\tikzset{tail reversed/.code={\pgfsetarrowsstart{tikzcd to}}}
\tikzset{2tail/.code={\pgfsetarrowsstart{Implies[reversed]}}}
\tikzset{2tail reversed/.code={\pgfsetarrowsstart{Implies}}}
\tikzset{no body/.style={/tikz/dash pattern=on 0 off 1mm}}
\newcommand{\CC}{\mathbb{C}}
\newcommand{\RR}{\mathbb{R}}
\newcommand{\QQ}{\mathbb{Q}}
\newcommand{\Pic}{\mathrm{Pic}}
\newcommand{\oh}{\mathcal{O}}
\newcommand{\Spec}{\mathrm{Spec}}
\newcommand{\Proj}{\mathrm{Proj}}
\newcommand{\HH}{\mathrm{H}}
\newcommand{\VV}{\mathbb{V}}
\newcommand{\ZZ}{\mathbb{Z}}
\newcommand{\Af}{\mathbb{A}}
\newcommand{\Rep}{\mathrm{Rep}}
\newcommand{\Sym}{\mathrm{Sym}}
\newcommand{\PP}{\mathbb{P}}
\newcommand{\rar}{\rightarrow}
\newcommand{\xrar}[1]{\xrightarrow{#1}}
\newcommand{\Cox}{\mathrm{Cox}}
\newcommand{\Hom}{\mathrm{Hom}}
\newcommand{\git}{\ensuremath{\operatorname{\!/\!\!/\!}}}
\newcommand{\GL}{\mathrm{GL}}
\newcommand{\SL}{\mathrm{SL}}
\newcommand{\Coker}{\mathrm{Coker}}
\newcommand{\MM}{\mathfrak{M}}
\newcommand{\calX}{\mathcal{X}}
\newcommand{\calY}{\mathcal{Y}}
\newcommand{\calZ}{\mathcal{Z}}
\newcommand{\calS}{\mathcal{S}}
\newcommand{\calP}{\mathcal{P}}
\newcommand{\tot}{\mathrm{tot}}
\newcommand{\cone}{{\mathrm{Cone}}}
\newcommand{\free}{\mathrm{free}}
\newcommand{\calW}{\mathcal{W}}
\newtheorem{theorem}{Theorem}[section]
\newtheorem{lemma}[theorem]{Lemma}
\newtheorem{proposition}[theorem]{Proposition}
\newtheorem{corollary}[theorem]{Corollary}
\theoremstyle{definition}
\newtheorem{definition}[theorem]{Definition}
\newtheorem{example}[theorem]{Example}
\newtheorem{notation}[theorem]{Notation}
\newtheorem*{acknowledgements}{Acknowledgements}
\theoremstyle{remark}
\newtheorem{remark}[theorem]{Remark}
\title{All crepant resolutions of hyperpolygon spaces via their Cox rings.}
\author{Austin Hubbard}
\thanks{Email: \href{mailto:ah2941@bath.ac.uk}{ah2941@bath.ac.uk}, Affiliation: University of Bath}
\date{}
\email{\href{mailto:ah2941@bath.ac.uk}{ah2941@bath.ac.uk}}
\address{University of Bath}
\begin{document}
\begin{abstract}
    We construct and enumerate all crepant resolutions of hyperpolygon spaces, a family of conical symplectic singularities arising as Nakajima quiver varieties associated to a star-shaped quiver. We provide an explicit presentation of the Cox ring of any such crepant resolution. Using techniques developed by Arzhantsev-Derenthal-Hausen-Laface we construct all crepant resolutions of the hyperpolygon spaces, including those which are not projective over the singularity. We find that the number of crepant resolutions equals the Hoşten-Morris numbers. 
    In proving these results, we obtain a description of all complete geometric quotients associated to the classical GIT problem constructing moduli spaces of ordered points on the projective line. These moduli spaces appear as the Lagrangian subvarieties of crepant resolutions of hyperpolygon spaces fixed under the conical action.
\end{abstract}
\maketitle
\tableofcontents

\section{Introduction}
Hyperpolygon spaces are Nakajima quiver varieties associated to an $n$-pointed star-shaped quiver for a fixed dimension vector $v$, for $n\geq 4$. They form a family of symplectic varieties of dimension $2n-6$, generalising the crepant partial resolutions of the Kleinian singularity of type $D_4$. First introduced by Konno \cite{Konno} as hyperkähler analogues of polygon spaces, hyperpolygon spaces have since been investigated by Harada-Proudfoot \cite{HP} and Godinho-Mandini \cite{GodinhoMandini}. 
Polygon spaces arise as compactifications of the moduli space of $n$-distinct ordered points on $\PP^1$ and have served as foundational examples in the theory of GIT. Much like the well-studied polygon spaces, hyperpolygon spaces have provided a ripe testing ground for conjectures pertaining to Nakajima quiver varieties and algebraic symplectic reduction.

We provide explicit presentations of the Cox rings of smooth hyperpolygon spaces $\MM_\theta$, for $n\geq 5$. As an application, we construct all crepant resolutions of the most singular hyperpolygon spaces $\MM_0$. Work of Bellamy-Craw-Rayan-Schedler-Weiss \cite{BCSRW} establishes a construction of all projective crepant resolutions of $\MM_0$ as Nakajima quiver varieties, using Mumford's GIT. To produce more general quotients we use the $A_2$-quotient construction developed by Arzhantsev-Derenthal-Hausen-Laface \cite{ADHL}.
This allows the construction of crepant resolutions that are not projective over $\MM_0$, which have been difficult to study until now.

We introduce maximally-biconnected complexes, a combinatorial object which we show parameterises the crepant resolutions of $\MM_0$. The number of maximally-biconnected complexes on the set $\{1,\ldots,n\}$ is given by the $n$-th Hoşten-Morris number \cite{HM}. We find that the number of crepant resolutions of $\MM_0$ up to isomorphism over $\MM_0$ equals $\lambda(n)$.

Any crepant resolution of $\MM_0$ admits a $\CC^*$-action extending the conical $\CC^*$-action on $\MM_0$. There is a unique Lagrangian subvariety inside the $\CC^*$-fixed locus, which is in almost all cases a polygon space. We see that the small birational transformation relating a pair of crepant resolutions is completely determined by the induced transformation on this Lagrangian subvariety, providing a close link between the crepant resolutions of $\MM_0$ and polygon spaces. We study the polygon space GIT problem of $n$ ordered points on $\PP^1$, extending the classical results to include non-projective quotients. This GIT problem is given by the action of $T:=(\CC^*)^{n}$ on the affine variety $Y$ given as the spectrum of the Cox ring of $\PP^{n-3}$ blown-up at $n-1$ general points, for $n\geq 5$. We provide a combinatorial characterisation of the open subsets $U\subseteq Y$ which admit geometric quotients $U\rar U\git T$ with $U\git T$ complete. We show that the number of such open subsets equals $\lambda(n)-n$.

\subsection{A presentation of the Cox ring.}
The hyperpolygon spaces $\MM_\theta$ are constructed as GIT quotients $\MM_\theta:=\mu^{-1}(0)\git_\theta G$, dependent on a choice of parameter $\theta\in \Theta\cong G^\vee\otimes_\ZZ \QQ$. The most singular hyperpolygon space $\MM_0$, given as the affine GIT quotient $\mu^{-1}(0)\git G$, is a conical symplectic singularity (see Definition \ref{def:consymsin}). It is shown by Bellamy-Craw-Rayan-Schedler-Weiss \cite{BCSRW}, that for generic $\theta\in \Theta$, the natural morphism $\pi_\theta:\MM_\theta \rar \MM_0$, induced by varying the parameter in $\Theta$, is a projective crepant resolution (see Theorem \ref{thm:BCSRW}). By work of Namikawa \cite{Namikawa}, for generic $\theta\in \Theta$, $\MM_\theta$ is a Mori dream space (see Definition \ref{def:MDS}), i.e.\ it has finitely-generated Cox ring. Even in cases where the Cox ring of a variety is finitely-generated, it is in general a difficult problem to provide explicit generators and relations for it.

The case of $n=4$ is extremely well-studied. The graph underlying the star-shaped quiver with $4$ external vertices is the affine Dynkin diagram of type $\widetilde{D}_4$. By work of Kronheimer \cite{Kronheimer}, the affine quotient $\MM_0$ is the Kleinian singularity of type $D_4$. For $\theta\in\Theta$ generic, $\pi_\theta:\MM_\theta \rar \MM_0$ is the minimal resolution of $\MM_0$. In this case, the Cox ring of the minimal resolution of $\MM_0$ is computed by Donten-Bury \cite[Example~6.14]{DontenBury}.

In order to describe the Cox ring of $\MM_\theta$, we introduce auxilliary geometric and algebraic objects (see Definition \ref{def:calX}): the affine variety $\mathcal{X}$ is the affinisation of a principal bundle over $\MM_\theta$; the $\CC$-algebra $R$ is a $\ZZ^n$-graded polynomial ring, and $I\subseteq R$ is a homogeneous ideal.

\begin{theorem}
    Fix $n\geq 5$. The following $\ZZ^n$-graded $\CC$-algebras are isomorphic
    \begin{enumerate}
        \item $\Cox(\mathfrak{M}_\theta)$, for $\theta\in\Theta$ generic.
        \item $\CC[\calX]^{\SL(2)}$.
    \item $R/I$.
    \end{enumerate}
\end{theorem}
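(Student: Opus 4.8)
The plan is to establish the two isomorphisms $\Cox(\mathfrak{M}_\theta)\cong\CC[\calX]^{\SL(2)}$ and $\CC[\calX]^{\SL(2)}\cong R/I$ separately, the first being conceptual and the second computational. For the first isomorphism, I would exploit the standard description of the Cox ring of a Mori dream space as the ring of global sections of the total coordinate sheaf, together with the fact that $\mathfrak{M}_\theta$ is a GIT quotient of $\mu^{-1}(0)$ by $G$. Concretely, $\mathfrak{M}_\theta$ is a geometric quotient (on the stable locus, which is all of the semistable locus for generic $\theta$) of an open subset of $\mu^{-1}(0)$ by $G$; passing to the Cox ring amounts to "undoing" the $\Pic(\mathfrak{M}_\theta)$-worth of linearisations, i.e.\ replacing the $G$-quotient by a quotient by the smaller group $G'=\ker(G\to\Pic(\mathfrak{M}_\theta))$ or equivalently by the semisimple part. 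Since $G=\GL$-type with $\SL(2)$ as its semisimple part in the relevant presentation, the variety $\calX$ from Definition~\ref{def:calX} is engineered precisely so that $\mathfrak{M}_\theta$ is a quotient of (an open subset of) $\calX$ by the residual torus, and $\calX$ is the affinisation of a principal bundle over $\mathfrak{M}_\theta$; then $\Cox(\mathfrak{M}_\theta)=\Gamma(\calX,\oh_\calX)^{\SL(2)}=\CC[\calX]^{\SL(2)}$ because taking $\SL(2)$-invariants of functions on the affinisation computes exactly the Cox ring graded by the character lattice of the residual torus. I would make this precise by invoking the Cox-ring-as-invariant-ring formalism of \cite{ADHL} (their characterisation of Cox rings via characteristic spaces and universal torsors), checking that $\calX$ is the relevant characteristic space up to the free part.

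For the second isomorphism, $\CC[\calX]^{\SL(2)}\cong R/I$, the strategy is classical invariant theory for $\SL(2)$. Since $\calX$ is an affine variety with an explicit $\SL(2)$-action — built from the quiver data, so the $\SL(2)$ acts on a direct sum of copies of the standard $2$-dimensional representation and its dual (one pair for each of the $n$ legs), modulo the moment map equations — the First Fundamental Theorem for $\SL(2)$ gives generators for $\CC[\calX]^{\SL(2)}$: they are the $\SL(2)$-invariant contractions, namely the $2\times 2$ determinants formed from pairs of vectors and the pairings between a vector and a covector. These invariants are exactly the generators of $R$ (with the prescribed $\ZZ^n$-grading reading off which legs are involved), so one gets a surjection $R\twoheadrightarrow\CC[\calX]^{\SL(2)}$. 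The Second Fundamental Theorem for $\SL(2)$ then supplies the relations — the Plücker-type quadratic syzygies among the determinants — and on top of those one must add the relations coming from the moment map equations defining $\calX$ inside the ambient quiver space. The claim is that the ideal generated by all of these is precisely $I$.

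The main obstacle I expect is the Second Fundamental Theorem step: showing that $I$ is \emph{exactly} the ideal of relations, i.e.\ that no further relations are needed beyond the Plücker relations and the descended moment-map relations. Surjectivity onto the generators is routine from the FFT, but proving that the kernel is generated in the expected low degrees requires care — one typically argues by a dimension count (both sides are integral domains of the same Krull dimension $\dim\mathfrak{M}_\theta + \mathrm{rk}\,\Pic(\mathfrak{M}_\theta)$, plus the free part), showing $R/I$ is a domain of the right dimension and that the surjection $R/I\twoheadrightarrow\CC[\calX]^{\SL(2)}$ between domains of equal dimension with $\CC[\calX]^{\SL(2)}$ normal must be an isomorphism, or by a direct Gröbner-basis/straightening-law argument on the bracket algebra. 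I would lean on the first approach: independently compute $\dim R/I$ via the combinatorics of the grading, verify integrality of $R/I$ (the delicate point, possibly via a flat degeneration or by exhibiting $R/I$ as a known normal ring), and conclude. A secondary subtlety is bookkeeping the $n=4$ versus $n\geq5$ distinction and confirming that for $n\geq5$ the relevant stable locus is big enough that the affinisation $\calX$ genuinely sees all of $\Cox(\mathfrak{M}_\theta)$ with no codimension-one loss, which is why the hypothesis $n\geq5$ appears in the statement.
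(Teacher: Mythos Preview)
Your overall architecture—split into a conceptual isomorphism $(1)\cong(2)$ and a computational one $(2)\cong(3)$—matches the paper's. But the execution differs at both steps, one of them substantively.

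For $(1)\cong(2)$: you treat the claim that $\calX$ is the affinisation of a principal bundle over $\MM_\theta$ as input, but this is precisely the content to be established. The paper does it by writing down an explicit $G$-equivariant morphism $\iota:\calX\to\mu^{-1}(0)$, defined on coordinate rings by $z_i\mapsto c_iy_i$, $w_i\mapsto -c_ix_i$. One checks that $\iota$ is an isomorphism over the open locus $U$ where all $(x_i,y_i)\neq 0$, that $\mu^{-1}(0)^{\theta\text{-ss}}\subseteq U$ for $\theta$ in the positive orthant, and that the complement of $\iota^{-1}(\mu^{-1}(0)^{\theta\text{-ss}})$ in $\calX$ has codimension $\geq 2$ (this is where $n\geq 5$ enters, as you suspected). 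Normality of $\calX$—a complete intersection, regular in codimension one via the isomorphism over $U$ with the normal variety $\mu^{-1}(0)$—plus Hartogs then gives $\CC[\calX]\cong\Gamma(\mu^{-1}(0)^{\theta\text{-ss}},\oh)$, and taking $\SL(2)$-invariants yields the Cox ring by the known identification of $\Cox(\MM_\theta)$ with the $\SL(2)$-invariants of this section ring. Your abstract ADHL-style sketch does not supply the map $\iota$, which is the actual link between the explicitly defined $\calX$ and the quiver variety.

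For $(2)\cong(3)$: your diagnosis that pinning down the relations is the hard part is correct, but the paper's route is cleaner than a dimension count or integrality argument. The key observation is that the three generators of the ideal $J$ cutting out $\calX\subseteq\calW$ span a copy of $\Sym^2(V^*)$ as an $\SL(2)$-representation. One has an $\SL(2)$-equivariant presentation
\[
\CC[\calW]\otimes\Sym^2(V^*)\longrightarrow\CC[\calW]\longrightarrow\CC[\calX]\longrightarrow 0,
\]
and since $\SL(2)$ is reductive, taking invariants is exact. Thus $\CC[\calX]^{\SL(2)}$ equals $\CC[\calW]^{\SL(2)}=R/I_0$ (the Plücker relations only) modulo the image of $(\CC[\calW]\otimes\Sym^2(V^*))^{\SL(2)}$. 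A \emph{second} application of the first fundamental theorem—embedding $\Sym^2(V^*)$ as the degree-$2$ piece of $\CC[V]$ and computing invariants of $\CC[\calW]\otimes\CC[V]$—identifies generators of this module, and their images in $R/I_0$ are exactly the elements $\sum_k\varphi_{i,k}\varphi_{j,k}c_k$. No independent verification that $R/I$ is a domain is needed; exactness of invariants does all the work. Your dimension-count approach could succeed in principle, but proving integrality of $R/I$ directly is the delicate step you yourself flag, and the paper simply sidesteps it.
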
 
The variety $\mathcal{X}$ is a complete intersection of codimension 3 in $\Af^{3n}$ with a particularly simple description, suggesting that if one wants to work with a coordinate ring for $\MM_\theta$ defined by simple equations, then $\CC[\calX]$ can be seen as a better candidate than $\Cox(\MM_\theta)$.
\subsection{All crepant resolutions of $\MM_0$.}
In \cite{BCSRW}, it is shown that any projective crepant resolution of $\MM_0$ arises as a morphism $\pi_\theta:\MM_\theta \rar \MM_0$, for generic $\theta$ in a unique GIT chamber within the positive orthant $F\subseteq\Theta$. The positive orthant $F$ is a fundamental domain for the action of the Namikawa-Weyl group on $\Theta$. They further provide an explicit hyperplane arrangement $\mathcal{A}$ which determines the GIT fan for the action of $G$ on $\mu^{-1}(0)$ (see discussion above Definition~\ref{def:hyperpolygon}). This gives a combinatorial method for computing the number of projective crepant resolutions: it is the number of chambers of the GIT fan contained in $F$.

However, a crepant resolution need not be a projective morphism. We provide a construction of all the crepant resolutions of $\MM_0$, including non-projective resolutions. As an application, we show that for $n\geq 6$, there are many examples of non-projective crepant resolutions of $\MM_0$. Each crepant resolution arises as the geometric quotient of an open subset of $X:=\Spec(\Cox(\MM_\theta))$ under the natural torus action induced by the $\Pic(\MM_\theta)$-grading. We utilise the construction of $A_2$-quotients via bunches of orbit cones given by Arzhantsev-Derenthal-Hausen-Laface \cite[Section~3.1]{ADHL}. The construction takes as input a bunch of orbit cones $\Psi\subseteq\Omega_X$ (see Definition \ref{def:bunch}), and produces an open subset $U(\Psi)\subseteq X$ admitting a good quotient $U(\Psi)\rar U(\Psi)\git T$.
We define maximally-biconnected complexes on $[n]:=\{1,\ldots,n\}$ (see Definition \ref{def:complex}), and describe a map associating to every such complex $\Delta$ a maximal bunch of orbit cones $\Psi_\Delta$.
\begin{theorem}\label{thm:A}
    Fix $n\geq 5$. Let $\mathscr{C}$ denote the set of crepant resolutions $\pi:\MM\rar \MM_0$ up to isomorphism over $\MM_0$.
    There is a bijection
    \begin{align*}
        \{\text{Maximally-biconnected complexes on }[n]\} &\longrightarrow \mathscr{C}, \\
        \Delta &\longmapsto \left[ U(\Psi_\Delta)\git T \rar \MM_0. \right]
    \end{align*}
\end{theorem}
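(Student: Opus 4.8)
The plan is to combine the $A_2$-quotient construction of Arzhantsev--Derenthal--Hausen--Laface \cite[Section~3.1]{ADHL} with the presentation of $\Cox(\MM_\theta)$ from the first Theorem, so as to reduce the enumeration of crepant resolutions of $\MM_0$ to a combinatorial statement about maximal bunches of orbit cones in $\Omega_X$, and then to match that combinatorics with maximally-biconnected complexes.

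\emph{Step 1 (reduction to bunches of orbit cones).} Let $\pi\colon\MM\rar\MM_0$ be any crepant resolution. Since $\MM$ and $\MM_\theta$ are both crepant (partial) resolutions of the conical symplectic singularity $\MM_0$, they are isomorphic in codimension one, hence carry canonically identified divisor class groups and canonically isomorphic Cox rings; by the first Theorem this ring is $\Cox(\MM_\theta)=\CC[X]$. Thus $\MM$ is a Mori dream space with that Cox ring, and the correspondence of \cite[Section~3.1]{ADHL} between maximal bunches of orbit cones $\Psi\subseteq\Omega_X$ and $A_2$-maximal varieties with Cox ring $\CC[X]$ presents $\MM$ as $U(\Psi)\git T$ for a unique $\Psi$. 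Because $\MM_0=X\git T$ for the trivial $T$-linearisation and $U(\Psi)\subseteq X$ is $T$-invariant, each such quotient carries a canonical morphism to $\MM_0$, and two of them are isomorphic over $\MM_0$ exactly when their bunches agree. Hence $\mathscr{C}$ is identified with the set of maximal bunches $\Psi$ for which $U(\Psi)\git T\rar\MM_0$ is a crepant resolution.

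\emph{Step 2 (which bunches give crepant resolutions).} It remains to characterise these $\Psi$. Birationality of $U(\Psi)\git T\rar\MM_0$ holds automatically over the smooth locus of $\MM_0$; properness is a combinatorial completeness condition on $\Psi$ relative to $\Omega_X$; and crepancy is automatic once $U(\Psi)\git T$ is known to be smooth, since it is then a small modification of the symplectic variety $\MM_\theta$ while $\MM_0$ is Gorenstein with trivial canonical class. Smoothness is the substantive local condition: using the presentation $X=V(I)$ (equivalently $\calX$ as a codimension-$3$ complete intersection, via Definition~\ref{def:calX}) one analyses the $T$-stabilisers and the local structure of $X$ along the strata picked out by the faces occurring in $\Psi$, and converts smoothness of the quotient into a condition on those faces. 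Carrying this out against the hyperplane arrangement $\mathcal{A}$ of \cite{BCSRW} (which governs the GIT fan, and whose chambers in the fundamental domain $F$ give the projective crepant resolutions by Theorem~\ref{thm:BCSRW}) yields the combinatorial description of the admissible $\Psi$.

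\emph{Step 3 (matching with maximally-biconnected complexes).} Finally one shows the assignment $\Delta\mapsto\Psi_\Delta$ of Definitions~\ref{def:bunch} and~\ref{def:complex} is a bijection onto the admissible bunches. Well-definedness amounts to checking that $\Psi_\Delta$ is a maximal bunch of orbit cones and that $U(\Psi_\Delta)\git T$ is smooth, proper and birational over $\MM_0$ --- the smoothness step invoking the local analysis of Step~2, with the biconnectivity axioms on $\Delta$ precisely matching the faces excluded by smoothness and separatedness. Injectivity follows from Step~1 together with the fact that $\Delta$ is recoverable from $\Psi_\Delta$. The main obstacle is surjectivity: given an arbitrary admissible maximal bunch $\Psi$, one must reconstruct a maximally-biconnected complex $\Delta$ with $\Psi=\Psi_\Delta$. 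Here the projective resolutions are the case where $\Delta$ is connected, recovering the chamber count of \cite{BCSRW}, while the general case requires showing that every way of gluing the projective pieces into a smooth separated variety over $\MM_0$ is encoded by the $2$-connectivity data of some $\Delta$; this is where the full combinatorics of $\Omega_X$ and of $\mathcal{A}$ is needed, and where I expect the bulk of the work to lie. Counting maximally-biconnected complexes then gives $|\mathscr{C}|=\lambda(n)$.
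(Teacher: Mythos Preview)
Your outline has the right overall shape but misidentifies where the real work lies, and it omits the key reduction that makes the paper's argument go through.

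First, in Step~1 you invoke the ADHL correspondence to write an arbitrary crepant resolution $\MM$ as $U(\Psi)\git T$ for a maximal bunch $\Psi$. That correspondence presupposes the $A_2$-property on the quotient side, and for a crepant resolution that is merely \emph{proper} (not projective) over $\MM_0$ this is not known in advance --- indeed, establishing it is part of what the theorem proves. The paper avoids this by working inside the non-separated total space $\MM_\tot=X^\free\git T$: one shows $\widehat{\MM}\subseteq X^\free$, hence $\MM\subseteq\MM_\tot$, and then argues directly via the stratification $\MM_\tot=\bigcup_\omega\mathcal{Z}_\omega$ that $\MM$ is a union of strata indexed by some subset $\Psi\subseteq\Omega_X^\free$. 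Only after identifying $\Psi$ with a concrete $\Psi_\Delta$ is it known to be a bunch at all.

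Second, you locate the substantive condition in smoothness. In fact smoothness is automatic: $\MM_\tot$ is covered by smooth projective $\MM_\theta$'s, so every open subscheme is smooth and locally crepant over $\MM_0$. The genuine obstructions are separatedness and properness, handled by valuative-criterion arguments against the stratification.

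Third --- and this is the main gap --- you do not mention the reduction to the polygon case. The paper's decisive move is to intersect with the closed subscheme $Y=\VV(c_1,\dots,c_n)\subseteq X$: the closure of the stratum $\mathcal{Z}_{C_0}$ inside $\MM$ is then a complete geometric quotient of an open subset of $Y$, and Theorem~\ref{thm:C} (proved independently) identifies it with $U(\Phi_\Delta)\git T$ for a unique full maximally-biconnected complex $\Delta$. This is how $\Delta$ is extracted from $\MM$ in the generic case; the remaining $n$ non-full complexes correspond to the corner chambers $C_1,\dots,C_n$. Your remark that projective resolutions correspond to ``connected'' $\Delta$ is not correct: fullness, not connectedness, is the relevant dichotomy, and full complexes include both projective and non-projective resolutions.
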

\begin{corollary}
    We have $\#\mathscr{C} = \lambda(n)$, where $\lambda(n)$ denotes the $n$-th Hoşten-Morris number (see Definition \ref{def:hm}). 
\end{corollary}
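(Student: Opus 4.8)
The plan is to obtain the corollary by combining Theorem~\ref{thm:A} with a single combinatorial count. Theorem~\ref{thm:A} exhibits $\Delta \mapsto [U(\Psi_\Delta)\git T \rar \MM_0]$ as a bijection from the set of maximally-biconnected complexes on $[n]$ onto $\mathscr{C}$, so $\#\mathscr{C}$ equals the number of maximally-biconnected complexes on $[n]$. The equivalence relation ``isomorphism over $\MM_0$'' is already built into the definition of $\mathscr{C}$ and into the statement of Theorem~\ref{thm:A}, so no further identifications among crepant resolutions have to be made at this stage: it remains only to count maximally-biconnected complexes and to match that count with $\lambda(n)$ as fixed in Definition~\ref{def:hm}.

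For this counting step I would compare Definition~\ref{def:complex} directly with the family of complexes whose cardinality is the Hoşten--Morris number $\lambda(n)$ in \cite{HM}. Concretely, I expect to unwind the biconnectivity and maximality conditions defining a maximally-biconnected complex on $[n]$ --- presumably a $2$-connectedness requirement together with a dual one, linked by Alexander duality --- and then either read off a literal equality of the two families or set up an explicit bijection between them, whose target has size $\lambda(n)$ by \cite{HM}. As a consistency check I would compare against the enumeration of \emph{projective} crepant resolutions of $\MM_0$ in \cite{BCSRW} for small $n$: these form a distinguished subset of $\mathscr{C}$, the remaining non-projective resolutions can be listed directly for $n = 5, 6, 7$, and the totals should match $\lambda(5), \lambda(6), \lambda(7)$.

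The corollary itself is therefore a short deduction; all of its substance is inherited from Theorem~\ref{thm:A} and from the combinatorial assertion that maximally-biconnected complexes on $[n]$ number $\lambda(n)$. Within that assertion the single genuinely non-formal point --- and the step I would expect to require real work --- is checking that the notion introduced in Definition~\ref{def:complex} coincides with (or is canonically in bijection with) the combinatorial object Hoşten and Morris count, rather than merely agreeing numerically in low dimension. Once that dictionary is in place the enumeration, and hence the corollary, follow immediately.
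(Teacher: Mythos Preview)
Your overall strategy matches the paper's: the corollary is deduced from Theorem~\ref{thm:A} together with the combinatorial assertion that maximally-biconnected complexes on $[n]$ are counted by $\lambda(n)$, and you correctly flag the latter as the only substantive step.

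Where the paper is concrete and you are speculative is in that combinatorial step. The paper does not try to match Definition~\ref{def:complex} directly against the objects in \cite{HM}; instead it proves an explicit bijection between maximally-biconnected complexes on $[n]$ and \emph{biconnected} complexes on $[n-1]$ (via $\Delta \mapsto \{C_{n-1}(I)\,:\,I\subseteq[n-1],\ I\notin\Delta\}$, essentially a complementation/Alexander-duality move restricted to $[n-1]$), and then simply quotes Definition~\ref{def:hm}, where $\lambda(n)$ is \emph{defined} as the number of biconnected complexes on $[n-1]$. So the ``dictionary'' you anticipate is indeed complementation-based, but the index shift from $[n]$ to $[n-1]$ is the key point, and it is what converts maximality into no condition at all. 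Your aside about a ``$2$-connectedness requirement'' misreads the term: in this paper \emph{biconnected} means $I\cup J\neq[n]$ for all $I,J\in\Delta$ (an intersecting-type condition on complements), not graph-theoretic $2$-connectivity. Once you replace your speculative description with that explicit bijection, your argument and the paper's coincide.
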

The Hoşten-Morris numbers $\lambda(n)$ are known for $n\leq 9$. The following table gives a comparison between the number $P(n)$ of projective crepant resolutions of $\MM_0$ (see \cite[Remark~3.9]{BCSRW}) and the number $\lambda(n)$ of all crepant resolutions.
\medskip
    \begin{center}
        \begin{tabular}{l|ccccccl}\toprule
            \ \ $n$ & 5 & 6 & 7 & 8 & 9 \\
            $P(n)$ & 81 & 1684 & 122921 & 33207256 & 34448225389 \\
            $\lambda(n)$ & 81 & 2646 & 1422564 & 229809982112 & 423295099074735261880  \\ \bottomrule
        \end{tabular}
    \end{center}    
\medskip
In the case $n=5$, we see that $\MM_0$ has 81 crepant resolutions, all of which are projective over $\MM_0$. In \cite[Theorem~1.1]{BCSRW} it is shown that $\MM_0\cong \CC^4/\Gamma$, where $\Gamma\subseteq \mathrm{Sp}(4)$ is a fixed group of order 32 (see Example \ref{ex:n=5}). This links the construction of all projective crepant resolutions of $\MM_0$ given in \cite{BCSRW} with the construction of 81 projective crepant resolutions of $\CC^4/\Gamma$ given by Donten-Bury-Wi\'{s}niewski \cite{DBW}.
\subsection{All polygon spaces.}
Polygon spaces can be constructed as GIT quotients of an affine variety $Y$ isomorphic to the affine cone over the Grassmannian $\mathrm{Gr}(2,n)$, by an action of $T:= (\CC^*)^n$ (see Definition \ref{def:polygon}). We define full maximally-biconnected complexes on $[n]$ (see Definition \ref{def:complex}), and describe a map associating to every such complex $\Delta$, a maximal bunch of orbit cones $\Phi_\Delta\subseteq\Omega_Y$.
\begin{theorem}\label{thm:C}
    Fix $n\geq 5$. Let $\mathscr{U}$ denote the set of $T$-invariant open subschemes $U\subseteq Y$ admitting a geometric quotient $U\rar U\git T$, such that $U\git T$ is complete. There is a bijection
        \begin{align*}
            \{\text{Full maximally-biconnected complexes on }[n]\}&\longrightarrow \mathscr{U},\\
            \Delta &\longmapsto U(\Phi_\Delta)/T.
        \end{align*}
\end{theorem}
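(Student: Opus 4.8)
The plan is to apply the theory of bunched rings of Arzhantsev--Derenthal--Hausen--Laface to the $T$-action on $Y$, running the argument of Theorem~\ref{thm:A} but now isolating the maximal bunches of orbit cones whose associated quotient is simultaneously \emph{geometric} and \emph{complete}. By Definition~\ref{def:polygon}, the coordinate ring of $Y$ is the Cox ring of the blow-up of $\PP^{n-3}$ at $n-1$ general points, equivalently the Plücker algebra of $\mathrm{Gr}(2,n)$; it is a $\ZZ^n$-graded unique factorisation domain with only constant invertible elements, generated by the Plücker coordinates $p_{ij}$ of weight $e_i+e_j$. So the $A_2$-quotient construction of \cite[Section~3.1]{ADHL} applies, giving a bijection $\Phi\mapsto U(\Phi)$ between maximal bunches of orbit cones $\Phi\subseteq\Omega_Y$ and $(2)$-maximal $T$-invariant open subsets of $Y$ admitting a good quotient. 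The first step is to make $\Omega_Y$ explicit: the orbit cones are the images $Q(\mathrm{cone}(S))$ under $Q\colon e_{ij}\mapsto e_i+e_j$ of the faces $\mathrm{cone}(S)$ of the positive orthant that occur as supports of points of $Y$, namely those with $S$ a complete multipartite graph $K_{B_1,\dots,B_k}$ attached to a partition of a subset of $[n]$ into $k\ge 2$ blocks; and $y\in U(\Phi)$ iff $\tau\subseteq\omega(y)$ for some $\tau\in\Phi$.

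Next I would translate the two conditions defining $\mathscr{U}$ into combinatorics. The quotient $U(\Phi)\to U(\Phi)\git T$ is geometric iff every $T$-orbit in $U(\Phi)$ is closed, and, from the description of $U(\Phi)$, this holds iff every cone of $\Phi$ is full-dimensional; a direct computation with the weights $e_i+e_j$ shows that $\dim Q(\mathrm{cone}(K_{B_1,\dots,B_k}))=n$ exactly when the $B_i$ partition \emph{all} of $[n]$ into $k\ge 3$ blocks, so these cones (indexed by partitions of $[n]$ into at least three blocks) are the ones admissible in a geometric bunch. Completeness is controlled by the properness criterion of \cite[Section~3.3]{ADHL}: $U(\Phi)\git T$ is complete iff $\Phi$ is a complete bunch, and unwinding this for bunches of the partition cones above produces a purely combinatorial covering condition on the underlying family of partitions. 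The decisive claim, which I would establish by comparing these axioms with Definition~\ref{def:complex}, is that a maximal bunch of full-dimensional orbit cones is complete precisely when its family of partitions is the family $\Phi_\Delta$ of a full maximally-biconnected complex $\Delta$; equivalently, $\Delta\mapsto\Phi_\Delta$ is a bijection onto the set of complete geometric maximal bunches in $\Omega_Y$.

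Granting this, the theorem follows. Injectivity of $\Delta\mapsto U(\Phi_\Delta)/T$ is immediate from injectivity of $\Phi\mapsto U(\Phi)$ in \cite{ADHL} together with injectivity of $\Delta\mapsto\Phi_\Delta$, which is a combinatorial check. For surjectivity onto $\mathscr{U}$, take a $T$-invariant open $U\subseteq Y$ with $U\git T$ geometric and complete. Being a nonempty open in the irreducible $Y$, $U$ is dense; so if $U$ were saturated in a strictly larger $T$-invariant open $U'$ admitting a good quotient, then $U\git T$ would be a dense, open, and (by completeness) closed subscheme of the irreducible $U'\git T$, forcing $U\git T=U'\git T$ and hence $U=U'$. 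Thus $U$ is $(2)$-maximal and $U=U(\Phi)$ for a unique maximal bunch $\Phi$, which is then geometric and complete, so $\Phi=\Phi_\Delta$ for a unique full maximally-biconnected complex $\Delta$. Since $U(\Phi_\Delta)/T$ is geometric and complete the map is also well defined, and we obtain the stated bijection.

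The main obstacle is the combinatorial identification of the second paragraph: faithfully translating the \cite{ADHL} completeness and bunch-maximality axioms into the (bi)connectivity conditions of Definition~\ref{def:complex}, and especially proving that every complete geometric maximal bunch in $\Omega_Y$ is realised by a complex. A closely related viewpoint, which I would develop alongside, is that under the restriction map on bunches coming from the inclusion of the conical-$\CC^*$-fixed locus, $U(\Phi_\Delta)/T$ is the distinguished Lagrangian component of the core of the crepant resolution $U(\Psi_\Delta)\git T$ of $\MM_0$; this component is a polygon space precisely when $\Delta$ is full, which accounts for the difference between the count $\lambda(n)$ of crepant resolutions and the count $\#\mathscr{U}=\lambda(n)-n$ of complete geometric $T$-quotients of $Y$.
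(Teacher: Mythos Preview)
Your surjectivity argument has a genuine gap that the paper's proof is specifically designed to close. You invoke the bijection of \cite[Theorem~3.1.4.4]{ADHL} between maximal bunches and $(2)$-maximal $T$-invariant opens admitting a good quotient, but that bijection only produces opens whose quotient has the \emph{$A_2$-property}. The set $\mathscr{U}$ in the statement imposes no such hypothesis: $U\git T$ is only assumed to be complete and the quotient geometric. So from ``$U$ is $(2)$-maximal'' you cannot conclude ``$U=U(\Phi)$ for a maximal bunch $\Phi$'' without first knowing that $U\git T$ is $A_2$; and proving that is essentially the content of the theorem. (The paper even remarks, after recording the ADHL bijection, that for general torus actions there can exist good quotients without the $A_2$-property, so this is not a formality.)

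The paper avoids this circularity by never appealing to the ADHL classification for the surjectivity direction. Instead it glues all generic projective quotients into a non-separated ``total polygon space'' $\calP_{\mathrm{tot}}=Y^{\mathrm{free}}/T$, stratified by the locally closed pieces $Z_\omega$ indexed by $\Omega_Y^{\mathrm{free}}$. Any $U\in\mathscr{U}$ sits inside $Y^{\mathrm{free}}$, so $U/T$ is a complete open subscheme of $\calP_{\mathrm{tot}}$. A valuation-ring argument, using that each $\calP_\theta$ is projective and is itself a union of strata, shows that any complete open subscheme of $\calP_{\mathrm{tot}}$ is a union of strata $Z_\omega$; openness then forces condition~(2) of Definition~\ref{def:bunch} on the index set. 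The crucial step you are missing is condition~(1): the paper proves it by an explicit one-parameter family in $\calY^{\mathrm{free}}$ which, after two different $G$-translations, specialises into $V_{\eta_I}$ and into $V_{\eta_{I^c}}$, so that no \emph{separated} open in $\calP_{\mathrm{tot}}$ can contain both $Z_{\eta_I}$ and $Z_{\eta_{I^c}}$. This geometric non-separatedness argument is what replaces the $A_2$ hypothesis and is the real content of surjectivity. Your sketch also defers the combinatorial identification ``complete geometric maximal bunch $\Leftrightarrow$ full maximally-biconnected complex'' to an unstated ADHL completeness criterion; in the paper this is handled by direct cone computations (Lemma~\ref{lem:twoprop}, Proposition~\ref{prop:maxbunchbij}) and a hands-on properness check (Proposition~\ref{prop:bunchquotientsarecomplete}).
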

\begin{corollary}
    We have $\#\mathscr{U} = \lambda(n)-n$, where $\lambda(n)$ denotes the $n$-th Hoşten-Morris number (see Definition \ref{def:hm}).
\end{corollary}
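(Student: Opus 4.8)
The plan is to obtain the count from the bijection of Theorem~\ref{thm:C} together with the enumeration underlying the Corollary to Theorem~\ref{thm:A}, the one new ingredient being a purely combinatorial fact: that exactly $n$ of the maximally-biconnected complexes on $[n]$ fail to be full. Granting that, the deduction is immediate --- Theorem~\ref{thm:C} identifies $\#\mathscr{U}$ with the number of full maximally-biconnected complexes on $[n]$; the Corollary to Theorem~\ref{thm:A}, equivalently the enumeration of maximally-biconnected complexes by the Hoşten-Morris numbers (cf.\ \cite{HM} and Definition~\ref{def:hm}), gives $\lambda(n)$ such complexes in total; and subtracting the $n$ non-full ones yields $\lambda(n)-n$. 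So the substance of the proof is the count of non-full complexes.

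First I would establish the lower bound by exhibiting $n$ explicit non-full maximally-biconnected complexes $\Delta_1,\dots,\Delta_n$, with $\Delta_i$ distinguished by the vertex $i\in[n]$; concretely these should be the complexes whose associated crepant resolutions of $\MM_0$ (under Theorem~\ref{thm:A}) restrict to a \emph{point} on the Lagrangian fixed by the conical $\CC^*$-action, rather than to a positive-dimensional polygon space --- in line with the observation in the introduction that this Lagrangian is a polygon space in all but finitely many cases. Checking from Definition~\ref{def:complex} that each $\Delta_i$ is maximally-biconnected but not full should be routine, and the $\Delta_i$ are pairwise distinct since the index $i$ is recoverable from $\Delta_i$, so there are at least $n$ non-full complexes.

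The hard part will be the reverse inequality. Given a maximally-biconnected complex $\Delta$ on $[n]$ that is not full, I would first unwind the fullness condition of Definition~\ref{def:complex} into a condition tested one vertex (or edge) at a time, so that its failure produces a witnessing index $i\in[n]$; then I would invoke maximality of $\Delta$ together with the biconnectivity constraint to show that, once $i$ is fixed, the remaining combinatorial data of $\Delta$ is completely determined, forcing $\Delta=\Delta_i$. Making this rigidity precise --- localising the failure of fullness at a single vertex and then excluding any freedom in the rest of $\Delta$ --- is the crux, and the step most sensitive to the exact shape of Definition~\ref{def:complex}; the geometric reading (these being exactly the $n$ resolutions that cannot produce a complete polygon-space quotient) is a consistency check rather than a shortcut. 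Combining the two bounds gives precisely $n$ non-full maximally-biconnected complexes, whence $\#\mathscr{U}=\lambda(n)-n$.
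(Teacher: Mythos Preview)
Your proposal is correct and matches the paper's approach exactly: the bijection of Theorem~\ref{thm:C} reduces the count to Lemma~\ref{lem:fullcomplexes}, which identifies the $n$ non-full maximally-biconnected complexes as precisely $\Delta_i=\{J:\emptyset\neq J\subseteq\{i\}^c\}$ for $i\in[n]$. The step you flag as the crux is in fact a two-line argument: if $\{i\}\notin\Delta$ then by maximality $\{i\}^c\in\Delta$ (a maximally-biconnected complex contains exactly one of $I,I^c$ for each $I\subseteq[n]$), whence every $J\in\Delta$ satisfies $J\cup\{i\}^c\neq[n]$, i.e.\ $J\subseteq\{i\}^c$, and maximality forces $\Delta=\Delta_i$.
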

For each hyperpolygon space $\MM_\theta$, there is a $\CC^*$-action contracting $\MM_\theta$ into the zero fiber $\pi_\theta^{-1}(0)$. If $\theta$ is generic, there is a unique Lagrangian subvariety fixed under this action which is almost always a polygon space (see Definition \ref{def:polygon}). If $\MM$ is an arbitrary crepant resolution of $\MM_0$, there remains a contracting $\CC^*$ action and a unique Lagrangian subvariety fixed under this action, which may not be projective.

The quotients arising from Theorem \ref{thm:C} describe the $\CC^*$-fixed Lagrangians inside arbitrary crepant resolutions $\MM$ of $\MM_0$. In fact, Theorem \ref{thm:C} is a vital component in the proof of Theorem \ref{thm:A}. It follows from the constructions involved in Theorems $\ref{thm:A}$ and $\ref{thm:C}$ that a crepant resolution $\MM$ is projective if and only if the $\CC^*$-fixed Lagrangian it contains is projective. This explains the fact that for $n=5$, in which case $\dim(\MM_0)=4$, all crepant resolutions $\MM$ are projective, since the $\CC^*$-fixed Lagrangian in $\MM$ is a smooth surface, which must be projective.

As an example, let $Z_3$ be the projective 3-fold in $\PP^5$ defined by the equations
    \begin{align*}
        \sum_{i=0}^5 x_i = 0, \ \ \sum_{i=0}^5 x_i^3 =0.
    \end{align*}
    This is the Segre cubic; it is constructed as a projective GIT quotient of $Y$ (see Example \ref{ex:segre}). Its singular locus consists of 10 ordinary double points, each of which locally admits a pair of crepant resolutions. Since each singular point can be resolved independently, $Z_3$ has $2^{10}$ crepant resolutions. Work of Finkelnberg \cite{Finkelnberg} establishes that only 332 of them are projective. All $2^{10}$ minimal resolutions arise as quotients of $Y$ described in Theorem \ref{thm:C}.

     There is a 5-dimensional analogue of the Segre cubic, which we denote by $Z_5$, constructed as a projective GIT quotient of $Y$ (see Example \ref{ex:segre2}). Its singular locus consists of 35 ordinary double points. Thus $Z_5$ admits $2^{35}$ crepant resolutions, of which only 495504 are projective (see Corollary \ref{cor:morichamber}). All minimal resolutions can again be constructed as quotients of $Y$ described by Theorem \ref{thm:C}.

\begin{acknowledgements}
    I would like to thank my supervisor Alastair Craw for introducing me to this problem and providing much helpful feedback. I'm grateful to Travis Schedler and Nick Proudfoot for their helpful commments.
    The author was supported fully by Research Project Grant RPG-2021-149 from The Leverhulme Trust.
\end{acknowledgements}
\section{Background}
\subsection{GIT quotients.}
We begin with a summary of various GIT quotient constructions, following the development given by Arzhantsev-Derenthal-Hausen-Laface \cite{ADHL}.

A variety is an integral separated scheme of finite-type over $\CC$. Let $G$ be a reductive algebraic group acting on an affine variety $X=\Spec(R)$. Finite generation of the subring of $G$-invariant functions $R^G\subseteq R$ provides the construction of an affine quotient variety $X\git G = \Spec(R^G)$.

There is an induced $G$-action on the coordinate ring $R$. The $R^G$-module of semi-invariant functions of weight $\theta\in \Theta$ is
\[ R_\theta:=\{f\in R \ \vert \ g\cdot f = \theta(g^{-1})f\}\subseteq R.\]
Let $\Theta:=G^\vee\otimes_{\ZZ}\QQ$. If $\theta\in \Theta\setminus G^\vee$, set $R_\theta=0$.
Given $\theta\in \Theta$, Mumford's GIT \cite{GIT} gives a construction of refined quotient spaces $X\git_\theta G$, projective over $X\git G$.
\begin{definition}
    Given $\theta\in \Theta$, a point $x\in X$ is \emph{$\theta$-semistable} if there exists some $n>0$, and some homogeneous $f\in R_{n\theta}$, such that $f(x)\neq 0$. Let $X^{\theta\text{-ss}}$ denote the $G$-invariant open subset of $\theta$-semistable points in $X$.
\end{definition}
For all $\theta\in \Theta$, the $\theta$-semistable locus admits a good quotient given by
\[ X\git_\theta G:= \Proj\Big(\bigoplus_{n\geq 0}R_{n\theta}\Big).\]
The inclusion of $R_0$ into $\bigoplus_{n\geq 0} R_{n\theta}$ induces a projective morphism $X\git_\theta G \rar X\git G$.

From now on we consider the action of an algebraic torus $T$ on $X$. In this case the coordinate ring $R$ is graded by $T^\vee$. We have $R\cong \bigoplus_{\theta\in T^\vee}R_\theta$. Suppose $f\in R_\theta$ for some $\theta\in T^\vee\subseteq\Theta$, then let $\deg(f)=\theta$.
\begin{definition}
	The \emph{orbit cone} of $x\in X$ is the convex polyhedral cone 
	\[ \omega_x := \QQ_{>0}\cdot\{\theta\in \Theta \ \vert \ f(x)\neq 0 \ \text{for some} \ f\in R_\theta\}
    \subseteq\Theta.\]
    We denote by $\Omega_X$ the set of all orbit cones of points in $X$. This is a finite set \cite[Proposition~3.1.1.10]{ADHL}. Given an orbit cone $\omega\in \Omega_X$, we denote its relative interior by $\omega^\circ$.
    The \emph{GIT cone} associated to $\theta\in \Theta$ is the intersection
    \[ C(\theta) := \bigcap_{\{\omega_x\in \Omega_X \vert \theta\in \omega_x\}}\omega_x.\]
    We say $\theta$ is \emph{generic} if $C(\theta)$ is top dimensional. We call the interior of a top dimensional GIT cone a GIT chamber.
\end{definition}
\begin{definition}
    A \emph{quasi-fan} in $\Theta$ is a finite collection of convex polyhedral cones in $\Theta$, closed under taking faces, and such that the intersection of any two cones is a face of both. A \emph{fan} is a quasi-fan consisting of strongly-convex polyhedral cones.
\end{definition}
\begin{theorem}[\cite{ADHL}, Theorem 3.1.2.8]
    The collection of all GIT cones gives a quasi-fan in $\Theta$. Given $\theta_1,\theta_2\in \Theta$, we have that $X^{\theta_1\text{-ss}}=X^{\theta_2\text{-ss}}$ if and only if $C(\theta_1)=C(\theta_2)$.
\end{theorem}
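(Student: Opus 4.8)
The plan is to reduce the whole statement to the combinatorics of the finite set $\Omega_X$ (finiteness is \cite[Proposition~3.1.1.10]{ADHL}) by first establishing the dictionary
\[ x\in X^{\theta\text{-ss}} \iff \theta\in\omega_x ,\qquad \theta\in\Theta,\ x\in X .\]
For $(\Leftarrow)$, if $\theta=\sum_i q_i\theta_i$ with $q_i\in\QQ_{>0}$ and $f_i\in R_{\theta_i}$ satisfying $f_i(x)\neq0$, then clearing denominators produces $n>0$ and exponents $a_i\in\ZZ_{>0}$ with $\prod_i f_i^{a_i}\in R_{n\theta}$, a function not vanishing at $x$, so $x$ is $\theta$-semistable. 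For $(\Rightarrow)$, a homogeneous $f\in R_{n\theta}$ with $f(x)\neq0$ exhibits $n\theta\in\omega_x$, hence $\theta\in\omega_x$ as $\omega_x$ is a cone. In particular $X^{\theta\text{-ss}}\neq\emptyset$ exactly when $\theta$ lies in the effective cone $\mathrm{Eff}(X):=\QQ_{\geq0}\{\deg f\mid 0\neq f\in R\ \text{homogeneous}\}$; for $\theta\notin\mathrm{Eff}(X)$ one has $C(\theta)=\Theta$ and the empty quotient, and these parameters are excluded, so that the GIT quasi-fan will have support $\mathrm{Eff}(X)$.

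Granting the dictionary, the second assertion is bookkeeping. The locus $X^{\theta\text{-ss}}=\{x\in X\mid\omega_x\ni\theta\}$ depends only on the set $S(\theta):=\{\omega\in\Omega_X\mid\theta\in\omega\}$, and conversely $S(\theta)$ is recovered from $X^{\theta\text{-ss}}$ since every orbit cone is realised as some $\omega_x$; hence $X^{\theta_1\text{-ss}}=X^{\theta_2\text{-ss}}\iff S(\theta_1)=S(\theta_2)$. Now $S(\theta)$ and $C(\theta)$ determine one another: on one hand $C(\theta)=\bigcap_{\omega\in S(\theta)}\omega$ depends only on $S(\theta)$; on the other, $\theta\in C(\theta)$ always, so for $\omega\in\Omega_X$ one has $\theta\in\omega$ iff $C(\theta)\subseteq\omega$, i.e. $S(\theta)=\{\omega\in\Omega_X\mid C(\theta)\subseteq\omega\}$ depends only on $C(\theta)$. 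Chaining the equivalences gives $X^{\theta_1\text{-ss}}=X^{\theta_2\text{-ss}}\iff C(\theta_1)=C(\theta_2)$.

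For the first assertion I would begin by proving that $\Omega_X$ is closed under passing to faces. Realise $X$ as a $T$-stable closed subvariety of some $\Af^N$ whose coordinates have weights $\theta_1,\dots,\theta_N$ (possible as $R$ has finitely many homogeneous generators). Given $x$ and a face $\tau\preceq\omega_x$, choose a one-parameter subgroup $\lambda\colon\Gm\to T$ with $\langle\lambda,-\rangle\geq0$ on $\omega_x$ and $\omega_x\cap\lambda^\perp=\tau$; each nonzero coordinate of $x$ has weight in $\omega_x$, hence pairs non-negatively with $\lambda$, so $x_0:=\lim_{t\to0}\lambda(t)\cdot x$ exists in $X$, and reading off the surviving coordinates (and using that a supporting-hyperplane face of a cone is generated by the defining rays it contains) gives $\omega_{x_0}=\omega_x\cap\lambda^\perp=\tau$. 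Granting this, one applies the standard fact that for a finite face-closed family $\mathcal F$ of polyhedral cones the intersections $C(w):=\bigcap_{w\in\sigma\in\mathcal F}\sigma$ form a quasi-fan supported on $\bigcup\mathcal F$ (here $\bigcup\Omega_X=\mathrm{Eff}(X)$). The key observation underlying that fact is that $\theta\in C(\theta)^\circ$: for each $\omega\in S(\theta)$ the unique face $\tau_\omega\preceq\omega$ with $\theta\in\tau_\omega^\circ$ again belongs to $\Omega_X$ and contains $\theta$, so $C(\theta)=\bigcap_{\omega\in S(\theta)}\tau_\omega$, an intersection of cones each having $\theta$ in its relative interior; hence $\theta\in C(\theta)^\circ$. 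From here the relative interiors $C(\theta)^\circ$ partition $\mathrm{Eff}(X)$, the faces of $C(\theta)$ are themselves GIT cones (pick a parameter in the relative interior of the face), and $C(\theta_1)\cap C(\theta_2)=C(\theta')$ for $\theta'\in(C(\theta_1)\cap C(\theta_2))^\circ$ is a common face of both, all by routine arguments with supporting functionals, as in \cite[Section~3.1.2]{ADHL}.

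The main obstacle is exactly the face-closedness of $\Omega_X$ together with the observation $\theta\in C(\theta)^\circ$: once these are available, the purely set-theoretic intersection $C(\theta_1)\cap C(\theta_2)$ is automatically a \emph{face} of each $C(\theta_i)$, which is the one nontrivial point in verifying the quasi-fan axioms; everything else is bookkeeping with the dictionary and the definition of the orbit cone.
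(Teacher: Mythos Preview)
The paper does not prove this statement; it is quoted as a background result with the citation \cite[Theorem~3.1.2.8]{ADHL}, so there is no in-paper proof to compare against. Your sketch is correct and is essentially the argument given in \cite[Section~3.1.2]{ADHL}: the dictionary $x\in X^{\theta\text{-ss}}\iff\theta\in\omega_x$, the reconstruction of $S(\theta)$ from $C(\theta)$ via $\theta\in C(\theta)$, face-closedness of $\Omega_X$ via one-parameter limits in an ambient affine space, and the consequence $\theta\in C(\theta)^\circ$ are exactly the ingredients used there. One small point worth making explicit in your write-up is that the orbit cone of $x\in X$ computed in $R$ agrees with the orbit cone computed in the ambient $\Af^N$ (since restriction $\CC[\Af^N]_\theta\to R_\theta$ is surjective and preserves non-vanishing at $x$), which is what justifies reading $\omega_{x_0}$ off from the surviving coordinates.
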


We are interested in constructing quotients that are not projective over $X\git T$. Since the natural morphism $X\git_\theta T\rar X\git T$ is always projective we require a construction that produces a larger class of quotients.
\begin{definition}\label{def:bunch}
    A \emph{bunch of orbit cones} is a non-empty collection of orbit cones $\Phi\subseteq \Omega_X$, such that:
    \begin{enumerate}
        \item Given $\omega_1,\omega_2 \in \Phi$, then $\omega_1^\circ \cap \omega_2^\circ \neq \emptyset$.
        \item Given $\omega_1\in \Phi$, and $\omega_2\in \Omega_X$ such that $\omega_1^\circ\subseteq \omega_2^\circ$, then $\omega_2\in \Phi$.
    \end{enumerate}
A \emph{maximal bunch of orbit cones} is a bunch of orbit cones that is not contained in any strictly larger bunch of orbit cones.
\end{definition}
\begin{definition}
    Given a bunch of orbit cones $\Phi\subseteq \Omega_X$, we define an open subscheme
\[ U(\Phi) = \{x\in X \ \vert \ \omega_x\in \Phi\}.\]    
\end{definition}
\begin{definition}
    A variety $Y$ has the \emph{$A_2$-property} if for all $y_0,y_1\in Y$ there exists an affine open subset of $Y$ containing both $y_0$ and $y_1$.
\end{definition}
\begin{proposition}[\cite{ADHL}, Proposition 3.1.3.8]\label{prop:3138}
    Let $\Phi\subseteq \Omega_X$ be a bunch of orbit cones. Then $U(\Phi)$ is a $T$-invariant open subscheme admitting a good quotient $U(\Phi)\rar U(\Phi)\git T$. The quotient $U(\Phi)\git T$ has the $A_2$-property.
\end{proposition}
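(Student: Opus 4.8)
The plan is to follow the template of Arzhantsev--Derenthal--Hausen--Laface: present $U(\Phi)$ as a union of $T$-invariant affine open subschemes, each already carrying its own affine GIT quotient; glue these quotients to a good quotient of $U(\Phi)$; and finally read off the $A_2$-property from a strengthened form of the covering statement. The genuine content is combinatorial and lives in the covering step, where both axioms of a bunch get used. I expect that step — showing the affine charts can be chosen inside $U(\Phi)$ — to be the main obstacle; the rest is formal.

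\emph{Invariance and charts.} Each homogeneous component $R_\theta$ is a $T$-submodule on which $T$ acts through the character $\mp\theta$, so for homogeneous $f$ the locus $\{f\neq 0\}$ is $T$-invariant and $\omega_{t\cdot x}=\omega_x$ for all $t\in T$; hence $U(\Phi)=\{x:\omega_x\in\Phi\}$ is $T$-invariant. Now fix $x\in U(\Phi)$ and set $\omega:=\omega_x\in\Phi$. Since $R$ is finitely generated, the monoid of degrees of homogeneous functions not vanishing at $x$ is finitely generated and generates $\omega$, so a suitable product of generators yields a homogeneous $f$ with $f(x)\neq 0$ and $\deg f$ in the relative interior $\omega^\circ$. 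The key claim is that $f$ can be chosen so that, moreover, $X_f=\{f\neq0\}\subseteq U(\Phi)$: for $y\in X_f$ one has $\deg f\in\omega_y$, hence $\deg f$ lies in the relative interior of a well-defined face $\tau$ of $\omega_y$, and using the stratification $X=\bigsqcup_{\omega'\in\Omega_X}X_{=\omega'}$ with $X_{=\omega'}:=\{y:\omega_y=\omega'\}$ one identifies $\tau$ as an orbit cone with $\omega^\circ\subseteq\omega_y^\circ$, whereupon bunch axiom (2) forces $\omega_y\in\Phi$. Letting $x$ range over $U(\Phi)$, these $X_f$ cover it, so $U(\Phi)$ is open.

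\emph{Gluing and the $A_2$-property.} On a chart $X_f=\Spec(R_f)$ one has the affine GIT quotient $X_f\git T=\Spec\big((R_f)_0\big)$. Since a good quotient is unique and restricts to a good quotient over every invariant open subscheme, the quotients of $X_f$ and $X_g$ agree over $X_f\cap X_g=X_{fg}$ and identify its image with a common open subscheme of both; hence they glue to a morphism $U(\Phi)\rar U(\Phi)\git T$ which, being a good quotient locally on the base, is a good quotient (the standard gluing lemma). For the $A_2$-property, lift two given points of $U(\Phi)\git T$ to $y_0,y_1\in U(\Phi)$ with $\omega_i:=\omega_{y_i}\in\Phi$. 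By bunch axiom (1), $\omega_0^\circ\cap\omega_1^\circ\neq\emptyset$; fix a lattice direction $\theta$ there. Applying the chart construction to each $y_i$ gives homogeneous $f_i$ with $f_i(y_i)\neq0$ and $\deg f_i\in\QQ_{>0}\theta$; after raising to suitable powers to equalise degrees and then passing to a generic linear combination (legitimate, since each $y_i$ excludes only a hyperplane in the pencil) one obtains a single homogeneous $h$ with $h(y_0)\neq0\neq h(y_1)$, $\deg h\in\omega_0^\circ\cap\omega_1^\circ$, and $X_h\subseteq U(\Phi)$. Then $X_h\git T$ is an affine open subscheme of $U(\Phi)\git T$ containing both given points, so $U(\Phi)\git T$ has the $A_2$-property.

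The hard part is the key claim in the second paragraph: that through every point of $U(\Phi)$ there passes a $T$-invariant \emph{principal} affine open still contained in $U(\Phi)$. This is precisely where one must control how the orbit cone degenerates under specialisation — that the faces $\omega_y$ can acquire are again orbit cones, and that bunch axiom (2) is exactly the condition making $U(\Phi)$ stable under this process, so that $X\setminus U(\Phi)=\bigsqcup_{\omega'\notin\Phi}X_{=\omega'}$ is closed. I would isolate this as a preliminary lemma about the combinatorics of $\Omega_X$ and the stratification $\{X_{=\omega'}\}$ and prove it first; once it is in hand, the gluing of good quotients and the pencil argument for the $A_2$-property are routine.
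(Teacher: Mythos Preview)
Your overall strategy matches the paper's (which simply cites ADHL and records the construction in the paragraph following the proposition): cover $U(\Phi)$ by $T$-invariant affine opens $U_x$, take affine GIT quotients on each, and glue. The $A_2$-argument via a common homogeneous $h$ is also the standard one. So at the level of architecture you are aligned with ADHL.

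There is, however, a genuine gap in your ``key claim''. You choose a \emph{single} homogeneous $f$ with $\deg f\in\omega_x^\circ$ and then try to deduce, for $y\in X_f$, that $\omega_x^\circ\subseteq\omega_y^\circ$. Knowing only $\deg f\in\omega_y$ is not enough: take $X=\mathbb{A}^3$ with $T=(\mathbb{C}^*)^2$ acting by weights $e_1,e_2,e_1+e_2$ on coordinates $a,b,c$, let $x=(1,1,0)$ so $\omega_x=\cone(e_1,e_2)$, and choose $f=c$ with $\deg f=e_1+e_2\in\omega_x^\circ$. Then $y=(0,0,1)\in X_f$ has $\omega_y=\QQ_{\ge0}(e_1+e_2)$, and $\omega_x^\circ\not\subseteq\omega_y^\circ$. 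The paper's (i.e.\ ADHL's) construction avoids this by setting $U_x:=\Spec R[f_1^{-1},\dots,f_r^{-1}]$ where the $\deg f_i$ \emph{generate} $\omega_x$; then $y\in U_x$ forces all $f_i(y)\neq0$, hence $\omega_x\subseteq\omega_y$ as cones, which is strictly more information than $\deg f\in\omega_y$. Your sentence ``a suitable product of generators'' hints at this, but your subsequent argument only uses $\deg f\in\omega_y$ and so cannot reach the conclusion.

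Even with the correct charts $U_x$, the passage from $\omega_x\subseteq\omega_y$ to $\omega_x^\circ\subseteq\omega_y^\circ$ (which is what bunch axiom~(2) requires) is not automatic for arbitrary polyhedral cones; it uses structural facts about orbit cones (faces of orbit cones are again orbit cones, and $U_x$ is intrinsically determined by $\omega_x$) proved in ADHL's Construction~3.1.3.7. You correctly flag this as ``the hard part'' and propose to isolate it as a lemma --- that is the right instinct --- but your sketched justification via the face $\tau$ does not close the gap. If you rewrite the chart step to use the $U_x$ as in the paper's paragraph and invoke (or reprove) ADHL~3.1.3.7 for the containment $U_x\subseteq U(\Phi)$, the rest of your argument goes through.
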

Let $x\in X$, and let $f_1,\ldots,f_r\in R$ be a collection of homogeneous functions such that $f_i(x)\neq 0$ and $\omega_x$ is generated by $\deg(f_i)$. We define a $T$-invariant open subscheme $U_x:=\Spec(R[f_1^{-1},\cdots,f_r^{-1}])\subseteq U(\Phi)$. This open subscheme does not depend on the choice of $f_i$ \cite[Construction~3.1.3.7]{ADHL}. Since $x\in U_x$, we have that the collection of $U_x$ for $x\in U(\Phi)$ forms an open covering of $U(\Phi)$. The affine quotients $U_x\git T$ glue together appropriately to form a good quotient.
\begin{remark}\label{rmk:projcones}
    Given $\theta\in \Theta$, define $\Phi_\theta:=\{ \omega\in\Omega_X \ \vert \ \theta\in \omega^\circ\}$. This is a maximal bunch of orbit cones, and $U(\Phi_\theta)=X^{\theta\text{-ss}}$. Hence the set of quotients defined via bunches of orbit cones includes the set of projective GIT quotients.
\end{remark}
\begin{definition}
    Let $U\subseteq X$ be a $T$-invariant open subset admitting a good quotient $\pi:U\rar U\git T$. Given $V\subseteq U$, we say $V$ is \emph{saturated} in $U$ if $V=\pi^{-1}(\pi(V))$.
\end{definition}

    Let $\mathscr{A}$ denote the collection of $T$-invariant open subschemes $U\subseteq X$ such that $U$ admits a good quotient $U\rar U\git T$, where $U\git T$ has the $A_2$-property, and $U$ is maximal with respect to inclusion of $T$-invariant saturated open subsets admitting good quotients.
\begin{theorem}[\cite{ADHL}, Theorem 3.1.4.4]
    There is a bijection 
    \begin{align*} \{\text{Maximal bunches of orbit cones }\Phi\subseteq\Omega_X\} &\longrightarrow \mathscr{A}, \\
        \Phi &\longmapsto U(\Phi).
    \end{align*}
\end{theorem}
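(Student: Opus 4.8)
The plan is to produce an explicit inverse to $\Phi\mapsto U(\Phi)$, namely
$U\mapsto\Phi(U):=\{\omega_x\mid x\in U\}\subseteq\Omega_X$, to check that each map lands in the stated target, and to show that the two composites are the identity. That $U(\Phi)$ is a $T$-invariant open subset admitting a good quotient whose target has the $A_2$-property is already Proposition~\ref{prop:3138}, so the only additional point for well-definedness of $\Phi\mapsto U(\Phi)$ is that $U(\Phi)$ is maximal with respect to inclusion of $T$-invariant saturated open subsets admitting good quotients — and this is exactly where defining property~(2) of a bunch is used. For the reverse map one must check that $\Phi(U)$ satisfies property~(1) (from the $A_2$-property of $U\git T$), property~(2) (from saturation), and maximality among bunches when $U\in\mathscr{A}$ (from maximality of $U$).

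The combinatorial backbone is the family of affine charts $U_x=\Spec(R[f_1^{-1},\dots,f_r^{-1}])$ of \cite[Construction~3.1.3.7]{ADHL}: $U_x$ depends only on $\omega_x$, it is affine so that $U_x\git T=\Spec(R[f_1^{-1},\dots,f_r^{-1}]^T)$ is an honest affine quotient, $x\in U_x$, and $U=\bigcup_{x\in U}U_x$ for every $U\in\mathscr{A}$. From the first and third of these, whenever $\omega_x=\omega_y$ one has $U_x=U_y\ni x$. Two consequences follow at once. First, $U(\Phi(U))=U$ for $U\in\mathscr{A}$: the inclusion $U\subseteq U(\Phi(U))$ is immediate, and conversely if $x\in U(\Phi(U))$ then $\omega_x=\omega_y$ for some $y\in U$, so $x\in U_y\subseteq U$. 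Second, $\Phi(U(\Phi))=\Phi$ because $U(\Phi)=\{x:\omega_x\in\Phi\}$ and every cone of $\Omega_X$ is $\omega_x$ for some $x$. Granting the two target checks, injectivity and surjectivity of $\Phi\mapsto U(\Phi)$ are then formal.

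Everything thus reduces to a dictionary between the geometry of good quotients and the combinatorics of orbit cones, which I would develop as follows. For a good quotient $\pi\colon U\to U\git T$ one has $\pi(x)=\pi(y)$ iff $\overline{Tx}\cap\overline{Ty}\neq\emptyset$ in $U$, and $y\in\overline{Tx}$ forces $\omega_y\subseteq\omega_x$ (a semi-invariant nonvanishing at a point of $\overline{Tx}$ is nonvanishing at $x$). Building on this: (i) two points of $U$ lie in a common $T$-invariant affine open of $U$ supporting a good quotient precisely when $\omega_x^\circ\cap\omega_y^\circ\neq\emptyset$ — one direction constructs the open out of $U_x\cap U_y$, the other exhibits a semi-invariant separating the two orbit closures when the relative interiors are disjoint — so the $A_2$-property of $U\git T$ translates into property~(1) for $\Phi(U)$; (ii) if $x\in U$ and $\omega_x^\circ\subseteq\omega_y^\circ$, then $x$ lies in the closure of $Ty$, whence $\pi(x)=\pi(y)$ and saturation of $U$ forces $y\in U$, giving property~(2) for $\Phi(U)$; and (iii) maximality of $U$ in the saturation-poset matches maximality of $\Phi(U)$ among bunches, since enlarging a bunch by a new orbit cone $\omega_y$ corresponds, via the chart $U_y$, to enlarging $U$ by a saturated piece. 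Run in reverse, (i) guarantees the charts $U_x$, $x\in U(\Phi)$, glue with compatible good quotients, and (ii) guarantees that $U(\Phi)$ is saturation-maximal.

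The main obstacle is exactly this dictionary, and within it the sharpest point is criterion~(i): characterising, for a torus acting on an affine variety, precisely when two points possess a common $T$-invariant affine neighbourhood supporting a good quotient in terms of the relative interiors of their orbit cones — together with the companion saturation criterion in (ii). Once these are in place, the rest is the bookkeeping indicated above. As the statement is \cite[Theorem~3.1.4.4]{ADHL}, one may of course simply invoke it; the sketch above is the shape of the argument given there.
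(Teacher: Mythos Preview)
The paper does not give its own proof of this statement: it is quoted verbatim as \cite[Theorem~3.1.4.4]{ADHL} and used as a black box. Your sketch is a faithful outline of the argument in \cite{ADHL}, and you correctly identify the key technical point (the orbit-cone criterion for two points to admit a common $T$-invariant affine neighbourhood with good quotient), so there is nothing to compare against here beyond noting that the paper simply invokes the reference, which you also acknowledge is a valid option.
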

\begin{remark}
    For general torus actions on affine varieties, there may exist open subschemes admitting good quotients such that the quotient spaces do not have the $A_2$-property, see \cite[Exercise~3.8]{ADHL} for an example.
\end{remark}
\subsection{Cox rings and Mori dream spaces.}
In this paper, we consider complete varieties and resolutions of singular affine varieties, both of which fit into the following class.
\begin{definition}
    A variety $X$ is called \emph{proper (projective) over affine} if $\Gamma(X,\oh_X)$ is a finitely generated $\CC$-algebra, and the natural affinisation morphism 
    $X\rar \Spec(\Gamma(X,\oh_X))$ is proper (projective).
\end{definition}

\begin{definition}\label{def:coxring}\label{def:MDS}
    Let $X$ be a smooth, proper over affine variety, with finitely generated and free Picard group of rank $n$. Let $L_1,\ldots,L_n$ be a collection of line bundles chosen such that their isomorphism classes form a basis of $\Pic(X)$. Given $\lambda=(\lambda_1,\ldots,\lambda_n)\in \ZZ^n$, let $L_{\lambda}:= L_1^{\otimes \lambda_1}\otimes\cdots\otimes L_n^{\otimes \lambda_n}$.
    The \emph{Cox ring} of $X$ is the $\Pic(X)$-graded $\CC$-algebra given by
\[ \mathrm{Cox}(X) = \bigoplus_{\lambda\in \ZZ^n}\Gamma(X, L_{\lambda}).\]    
    In the case that $\Cox(X)$ is a finitely-generated $\CC$-algebra, then $X$ is a \emph{Mori dream space}. When $X$ is a Mori dream space the \emph{total coordinate space} of $X$ is $\overline{X}:=\Spec(\Cox(X))$.
\end{definition}

Suppose $X$ is a Mori dream space. Then $X$ carries an ample line bundle $L$, satisfying
\[ X\cong \Proj\Big(\bigoplus_{k\geq 0}\Gamma(X,L^{\otimes k})\Big).\]
Let $T$ be the algebraic torus with character group $T^\vee=\Pic(X)$. Since $\Cox(X)$ is $\Pic(X)$-graded, the total coordinate space $\overline{X}:=\Spec(\Cox(X))$ is equipped with a $T$-action. Letting $\theta=[L]\in T^\vee$, we have that $\overline{X}\git_\theta T \cong X$. Hence a Mori dream space is recovered from its total coordinate space as a GIT quotient.

\begin{definition}
    Let $X$ and $L_1,\ldots,L_n$ be as in Definition \ref{def:coxring}. The \emph{Cox sheaf} of $X$ is the $\Pic(X)$-graded sheaf of $\oh_X$-algebras given by
    \[\mathscr{C}ox(X) \vcentcolon= \bigoplus_{\lambda\in\ZZ^n}L_\lambda.\]
    The relative spectrum $\widehat{X}=\Spec_X(\mathscr{C}ox(X))$ carries a $T$ action induced by the $\Pic(X)$-grading on $\mathscr{C}ox(X)$. The \emph{total coordinate space} of $\overline{X}\vcentcolon=\Spec(\Cox(X))$ is the affinisation of $\widehat{X}$.
\end{definition}
\begin{lemma}[\cite{ADHL}, Construction~1.6.3.1]\label{lem:coxsheaf}
    Let $X$ be as in Definition \ref{def:coxring}. There is a $T$-equivariant open immersion $\widehat{X}\rar~\overline{X}$, with complement of codimension $\geq 2$. If $X$ is projective over affine and $L$ is an ample line bundle with $\theta=[L]$, then $\widehat{X}=\overline{X}^{\theta\text{-ss}}$.
\end{lemma}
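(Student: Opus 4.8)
The plan is to construct the open immersion chart by chart, using an ample line bundle to cut out affine charts, and then to bound the complement via the graded factoriality of the Cox ring. First I would record the basic features of $\widehat X=\Spec_X\mathscr{C}ox(X)$. Writing $q\colon\widehat X\rar X$ for the structure morphism, the sheaf $\mathscr{C}ox(X)=\bigoplus_\lambda L_\lambda$ is Zariski-locally (on an open trivialising all the $L_i$) isomorphic to $\oh_X[t_1^{\pm1},\dots,t_n^{\pm1}]$, so $q$ is an affine $T$-torsor and $\Gamma(\widehat X,\oh_{\widehat X})=\Gamma(X,\mathscr{C}ox(X))=\Cox(X)$; in particular $\overline X=\Spec\Cox(X)$ is the affinisation of $\widehat X$, and there is a canonical $T$-equivariant morphism $p\colon\widehat X\rar\overline X$. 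Since $X$ is a Mori dream space it carries an ample line bundle $L$ (as noted after Definition~\ref{def:MDS}), and I would use $L$ to cover $X$ by the affine opens $X_f$ as $f$ runs over sections of positive tensor powers of $L$ — each such $f$ a homogeneous element of $\Cox(X)$. For any homogeneous $f\in\Cox(X)$ with $X_f$ affine, restricting the relative spectrum and using the twisted localisation identity $\Gamma(X_f,\mathscr{C}ox(X))\cong\Cox(X)[f^{-1}]$ — valid since $X$ is Noetherian and separated — identifies $q^{-1}(X_f)$ with the principal affine open $\{f\neq0\}\subseteq\overline X$, compatibly with $p$. These charts cover $\widehat X$ and glue, so $p$ is a $T$-equivariant open immersion onto $\bigcup_f\{f\neq0\}$.

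Next I would bound $Z:=\overline X\setminus\widehat X$. It is $T$-invariant, so each of its irreducible components is $T$-invariant and hence cut out by a homogeneous prime. Suppose some component $V(\mathfrak p)$ had codimension one. The Cox ring $\Cox(X)$ is a normal domain in which every homogeneous element factors uniquely into homogeneous primes (\cite{ADHL}), so the height-one homogeneous prime $\mathfrak p$ is generated by a single homogeneous prime element $g$; but then $g$ vanishes nowhere on $\widehat X\subseteq\overline X\setminus V(g)$, so $1/g$ is a regular function on $\widehat X$, i.e.\ $1/g\in\Gamma(\widehat X,\oh_{\widehat X})=\Cox(X)$, contradicting that $g$ is not a unit. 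Hence $\codim_{\overline X}Z\geq2$.

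For the final assertion, suppose $X$ is projective over affine with $L$ ample and $\theta=[L]$. By definition $\overline X^{\theta\text{-ss}}=\bigcup_{k\geq1}\bigcup_{s\in\Cox(X)_{k\theta}}\{s\neq0\}$, and $\Cox(X)_{k\theta}=\Gamma(X,L^{\otimes k})$. For $k\geq1$ and $s\in\Gamma(X,L^{\otimes k})$ the open $X_s$ is affine by ampleness of $L$, hence $\{s\neq0\}\subseteq\widehat X$ by the first paragraph; this gives $\overline X^{\theta\text{-ss}}\subseteq\widehat X$. Conversely ampleness gives $X=\bigcup_{k\geq1,\,s}X_s$, so $\widehat X=\bigcup_{k\geq1,\,s}q^{-1}(X_s)\subseteq\bigcup_{k\geq1,\,s}\{s\neq0\}=\overline X^{\theta\text{-ss}}$. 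Therefore $\widehat X=\overline X^{\theta\text{-ss}}$.

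The step I expect to be the real content is the codimension bound: it is the only place where a non-formal input enters, namely that $\Cox(X)$ is factorially graded — equivalently, that $\overline X$ has trivial divisor class group — which is a structural fact about Cox rings rather than something one reads off the construction. Everything else amounts to unwinding the relative-spectrum construction together with the defining property of ampleness over an affine base.
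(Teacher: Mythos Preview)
The paper does not prove this lemma; it is quoted from \cite{ADHL}. Your argument follows the spirit of that construction, and both the codimension bound via graded factoriality of $\Cox(X)$ and the identification $\widehat X=\overline X^{\theta\text{-ss}}$ in the projective case are correct.

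There is, however, a gap in your proof of the first assertion. You invoke an ample line bundle to produce affine charts $X_f$ with $f$ homogeneous in $\Cox(X)$, appealing to the paper's remark that a Mori dream space carries an ample bundle. But the first sentence of the lemma assumes only that $X$ is as in Definition~\ref{def:coxring}---smooth and proper over affine, not projective---and the paper applies the lemma in exactly that generality: in Section~5, $\mathfrak M$ is an arbitrary crepant resolution of $\mathfrak M_0$, possibly non-projective over $\mathfrak M_0$ (constructing such resolutions is the whole point of the paper). So your argument, as written, does not cover the case the paper actually needs. The fix is to use smoothness rather than ampleness: for any affine open $U\subseteq X$ the complement $X\setminus U$ is pure of codimension one since $X$ is smooth, hence is the support of an effective divisor $D$; the canonical section $1_D\in\Gamma(X,\oh_X(D))\subseteq\Cox(X)$ is homogeneous with $X_{1_D}=U$. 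Your localisation identity then gives $q^{-1}(U)\cong\{1_D\neq0\}\subseteq\overline X$, and any affine cover of $X$ produces the open immersion without projectivity.
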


Let $D$ be an effective divisor on $X$. It follows from finite generation of the Cox ring that the section ring $R_D:=\bigoplus_{n\geq 0}\Gamma(X,\oh(D))$ is finitely generated. 
\begin{definition}
    Given effective divisors $D_1$ and $D_2$ on $X$, there are induced rational maps $\phi_1:X \dashrightarrow \Proj(R_{D_1})$ and $\phi_2:X\dashrightarrow \Proj(R_{D_2})$. We say $D_1$ and $D_2$ are \emph{Mori equivalent} if there exists some $m>0$ and an isomorphism $\psi:\Proj(R_{mD_1})\rar \Proj(R_{mD_2})$, such that $\psi\circ\phi_1=\phi_2$. A \emph{Mori cone} in $\Pic(X)_\QQ$ is the closure of a Mori equivalence class, a \emph{Mori chamber} is the interior of a top-dimensional Mori cone. Since there are natural isomorphisms $\Proj(R_D)\cong \Proj(R_{mD})$, for $m>0$, we see that Mori cones are indeed cones.
\end{definition}
\begin{theorem}[\cite{HuKeel}, Theorem 2.3]\label{thm:hukeel}
    Let $X$ be a Mori dream space. The GIT cones associated to the action of $T$ on $\overline{X}$ are exactly the Mori cones of $X$. 
\end{theorem}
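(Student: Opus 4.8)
The plan is to show that two effective divisor classes of $X$ are Mori equivalent if and only if they determine the same GIT cone for the $T$-action on $\overline X$; the theorem then follows formally, because each Mori cone is by definition the closure of a Mori equivalence class, each GIT cone $C(\theta)$ is the closure of $\{\theta':C(\theta')=C(\theta)\}$ by the quasi-fan structure of \cite{ADHL}, Theorem 3.1.2.8, and the support $\bigcup_x\omega_x$ of the GIT quasi-fan equals the effective cone --- indeed $\theta$ lies in some orbit cone exactly when $\Cox(X)_{k\theta}=\Gamma(X,L_{k\theta})\neq 0$ for some $k>0$, i.e.\ exactly when a multiple of a representative divisor is effective. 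Throughout I will use two identifications: for an effective divisor $D$ with $\theta:=[D]$, there is a graded-ring isomorphism $R_D=\bigoplus_{k\ge 0}\Gamma(X,\oh(kD))\cong\bigoplus_{k\ge 0}\Cox(X)_{k\theta}$, so $\Proj(R_D)=\overline X\git_\theta T$; and, fixing an ample $\theta_0$ so that $X=\overline X\git_{\theta_0}T$, the rational map $\phi_D\colon X\dashrightarrow\Proj(R_D)$ coincides with the variation-of-GIT map carried by the common open subset $\overline X^{\theta_0\text{-ss}}\cap\overline X^{\theta\text{-ss}}\subseteq\overline X$, which admits good-quotient morphisms onto both $X$ and $\overline X\git_\theta T$.

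The easy implication: if $C(\theta_1)=C(\theta_2)$, then \cite{ADHL}, Theorem 3.1.2.8 gives $\overline X^{\theta_1\text{-ss}}=\overline X^{\theta_2\text{-ss}}$, so $\overline X\git_{\theta_1}T$ and $\overline X\git_{\theta_2}T$ are literally the same good quotient of the same open subset and, under this equality, $\phi_{D_1}=\phi_{D_2}$; combined with the canonical isomorphisms $\Proj(R_{mD})\cong\Proj(R_D)$, the identity map witnesses that $D_1$ and $D_2$ are Mori equivalent.

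For the converse I would first trade GIT cones for semistable loci. Since $S_x:=\{\theta\in T^\vee: f(x)\neq 0\text{ for some }f\in R_\theta\}$ is a submonoid of $T^\vee$ (a product of homogeneous functions not vanishing at $x$ does not vanish at $x$, as $\overline X$ is integral), a positive multiple of $\theta$ lies in $S_x$ precisely when $\theta\in\omega_x$, so $\overline X^{\theta\text{-ss}}=\{x:\theta\in\omega_x\}$ and hence $C(\theta)=\bigcap_{\omega_x\ni\theta}\omega_x=\{\theta':\overline X^{\theta\text{-ss}}\subseteq\overline X^{\theta'\text{-ss}}\}$; it therefore suffices to show that $D_1\sim_{\mathrm{Mori}}D_2$ forces $\overline X^{\theta_1\text{-ss}}=\overline X^{\theta_2\text{-ss}}$. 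The crucial point is that the semistable locus can be read off from the rational contraction $\phi_D$. On the big open $\widehat X=\overline X^{\theta_0\text{-ss}}$ of Lemma \ref{lem:coxsheaf}, with quotient $\pi\colon\widehat X\to X$, one has $\overline X^{\theta\text{-ss}}\cap\widehat X=\pi^{-1}\big(X\smallsetminus\mathbf B(D)\big)$, where $\mathbf B(D)=\bigcap_{k>0}\mathrm{Bs}\,|kD|$ is the stable base locus: a section $f\in\Gamma(X,\oh(kD))=\Cox(X)_{k\theta}$, viewed as a function on $\widehat X$, is nonzero on the fibre over $x$ exactly when $x$ lies off the support of the effective divisor of zeros of $f$. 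Assuming $\theta$ lies in the interior of the effective cone, $\operatorname{dom}(\phi_D)=X\smallsetminus\mathbf B(D)$, so a Mori equivalence, which makes $\phi_{D_1}$ and $\phi_{D_2}$ --- hence $\phi_{mD_1}$ and $\phi_{mD_2}$ --- have the same domain, yields $\mathbf B(D_1)=\mathbf B(D_2)$ and thus $\overline X^{\theta_1\text{-ss}}\cap\widehat X=\overline X^{\theta_2\text{-ss}}\cap\widehat X$. To upgrade to equality on all of $\overline X$ I would take $\theta$ generic: then $X_\theta:=\overline X\git_\theta T$ is a $\QQ$-factorial Mori dream space with $\Cox(X_\theta)=\Cox(X)$, on which $\theta$ is ample, so the Cox-sheaf description (Lemma \ref{lem:coxsheaf}, applied to $X_\theta$) identifies $\overline X^{\theta\text{-ss}}$ with $\widehat{X_\theta}=\Spec_{X_\theta}(\mathscr{C}ox(X_\theta))$; a Mori equivalence induces an isomorphism $X_{\theta_1}\xrightarrow{\sim}X_{\theta_2}$ over the common affinization $\overline X\git T$ respecting the identification of Picard groups, hence a $T$-equivariant isomorphism of Cox sheaves, so $\widehat{X_{\theta_1}}=\widehat{X_{\theta_2}}$ inside $\overline X$. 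The non-generic classes then follow by taking closures, since every GIT cone is an intersection of closures of GIT chambers.

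The main obstacle is precisely the reconstruction just sketched: recovering the semistable locus $\overline X^{\theta\text{-ss}}$ --- a locus in the total coordinate \emph{space} $\overline X$ --- from the rational contraction $\phi_D$ on $X$, which forces one to control the codimension $\ge 2$ gap between $\widehat X$ and $\overline X$, to know that for generic $\theta$ the birational model $X_\theta$ is again a (possibly merely $\QQ$-factorial) Mori dream space sharing the total coordinate space of $X$, and to verify that an abstract Mori equivalence genuinely lifts to a $T$-equivariant isomorphism compatible with the full $\Pic(X)$-grading, not just with the ray spanned by $\theta$. The remaining ingredients --- the monoid computation identifying $\overline X^{\theta\text{-ss}}$ with $\{x:\theta\in\omega_x\}$, the description of $\operatorname{dom}(\phi_D)$, and the translation between section rings and graded pieces of $\Cox(X)$ --- are routine.
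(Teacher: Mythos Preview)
The paper does not prove this theorem; it is cited as \cite{HuKeel}, Theorem 2.3, and stated without argument. The single sentence that follows the statement --- that for $\theta=[\oh_X(D)]$ one has $(X\dashrightarrow\overline{X}\git_\theta T)=(X\dashrightarrow\Proj(R_D))$ --- is a remark recording the identification of the two rational maps, not a proof. There is therefore nothing in the paper against which to compare your proposal.

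For what it is worth, your sketch is along reasonable lines and you have correctly isolated the genuine difficulty: a Mori equivalence is, by definition, only an isomorphism of the \emph{targets} $\Proj(R_{mD_i})$ compatible with the rational maps from $X$, and one must promote this to an equality of semistable loci in $\overline X$. Your proposed route --- pass to generic $\theta$, identify $\overline X^{\theta\text{-ss}}$ with $\widehat{X_\theta}$ via Lemma~\ref{lem:coxsheaf}, and argue that the Mori equivalence induces a $T$-equivariant isomorphism of Cox sheaves --- is the delicate step, and you rightly flag that checking the induced isomorphism respects the full $\Pic(X)$-grading (not just the ray through $\theta$) is where the work lies. If you want to pursue this, the original Hu--Keel paper is the place to look; the present paper simply imports the result.
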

Suppose $\theta = [\oh_X(D)]\in \Pic(X)$, from the definition of the Cox ring we see clearly that $(X\dashrightarrow \overline{X}\git_\theta T) = (X\dashrightarrow \Proj(R_D))$.
\subsection{Polygon spaces.}
In this subsection we focus on a particular family of affine varieties with torus action, constructed as spaces of quiver representations. Fix $n\geq 5$.
\begin{definition}
A quiver $Q$ consists of the data of a vertex set $Q_0$, a set of arrows $Q_1$ and a pair of orienting source and target maps $s,t:Q_1\rar Q_0$. We define a representation of a quiver $Q$ as the data of a vector space $V_i$ assigned to each vertex $i\in Q_0$, together with a linear map $f_a\in \Hom(V_{s(a)},V_{t(a)})$ for each arrow $a\in Q_1$. A dimension vector $v:Q_0\rar \ZZ_{\geq 0}$ assigns to each vertex a non-negative integer. For a fixed dimension vector $v$ we write
\[ \Rep(Q,v) := \{\text{Representations }((V_i)_{i\in Q_0},(f_a)_{a\in Q_1})\ \vert \ \dim(V_i)=v(i)\}.\]   
\end{definition} 
Fixing vector spaces $V_i$ of dimension $v(i)$, for all $i\in Q_0$, we obtain
\[ \Rep(Q,v) \cong \bigoplus_{a\in Q_1}\Hom(V_{s(a)},V_{t(a)}).\]
From now on, we always assume such a fixed collection of vector spaces.

The group $\GL(Q,v):=\prod_{i\in Q_0}\GL(V_i)$ acts on $\Rep(Q,v)$ via simultaneous change of basis:
\[ (g_i)_{i\in Q_0}\cdot (f_a)_{a\in Q_1} = (g_{t(a)}\cdot f_a \cdot g_{s(a)}^{-1})_{a\in Q_1}.\]
The action of $\GL(Q,v)$ on $\Rep(Q,v)$ has global stabiliser $\Delta = \{(\lambda I_i)_{i\in Q_0} \ \vert \ \lambda\in \CC^*\}$, where $I_i$ denotes the identity endomorphism on $V_i$. When working with a fixed quiver $Q$ and dimension vector $v$ we write $G:=\GL(Q,v)/\Delta$.
\begin{definition}\label{def:polyquiver}
The star-shaped quiver $Q$ is the quiver with vertex set $Q_0=\{0,1,\ldots,n\}$ and arrow set $Q_1=\{a_1,\ldots,a_n\}$, with $s(a_i)=0$ and $t(a_i)=i$, for $i=1,\ldots,n$. From now on we fix the dimension vector $v$ with $v(0)=2$, and $v(i)=1$, for $i=1,\ldots,n$.
\begin{figure}[ht]
    \centering
    \begin{tikzpicture}[scale=0.75]
            \foreach \x in {1,2,3,4,5} {
                \begin{scope}[rotate={(\x-1)*72}]                    
                    \ifnum\x=1 %
                    \draw[thick,->] (0.3,0) -- (2,0) node[midway,above]{$a_\x$};
                    \else
                    \draw[thick,->] (0.3,0) -- (2,0);
                    \fi
                    \node at (0,0) {0};
                    \fill[black] (2.15,0) circle (0.07);
                    \node at (2.5,0) {\x};
                \end{scope}
            }
    \end{tikzpicture}   
    \caption{The star-shaped quiver $Q(5)$.} 
\end{figure}
\end{definition}

There is a short exact sequence
\[ 1 \longrightarrow \SL(2) \longrightarrow G \longrightarrow    T\cong (\CC^*)^n \longrightarrow 1,\]
with the homomorphism $G\rar T$ given by $[(g_0,g_1,\ldots,g_n)] \mapsto (g_1,\ldots,g_n)$. This induces isomorphisms $G^\vee\cong T^\vee\cong \ZZ^n$, which we fix throughout this paper. Let $\Theta := G^\vee\otimes_\ZZ \QQ\cong T^\vee\otimes_\ZZ\QQ\cong \QQ^n$. We write $e_1,\ldots,e_n$ for the standard basis of both $\ZZ^n$ and $\QQ^n$.
\begin{definition}\label{def:polygon}
    Let $\calY:=\Rep(Q,v)$. A polygon space is a GIT quotient of the form \[\calP_\theta := \calY\git_\theta G,\] for some $\theta\in \Theta$.
\end{definition}
Let $V$ denote the natural representation of $\GL(2)$, and let $V^*$ denote its dual representation. Given $W$ a vector space, and $\lambda\in \Theta$, denote by $W(\lambda)$ the representation of $T$ acting with uniform weight $-\lambda$. As a representation of $\GL(Q,v)$, we have
\[ \calY \cong \bigoplus_{i=1}^n V^*(e_i).\]
Fixing bases for $V_i$, for all $i\in Q_0$, we obtain induced bases $x_i,y_i$ of $\Hom(V_0,V_i)^*$. We obtain a presentation 
\[\CC[\calY]\cong \CC[x_1,y_1,\ldots,x_n,y_n],\] with $\deg(x_i)=\deg(y_i)=e_i$.
\begin{definition}\label{def:gr}
    Let $\CC[\mathrm{Gr}(2,n)]$ denote the homogeneous coordinate ring of the Grassmannian of planes in affine $n$-space. This is generated by Plücker coordinates $\varphi_{i,j}$, for $1\leq i<j\leq n$. We endow this with a $\ZZ^n$-grading such that $\deg(\varphi_{i,j})=e_i+e_j$.
\end{definition}
\begin{lemma}\label{lem:griso}
    Let $Y:=\calY\git \SL(2)$. There is an isomorphism of $\ZZ^n$-graded $\CC$-algebras $\CC[Y]\cong \CC[\mathrm{Gr}(2,n)]$. Moreover given $\theta\in \Theta$, we have that $\calY\git_\theta G\cong Y\git_\theta T$.
\end{lemma}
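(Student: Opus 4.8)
The plan is to prove the two assertions separately: the ring isomorphism $\CC[Y]\cong\CC[\mathrm{Gr}(2,n)]$ is a classical invariant-theory statement, while the identification $\calY\git_\theta G\cong Y\git_\theta T$ is a formal ``quotients in stages'' argument.

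For the first isomorphism, I would start from the presentation $\CC[\calY]\cong\CC[x_1,y_1,\ldots,x_n,y_n]$ arising from $\calY\cong\bigoplus_{i=1}^n V^*(e_i)$, under which the subgroup $\SL(2)\subseteq G$ acts diagonally via the standard representation on each copy of $V^*$; note that the $\SL(2)$-invariant symplectic form identifies $V^*\cong V$ as $\SL(2)$-modules, so $\CC[\calY]^{\SL(2)}$ is the ring of invariants of $\SL(2)$ acting on $n$ vectors in $\CC^2$ (equivalently, on $2\times n$ matrices by left multiplication). The First Fundamental Theorem for $\SL(2)$ gives a surjection from the polynomial ring on variables $z_{i,j}$, $1\le i<j\le n$, onto $\CC[\calY]^{\SL(2)}$ sending $z_{i,j}$ to the $2\times 2$ minor $\phi_{i,j}:=x_iy_j-x_jy_i$, and the Second Fundamental Theorem identifies the kernel of this surjection with the ideal generated by the Plücker relations. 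Since $\CC[\mathrm{Gr}(2,n)]$ is by definition this polynomial ring modulo the Plücker ideal, we obtain an isomorphism $\CC[\mathrm{Gr}(2,n)]\xrightarrow{\ \sim\ }\CC[\calY]^{\SL(2)}=\CC[Y]$ sending $\varphi_{i,j}\mapsto\phi_{i,j}$. This isomorphism is $\ZZ^n$-graded because each $\phi_{i,j}$ is $T$-homogeneous of degree $\deg(x_i)+\deg(y_j)=e_i+e_j=\deg(\varphi_{i,j})$. Finally, reductivity of $\SL(2)$ ensures $\CC[Y]$ is finitely generated, so $Y$ is a genuine (and visibly integral) affine variety.

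For the second part, the key observation is that $\SL(2)\trianglelefteq G$ with $G/\SL(2)\cong T$, and that every character $\theta\in G^\vee$ restricts trivially to $\SL(2)$ (as $\SL(2)$ is semisimple), hence is pulled back from $T^\vee$, consistent with the fixed identification $G^\vee\cong T^\vee$. I would then observe that for each $m\ge 0$ a function $f\in\CC[\calY]$ is a $G$-semi-invariant of weight $m\theta$ if and only if it is $\SL(2)$-invariant and a $T$-semi-invariant of weight $m\theta$; indeed, $\SL(2)$-invariance of $f$ forces the function $g\mapsto g\cdot f$ on $G$ to factor through $T$. Thus $\CC[\calY]_{m\theta}=\bigl(\CC[\calY]^{\SL(2)}\bigr)_{m\theta}=\CC[Y]_{m\theta}$ (under the weight identification), so summing over $m\ge 0$ and applying $\Proj$ gives
\[ \calY\git_\theta G=\Proj\left(\bigoplus_{m\ge 0}\CC[\calY]_{m\theta}\right)=\Proj\left(\bigoplus_{m\ge 0}\CC[Y]_{m\theta}\right)=Y\git_\theta T. \]
For $\theta=0$ this specialises to $\calY\git G=\Spec\CC[\calY]^G=\Spec\bigl(\CC[Y]^T\bigr)=Y\git T$, as it should.

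The only substantial external input is the Second Fundamental Theorem (that the Plücker relations generate all relations among the $2\times 2$ minors), which I would simply cite from a standard reference on classical invariant theory; everything else — the grading bookkeeping in the first part and the description of $G$-semi-invariants as $\SL(2)$-invariant $T$-semi-invariants in the second — is routine. Consequently I do not expect a genuine obstacle here; the only point requiring care is keeping the weight identifications $G^\vee\cong T^\vee\cong\ZZ^n$ consistent throughout.
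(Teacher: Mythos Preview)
Your proposal is correct and follows essentially the same approach as the paper: the paper also cites the First Fundamental Theorem for $\SL(2)$ to produce the generators $\varphi_{i,j}=x_iy_j-x_jy_i$, and for the second assertion likewise uses that $\SL(2)$ has no non-trivial characters to conclude $\CC[\calY]_\theta\subseteq\CC[\calY]^{\SL(2)}$, whence $\CC[\calY]_\theta=\CC[Y]_\theta$. The only difference is cosmetic: you spell out the Second Fundamental Theorem and the Proj computation explicitly, whereas the paper treats the identification with $\CC[\mathrm{Gr}(2,n)]$ as immediate from the generators and cites a lemma for the quotients-in-stages step.
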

\begin{proof}
    We have that $\CC[Y]=\CC[\calY]^{\SL(2)}$. By the first fundamental theorem of invariant theory for $\SL(2)$ \cite[11.1.2]{Procesi}, we find that $\CC[Y]$ is generated by the determinants $\varphi_{i,j}:=x_iy_j-x_jy_i$, for $1\leq i<j\leq n$. The second statement follows from the fact that $\SL(2)$ admits no non-trivial characters: we have that $\CC[\calY]_\theta \subseteq \CC[\calY]^{\SL(2)}$, hence $\CC[\calY]_\theta = \CC[Y]_\theta$. This is an application of \cite[Lemma~2.1]{alinew}.
\end{proof}
\begin{proposition}\label{lem:polycox}\label{prop:cox}
    Let $n\geq 5$. There is a $T$-equivariant isomorphism 
    \[Y\cong \Spec(\Cox(\PP^{n-3}_{n-1})).\]
\end{proposition}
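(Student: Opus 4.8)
The plan is to reduce to an explicit presentation of $\Cox(\PP^{n-3}_{n-1})$ and match it, via Lemma~\ref{lem:griso}, with the Plücker presentation of $\CC[Y]\cong\CC[\mathrm{Gr}(2,n)]$. Fix points $p_1,\dots,p_{n-1}$ in general position in $\PP^{n-3}$ and let $X:=\PP^{n-3}_{n-1}$ be their blow-up, so $\Pic(X)$ is freely generated by the hyperplane class $H$ and the exceptional classes $E_1,\dots,E_{n-1}$ (the hypothesis $n\ge5$ ensures $\dim\PP^{n-3}\ge2$, so these data behave as described). Since the $p_i$ are general, each pair $1\le i<j\le n-1$ determines a unique hyperplane through the remaining $n-3$ points, whose strict transform $\widetilde H_{ij}$ has class $H-\sum_{k\neq i,j}E_k$. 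I would then invoke the known presentation of $\Cox(X)$ — the higher-dimensional analogue of the description of the Cox ring of the degree-$5$ del Pezzo surface $\PP^2_4$: namely $\Cox(X)$ is generated by the canonical sections $\varepsilon_i\in\Gamma(X,E_i)$ and $h_{ij}\in\Gamma(X,\widetilde H_{ij})$, and the ideal of relations among them is generated by the quadratic Plücker relations once we set $\varepsilon_i\leftrightarrow\varphi_{i,n}$ and $h_{ij}\leftrightarrow\varphi_{i,j}$ for $1\le i<j\le n-1$. This produces a ring isomorphism $\Cox(X)\cong\CC[\mathrm{Gr}(2,n)]\cong\CC[Y]$, hence $Y\cong\Spec(\Cox(X))$.

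Next I would check that the grading matches, which is precisely the $T$-equivariance. The Plücker coordinates carry the sublattice $\Lambda:=\langle e_i+e_j\mid 1\le i<j\le n\rangle\subseteq\ZZ^n$, free of rank $n$, so it suffices to produce an isomorphism $\Lambda\xrightarrow{\ \sim\ }\Pic(X)$ sending $e_i+e_n$ to $[E_i]$ and $e_i+e_j$ to $H-\sum_{k\neq i,j}E_k$. It is cleanest to exhibit the inverse directly: define $\psi\colon\Pic(X)\to\ZZ^n$ on the basis by $\psi(E_i)=e_i+e_n$ and $\psi(H)=\sum_{k=1}^{n-1}e_k+(n-3)e_n$. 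One checks that $\psi$ lands in $\Lambda$ (the coordinate sum $2n-4$ of $\psi(H)$ is even), that its image contains every $e_i+e_j$ and hence all of $\Lambda$, and — being a surjection between free abelian groups of equal rank $n$ — that $\psi$ is an isomorphism onto $\Lambda$; finally $\psi^{-1}(e_i+e_n)=[E_i]$ and $\psi^{-1}(e_i+e_j)=H-\sum_{k\neq i,j}E_k$ by a one-line computation. Thus $\psi$ identifies the $\ZZ^n$-grading of $\CC[\mathrm{Gr}(2,n)]$ with the $\Pic(X)$-grading of $\Cox(X)$, upgrading the isomorphism of the previous paragraph to a $T$-equivariant one.

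The main obstacle is the input presentation of $\Cox(X)$. The easy direction — that the Plücker relations hold among the $\varepsilon_i$ and $h_{ij}$ — can be obtained by restricting to the open torus orbit of $X$, where each generator becomes an explicit (Laurent) monomial in affine coordinates and the relations are verified by hand, or by a dimension count on the relevant graded pieces of $\Cox(X)$. The hard direction — that $\varepsilon_i,h_{ij}$ generate $\Cox(X)$ and impose no relations beyond the Plücker ones — is the crux; if a citable presentation is not at hand I would instead bypass it via the total-coordinate-space characterisation. One first identifies $X$ with a polygon space $\calP_{\theta_0}=Y\git_{\theta_0}T$ for a suitable corner chamber $\theta_0$ (the classical ``one heavy point'' GIT quotient of $n$ ordered points on $\PP^1$, which is $\PP^{n-3}$ blown up at $n-1$ general points), then notes that $\CC[Y]\cong\CC[\mathrm{Gr}(2,n)]$ is a normal UFD whose only units are constants and that the $T$-action on $Y$ is free outside a closed subset of codimension $\ge2$ (a direct check with Plücker coordinates, using $n\ge5$ so that a generic configuration has more than two distinct points), so that $Y$ satisfies the defining properties of the total coordinate space of $X$ (cf.\ Lemma~\ref{lem:coxsheaf} and the surrounding theory of \cite{ADHL}). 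This route trades the combinatorial obstruction for the concrete tasks of pinning down $\theta_0$ and verifying the codimension bound.
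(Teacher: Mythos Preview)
Your proposal is correct and follows the same overall strategy as the paper—reduce to a known presentation of $\Cox(\PP^{n-3}_{n-1})$ and then match gradings—but the specific input differs. The paper invokes Mukai's theorem, which realises this Cox ring as the $\mathbb{G}_a$-invariant subring of $\CC[x_1,\ldots,x_{n-1},y_1,\ldots,y_{n-1}]$ under the action $x_i\mapsto x_i$, $y_i\mapsto y_i+tx_i$, and then cites an explicit calculation (Ottem) identifying that invariant ring with $\CC[\mathrm{Gr}(2,n)]$; the grading match is a short change of basis in $\ZZ^n$. Your primary route names the geometric generators $\varepsilon_i,h_{ij}$ directly; this is the same content (Mukai's generators $x_i$ and $x_iy_j-x_jy_i$ are exactly the sections cutting out $E_i$ and $\widetilde H_{ij}$), and as you rightly flag, the hard direction still needs a citation—Mukai's theorem is precisely such a citation, so the paper's route is the shorter path. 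Your alternative route, realising $\PP^{n-3}_{n-1}$ as a polygon space $\calP_{\theta_0}$ and then checking that $Y$ meets the characterising properties of a total coordinate space (normal UFD, only constant units, $T$ acting freely in codimension $\ge2$), is a genuinely different and more self-contained argument; it trades the black-box presentation for the concrete identification $\calP_{\theta_0}\cong\PP^{n-3}_{n-1}$ (classical, and recoverable from the paper's Lemmas~\ref{lem:extquot} and~\ref{lem:flipflop} by crossing the $n-1$ walls of type $\#I=2$ adjacent to the $\PP^{n-3}$ chamber) together with the codimension bound.
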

\begin{proof}
    Consider the action of $\mathbb{G}_a$ on $A:=\CC[x_1,\ldots,x_{n-1},y_1,\ldots,y_{n-1}]$, where $t$ acts via
    \[ x_i\mapsto x_i, \ \ y_i \mapsto y_i + tx_i.\]
    By a theorem of Mukai \cite{Mukai}, the Cox ring of $\PP^{n-3}_{n-1}$ can be calculated as the invariant subring $A^{\mathbb{G}_a}$. The invariant subring is graded by $\ZZ^n$ with basis $e_0,\ldots,e_{n-1}$. An explicit calculation is given in \cite{ottem}, showing that $A^{\mathbb{G}_a}$ is generated by $x_i$, and $x_iy_j-x_jy_i$, and that $A^{\mathbb{G}_a}$ is isomorphic to the homogeneous coordinate ring of $\mathrm{Gr}(2,n)$. We have that $\deg(x_i)=e_i$ and $\deg(x_iy_j-x_jy_i)=e_0-\sum_{k\neq i,j}e_k$. Considering the degrees of elements under the new basis $f_0,\ldots,f_{n-1}$, where $e_0 = \sum_{i=1}^{n-1}f_i$, and $e_i = f_0 + f_i$, we find that the $\ZZ^n$-grading on $A^{\mathbb{G}_a}$ agrees with the grading given in Definition \ref{def:gr}. Hence the equivariant isomorphism $Y\cong\Spec(\Cox(\PP^{n-3}_{n-1}))$ is induced by the isomorphism of graded algebras given in Lemma \ref{lem:griso}
\end{proof}
We briefly describe the GIT quasi-fan associated to the action of $G$ on $\Rep(Q,v)$. For details see \cite[Example~3.3]{dolgachevhu}, or \cite[Chapter~11]{Dolgachev}. We then provide some results elucidating the geometry of the wall crossings relating the various polygon spaces.
\begin{definition}\label{def:hyperplanes}
    Define the convex cone \[C_0 := \Big\{(\theta_i)\in \Theta \ \vert \ 0 \leq \theta_i \leq \sum_{j\neq i}\theta_j, \ \text{for all }i\in [n]\Big\}\subseteq \Theta.\] Given $I\subseteq [n]$, define the hyperplane
\[ H_I:=\Big\{(\theta_i)\in \Theta \ \vert \ \sum_{i\in I}\theta_i=\sum_{j\notin I}\theta_j\Big\}.\] 
    Let $\mathcal{A}$ be the collection of hyperplanes in $\Theta$ given by $H_I$, for all $I\subseteq[n]$, together with the coordinate hyperplanes. 
\end{definition}
\begin{lemma}[\cite{Dolgachev}, Corollary 11.3]\label{lem:polygit}
    We have that $\calY^{\theta\text{-ss}}\neq\emptyset$ if and only if $\theta\in C_0$. Each GIT chamber is given by the set of rational points of a connected component of 
    \[\RR_{>0}\cdot(C_0\cap (\Theta\setminus\bigcup_{H\in\mathcal{A}}H))\subseteq \Theta\otimes_\QQ\RR.\]
\end{lemma}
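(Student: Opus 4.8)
The plan is to reduce everything to a combinatorial statement about the torus action on $Y=\calY\git\SL(2)$ via Lemma~\ref{lem:griso}, and then to feed the resulting description of orbit cones into the GIT-fan theorem \cite[Theorem~3.1.2.8]{ADHL}. Since $\SL(2)$ has no nontrivial characters one has $\CC[\calY]_\theta=\CC[Y]_\theta$ for all $\theta$ (exactly as in the proof of Lemma~\ref{lem:griso}), so $\calY^{\theta\text{-ss}}$ is the preimage of $Y^{\theta\text{-ss}}$ under $\calY\to Y$; in particular $\calY^{\theta\text{-ss}}\neq\emptyset\iff Y^{\theta\text{-ss}}\neq\emptyset$, and the orbit cones controlling the GIT fan are the cones $\omega_y$ for $y\in Y$. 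So I would work throughout with $\CC[Y]\cong\CC[\mathrm{Gr}(2,n)]$ and its $\ZZ^n$-grading given by the $\varphi_{i,j}$.

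For the first assertion: as $\CC[Y]$ is a domain generated by the $\varphi_{i,j}$ with $\deg\varphi_{i,j}=e_i+e_j$, a graded piece $\CC[Y]_\alpha$ is nonzero precisely when $\alpha$ lies in the monoid generated by the $e_i+e_j$; hence $Y^{\theta\text{-ss}}\neq\emptyset$, i.e.\ $\CC[Y]_{m\theta}\neq 0$ for some $m>0$, if and only if $\theta$ lies in the rational cone $\sigma:=\QQ_{\geq 0}\langle e_i+e_j:1\leq i<j\leq n\rangle$ (clear denominators for one direction). It then remains to check $\sigma=C_0$. One inclusion is immediate since each $e_i+e_j$ satisfies the inequalities defining the convex cone $C_0$. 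For the reverse I would show that the pointed polyhedral cone $C_0$ (pointed as it sits in the positive orthant) has exactly the rays $\QQ_{\geq 0}(e_i+e_j)$ as extreme rays: running through the cases according to how many coordinates vanish, the only rays along which $n-1$ of the $2n$ defining inequalities $\theta_i\geq 0$, $\theta_i\leq\sum_{j\neq i}\theta_j$ hold with equality (with $n-1$ of them independent) are the $\QQ_{\geq 0}(e_i+e_j)$. Hence $C_0=\mathrm{cone}(e_i+e_j)=\sigma$.

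For the GIT chambers the main step is to classify the orbit cones. For $y\in Y$ the set $\{(i,j):\varphi_{i,j}(y)\neq 0\}$ is the edge set of the complete multipartite graph on the nonzero parallel classes of the rank-$\leq 2$ matroid attached to $y$: writing $[n]=S_0\sqcup S_1\sqcup\dots\sqcup S_r$ with $S_0$ the loops and $S_1,\dots,S_r$ the nonzero classes, one has $\varphi_{i,j}(y)\neq 0$ iff $i$ and $j$ lie in distinct blocks among $S_1,\dots,S_r$; and every such partition is realised by placing the blocks at distinct points of $\PP^1$. An extreme-ray computation as above (or a transportation argument) then identifies
\[
\omega_y=\big\{\theta:\theta_i=0\ (i\in S_0),\ \theta_i\geq 0,\ \textstyle\sum_{i\in S_k}\theta_i\leq\sum_{i\notin S_0\cup S_k}\theta_i\ \ \forall k\big\}.
\]
On the subspace $\{\theta_i=0,\ i\in S_0\}$ this is cut out by coordinate hyperplanes and by the hyperplanes $H_{S_k}$, so \emph{every} orbit cone is an intersection of closed half-spaces whose boundaries lie in $\mathcal A$. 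Consequently, if $\theta,\theta'$ lie in the same connected component of $(\Theta\otimes_\QQ\RR)\setminus\bigcup_{H\in\mathcal A}H$, then $\theta\in\omega\iff\theta'\in\omega$ for every $\omega\in\Omega_Y$, whence $C(\theta)=C(\theta')$ and $\calY^{\theta\text{-ss}}=\calY^{\theta'\text{-ss}}$ by \cite[Theorem~3.1.2.8]{ADHL}. Thus semistability, hence the GIT cone, is constant on each component of $C_0\setminus\bigcup\mathcal A$.

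To finish, fix a component $\Gamma$ of $C_0^\circ\setminus\bigcup\mathcal A$ and $\theta\in\Gamma$. Since $C_0$ is the orbit cone of a generic point, $C(\theta)\subseteq C_0$; since every orbit cone through $\theta$ is an intersection of half-spaces with boundaries in $\mathcal A$ and $\overline\Gamma$ lies on $\theta$'s side of each member of $\mathcal A$, we get $\overline\Gamma\subseteq C(\theta)$. For the reverse inclusion, for each $H_I$ with $2\leq|I|\leq n-2$ bounding $\Gamma$, the partition of $[n]$ with single non-singleton block $I$ (respectively $[n]\setminus I$) has orbit cone $C_0\cap\{\sum_{i\in I}\theta_i\leq\sum_{i\notin I}\theta_i\}$ (respectively the opposite half); taking these for all such $I$ cuts $C(\theta)$ down to $\overline\Gamma$. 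Hence $C(\theta)^\circ=\Gamma$ is top-dimensional, so $\Gamma$ is a GIT chamber. Conversely, the GIT cones subdivide the weight cone $\omega_\calY=C_0$, so any chamber lies in $C_0^\circ$; and if $\theta\in C_0^\circ$ lies on some $H\in\mathcal A$, a suitable orbit cone through $\theta$ (of the type above, or the bipartite cone $C_0\cap H_{\{j\}}$) is contained in a member of $\mathcal A$, forcing $C(\theta)$ into that hyperplane, so $\theta$ is not generic. Thus the GIT chambers are precisely the components $\Gamma$, and these are the rational points of the connected components in the statement. The main obstacle is the orbit-cone classification — recognising the non-vanishing pattern of the $\varphi_{i,j}$ as the complete multipartite graph of a rank-$2$ matroid and matching the edge cones of these graphs with the facets of $C_0$ and the cuts $H_I$; the extreme-ray computation for $C_0$ and the appeal to \cite[Theorem~3.1.2.8]{ADHL} are then routine.
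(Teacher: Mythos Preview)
The paper does not supply its own proof of this lemma; it is quoted from \cite{Dolgachev} without argument. Your proof is correct and, interestingly, anticipates material the paper develops independently later: your classification of orbit cones via the partition $S_0\sqcup S_1\sqcup\cdots\sqcup S_r$ is exactly Lemma~\ref{lem:polycones}, and your half-space description of $\omega_y$ is Lemma~\ref{lem:dualdesc}(1). So rather than outsourcing the statement to Dolgachev, you have folded the content of those two later lemmas into a direct proof here.

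Two small points worth tightening. First, when you assert that $\CC[Y]_\alpha\neq 0$ for every $\alpha$ in the monoid generated by the $e_i+e_j$, you are implicitly using that $\CC[Y]$ is a domain (so a product of the nonzero $\varphi_{i,j}$ is nonzero); this is fine since the affine cone over $\mathrm{Gr}(2,n)$ is irreducible, but it is worth saying. Second, in the final paragraph your parenthetical ``or the bipartite cone $C_0\cap H_{\{j\}}$'' is superfluous: any $\theta\in C_0^\circ$ already avoids the coordinate hyperplanes and the facets $H_{\{j\}}$, so the only members of $\mathcal A$ that can pass through $\theta$ are the $H_I$ with $2\leq|I|\leq n-2$, and for those your cones $\eta_I=C_0\cap\{\sum_{i\in I}\theta_i\leq\sum_{i\notin I}\theta_i\}$ already do the job of forcing $C(\theta)\subseteq H_I$.
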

\begin{corollary}\label{cor:morichamber}
    The number of Mori chambers of $\PP^{n-3}_{n-1}$ is given by the number of chambers of the hyperplane arrangement $\mathcal{A}$ cointained in $C_0$. 
\end{corollary}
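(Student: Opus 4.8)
The plan is to identify the Mori chambers of $\PP^{n-3}_{n-1}$ with the GIT chambers of a torus action, and then feed in the explicit description of those chambers provided by Lemma~\ref{lem:polygit}. First I would collect the standing inputs. By Proposition~\ref{prop:cox} there is a $T$-equivariant isomorphism $Y\cong\Spec(\Cox(\PP^{n-3}_{n-1}))$, and the proof of that proposition presents $\Cox(\PP^{n-3}_{n-1})$ as the homogeneous coordinate ring of $\mathrm{Gr}(2,n)$, which is finitely generated; since $\PP^{n-3}_{n-1}$ is smooth projective with $\Pic$ free of rank $n$, it is a Mori dream space and $Y$ is its total coordinate space. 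Theorem~\ref{thm:hukeel} then tells us that the GIT cones for the $T$-action on $Y$ are exactly the Mori cones of $\PP^{n-3}_{n-1}$; taking interiors of top-dimensional cones, the Mori chambers of $\PP^{n-3}_{n-1}$ are precisely the GIT chambers for $T$ acting on $Y$. So it remains to count the latter.

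Next I would transfer this count to the quiver side. Let $q\colon\calY\to\calY\git\SL(2)=Y$ be the affine quotient map. As in the proof of Lemma~\ref{lem:griso}, every $G$-semi-invariant on $\calY$ of weight in $T^\vee$ is automatically $\SL(2)$-invariant, since $\SL(2)$ has no nontrivial characters, whence $\CC[\calY]_{\theta}=\CC[Y]_{\theta}$ for all $\theta\in T^\vee$. Because such an $f$ descends along $q$ and satisfies $f(x)=f(q(x))$, this yields $\calY^{\theta\text{-ss}}=q^{-1}(Y^{\theta\text{-ss}})$ for every $\theta$. As $q$ is surjective, we get $\calY^{\theta_1\text{-ss}}=\calY^{\theta_2\text{-ss}}$ if and only if $Y^{\theta_1\text{-ss}}=Y^{\theta_2\text{-ss}}$, so by the characterisation of the GIT quasi-fan recalled above (two parameters lie in the same GIT cone iff they have the same semistable locus) the GIT quasi-fan of $(G,\calY)$ coincides with that of $(T,Y)$; in particular the two have the same GIT chambers. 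Hence the Mori chambers of $\PP^{n-3}_{n-1}$ are in bijection with the GIT chambers for $G$ acting on $\calY$.

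Finally I would invoke Lemma~\ref{lem:polygit}: the GIT chambers for $G$ on $\calY$ are the sets of rational points of the connected components of $\RR_{>0}\cdot\big(C_0\cap(\Theta\setminus\bigcup_{H\in\mathcal{A}}H)\big)$. Since $C_0$ is cut out by homogeneous linear inequalities and each $H\in\mathcal{A}$ is a linear hyperplane, this set is already stable under positive scaling, so applying $\RR_{>0}\cdot(-)$ and passing to rational points does not affect the number of components. Moreover every facet-supporting hyperplane of $C_0$ lies in $\mathcal{A}$: the facet $\{\theta_i=0\}$ is a coordinate hyperplane, and $\{\theta_i=\sum_{j\neq i}\theta_j\}$ equals $H_{\{i\}}$. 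Therefore no chamber of $\mathcal{A}$ crosses $\partial C_0$, and the connected components of $C_0\cap(\Theta\setminus\bigcup_{H}H)$ are exactly the chambers of $\mathcal{A}$ contained in $C_0$. Chaining the three identifications gives the asserted equality of numbers.

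I expect the only genuinely delicate point to be the middle step: one must check that the GIT quasi-fans of $(G,\calY)$ and $(T,Y)$ actually agree, not merely that the two problems produce isomorphic quotients chamber by chamber. This reduces to the identity $\calY^{\theta\text{-ss}}=q^{-1}(Y^{\theta\text{-ss}})$, which is immediate from the equality $\CC[\calY]_\theta=\CC[Y]_\theta$ established in Lemma~\ref{lem:griso}. The remaining ingredients — that $\PP^{n-3}_{n-1}$ satisfies the hypotheses of Theorem~\ref{thm:hukeel}, and that the facets of $C_0$ sit on hyperplanes of $\mathcal{A}$ — are routine.
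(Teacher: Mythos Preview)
Your proposal is correct and follows essentially the same approach the paper intends: the corollary is stated without proof precisely because it is meant to be read off from combining Proposition~\ref{prop:cox}, Theorem~\ref{thm:hukeel}, and Lemma~\ref{lem:polygit}. Your careful justification of the transfer step $\calY^{\theta\text{-ss}}=q^{-1}(Y^{\theta\text{-ss}})$, ensuring the GIT quasi-fans of $(G,\calY)$ and $(T,Y)$ agree, simply makes explicit what the paper leaves implicit via Lemma~\ref{lem:griso}.
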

Let $M(n)$ denote the number of chambers of the hyperplane arrangement $\mathcal{A}$ contained in $C_0$. Let $C(n)$ denote the number of chambers of $\mathcal{A}$ contained in the positive orthant, then $M(n)=C(n)-n$. The numbers $M(n)$ are given for $n\leq 9$ in \cite[Corollary~1.4]{BCSRW}.
\medskip
\begin{center}
        \begin{tabular}{l|ccccccl}\toprule
            \ \ \ \ \ $n$ & 5 & 6 & 7 & 8 & 9 \\
            $M(n)$ & 76 & 1678 & 122914 & 33207248 & 34448225380 \\ \bottomrule
        \end{tabular}
    \end{center}
    \medskip
\begin{lemma}\label{lem:extquot}
    Let $i\in [n]$. The set 
    \[\theta\in \{\theta\in\Theta \ \vert \ \theta_i+\theta_j > \sum_{k\neq i,j}\theta_k, \ \text{for all }j\neq i\}\cap C_0^\circ\] 
   is a GIT chamber. For $\theta$ in this chamber we have $\calP_\theta\cong\PP^{n-3}$.
\end{lemma}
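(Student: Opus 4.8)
\emph{Overview.} There are two assertions: that the given set, call it $K$, is a GIT chamber, and that $\calP_\theta\cong\PP^{n-3}$ for $\theta\in K$. For the first I would use Lemma~\ref{lem:polygit}; for the second, Lemma~\ref{lem:griso} together with the classical description of $\calP_\theta$ as a moduli space of weighted points on $\PP^1$. Write $S=\sum_\ell\theta_\ell$; recall that $C_0^\circ=\{\theta: 0<\theta_k<\tfrac12 S\ \text{for all }k\}$, so that $K$ is the open convex cone $C_0^\circ\cap\{\theta:\theta_i+\theta_j>\tfrac12 S\ \text{for all }j\ne i\}$. By Lemma~\ref{lem:polygit} it is enough to show that $K$ is a single connected component of $C_0\cap(\Theta\setminus\bigcup_{H\in\mathcal{A}}H)$. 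The key point is the following dichotomy for $\theta\in K$: for every $J\subseteq[n]$ one has $\sum_{k\in J}\theta_k\ne\tfrac12 S$. Indeed, if $i\in J$ then either $J=\{i\}$ and $\sum_{k\in J}\theta_k=\theta_i<\tfrac12 S$, or $|J|\ge 2$ and $\sum_{k\in J}\theta_k\ge\theta_i+\theta_j>\tfrac12 S$ for any $j\in J\setminus\{i\}$ (using $\theta_k>0$); if $i\notin J$, apply this to $J^c$ and use $\sum_{k\in J}\theta_k=S-\sum_{k\in J^c}\theta_k$. Since also $\theta_k>0$, this shows $K$ is disjoint from every hyperplane of $\mathcal{A}$, so $K$ (being convex, hence connected) lies in a single component $K'$. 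For the reverse inclusion, take $\theta^*\in K'$: membership in $C_0$ together with $\theta^*\notin H_{\{k\}}$ and $\theta^*$ off the coordinate hyperplanes forces $\theta^*\in C_0^\circ$, and for each $j\ne i$ the continuous function $\theta^*\mapsto\theta^*_i+\theta^*_j-\tfrac12\sum_\ell\theta^*_\ell$ is nowhere zero on the connected set $K'$, hence of constant sign, which is positive because it is positive on $K\subseteq K'$. Hence $\theta^*\in K$ and $K=K'$. Finally $K\ne\emptyset$, since $(n-2)e_i+\sum_{j\ne i}e_j\in K$; this also shows $K$ is a chamber rather than a lower-dimensional cone.

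\textbf{Identification with $\PP^{n-3}$.} By the symmetry permuting the legs of the star-shaped quiver we may take $i=1$. By Lemma~\ref{lem:griso}, $\calP_\theta\cong Y\git_\theta T$ with $Y=\Spec\CC[\mathrm{Gr}(2,n)]$, and by the classical Gelfand--MacPherson correspondence between the $T$-quotient of the affine cone over $\mathrm{Gr}(2,n)$ and the $\mathrm{SL}(2)$-quotient of $(\PP^1)^n$ (see e.g.\ \cite[Ch.~11]{Dolgachev}), $\calP_\theta$ is the moduli space of $n$ ordered points on $\PP^1$ with weight $\theta$. Genericity of $\theta$ makes semistability coincide with stability, and $(x_1,\dots,x_n)$ is stable precisely when $\sum_{k:\,x_k=p}\theta_k<\tfrac12 S$ for every $p\in\PP^1$. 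The inequalities defining $K$ translate directly: $\theta_1+\theta_j>\tfrac12 S$ forces $x_1\ne x_j$ for all $j\ne 1$, and $\sum_{j\ne1}\theta_j=S-\theta_1>\tfrac12 S$ forbids $x_2=\cdots=x_n$; conversely, applying the dichotomy above to the index set of any coincidence (which omits $1$ unless it is $\{1\}$) shows that any configuration meeting these two conditions is stable. So the stable locus is $\{(x_k): x_1\ne x_j\ \forall j\ne1,\ \text{and }x_2,\dots,x_n\ \text{not all equal}\}$, with geometric quotient. Using $\mathrm{PGL}(2)$ to move $x_1$ to $\infty$ leaves the free action of the affine group $\{z\mapsto az+b\}$ on $\{(x_2,\dots,x_n)\in\Af^{n-1}:\ \text{not all equal}\}$; quotienting first by translations and then by $\CC^*$ identifies this with $(\{y\in\Af^{n-1}:\sum_j y_j=0\}\setminus\{0\})/\CC^*\cong\PP^{n-3}$. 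Hence $\calP_\theta\cong\PP^{n-3}$.

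\textbf{Main obstacle, and a self-contained variant.} The first part is routine once the half-sum dichotomy is isolated. The content of the second part lies entirely in the stability bookkeeping together with the invocation of Gelfand--MacPherson; if one prefers to stay within the algebra at hand, one can instead argue with standard monomials: the section ring $\bigoplus_{m\ge0}\CC[\mathrm{Gr}(2,n)]_{m\theta}$ vanishes in odd degrees, and in degree $2m$ every standard monomial is divisible by $F^m$, where $F=\varphi_{1n}\varphi_{12}\cdots\varphi_{1,n-1}$; dividing out $F^m$ identifies the section ring with $\bigoplus_m\CC[\mathrm{Gr}(2,n)]_{m\theta'}$, where $\theta'=(n-3)e_1+\sum_{j\ne1}e_j$, whose degree-$m$ piece has as a basis the standard monomials $G_{a_1}\cdots G_{a_m}$ indexed by size-$m$ multisets in $\{2,\dots,n-1\}$, with $G_a=\varphi_{an}\prod_{c\in\{2,\dots,n-1\}\setminus\{a\}}\varphi_{1c}$; being pairwise distinct standard monomials they are linearly independent, so the ring is the polynomial ring $\CC[G_2,\dots,G_{n-1}]$ in $n-2$ variables and $\calP_\theta\cong\Proj\CC[G_2,\dots,G_{n-1}]\cong\PP^{n-3}$. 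Either way the only real work is the bookkeeping; the geometric picture — the point indexed by $i$ is pinned away from the others, and what remains is a projective space of configurations — is transparent.
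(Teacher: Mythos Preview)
Your argument is correct and essentially parallel to the paper's, though packaged differently. For the chamber claim the paper does not invoke Lemma~\ref{lem:polygit} directly; instead it reads off the unstable locus from King stability, finding that it is cut out by the ideals $(\varphi_{i,j})$ for $j\ne i$ together with $(\varphi_{j_1,j_2}\mid j_1,j_2\ne i)$---which, once pushed through Gelfand--MacPherson, is exactly your description of the stable configurations. For the quotient, the paper stays upstairs in $\calY$ and takes the slice $\calS=\{(x_i,y_i)=(1,0),\ y_j=1\ \text{for}\ j\ne i\}$, whose residual symmetry group is $\mathrm{Af}_1$; quotienting first by the unipotent radical and then by the remaining $\CC^*$ gives $\PP^{n-3}$. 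This is literally your ``send $x_i$ to $\infty$, then quotient by $z\mapsto az+b$'' computation, carried out in $\calY$ rather than on $(\PP^1)^n$. Your half-sum dichotomy makes the chamber verification more explicit than the paper's treatment, and the standard-monomial variant you sketch is a pleasant self-contained alternative that the paper does not pursue.
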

\begin{proof}
    An analysis of $\theta$-semistability via King stability (see \cite{King}) shows that the complement of $\calY^{\theta\text{-ss}}$ in $\calY$ is covered by the closed subsets defined by the ideals
    \begin{enumerate}
        \item $(\varphi_{i,j})\subseteq\CC[\calY]$, for $j\neq i$.
        \item $(\varphi_{j_1,j_2} \ \vert \ \text{for all }j_1,j_2\neq i)\subseteq\CC[\calY]$. 
    \end{enumerate}
    Let $\calS=\{p\in \calY^{\theta\text{-ss}} \ \vert \ (x_i(p),y_i(p))=(1,0), \ y_j(p)=1\}$. For all $y\in \calY^{\theta\text{-ss}}$ there exists some $g\in G$ such that $g\cdot y \in \calS$. Let $H\subseteq G$ be the subgroup preserving $\calS$, then $H\cong\mathrm{Af}_1$ the group of affine-linear transformations on $\Af^1$, generated by $[(g_0,\ldots,g_n)]$ where $g_0=\begin{pmatrix}
        a & b \\ 0 & 1
    \end{pmatrix}$, and $g_j=1$, for all $j\neq 0$.

    We have $\CC[\overline{\calS}]\cong\CC[x_j \ \vert \ j\neq i]$, and $\CC[\overline{\calS}]^{\mathbb{G}_a}\cong \CC[x_{j_1}-x_{j_2} \ \vert \ \text{for all }j_1,j_2\neq i]\cong \CC[z_0,\ldots,z_{n-3}]$, where $\mathbb{G}_a\subseteq\mathrm{Af}_1$ is the unipotent radical. The residual torus $H/\mathbb{G}_a\cong\CC^*$ acts with weight 1 on each $z_j$. Now note that $\varphi_{j_1,j_2}\vert_\calS = x_{j_1}-x_{j_2}$, hence the image of the ideal $(\varphi_{j_1,j_2} \ \vert \ \text{for all }j_1,j_2\neq i)$ in $\calS$ equals $(z_0,\ldots,z_{n-3})$. Therefore $\calY^{\theta\text{-ss}}/T\cong \PP^{n-3}$. 
\end{proof}
\begin{definition}
    Let $X$ be a smooth variety containing a smooth $\PP^m$, with $m>0$, of codimension $k$, with normal bundle $\oh_{\PP^m}(-1)^{\oplus k}$. We obtain a new variety $X'$ by blowing up $\PP^m\subseteq X$. The exceptional divisor is isomorphic to $\PP^m\times\PP^{k-1}$. There is then a contraction $X'\rar X''$, given by the projection $\PP^m\times \PP^{k-1}$ onto the second factor. The variety $X''$ then contains a smooth $\PP^{k-1}$ of codimension $m+1$, with normal bundle $\oh_{\PP^{k-1}}(-1)^{\oplus m+1}$. We say $X$ and $X''$ are related by a standard flip/antiflip.
\end{definition}
\begin{lemma}\label{lem:flipflop}
    Let $\theta_+\in \Theta$ lie in a GIT chamber $\gamma$, such that a hyperplane $H_I$ supports a facet of $\overline{\gamma}$, with $\# I \geq 2$. Let $\theta_-$ lie in the GIT chamber adjacent to $\gamma$ given by crossing $H_I$. Then $\calP_{\theta_+}$ and $\calP_{\theta_-}$ are related by a standard flip/antiflip if $\#I \geq 3$, and a blow-up of a single point if $\#I=2$.
\end{lemma}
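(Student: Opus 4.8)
\emph{Strategy.} The plan is to treat this as a single wall-crossing in variation of GIT (in the style of Thaddeus and Dolgachev--Hu) and to pin down the birational modification from its local geometry at one point, computed via Luna's slice theorem inside $\calY=\Rep(Q,v)$. First, since a GIT chamber adjacent to $\gamma$ exists across $H_I$, the wall $H_I$ must be interior to $C_0$ (crossing a facet of $C_0$ lands outside the cone of Lemma~\ref{lem:polygit}, where nothing is semistable), so $2\le\#I\le n-2$. Choose $\theta_0$ in the relative interior of the facet $\overline\gamma\cap H_I$; using Lemma~\ref{lem:griso} we obtain projective birational morphisms $p_\pm\colon\calP_{\theta_\pm}\to\calP_{\theta_0}$, and all the content is concentrated over the image of the strictly $\theta_0$-semistable locus.

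\emph{Step 1: locate the jump.} Let $\lambda\colon\CC^*\to T$ be the primitive cocharacter with $\lambda_i=1$ for $i\in I$ and $\lambda_i=-1$ for $i\notin I$, so $H_I=\lambda^\perp$. On $\CC[Y]\cong\CC[\mathrm{Gr}(2,n)]$ the coordinate $\varphi_{ij}$ has $\lambda$-weight $\lambda_i+\lambda_j$, which vanishes exactly when $\#(\{i,j\}\cap I)=1$; hence the fixed locus $Y^\lambda$ is the image of the configurations in which $\{\ell_i\}_{i\in I}$ lies on one line and $\{\ell_j\}_{j\notin I}$ lies on one line. A King-stability analysis as in the proof of Lemma~\ref{lem:extquot} shows that $\theta_0$-semistability (with $\theta_0\in C_0^\circ$) forces every $\ell_k$ to be nonzero -- a subrepresentation with $W_0=0$ and $W_k=\CC$ for a single vertex $k$ with $\ell_k=0$ has positive $\theta$-value $\theta_k$ -- so on $Y^\lambda\cap\calY^{\theta_0\text{-ss}}$ the two lines are distinct, all columns are nonzero, and $T$ acts transitively modulo the residual stabiliser. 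Thus $Y^\lambda\cap\calY^{\theta_0\text{-ss}}$ is a single $T$-orbit $T\cdot y_0$, closed in $\calY^{\theta_0\text{-ss}}$; by the VGIT dictionary it maps to one point $q_0\in\calP_{\theta_0}$, and $p_\pm$ are isomorphisms over $\calP_{\theta_0}\setminus\{q_0\}$.

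\emph{Step 2: local model at $q_0$ and conclusion.} The stabiliser of $y_0$ in $T$ is exactly $\lambda(\CC^*)$, which is connected, so by Luna's slice theorem an \'etale neighbourhood of $q_0$ in $\calP_{\theta_0}$ is $S\git\lambda(\CC^*)$ with $S=T_{y_0}Y/T_{y_0}(T\cdot y_0)$. Computing the $\lambda$-weight decomposition of $T_{y_0}Y=T_{y_0}\calY/\mathfrak{sl}_2\!\cdot y_0$ directly (via a lift of $\lambda$ to $G$ fixing $y_0$): the weight-$0$ part is $T_{y_0}(T\cdot y_0)$, of dimension $n-1$, and the only other weight spaces are a weight-$(+2)$ space of dimension $\#I^c-1$ (cut out by the $\varphi_{ij}$ with $i,j\notin I$) and a weight-$(-2)$ space of dimension $\#I-1$ (cut out by the $\varphi_{ij}$ with $i,j\in I$), the $\mathfrak{sl}_2$-orbit accounting for one dimension in each of the three. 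Hence $S\cong\CC^{\#I-1}\oplus\CC^{\#I^c-1}$ with $\lambda(\CC^*)$ acting with opposite uniform weights on the summands (its kernel $\mu_2$ acting trivially), so near $q_0$ the space $\calP_{\theta_0}$ is the affine cone over the Segre $\PP^{\#I-2}\times\PP^{\#I^c-2}$, and, since $\theta_\pm$ restrict to the two nontrivial characters of $\lambda(\CC^*)$, the morphisms $p_\pm$ are near $q_0$ its two toric resolutions $\tot\!\big(\oh_{\PP^{\#I-2}}(-1)^{\oplus(\#I^c-1)}\big)$ and $\tot\!\big(\oh_{\PP^{\#I^c-2}}(-1)^{\oplus(\#I-1)}\big)$, with $p_\pm^{-1}(q_0)$ the respective zero-sections. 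For $\#I\ge 3$ this is exactly the local picture of a standard flip/antiflip ($m=\#I-2$, $k=\#I^c-1$), and since $\calP_{\theta_\pm}$ are smooth for generic $\theta_\pm$ and $p_\pm$ are isomorphisms off $q_0$, the modification $\calP_{\theta_+}\dashrightarrow\calP_{\theta_-}$ is globally the standard flip/antiflip. For $\#I=2$ one summand of $S$ is a line, $\PP^{\#I-2}=\PP^0$ is a smooth point of $\calP_{\theta_0}$, one resolution is $\tot(\oh_{\PP^0}(-1)^{\oplus(n-3)})=\CC^{n-3}$ (an isomorphism near $q_0$) and the other is $\tot(\oh_{\PP^{n-4}}(-1))=\mathrm{Bl}_{q_0}\CC^{n-3}$, so $\calP_{\theta_-}$ is the blow-up of $\calP_{\theta_+}$ at the point $q_0$ (or vice versa).

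\emph{Main obstacle.} I expect the real work to be in two places: (i) showing via King stability that $Y^\lambda\cap\calY^{\theta_0\text{-ss}}$ is a \emph{single} orbit, so that exactly one point of $\calP_{\theta_0}$ is modified (this refines the semistability analysis of Lemma~\ref{lem:extquot}, since one must also rule out the partially degenerate configurations in $Y^\lambda$); and (ii) the slice computation -- extracting the weights $\pm2$ and the dimensions $\#I-1,\#I^c-1$, and checking that $\Stab_T(y_0)$ is connected so that the local model is literally the Segre cone and its two smooth modifications, with no residual finite quotient. Passing from the \'etale-local model to the global statement, and citing smoothness of $\calP_{\theta_\pm}$ for generic parameter, are then routine.
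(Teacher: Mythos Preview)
Your argument is correct and complete; the weight computation and the identification of the slice as $\CC^{\#I-1}\oplus\CC^{\#I^c-1}$ with opposite $\lambda$-weights is exactly right (modulo a harmless sign swap in your parenthetical labelling of which $\varphi_{ij}$'s cut out which eigenspace), and the passage to the global statement via smoothness of $\calP_{\theta_\pm}$ is clean.

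The paper takes a different, more hands-on route. Rather than localising via Luna at the unique strictly semistable point, it identifies the flipping locus \emph{globally}: the closed subscheme $Z=\VV(\varphi_{i,j}\mid i,j\in I)\subseteq\calY$ is exactly the locus that becomes unstable on crossing $H_I$ (read off from King stability), and the projection $Z^{\theta\text{-ss}}\to\calY_m^{\theta'\text{-ss}}$ collapsing the $I$-vectors to a single outer vertex realises $Z^{\theta\text{-ss}}/T$ as the polygon space for a smaller star quiver with parameter $\theta'$ lying in the extremal chamber of Lemma~\ref{lem:extquot}, hence $\cong\PP^{\#I^c-2}$. The normal bundle is then computed directly from the Pl\"ucker relations. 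Your Luna-slice/VGIT approach is the standard Thaddeus--Dolgachev--Hu machinery and is arguably more conceptual and more portable to other wall-crossings; the paper's approach is more elementary (no \'etale slice theorem) and stays entirely within the combinatorics already set up, at the cost of leaning on the auxiliary Lemma~\ref{lem:extquot}. Both land on the same local model, and the ``main obstacles'' you flag---single-orbit and connected stabiliser---are genuine but, as you note, routine once identified.
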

\begin{proof}
    Let $m=n-\#I -1$. We write $\calY_m$, $T_m$ and $\Theta_m$, for $\calY$, $T$, and $\Theta$, but with $m$ replacing $n$ in the definition. Assume that $I=\{m,m+1,\ldots,n\}$. Let 
    \[\theta' = (\theta_1,\ldots,\theta_{m-1}, \sum_{i\in I}\theta_i)\in \Theta_m\] 
    Then $\theta'$ lies in the GIT chamber in $\Theta_m$ described in Lemma \ref{lem:extquot}. Hence $\calY_m\git_{\theta'}T_m \cong \PP^{m-3}$.

    Let $Z\subseteq \calY$ be the closed subscheme defined by the ideal $(\varphi_{i,j} \ \vert \ i,j\in I)$. By analysing the criteria for King semi-stability we see that the locus in $\calY^{\theta\text{-ss}}$ which becomes unstable when we cross the wall $H_I$ is $Z^{\theta\text{-ss}}:= Z_I\cap \calY^{\theta\text{-ss}}$. The morphism $Z^{\theta\text{-ss}}\rar \calY_m^{\theta'\text{-ss}}$ given by projecting onto the first $m$-coordinates is a torus bundle, hence $Z^{\theta\text{-ss}}/T\cong \PP^{m-3}$. From the proof of Lemma \ref{lem:extquot}, we see that the invertible sheaves generated by $\varphi_{i,j}$, such that $i\neq j$ and $\{i,j\}\not\subseteq I$ descend to $\oh_{\PP^m}(1)$. The Plücker relations can then be used to show that the $T$-equivariant invertible sheaves generated by $\varphi_{i,j}$ descend to $\oh_{\PP^m}(1)$, for all $i\neq j$. Thus the conormal bundle descends to $\oh_{\PP^m}(1)^{\oplus n-m}$.

    An identical analysis on the opposite side of the wall $H_I$ shows that the birational transformations $\calP_{\theta_+}\dashrightarrow \calP_{\theta_-}$ are exactly those described in the lemma.
\end{proof}
\subsection{Hyperpolygon spaces.}
We now summarise some results on hyperpolygon spaces, the algebraic symplectic analogue of polygon spaces. Hyperpolygon spaces are the Nakajima quiver varieties associated to the doubled star-shaped quiver. For generalities on the construction of Nakajima quiver varieties see \cite{Nakajima}. Fix $n\geq 4$.
\begin{definition}
    The doubled star-shaped quiver $\overline{Q}$ is the quiver with vertex set $Q_0=\{0,1,\ldots,n\}$ and arrow set $Q_1=\{a_1,b_1,\ldots,a_n,b_n\}$ with $s(a_i)=t(b_i)=0$ and $t(a_i)=s(b_i)=i$, for $1\leq i \leq n$.
\end{definition}
\begin{figure}[ht]
    \centering
    \begin{tikzpicture}[scale=0.75]
        \foreach \x in {1,2,3,4,5} {
            \begin{scope}[rotate={(\x-1)*72}]
                \ifnum\x=1 %
                \draw[thick,->] (0.2,-0.07) to[bend right=15] (2,-0.07) node[below=0.35cm,left=0.3cm] {$a_\x$};
                \draw[thick,<-] (0.2,0.07) to[bend left=15] (2,0.07) node[above=0.35cm,left=0.3cm] {$b_\x$};
                \else
                \draw[thick,->] (0.2,-0.07) to[bend right=15] (2,-0.07);
                \draw[thick,<-] (0.2,0.07) to[bend left=15] (2,0.07);
                \fi
                
                \fill[black] (0,0) circle (0.07);
                \fill[black] (2.15,0) circle (0.07);
                \node at (2.5,0) {\x};
            \end{scope}
        }
    \end{tikzpicture}  
    \caption{The doubled star-shaped quiver $\overline{Q}(5)$.}  
\end{figure}
The action of $G:=G(Q,v)=G(\overline{Q},v)$ on $\Rep(\overline{Q},v)$ is a hamiltonian group action. As a representation of $G$ we have
\[ \Rep(\overline{Q},v)\cong \bigoplus_{i=1}^n V^*(e_i)\oplus V(-e_i).\]
There is an associated ($G$-equivariant) moment map 
\[\mu: \Rep(\overline{Q},v)\longrightarrow \bigoplus_{i\in Q_0}\mathrm{End}(V_i).\] Write $\alpha_i= f_{a_i}$ and $\beta_i=f_{b_i}$. We may then describe $\mu$ as follows
\[ \mu(\alpha_1,\beta_1,\ldots,\alpha_n,\beta_n)=\Big(\sum_{i=1}^n \beta_i\circ\alpha_i,\alpha_1\circ\beta_1,\ldots,\alpha_n\circ\beta_n\Big).\]
Fixing a basis for $V_i$ for all $i\in Q_0$, we obtain an induced basis $x_i,y_i$ of $\Hom(V_0,V_i)^*$ with dual basis $z_i,w_i$ of $\Hom(V_0,V_i)$.
We obtain the presentation \begin{displaymath}{\CC[\Rep(\overline{Q},v)]\cong \CC[x_1,y_1,z_1,w_1,\ldots,x_n,y_n,z_n,w_n]=\CC[\calY][z_1,w_1,\ldots,z_n,w_n]}.\end{displaymath}
The ideal in $\CC[\Rep(\overline{Q}.v)]$ defining $\mu^{-1}(0)$ is generated by the elements
\begin{align*}
    \sum_{i=1}^n y_iz_i, \ \sum_{i=1}^n x_iw_i, \ \sum_{i=1}^n x_iz_i, \ \sum_{i=1}^ny_iw_i, \ \text{and }
    x_iz_i+y_iw_i, \ \text{for }i=1,\ldots,n.
\end{align*}

Let $\Theta := G^\vee\otimes_\ZZ \QQ\cong T^\vee\otimes_\ZZ\QQ\cong \QQ^n$.
\begin{definition}\label{def:hyperpolygon}
    A hyperpolygon space is a GIT quotient of the form 
    \[\MM_\theta:=\mu^{-1}(0)\git_\theta G,\] for some $\theta\in \Theta$.
\end{definition}
\begin{definition}\label{def:consymsin}
    A symplectic variety is a normal variety $X$ equipped with a symplectic form $\omega$ on the smooth locus $X_{\mathrm{reg}}$, such that for any resolution the pullback of $\omega$ extends to a regular 2-form on the resolution. A conical symplectic singularity is an affine symplectic variety $X$ such that the coordinate ring is positively graded: $\CC[X]\cong \oplus_{n\geq 0}\CC[X]_n$, and such that $\CC[X]_0=\CC$ and $\omega$ is homogeneous with respect to the induced $\CC^*$-action.
\end{definition}
\begin{proposition}[\cite{BellamySchedler}, Theorem 1.2]
    The affine variety $\MM_0$ is a conical symplectic singularity, and for all $\theta\in \Theta$, the natural morphism $\pi_\theta:\MM_\theta \rar \MM_0$ is a projective crepant partial resolution.
\end{proposition}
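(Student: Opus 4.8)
The plan is to reduce everything to structural facts about the moment map fibre $\mu^{-1}(0)\subseteq\Rep(\dubQ,v)$, and then read off the symplectic, conical and crepancy statements formally. First I would record that the dimension vector $v=(2,1,\dots,1)$ is a positive root of the underlying graph of $Q$: for $n=4$ it is the minimal imaginary root $\delta$ of $\widetilde{D}_4$, and for $n\geq 5$ one checks directly that $v$ lies in the fundamental region (connected support, and $\langle v,e_i\rangle\leq 0$ at every vertex), so it is a positive imaginary root. By Crawley-Boevey's theorem on fibres of the moment map for a quiver, this forces $\mu^{-1}(0)$ to be a reduced, irreducible, normal complete intersection of the expected dimension $\dim\Rep(\dubQ,v)-(\dim\GL(Q,v)-1)$. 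Since $G$ is reductive and GIT quotients of normal varieties by reductive groups are normal, $\MM_0=\mu^{-1}(0)\git G$ is a normal affine variety, and the same applied to the $\theta$-semistable locus gives normality of every $\MM_\theta$.

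Next I would install the symplectic and conical structures. The standard form $\omega=\sum_i(dx_i\wedge dz_i+dy_i\wedge dw_i)$ on $\Rep(\dubQ,v)=T^*\Rep(Q,v)$ descends, by Marsden-Weinstein reduction, to a genuine (nowhere-degenerate) symplectic form $\omega_\theta$ on the locus of each $\MM_\theta$ where $G$ acts freely; in particular on $(\MM_0)_{\mathrm{reg}}$ and, for $\theta$ generic, on all of $\MM_\theta$. For the conical structure I would use the $\CC^*$-action scaling every coordinate $x_i,y_i,z_i,w_i$ with weight $1$: the generators of the ideal of $\mu^{-1}(0)$ are then homogeneous of degree $2$, so $\mu^{-1}(0)$ is invariant; this action commutes with $G$, so it descends to $\MM_0$, the induced grading on $\CC[\MM_0]$ is non-negative with degree-zero part $\CC$, and $\omega$ is homogeneous of weight $2$. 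To see that $\pi_\theta$ is a resolution for generic $\theta$, I would run the King stability analysis to show that in a GIT chamber every $\theta$-semistable point is $\theta$-stable with trivial $G$-stabiliser and that $d\mu$ is surjective there, so $\MM_\theta$ is smooth; projectivity of $\pi_\theta$ is immediate from its being the canonical morphism $\mu^{-1}(0)\git_\theta G\to\mu^{-1}(0)\git G$, and birationality follows since $\pi_\theta$ is an isomorphism over the dense image of the $G$-stable locus of $\mu^{-1}(0)$.

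Finally, crepancy: over the open locus where $\pi_\theta$ is an isomorphism, $\pi_\theta^*\omega_0$ and $\omega_\theta$ coincide, as both are the form descended from $\Rep(\dubQ,v)$. For $\theta$ generic, $\omega_\theta$ is a nowhere-degenerate regular $2$-form on the smooth $\MM_\theta$ trivialising $K_{\MM_\theta}$ and extending $\pi_\theta^*\omega_0$; since $\MM_0$ is symplectic, $K_{\MM_0}$ is trivial, and comparing the two trivial line bundles away from the $\pi_\theta$-exceptional locus yields $K_{\MM_\theta}=\pi_\theta^*K_{\MM_0}$, i.e. $\pi_\theta$ is crepant. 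This exhibits a smooth $\MM_\theta$ on which the pullback of $\omega_0$ extends to a regular form, so by Beauville's criterion (one resolution suffices) $\MM_0$ has symplectic singularities, and with the weight-$2$ grading above it is a \emph{conical} symplectic singularity. For arbitrary $\theta$, $\pi_\theta$ is then a projective, birational morphism from the normal variety $\MM_\theta$ which is crepant by the same form comparison on smooth loci, hence a projective crepant partial resolution.

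I expect the genuine obstacle to be the very first step — that $\mu^{-1}(0)$ is a reduced, irreducible, normal complete intersection of the expected dimension. Everything downstream is essentially formal once this is known, but this input requires the quiver-theoretic machinery (dimension estimates for fibres of $\mu$ via representation varieties of deformed preprojective algebras), combined with the root-theoretic verification for $v=(2,1,\dots,1)$ and $n\geq 4$. A secondary technical point is the scheme-theoretic smoothness of $\MM_\theta$ in a generic GIT chamber, i.e. surjectivity of $d\mu$ along the stable locus, which reduces to the fact that $\theta$-stable representations of $\dubQ$ of dimension vector $v$ are Schurian.
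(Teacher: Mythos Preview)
The paper does not give its own proof of this proposition: it is stated with a citation to \cite{BellamySchedler}, Theorem~1.2, and no argument is supplied. Your proposal is therefore not to be compared against anything in the present paper, but it is a correct and well-organised outline of the standard proof, essentially the one carried out in the cited reference. The key inputs you identify --- Crawley-Boevey's theorem that $\mu^{-1}(0)$ is a reduced irreducible normal complete intersection when $v$ lies in the fundamental region, Marsden--Weinstein reduction for the symplectic form, and the trivialisation of the canonical bundle via $\omega_\theta^{\mathrm{top}}$ to deduce crepancy --- are exactly the right ones, and your assessment that the first of these is the only step with real content is accurate.

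Two small remarks. First, your choice of conical $\CC^*$-action (weight~$1$ on every coordinate of $\Rep(\dubQ,v)$) is fine here, though in the quiver-variety literature one more often uses the action scaling only the cotangent directions $z_i,w_i$; either makes $\omega$ homogeneous of positive weight and gives $\CC[\MM_0]_0=\CC$. Second, for \emph{arbitrary} $\theta$ you assert birationality of $\pi_\theta$ somewhat quickly: one should note that the locus of simple (hence $\theta$-stable for every $\theta$) representations in $\mu^{-1}(0)$ is dense, which again comes from Crawley-Boevey's analysis of the generic fibre, so the $\theta$-semistable locus always contains this dense open and $\pi_\theta$ restricts to an isomorphism there.
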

\begin{theorem}[\cite{Namikawa}, Main Theorem]
    Let $\pi:X\rar Y$ be a projective crepant resolution of a conical symplectic singularity. Then $X$ is a Mori dream space.
\end{theorem}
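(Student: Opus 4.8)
The plan is to follow Namikawa's argument, which reduces the statement to two ingredients: the Hu--Keel-type characterisation of Mori dream spaces (finitely generated free Picard group, $\mathbb Q$-factoriality, rational polyhedral nef cone, and finitely many small $\mathbb Q$-factorial modifications whose nef cones tile the movable cone), together with the deformation theory of conical symplectic singularities. Throughout write $Y=\Spec(R)$ with $R=\bigoplus_{k\geq 0}R_k$, $R_0=\CC$, and $\pi\colon X\rar Y$ the given projective crepant resolution.

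First I would record the elementary properties of $X$. Since $Y$ has symplectic (hence rational Gorenstein) singularities and $\pi$ is crepant, $\pi_*\oh_X=\oh_Y$ and $R^i\pi_*\oh_X=0$ for $i>0$; in particular $\Gamma(X,\oh_X)=R$ is a finitely generated $\CC$-algebra and the affinisation morphism $X\rar\Spec(\Gamma(X,\oh_X))=Y$ is precisely $\pi$, which is projective, so $X$ is projective over affine. The conical $\CC^*$-action on $Y$ lifts to a contracting $\CC^*$-action on $X$ onto the central fibre $\pi^{-1}(0)$. Positivity of the grading together with $R_0=\CC$ then forces $\HH^i(X,\oh_X)=0$ for $i>0$, so the exponential sequence gives $\Pic(X)\cong\HH^2(X,\ZZ)$; this is finitely generated, and it is free because conical symplectic resolutions are simply connected. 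Finally $X$ is $\mathbb Q$-factorial since it is smooth. This verifies the formal hypotheses of the Hu--Keel criterion, so it remains to control the birational geometry of $X$.

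The heart of the argument is the graded Poisson deformation theory of $Y$ and of $X$ due to Namikawa, building on Kaledin. The relevant deformation functors are unobstructed with smooth affine bases, and there is a commutative square
\[
\begin{CD}
\calX @>>> \calY \\
@VVV @VVV \\
\fk{h} @>>> \fk{h}/W
\end{CD}
\]
in which $\fk{h}:=\HH^2(X,\CC)$, the morphism $\fk{h}\rar\fk{h}/W$ is the quotient by a \emph{finite} group $W$ generated by reflections (the Namikawa--Weyl group), and $\Pic(X)_\QQ$ is a canonical $\QQ$-structure on $\fk{h}$ on which $W$ acts. The crucial consequence is the chamber description of $\Pic(X)_\QQ=\fk h_\QQ$: a finite, $W$-stable arrangement of rational hyperplanes subdivides $\fk h_\QQ$ into cones, the ample cone of $X$ is one of them, the cones contained in $\overline{\mathrm{Mov}}(X)$ are in bijection with the crepant resolutions $X_i\rar Y$ (wall-crossings interior to $\overline{\mathrm{Mov}}(X)$ being flops, which by Namikawa again produce conical symplectic resolutions of $Y$), and $W$ permutes simply transitively the translates of $\overline{\mathrm{Mov}}(X)$ tiling $\fk h_\QQ$. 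Finiteness of $W$ and of the arrangement then yields: only finitely many crepant resolutions $X_i$ of $Y$; each $\Nef(X_i)$ is rational polyhedral; and $\overline{\mathrm{Mov}}(X)$, being a finite union of such cones, is rational polyhedral.

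Finally I would assemble these inputs. The faces of the rational polyhedral cone $\Nef(X)$ correspond to the projective contractions of $X$; the finitely many $X_i$ are exactly the small $\mathbb Q$-factorial modifications of $X$, with $\overline{\mathrm{Mov}}(X)=\bigcup_i f_i^*\Nef(X_i)$ for $f_i\colon X_i\dashrightarrow X$; and $\mathrm{Eff}(X)=\overline{\mathrm{Mov}}(X)+\sum_j\RR_{\geq 0}[E_j]$, over the finitely many $\pi$-exceptional prime divisors $E_j$, is again rational polyhedral. This is precisely the Hu--Keel characterisation of a Mori dream space, and the standard Zariski-decomposition argument along the chamber structure upgrades it to finite generation of $\Cox(X)=\bigoplus_{\lambda}\Gamma(X,L_\lambda)$. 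The main obstacle is entirely contained in the third paragraph: constructing the universal graded Poisson deformations and, above all, proving that the Namikawa--Weyl group $W$ is finite and that its reflection-chamber structure on $\Pic(X)_\QQ$ matches the MMP chamber structure of $X$ over $Y$; everything else is formal once this deep deformation-theoretic input is granted.
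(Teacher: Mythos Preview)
The paper does not prove this statement at all: it is stated with attribution to Namikawa and used as a black box, with no argument given. So there is no ``paper's own proof'' to compare against beyond the citation itself.

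Your sketch is a reasonable outline of Namikawa's actual proof in the cited reference, combining the Hu--Keel criterion with the finiteness of the Namikawa--Weyl group and the resulting rational polyhedral chamber decomposition of $\Pic(X)_\QQ$. A couple of minor points you may want to shore up if you intend this as a self-contained argument: the exponential-sequence computation of $\Pic(X)$ and the simple-connectedness of conical symplectic resolutions both require care in the non-compact setting (one typically uses the contracting $\CC^*$-action to retract onto the central fibre and then invokes Namikawa's or Kaledin's results on the topology of that fibre), and the identification of the deformation-theoretic wall structure with the MMP wall structure is itself a nontrivial theorem of Namikawa that you correctly flag as the deep input. But as a high-level roadmap of the cited proof, your proposal is accurate; it simply goes well beyond what the paper itself does, which is to quote the result.
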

\begin{lemma}[\cite{Kaledin}, Lemma 2.11]\label{lem:semismall}
    Let $\pi_1:X_1\rar Y$ and $\pi_2:X_2\rar Y$ be projective crepant resolutions of a conical symplectic singularity. Then $X_1$ and $X_2$ are isomorphic in codimension 1.
\end{lemma}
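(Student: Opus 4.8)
The plan is to deduce the statement from a standard discrepancy computation in birational geometry; in fact only the smoothness of $X_1,X_2$ and the crepancy of $\pi_1,\pi_2$ over the Gorenstein singularity $Y$ will be used, not the symplectic form itself. First I would set up a common resolution: write $\phi:=\pi_2^{-1}\circ\pi_1:X_1\dashrightarrow X_2$ for the induced birational map over $Y$, and choose a smooth variety $W$ with proper birational morphisms $p_i:W\rar X_i$ such that $\pi_1\circ p_1=\pi_2\circ p_2=:q$ — for instance, resolve the closure of the graph of $\phi$ inside $X_1\times_Y X_2$. For a prime divisor $F\subseteq W$ let $a(F,X_i)$ be the coefficient of $F$ in $K_W-p_i^*K_{X_i}$. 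Since $X_i$ is smooth, hence terminal, $a(F,X_i)>0$ when $F$ is $p_i$-exceptional and $a(F,X_i)=0$ otherwise; in particular $K_W-p_i^*K_{X_i}$ is an effective $q$-exceptional divisor whose support is exactly the union of the $p_i$-exceptional prime divisors.

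The key point is then that the two sets of discrepancies agree. Because $\pi_i$ is crepant, $K_{X_i}=\pi_i^*K_Y$, and so $p_i^*K_{X_i}=q^*K_Y$ for $i=1,2$; here $K_Y$ is Cartier, and these pullbacks make sense, precisely because $Y$, being a symplectic singularity, is Gorenstein (Beauville). Hence $K_W-p_1^*K_{X_1}$ and $K_W-p_2^*K_{X_2}$ are linearly equivalent effective $q$-exceptional divisors on $W$, so by the negativity lemma they are equal. Therefore $a(F,X_1)=a(F,X_2)$ for every prime divisor $F\subseteq W$, and in particular the $p_1$-exceptional and $p_2$-exceptional prime divisors of $W$ coincide; call them $E_1,\ldots,E_r$.

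Finally I would translate this back to $X_1$ and $X_2$. If $\phi$ contracted some prime divisor $D\subseteq X_1$, then its strict transform $\widetilde D\subseteq W$ under $p_1$ would be non-$p_1$-exceptional, while $p_2(\widetilde D)$ is dense in $\overline{\phi(D)}$, which has codimension $\geq 2$ in $X_2$; thus $\widetilde D$ would be $p_2$-exceptional, contradicting the equality of exceptional loci. So $\phi$ contracts no divisor, and by the symmetric argument neither does $\phi^{-1}$. Setting $U_i:=X_i\setminus p_i(E_1\cup\cdots\cup E_r)$ — an open subset whose complement has codimension $\geq 2$, since each $p_i(E_j)$ does — the morphisms $p_i$ restrict to isomorphisms $W\setminus(E_1\cup\cdots\cup E_r)\xrar{\ \sim\ }U_i$, giving an isomorphism $U_1\cong U_2$ compatible with the maps to $Y$. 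This is the desired conclusion.

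The step requiring the most care is the equality $a(F,X_1)=a(F,X_2)$: it rests on $K_Y$ being $\QQ$-Cartier, which is exactly why Gorensteinness of $Y$ is invoked, and on the negativity lemma, needed to upgrade the linear equivalence of the two $q$-exceptional divisors to an honest equality; everything else is formal. One could alternatively run Kaledin's more geometric argument — symplectic resolutions are semismall, so the $\pi_i$-exceptional divisors lie over codimension-$2$ symplectic leaves, and a local wall-crossing analysis shows these loci are flopped rather than contracted — but the discrepancy argument above is shorter and sidesteps semismallness entirely.
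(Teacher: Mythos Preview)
Your argument is correct. The paper does not supply its own proof of this lemma; it is stated with a citation to Kaledin and used as a black box. So there is nothing in the paper to compare against directly, and your discrepancy computation stands on its own.

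A couple of minor remarks on the write-up. First, once you fix a Cartier canonical divisor $K_Y$ on $Y$ and set $K_{X_i}:=\pi_i^*K_Y$ (which is legitimate precisely because $\pi_i$ is crepant), you get $K_W-p_1^*K_{X_1}=K_W-q^*K_Y=K_W-p_2^*K_{X_2}$ as \emph{divisors}, not merely as classes; the negativity lemma is then not strictly needed. Invoking it does no harm, but you could shorten the middle step. Second, in the final paragraph you implicitly use that the exceptional locus of $p_i$ is pure of codimension $1$; this holds because $X_i$ is smooth (van der Waerden purity), and it is what guarantees that $\mathrm{Exc}(p_i)$ really equals $E_1\cup\cdots\cup E_r$ and that $p_i$ restricts to an isomorphism $W\setminus\bigcup E_j\rar U_i$. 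It would be worth saying this explicitly.

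As you note at the end, Kaledin's own route goes through semismallness of symplectic resolutions, which gives finer control over the fibres but requires more input from symplectic geometry. Your approach uses only that $Y$ is Gorenstein with crepant resolutions by smooth (hence terminal) varieties, so it applies more broadly and is the standard MMP argument for this kind of statement.
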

\begin{example}\label{ex:n=5}
    Let $n=4$. For all $\theta\in\Theta$, $\MM_\theta$ is 2-dimensional and the entire picture is clear: we recover Kronheimer's construction of all the crepant partial resolutions of the $D_4$ surface singularity (see \cite{Kronheimer}). In particular $\MM_0\cong \CC^2/\Gamma$, where $\Gamma\subseteq \SL(2)$ is the binary dihedral group of order 8.

    The GIT quasi-fan is the fan associated to the $D_4$ Weyl hyperplane arrangement. Any two GIT chambers may be identified by suitable reflections, and the quotients obtained are invariant under such reflections. Let $C$ be a GIT chamber. The closure $\overline{C}$ is the cone over 3-simplex. Given $\theta\in C$, then $\MM_\theta\cong \widetilde{\CC^2/\Gamma}$, the minimal resolution. For $\theta_0$ in a face of $\overline{C}$, then $\MM_{\theta_0}\rar \MM_0$ is a crepant partial resolution.
\end{example}
From now on we assume that $n\geq 5$.
\begin{example}
    Let $n=5$. Then $\MM_0\cong \CC^4/\Gamma$, where $\Gamma=Q_8\times_{\ZZ/2\ZZ}D_8$, the quaternion group and the dihedral group of order 8 identified along their centers.
    Donten-Bury and Wi\'{s}niewski study the crepant resolutions of this quotient singularity in \cite{DBW}. They confirm the calculation of Bellamy \cite{Bellamy} by constructing 81 projective crepant resolutions and describing the birational modifications relating them.
\end{example}
\begin{remark}
    In \cite{BCSRW} it is shown that for $n\geq 6$, the affine hyperpolygon space $\MM_0$ is not isomorphic to a quotient singularity $\CC^{2n}/\Gamma$, where $G\subseteq \mathrm{Sp}(2n)$.
\end{remark}
By the same reasoning as in the polygon space case, we may construct an intermediate quotient by $\SL(2)$. For all $\theta\in \Theta$ we obtain
\[\MM_\theta\cong (\mu^{-1}(0)\git\SL(2))\git_\theta T.\] 
The GIT quasi-fan associated to the action of $T$ on $\mu^{-1}(0)\git \SL(2)$ is described explicitly in \cite{BCSRW}. When describing the GIT quasi-fan associated to the action of $T$ on $\mu^{-1}(0)\git \SL(2)$, given $I\subseteq[n]$, we introduced a collection of hyperplanes $\mathcal{A}$ (see Definition \ref{def:hyperplanes}).
\begin{definition}\label{def:F}
    We define a quasi-fan as follows: a chamber is given by the rational points of a single connected component of $\RR_{>0}\cdot(\Theta\setminus \bigcup_{H\in\mathcal{A}}H)\subseteq\Theta\otimes_\QQ\RR$. The GIT cones are then the closures of these chambers, together with all their faces. Let \[F=\{(\theta_i)\in \Theta \ \vert \ \theta_i\geq 0, \ \text{for all }i\in [n]\}\subseteq \Theta\] denote the positive orthant under the fixed isomorphism $\Theta\cong\QQ^n$ (see discussion above Definition \ref{def:polygon}).
\end{definition}

\begin{theorem}[\cite{BCSRW}, Theorem 3.4]\label{thm:BCSRW}
    The quasi-fan determined by the hyperplane arrangement $\mathcal{A}$ is the GIT quasi-fan, and $\pi_\theta:\mathfrak{M}_\theta\rar\MM_0$ is a crepant resolution if and only if $\theta$ is generic. In other words $\MM_\theta$ is smooth if and only if $\theta\in \Theta\setminus \bigcup_{H\in\mathcal{A}}H$. Moreover any projective crepant resolution $\pi:\MM\rar \MM_0$ is isomorphic to $\pi_\theta:\MM_\theta\rar \MM_0$ (over $\MM_0$), for $\theta$ in a unique chamber in $F$.
\end{theorem}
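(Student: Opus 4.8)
The plan is to prove the three assertions — the identification of the GIT quasi-fan, smoothness of $\MM_\theta$ exactly off $\mathcal{A}$, and the completeness of the list of projective crepant resolutions — by combining an explicit King-stability analysis of the doubled star-shaped quiver $\dubQ$ with the theory of conical symplectic singularities. I would begin from King's criterion \cite{King}: normalising $\theta\in G^\vee$ so that $\theta(v)=0$, a point of $\mu^{-1}(0)$ is $\theta$-semistable (resp.\ $\theta$-stable) if and only if every proper nonzero $\dubQ$-subrepresentation $(W_0\subseteq V_0,\,W_i\subseteq V_i)$ invariant under all $\alpha_i,\beta_i$ has $\theta$-weight $\le 0$ (resp.\ $<0$). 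Since $v(0)=2$ and $v(i)=1$, such a subrepresentation is determined by $\dim W_0\in\{0,1,2\}$ together with the set $I=\{i:W_i=V_i\}$, and its $\theta$-weight is an affine-linear functional of $\theta$ that vanishes precisely along a coordinate hyperplane $\{\theta_i=0\}$ (the cases $\dim W_0\in\{0,2\}$ with $|I|\in\{1,n-1\}$), along the hyperplane $H_I$ of Definition~\ref{def:hyperplanes} (the case $\dim W_0=1$), or along an auxiliary hyperplane $\{\sum_{i\in J}\theta_i=0\}$ with $2\le|J|\le n-1$. The auxiliary hyperplanes do not support walls: a destabilising subrepresentation of that type forces $\beta_i=0$ (or $\alpha_i=0$) for every $i\in J$, whence the one-vertex subrepresentations at the single vertices $i\in J$ are also present and already destabilise the point in both chambers of $\mathcal{A}$ adjacent to that hyperplane, so the semistable locus is unchanged across it; conversely the semistable locus genuinely changes across every hyperplane of $\mathcal{A}$. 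Hence $\mu^{-1}(0)^{\theta\text{-ss}}$ depends only on the cell of $\mathcal{A}$ containing $\theta$, and by Theorem~3.1.2.8 of \cite{ADHL} the GIT quasi-fan is the quasi-fan determined by $\mathcal{A}$.

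By the quoted proposition of \cite{BellamySchedler}, $\pi_\theta:\MM_\theta\rar\MM_0$ is a projective crepant partial resolution for every $\theta$, so it is a crepant \emph{resolution} if and only if $\MM_\theta$ is smooth. For $\theta$ off all hyperplanes of $\mathcal{A}$ the analysis above gives $\theta$-semistable $=$ $\theta$-stable, so $G$ acts freely on $\mu^{-1}(0)^{\theta\text{-s}}$ (the $\GL(\dubQ,v)$-stabiliser of a stable representation reduces to the scalars $\Delta$); together with flatness of $\mu$ and the fact that, for this dimension vector, $\mu^{-1}(0)$ is a reduced complete intersection of dimension $3n-3$, this makes $\MM_\theta$ smooth of dimension $2n-6$. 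For $\theta$ on a wall I would instead exhibit a strictly $\theta$-semistable point whose closed $G$-orbit has positive-dimensional stabiliser, producing a singular point of $\MM_\theta$. Combined with the identification of the GIT quasi-fan, so that $\theta$ is generic precisely when $\theta\in\Theta\setminus\bigcup_{H\in\mathcal{A}}H$, this shows that $\pi_\theta$ is a crepant resolution if and only if $\theta$ is generic, i.e.\ $\MM_\theta$ is smooth if and only if $\theta\in\Theta\setminus\bigcup_{H\in\mathcal{A}}H$.

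For the completeness statement I would pass to symplectic-resolution theory. For generic $\theta$, $\MM_\theta$ is a Mori dream space by \cite{Namikawa}, and by \cite[Lemma~2.11]{Kaledin} (Lemma~\ref{lem:semismall}) any two projective crepant resolutions of $\MM_0$ agree in codimension $1$; they therefore share the total coordinate space $\overline{\MM_\theta}:=\Spec(\Cox(\MM_\theta))$ and are related by flops, so every projective crepant resolution of $\MM_0$ is isomorphic over $\MM_0$ to $\overline{\MM_\theta}\git_{\theta'}T$ for some $\theta'$ in the movable cone of $\MM_\theta$. By Theorem~\ref{thm:hukeel} the GIT chambers of this action lying in the movable cone are exactly its Mori chambers, and these are in bijection with the marked projective crepant resolutions related by flops. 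Finally, Namikawa's identification of $\Pic(\MM_\theta)_\QQ$ with $\HH^2(\MM_\theta,\QQ)$ realises the union of the translates of the movable cone under the Namikawa-Weyl group $W$ as $\Theta$, with $W$ acting by reflections in the divisorial hyperplanes of $\mathcal{A}$; since $F$ is a fundamental domain for this action, each isomorphism class of projective crepant resolution of $\MM_0$ has a unique representative $\pi_\theta$ with $\theta$ in a chamber of $F$.

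The main obstacle is this last step: matching the abstract Mori chamber decomposition of the movable cone of $\MM_\theta$ — on which the Namikawa-Weyl group $W$ acts with fundamental domain $F$ — with the concrete arrangement $\mathcal{A}$, i.e.\ sorting the walls of $\mathcal{A}$ into \emph{flopping walls}, across which $\MM_\theta$ genuinely flops, and \emph{Weyl walls}, across which $\MM_\theta\cong\MM_{w\theta}$ over $\MM_0$, and identifying $W$ with the reflection group generated by the $H_I$. This requires precise control of Namikawa's period map and deformation data for $\MM_0$. The quiver-side inputs — normality and the complete-intersection property of $\mu^{-1}(0)$, and the coincidence of $\theta$-semistability and $\theta$-stability off $\mathcal{A}$ — are more standard, but they still need the subrepresentation bookkeeping indicated above, and one must separately verify that $\MM_\theta$ is genuinely singular for $\theta$ on a wall rather than merely non-isomorphic to the generic $\MM_{\theta'}$.
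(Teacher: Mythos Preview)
The paper does not prove this theorem at all: it is quoted verbatim as \cite[Theorem~3.4]{BCSRW} and used as a black box throughout. There is therefore no ``paper's own proof'' to compare against; your proposal is really a sketch of how the argument in \cite{BCSRW} goes.

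On the substance of your sketch: the overall architecture is right, and you correctly isolate the genuinely hard step as the identification of the Namikawa--Weyl group and the matching of the abstract Mori/movable-cone picture with the concrete arrangement $\mathcal{A}$. Two points deserve tightening. First, your enumeration of destabilising subrepresentations and the resulting hyperplanes is loose: for $\dim W_0=0$ the weight is $\sum_{i\in J}\theta_i$ for an arbitrary $J$, not only $|J|=1$, and these auxiliary hyperplanes are not automatically redundant in the way you describe (your argument that the single-vertex subrepresentations ``already destabilise in both adjacent chambers'' presupposes knowledge of which side of the coordinate hyperplanes you are on). The cleaner route, which is what \cite{BCSRW} actually does, is to compute the GIT fan via orbit cones or via Crawley-Boevey's description of the strata of $\mu^{-1}(0)$, rather than by a direct subrepresentation census. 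Second, your claim that a strictly semistable point with positive-dimensional stabiliser forces $\MM_\theta$ to be singular is not automatic for GIT quotients in general; here it follows from the symplectic/stratification structure (\'etale-local normal form for quiver varieties, or the fact that $\MM_\theta\to\MM_0$ is semismall and the fibre over the relevant stratum jumps in dimension), and you should say which of these you are invoking.
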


\begin{corollary}
    The number of projective crepant resolutions $\pi:\MM\rar \MM_0$ up to isomorphism over $\MM_0$, is given by the number of the chambers in the hyperplane arrangement $\mathcal{A}$ contained in $F$. 
\end{corollary}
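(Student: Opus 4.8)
The plan is to promote Theorem~\ref{thm:BCSRW} into an explicit bijection between the set of chambers of $\mathcal{A}$ contained in $F$ and the set $\mathscr{C}_{\mathrm{proj}}$ of projective crepant resolutions $\pi\colon\MM\rar\MM_0$ taken up to isomorphism over $\MM_0$; comparing cardinalities then gives the corollary.

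First I would verify that $\gamma\mapsto[\pi_\theta\colon\MM_\theta\rar\MM_0]$, for $\theta$ any point of the chamber $\gamma$, is a well-defined map into $\mathscr{C}_{\mathrm{proj}}$. By Theorem~\ref{thm:BCSRW} the fan determined by $\mathcal{A}$ is the GIT quasi-fan for the action of $G$ on $\mu^{-1}(0)$, so each chamber $\gamma$ of $\mathcal{A}$ is the interior of a top-dimensional GIT cone. For $\theta_1,\theta_2\in\gamma$ we then have $C(\theta_1)=C(\theta_2)=\overline{\gamma}$, hence $\mu^{-1}(0)^{\theta_1\text{-ss}}=\mu^{-1}(0)^{\theta_2\text{-ss}}$ by the description of the GIT quasi-fan recalled in Section~2.1, so $\MM_{\theta_1}=\MM_{\theta_2}$ with the same structure morphism to $\MM_0$; thus the class $[\pi_\theta]$ depends only on $\gamma$. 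Since every $\theta\in\gamma$ is generic, Theorem~\ref{thm:BCSRW} guarantees that $\pi_\theta$ is a genuine projective crepant resolution, so the assignment indeed lands in $\mathscr{C}_{\mathrm{proj}}$.

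Surjectivity and injectivity will then follow at once from the final sentence of Theorem~\ref{thm:BCSRW}: every projective crepant resolution is isomorphic over $\MM_0$ to $\pi_\theta$ for some $\theta$ in a chamber contained in $F$ (surjectivity), and that chamber is unique, so if $\theta_1\in\gamma_1$ and $\theta_2\in\gamma_2$ give resolutions isomorphic over $\MM_0$ then $\gamma_1=\gamma_2$ (injectivity). Hence $\gamma\mapsto[\pi_\theta]$ is a bijection and $\#\mathscr{C}_{\mathrm{proj}}$ equals the number of chambers of $\mathcal{A}$ contained in $F$.

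No real obstacle arises; this is essentially bookkeeping on top of Theorem~\ref{thm:BCSRW}. The one step that deserves care is the well-definedness argument, where one must use that equality of GIT cones forces equality of the semistable loci — and therefore of the quotients together with their morphisms to $\MM_0$ — which is precisely the content of the ADHL description of GIT quasi-fans from Section~2.1 applied via the identification of that quasi-fan with $\mathcal{A}$ given by Theorem~\ref{thm:BCSRW}.
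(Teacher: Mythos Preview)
Your proposal is correct and matches the paper's approach: the paper states this corollary immediately after Theorem~\ref{thm:BCSRW} with no proof, treating it as an immediate consequence, and your argument is exactly the bookkeeping that unpacks why the bijection implicit in that theorem yields the count.
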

Let $P(n)$ be the number of chambers in $F$, for $n\leq 9$ these are given by
\medskip
    \begin{center}
        \begin{tabular}{l|ccccccl}\toprule
            \ \ $n$ & 5 & 6 & 7 & 8 & 9 \\
            $P(n)$ & 81 & 1684 & 122921 & 33207256 & 34448225389 \\ \bottomrule
        \end{tabular}
    \end{center}
\section{Cox rings}
\subsection{The Cox ring of smooth hyperpolygon spaces.}
Let $V$ denote the natural representation of $\GL(2)$, and $V^*$ its dual representation. Fix bases $x_i,y_i$ of $V(-e_i)$ and $c_i$ of $\det(V)^*(2e_i)$, for $i=1,\ldots,n$. Let 
\[\calW=\bigoplus_{i=1}^n \big(V^*(e_i)\oplus \det(V)(-2e_i)\big).\] We obtain a presentation
\[ \CC[\calW]\cong \CC[x_1,y_1,c_1,\ldots,x_n,y_n,c_n].\]
Let $J$ be the $\GL(Q,v)$-invariant ideal generated by
\[ \sum_{i=1}^n c_ix_i^2, \ \ \sum_{i=1}^n c_ix_iy_i, \ \ \sum_{i=1}^n c_iy_i^2.\]
\begin{definition}\label{def:calX}\label{def:RI}
    Let $\calX = \Spec(\CC[\calW]/J)$. The subgroup $\Delta\subseteq \GL(Q,v)$ acts trivially on $\calX$, thus $\calX$ has a $G$-action. Let
    \[ R = \CC[\varphi_{i,j},c_k \ \vert \ 1\leq i<j\leq n, \ k=1,\ldots,n].\]
    Endow $R$ with a $\ZZ^n$-grading via $\deg(\varphi_{i,j})=e_i+e_j$, and $\deg(c_k)=-2e_k$. Let $I\subseteq R$ be the homogeneous ideal generated by the following sets of elements
    \begin{enumerate}
        \item $\varphi_{i,j}\varphi_{k,l}-\varphi_{i,k}\varphi_{j,l}+\varphi_{i,l}\varphi_{j,k}$, for $1\leq i<j<k<l\leq n$.
        \item $\sum_{k=1}^n\varphi_{i,k}\varphi_{j,k}c_k$, for $1\leq i\leq j \leq n$.
    \end{enumerate}
\end{definition}
The remainder of this section is dedicated to the proof of the following theorem.
\begin{theorem}\label{thm:cox}
    Fix $n\geq 5$. The following $\ZZ^n$-graded $\CC$-algebras are isomorphic.
    \begin{enumerate}
        \item $\Cox(\mathfrak{M}_\theta)$, for $\theta\in\Theta$ generic.
        \item $\CC[\calX]^{\SL(2)}$.
    \item $R/I$.
    \end{enumerate}
\end{theorem}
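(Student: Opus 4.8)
The plan is to establish the isomorphisms $(1)\cong(2)$ and $(2)\cong(3)$ separately, with $\calX$ and its invariant ring serving as the bridge between the geometrically-defined Cox ring and the explicit presentation $R/I$.

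For the isomorphism $(2)\cong(3)$, the idea is to compute $\CC[\calX]^{\SL(2)}$ directly via classical invariant theory. First I would observe that $\CC[\calW]=\CC[x_i,y_i,c_i]$ is a polynomial ring on copies of the standard representation $V$ (the $x_i,y_i$) together with copies of $\det(V)$ (the $c_i$), all for $\SL(2)\subseteq\GL(2)$; since $\det(V)$ is trivial as an $\SL(2)$-representation, the first fundamental theorem of invariant theory for $\SL(2)$ (cf.\ \cite{Procesi}, as in the proof of Lemma \ref{lem:griso}) gives that $\CC[\calW]^{\SL(2)}$ is generated by the $c_k$ together with the $2\times 2$ determinants $\varphi_{i,j}=x_iy_j-x_jy_i$, with relations generated by the Plücker relations, i.e.\ the elements in (1) of Definition \ref{def:RI}. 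The second fundamental theorem pins down the relations. Then, since taking $\SL(2)$-invariants is exact (as $\SL(2)$ is reductive), one has $\CC[\calX]^{\SL(2)}=(\CC[\calW]/J)^{\SL(2)}=\CC[\calW]^{\SL(2)}/(J\cap\CC[\calW]^{\SL(2)})$. The generators of $J$ are precisely the image of $\Sym^2(V^*)$ under an equivariant map (exactly as in the $\widetilde D_4$-type computation sketched in the commented-out section: $\sum c_ix_i^2,\sum c_ix_iy_i,\sum c_iy_i^2$ span a copy of $\Sym^2$), so one must show that the $\SL(2)$-invariant part of the ideal $J$ is generated by the elements $\sigma_{i,j}=\sum_k\varphi_{i,k}\varphi_{j,k}c_k$ of (2). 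This is the main technical obstacle: a priori $J\cap\CC[\calW]^{\SL(2)}$ could be strictly larger than the ideal generated by the $\sigma_{i,j}$. The clean way is to decompose $J$ as an $\SL(2)$-module — $J\cong \Sym^2(V)\otimes\CC[\calW]$ as a quotient (using that the three generators span $\Sym^2(V)$ and checking there are no syzygies in low degree, or computing the Koszul resolution since the three forms are a regular sequence) — and then take invariants using that $\bigl(\Sym^2(V)\otimes\CC[\calW]\bigr)^{\SL(2)}$ is spanned by contracting $\Sym^2(V)$ against the $\Sym^2(V^*)\otimes\det(V)$-isotypic pieces built from pairs $\varphi_{i,\bullet}c_\bullet$; this recovers exactly the $\sigma_{i,j}$. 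Care is needed to confirm the three forms $\sum c_ix_i^2,\sum c_ix_iy_i,\sum c_iy_i^2$ form a regular sequence in $\CC[\calW]$ so that $J$ has the expected Koszul resolution and no spurious invariant relations appear.

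For the isomorphism $(1)\cong(2)$, I would run the strategy visible in the commented-out portion of the paper. One uses Lemma \ref{lem:griso}-style reasoning: the intermediate $\SL(2)$-quotient identifies $\MM_\theta\cong(\mu^{-1}(0)\git\SL(2))\git_\theta T$, and one has (by the relative Cox-sheaf description, Lemma \ref{lem:coxsheaf}, and the fact that $\widehat{\MM_\theta}=\overline{\MM_\theta}^{\theta\text{-ss}}$ for ample $\theta$) that $\Cox(\MM_\theta)\cong\HH^0\bigl(\mu^{-1}(0)^{\theta\text{-ss}},\oh\bigr)^{\SL(2)}$. So it suffices to produce an $\SL(2)$- and $\ZZ^n$-equivariant isomorphism $\CC[\calX]\cong\HH^0\bigl(\mu^{-1}(0)^{\theta\text{-ss}},\oh\bigr)$. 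Here I would use the two presentations $M=\mu^{-1}(0)=\underline{\Spec}\Sym(\Coker\alpha)$ and $\calX=\underline{\Spec}\Sym(\Coker\beta)$ as schemes affine over $\Rep(Q,v)$, where $\alpha\colon\oh\otimes\fk g\to\bigoplus\oh\{e_i\}^{\oplus2}$ and $\beta\colon\oh\otimes\mathfrak{sl}_2\to\bigoplus\oh\{2e_i\}$ are the maps spelled out in the excerpt, linked by the map $\Psi$. On the locus $U\subseteq\Rep(Q,v)$ where $T$ acts freely, the snake lemma applied to the relevant diagram of sheaves shows $\alpha|_U$ and $\beta|_U$ are quasi-isomorphic two-term complexes, hence $M_U\cong\calX_U$. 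Then a codimension count — $M$ is an irreducible complete intersection of dimension $3n-3$ by Crawley-Boevey, $\calX$ is a complete intersection of the same dimension, both complements $M\setminus M_U$ and $\calX\setminus\calX_U$ have codimension $\geq 2$, and $M^{\theta\text{-ss}}$ is regular being a principal $G$-bundle over the smooth $\MM_\theta$ with regular complement of codimension $\geq 2$ in $M_U$ (using Konno's and Harada–Proudfoot's descriptions of the unstable locus and the Lagrangian core) — lets one apply algebraic Hartogs' lemma twice along the chain $M^{\theta\text{-ss}}\subseteq M_U\cong\calX_U\subseteq\calX$ to conclude $\HH^0(M^{\theta\text{-ss}},\oh)\cong\HH^0(\calX,\oh)=\CC[\calX]$. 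Taking $\SL(2)$-invariants throughout gives $(1)\cong(2)$.

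In summary: the skeleton is Cox sheaf $\Rightarrow$ $\SL(2)$-invariant sections over the semistable locus of $\mu^{-1}(0)$; quasi-isomorphism of the Koszul-type complexes $\alpha$ and $\beta$ over the free locus plus Hartogs to identify this with $\CC[\calX]$; then classical $\SL(2)$ invariant theory (FFT + SFT, plus a regular-sequence/Koszul computation of $J^{\SL(2)}$) to identify $\CC[\calX]^{\SL(2)}$ with $R/I$. I expect the genuinely delicate step to be verifying that no invariant relations beyond the Plücker relations and the $\sigma_{i,j}$ are needed — i.e.\ controlling $J\cap\CC[\calW]^{\SL(2)}$ — together with the careful codimension and regularity bookkeeping required to license the two applications of Hartogs' lemma.
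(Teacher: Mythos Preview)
Your proposal is correct and follows essentially the same route as the paper: for $(1)\cong(2)$ the paper writes down the explicit map $\iota:\calX\to\mu^{-1}(0)$ (which is exactly the map induced by your $\Psi$), checks it is an isomorphism over the torus-free locus $U$, and then runs the codimension-plus-Hartogs argument using Konno and Harada--Proudfoot just as you outline; for $(2)\cong(3)$ the paper takes $\SL(2)$-invariants of the presentation $\CC[\calW]\otimes\Sym^2(V^*)\to\CC[\calW]\to\CC[\calX]\to 0$ and computes the invariant generators via the FFT, again as you propose. The one place you overcomplicate matters is the regular-sequence/Koszul verification: because $\SL(2)$ is reductive, taking invariants is exact, so the image of $\bigl(\CC[\calW]\otimes\Sym^2(V^*)\bigr)^{\SL(2)}$ in $\CC[\calW]^{\SL(2)}$ already equals $J^{\SL(2)}$ without any need to control syzygies among the three generators of $J$.
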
 
\begin{definition}
    For $\theta\in \Theta$ generic, the \emph{core} of $\MM_\theta$ is the zero fiber $\pi_\theta^{-1}(0)$. In \cite[Theorem~5.8]{NakALE} it is shown that every irreducible component of $\pi_\theta^{-1}(0)$ is a Lagrangian subvariety. In particular each irreducible component of the core has dimension $n-3$.
\end{definition}
\begin{definition}
    Fix $\theta \in \Theta$. We say $I\subseteq [n]$ is \emph{$\theta$-long} if
    \[ \sum_{i\in I}\theta_i>\sum_{j\notin I}\theta_j.\]
    and $I$ is \emph{$\theta$-short} if
    \[ \sum_{i\in I}\theta_i<\sum_{j\notin I}\theta_j.\]
    If $\theta\in \Theta$ is generic then every subset $I\subseteq[n]$ is either $\theta$-short or $\theta$-long.
    For every $I\subseteq [n]$, define an ideal 
    \[ \mathscr{I}_I = (x_iy_j-x_jy_i, z_k,w_k \ \vert \ i,j\in I, \ k\in I^c)\subseteq\CC[\mu^{-1}(0)].\]
\end{definition}
\begin{theorem}[\cite{Konno}, Theorem 4.2]\label{thm:konno}
    Let $\theta\in F$ be generic. Then 
    \[ \mu^{-1}(0)^{\theta\text{-ss}}= \mu^{-1}(0)\setminus\left(\bigcup_{ \theta\text{-long}\ I}\VV(\mathscr{I}_I)\cup\bigcup_{i=1}^n \VV(x_i,y_i)\right).\]
\end{theorem}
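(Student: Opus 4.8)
I would derive this from the Hilbert–Mumford criterion, in the form of King's numerical criterion for representations of $\overline Q$, using the moment map equations to collapse the combinatorics. Recall that $\theta\in\Theta\cong\QQ^n$, and that for the GIT problem defining $\MM_\theta$ the numerical function attached to $\theta$ sends a dimension vector $(d_0,d_1,\ldots,d_n)$ to
\[
\theta(d_0,d_1,\ldots,d_n)=-\tfrac12\Big(\sum_{j=1}^n\theta_j\Big)d_0+\sum_{i=1}^n\theta_i d_i ,
\]
which vanishes on $v=(2,1,\ldots,1)$. By King's criterion applied to $p\in\mu^{-1}(0)$ regarded as a representation of $\overline Q$ (here the semistable locus of the closed affine $G$-subvariety $\mu^{-1}(0)$ is its intersection with that of $\Rep(\overline Q,v)$), $p$ fails to be $\theta$-semistable exactly when it has a subrepresentation $N$ with $\theta(N)<0$; the sign is fixed by the observation that if $\alpha_{i_0}=0$ then $N_0=V_0$, $N_{i_0}=0$, $N_i=V_i$ ($i\ne i_0$) is a subrepresentation with $\theta(N)=-\theta_{i_0}<0$, and such $p$ lies in $\VV(x_{i_0},y_{i_0})$, hence must be unstable. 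Since $\dim V_0=2$ and $\dim V_i=1$ for $i\ge1$, a subrepresentation $N$ is the data of $d_0:=\dim N_0\in\{0,1,2\}$, a subspace $N_0\subseteq V_0$, and a set $S:=\{i\ge1:N_i=V_i\}$, subject to $\alpha_i(N_0)\subseteq N_i$ and $\beta_i(N_i)\subseteq N_0$.

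I would then dispose of the three values of $d_0$. If $d_0=0$ then $\theta(N)=\sum_{i\in S}\theta_i\ge 0$, since $\theta\in F$ generic forces every $\theta_i>0$, so nothing destabilises. If $d_0=2$ then $\alpha_i(V_0)\subseteq N_i$ forces $i\in S$ whenever $\alpha_i\ne0$, so $\theta(N)=-\sum_{i\notin S}\theta_i$, which is negative precisely when some $\alpha_i=0$; this contributes exactly $\bigcup_i\VV(x_i,y_i)$ to the unstable locus. The heart of the matter is $d_0=1$, with $N_0=\ell$ a line in $V_0$. Here the only relations defining $\mu^{-1}(0)$ that enter are $x_iz_i+y_iw_i=0$, i.e.\ $\alpha_i\beta_i=0$, equivalently $\im\beta_i\subseteq\ker\alpha_i$. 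If $i\in S$ then $\im\beta_i=\beta_i(V_i)\subseteq\ell$, and together with $\im\beta_i\subseteq\ker\alpha_i$ this forces $\beta_i=0$ whenever $\alpha_i\ne0$ and $\ker\alpha_i\ne\ell$; if $i\notin S$ then $\alpha_i(\ell)=0$, so $\ker\alpha_i=\ell$ when $\alpha_i\ne0$. Restricting to $p$ with all $\alpha_i\ne0$ (the other points being covered by the $d_0=2$ case), this shows that a destabilising $N$ with $N_0=\ell$ exists if and only if $I_\ell:=\{i:\ker\alpha_i=\ell\}$ is $\theta$-long and $\{i:\beta_i\ne0\}\subseteq I_\ell$: for the forward implication take $S=I_\ell^{c}$, an admissible subrepresentation with $\theta(N)=\tfrac12\sum_j\theta_j-\sum_{i\in I_\ell}\theta_i<0$; for the reverse, note that $S^c\subseteq I_\ell$ must be $\theta$-long and that $\beta_i=0$ for every $i\in I_\ell^{c}\subseteq S$.

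It remains to match this with the stated locus. For $p$ with all $\alpha_i\ne0$, the existence of a line $\ell$ with $I_\ell$ $\theta$-long and $\{i:\beta_i\ne0\}\subseteq I_\ell$ is equivalent to $p\in\VV(\mathscr{I}_I)$ for some $\theta$-long $I$: given the latter, the $\alpha_i$ with $i\in I$ are nonzero and pairwise proportional, hence share a kernel line $\ell$ with $I\subseteq I_\ell$, while $z_k=w_k=0$ for $k\notin I$ gives $\{i:\beta_i\ne0\}\subseteq I\subseteq I_\ell$ and $I_\ell$ is $\theta$-long; conversely take $I=I_\ell$, for which all generators of $\mathscr{I}_{I_\ell}$ vanish at $p$. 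Putting the three cases together yields
\[
\mu^{-1}(0)\setminus\mu^{-1}(0)^{\theta\text{-ss}}=\bigcup_{\theta\text{-long }I}\VV(\mathscr{I}_I)\;\cup\;\bigcup_{i=1}^n\VV(x_i,y_i),
\]
which is the assertion. The main obstacle is the $d_0=1$ case: using $\im\beta_i\subseteq\ker\alpha_i$ to kill the relevant $\beta_i$, and then reading off the surviving numerical inequality as membership in a single $\VV(\mathscr{I}_I)$, while treating carefully the degenerate subsets $|I|=1$ (for which $\mathscr{I}_I$ carries no proportionality relations) and $\{i:\beta_i\ne0\}=\emptyset$ (for which $\ell$ ranges over all lines in $V_0$ instead of being pinned down by the $\beta_i$).
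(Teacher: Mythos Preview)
Your argument is correct and complete. The paper does not supply its own proof of this statement; it is quoted verbatim as \cite[Theorem~4.2]{Konno}, and your approach via King's numerical criterion is precisely the standard route (and the one Konno takes): classify proper subrepresentations of a point $p\in\Rep(\overline Q,v)$ by $d_0=\dim N_0\in\{0,1,2\}$, use the moment-map relation $\alpha_i\beta_i=0$ to force $\operatorname{im}\beta_i\subseteq\ker\alpha_i$ in the $d_0=1$ case, and read off the destabilising condition as $\theta$-longness of $I_\ell=\{i:\ker\alpha_i=\ell\}$ together with vanishing of the $\beta_k$ for $k\notin I_\ell$. The one small wrinkle worth making explicit is the sign/normalisation of $\theta$ on dimension vectors (you fix it correctly via the $\alpha_{i_0}=0$ test) and the fact that $\mu^{-1}(0)^{\theta\text{-ss}}=\mu^{-1}(0)\cap\Rep(\overline Q,v)^{\theta\text{-ss}}$, which you note in passing; both are routine. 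The edge cases you flag ($\lvert I\rvert=1$, all $\beta_i=0$) are already handled by your argument as written, since in either situation the matching $p\in\VV(\mathscr{I}_I)\Longleftrightarrow$ (some $I_\ell\supseteq I$ is $\theta$-long and $\{i:\beta_i\ne0\}\subseteq I_\ell$) goes through unchanged.
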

\begin{theorem}[\cite{HP}, Theorem 2.2]\label{thm:HPcore}
    Let $\theta\in F$ be generic. Let $\Pi_\theta:\mu^{-1}(0)^{\theta\text{-ss}}\rar \MM_\theta$ denote the quotient morphism. Then
    \[ \pi_\theta^{-1}(0)= \Pi_\theta\left(\mu^{-1}(0)^{\theta\text{-ss}}\cap \bigcup_{I}\VV(\mathscr{I}_I)\right).\]
\end{theorem}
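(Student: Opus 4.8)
The plan is to reduce Theorem~\ref{thm:HPcore} to the identification of the null-cone of the $G$-action on $\mu^{-1}(0)$, and then to carry out that identification by a direct application of the Hilbert--Mumford criterion. For the reduction, equip $\CC[\mu^{-1}(0)]$ with the conical grading in which each of $x_i,y_i,z_i,w_i$ has weight $1$, so the defining equations of $\mu^{-1}(0)$ are homogeneous of degree $2$. Then $\CC[\MM_0]=\CC[\mu^{-1}(0)]^G$ satisfies $\CC[\MM_0]_0=\CC$, so $\MM_0$ is a cone whose vertex $0$ is the image of the closed $G$-orbit $\{0\}\subseteq\mu^{-1}(0)$; by the standard description of the fibres of an affine GIT quotient, the fibre over $0$ of $q\colon\mu^{-1}(0)\rar\MM_0=\mu^{-1}(0)\git G$ is the null-cone $\mathcal{N}:=\{p\in\mu^{-1}(0)\mid 0\in\overline{G\cdot p}\}$. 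Since $\pi_\theta$ fits in a commutative square with $q$ and the surjection $\Pi_\theta$, i.e.\ $\pi_\theta\circ\Pi_\theta=q|_{\mu^{-1}(0)^{\theta\text{-ss}}}$, we obtain $\pi_\theta^{-1}(0)=\Pi_\theta\big(\mu^{-1}(0)^{\theta\text{-ss}}\cap\mathcal{N}\big)$. So it suffices to prove $\mathcal{N}=\bigcup_{I\subseteq[n]}\VV(\mathscr{I}_I)$ as subsets of $\mu^{-1}(0)$; intersecting with $\mu^{-1}(0)^{\theta\text{-ss}}$ then yields the theorem, and Theorem~\ref{thm:konno} shows that $\VV(\mathscr{I}_I)$ meets the semistable locus only for $\theta$-short $I$, which is why the core is covered by these particular subvarieties.

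\emph{The inclusion $\bigcup_I\VV(\mathscr{I}_I)\subseteq\mathcal{N}$.} Fix $I$ and $p\in\VV(\mathscr{I}_I)$. Since $\mathcal{N}$ and each $\VV(\mathscr{I}_I)$ are $G$-stable, I may first act by an element of $\GL(V_0)$ so that $\epsilon_2$ spans the common kernel line of the $\alpha_i$, $i\in I$ (any basis if these all vanish), i.e.\ $y_i(p)=0$ for all $i\in I$; the moment-map equation $x_iz_i+y_iw_i=0$ then forces $z_i(p)=0$ whenever $i\in I$ and $\alpha_i\neq0$ (as then $x_i(p)\neq0$). Now let $\lambda$ act on $V_0$ by $\mathrm{diag}(1,t^{2})$ in the basis $\epsilon_1,\epsilon_2$ and on $V_i$ by $t^{d_i}$, with $d_i=1$ for $i\in I$, $\alpha_i\neq0$; $d_i=-1$ for $i\in I$, $\alpha_i=0$; and $d_i=3$ for $i\notin I$. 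The exponents of $t$ in $\lambda(t)\cdot p$ are $d_i$ on $x_i$, $d_i-2$ on $y_i$, $-d_i$ on $z_i$, $2-d_i$ on $w_i$; one checks in each of the three cases that every coordinate not already vanishing at $p$ has strictly positive exponent, so $\lim_{t\to0}\lambda(t)\cdot p=0$ and $p\in\mathcal{N}$. The moment-map relation is used essentially here: without it, a point of $\VV(\mathscr{I}_I)$ at which some $i\in I$ has both $\alpha_i$ and $\beta_i$ nonzero need not lie in $\mathcal{N}$.

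\emph{The inclusion $\mathcal{N}\subseteq\bigcup_I\VV(\mathscr{I}_I)$.} Let $p\in\mathcal{N}$. By Hilbert--Mumford there is a nontrivial one-parameter subgroup of $G$ contracting $p$ to $0$; conjugating (allowed since $\bigcup_I\VV(\mathscr{I}_I)$ is $G$-stable) we may take it to act diagonally, by $\mathrm{diag}(t^a,t^b)$ on $V_0$ with $a\le b$ and by $t^{d_i}$ on $V_i$. The condition $\lim_{t\to0}\lambda(t)\cdot p=0$ says, for each $i$: $x_i(p)=0$ or $d_i>a$; $y_i(p)=0$ or $d_i>b$; $z_i(p)=0$ or $d_i<a$; $w_i(p)=0$ or $d_i<b$. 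Put $I:=\{i\mid d_i\le b\}$. For $k\notin I$ we have $d_k>b\ge a$, hence $z_k(p)=w_k(p)=0$; and for $i,j\in I$, if $\alpha_i\neq0$ then $y_i(p)\neq0$ would force $d_i>b$, contradicting $i\in I$, so $y_i(p)=0$ (and likewise for $j$), whence $\varphi_{i,j}(p)=x_i(p)y_j(p)-x_j(p)y_i(p)=0$ in all cases. Thus $p\in\VV(\mathscr{I}_I)$, which completes the argument.

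The genuinely conceptual step is the reduction in the first paragraph: identifying $\pi_\theta^{-1}(0)$ with the $\Pi_\theta$-image of the null-cone rests on the conical structure of $\MM_0$ together with the compatibility of $\pi_\theta$ with the GIT quotient maps. After that the two inclusions are routine Hilbert--Mumford bookkeeping, the only real subtlety being that the moment-map equations $x_iz_i+y_iw_i=0$ must be invoked in the first inclusion to build the contracting one-parameter subgroup.
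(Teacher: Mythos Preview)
The paper does not give its own proof of this statement; it is quoted from \cite{HP} (Theorem 2.2 there) and used as a black box. Your self-contained argument via the Hilbert--Mumford criterion is correct and is a genuine addition: the reduction to identifying the null-cone of $\mu^{-1}(0)$ under $G$ is valid because $\pi_\theta\circ\Pi_\theta$ coincides with the restriction of the affine quotient $q$, and both inclusions in the identification $\mathcal{N}=\bigcup_I\VV(\mathscr{I}_I)$ go through as written. Two minor remarks: the case distinction ``if $\alpha_i\neq0$'' in the second inclusion is harmless but unnecessary, since for $i\in I$ one has $d_i\le b$, hence the exponent of $y_i$ is $\le 0$ and $y_i(p)=0$ in any case; and it is worth making explicit that every one-parameter subgroup of $G=\GL(Q,v)/\Delta$ lifts to $\GL(Q,v)$ (this holds because $\Delta\cong\CC^*$ is a torus, so any $\Delta$-torsor over $\CC^*$ is trivial), which justifies writing the diagonalised $1$-PS in the form $(\mathrm{diag}(t^a,t^b),t^{d_1},\ldots,t^{d_n})$.
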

\begin{lemma}\label{lem:codimen}
    Let $\theta_1,\theta_2\in F$ be generic. Then 
    \[ \MM_{\theta_1}\setminus\pi_{\theta_1}^{-1}(0) \cong \MM_{\theta_2}\setminus\pi_{\theta_2}^{-1}(0).\]
    Moreover $\mu^{-1}(0)^{\theta_1\text{-ss}}$ and $\mu^{-1}(0)^{\theta_2\text{-ss}}$ are isomorphic in codimension 1.
\end{lemma}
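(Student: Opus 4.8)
The plan is to combine Konno's description of the semistable locus (Theorem~\ref{thm:konno}) with Harada--Proudfoot's description of the core (Theorem~\ref{thm:HPcore}) in order to exhibit $\MM_\theta\setminus\pi_\theta^{-1}(0)$ as the geometric quotient of an open subset of $\mu^{-1}(0)$ that does \emph{not} depend on $\theta$.

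First I would recall that for generic $\theta\in F$ the $G$-action on $\mu^{-1}(0)^{\theta\text{-ss}}$ is free and $\Pi_\theta:\mu^{-1}(0)^{\theta\text{-ss}}\rar\MM_\theta$ is a principal $G$-bundle (this is implicit in Theorem~\ref{thm:BCSRW}: it is what makes $\MM_\theta$ smooth); in particular $\Pi_\theta$ is a geometric quotient with closed orbits, so every $G$-invariant closed subset of $\mu^{-1}(0)^{\theta\text{-ss}}$ is saturated. Each ideal $\mathscr{I}_I$ is generated by $G$-semi-invariant functions, hence $\mu^{-1}(0)^{\theta\text{-ss}}\cap\bigcup_{I\subseteq[n]}\VV(\mathscr{I}_I)$ is $G$-invariant and closed in $\mu^{-1}(0)^{\theta\text{-ss}}$, so it is saturated; together with Theorem~\ref{thm:HPcore} this yields
\[ \Pi_\theta^{-1}\big(\pi_\theta^{-1}(0)\big)\;=\;\mu^{-1}(0)^{\theta\text{-ss}}\cap\bigcup_{I\subseteq[n]}\VV(\mathscr{I}_I). \]
Now set $W:=\mu^{-1}(0)\setminus\big(\bigcup_{I\subseteq[n]}\VV(\mathscr{I}_I)\cup\bigcup_{i=1}^n\VV(x_i,y_i)\big)$, which is manifestly independent of $\theta$. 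Substituting Theorem~\ref{thm:konno} for $\mu^{-1}(0)^{\theta\text{-ss}}$ and noting that the $\theta$-long sets are absorbed into the union over all $I\subseteq[n]$, one obtains $W=\mu^{-1}(0)^{\theta\text{-ss}}\setminus\Pi_\theta^{-1}(\pi_\theta^{-1}(0))$ for every generic $\theta\in F$. Since $W$ is $G$-invariant, open and saturated in $\mu^{-1}(0)^{\theta\text{-ss}}$, the restriction $\Pi_\theta|_W$ is a geometric quotient of $W$ by $G$ onto the open subset $\MM_\theta\setminus\pi_\theta^{-1}(0)$. As $W$ is a fixed scheme and geometric quotients are unique, $\MM_{\theta_1}\setminus\pi_{\theta_1}^{-1}(0)\cong W\git G\cong\MM_{\theta_2}\setminus\pi_{\theta_2}^{-1}(0)$, proving the first assertion.

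For the "moreover" I would run a dimension count inside $\mu^{-1}(0)^{\theta\text{-ss}}$. Since $\Pi_\theta$ is a principal $G$-bundle and $\dim G=n+3$, we get $\dim\mu^{-1}(0)^{\theta\text{-ss}}=\dim\MM_\theta+\dim G=(2n-6)+(n+3)=3n-3$, while every component of the core $\pi_\theta^{-1}(0)$ is Lagrangian of dimension $n-3$, so $\dim\Pi_\theta^{-1}(\pi_\theta^{-1}(0))=(n-3)+(n+3)=2n$. Hence the complement of $W$ in $\mu^{-1}(0)^{\theta\text{-ss}}$ has codimension $(3n-3)-2n=n-3\geq2$ for $n\geq5$. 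Because $W$ is the same open subscheme of $\mu^{-1}(0)^{\theta_1\text{-ss}}$ and of $\mu^{-1}(0)^{\theta_2\text{-ss}}$, the two inclusions $\mu^{-1}(0)^{\theta_1\text{-ss}}\hookleftarrow W\hookrightarrow\mu^{-1}(0)^{\theta_2\text{-ss}}$ exhibit an isomorphism in codimension~$1$. (Alternatively, this last point follows from Lemma~\ref{lem:semismall} applied to the projective crepant resolutions $\MM_{\theta_1},\MM_{\theta_2}$ of the conical symplectic singularity $\MM_0$, pulled back along the $G$-bundles $\Pi_{\theta_i}$.)

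The main obstacle is the combinatorial bookkeeping that makes $W$ genuinely $\theta$-independent: one has to verify that the index family over which $\VV(\mathscr{I}_I)$ is removed in Theorem~\ref{thm:HPcore} is the full power set of $[n]$, so that the additional $\theta$-long sets removed in passing from $\mu^{-1}(0)$ to $\mu^{-1}(0)^{\theta\text{-ss}}$ contribute nothing new, and that the subset removed to form $\Pi_\theta^{-1}(\pi_\theta^{-1}(0))$ really is closed and saturated, which is where genericity of $\theta$ (equivalently, freeness of the $G$-action) is used. Once this is in place, the argument is a routine comparison of the two cited descriptions.
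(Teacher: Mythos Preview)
Your proof is correct and follows essentially the same route as the paper: both combine Theorem~\ref{thm:konno} and Theorem~\ref{thm:HPcore} to exhibit $\mu^{-1}(0)^{\theta\text{-ss}}\setminus\Pi_\theta^{-1}(\pi_\theta^{-1}(0))$ as a $\theta$-independent open set (the paper calls this $\mu^{-1}(0)^{\theta\text{-ss}}\setminus N_\theta$, your $W$), and then run the identical dimension count $(3n-3)-2n=n-3\geq 2$. Your write-up is somewhat more explicit about saturation and about why the index set in Theorem~\ref{thm:HPcore} absorbs the $\theta$-long subsets from Theorem~\ref{thm:konno}, and you obtain $\dim\mu^{-1}(0)^{\theta\text{-ss}}=3n-3$ from the $G$-bundle over $\MM_\theta$ rather than from Crawley--Boevey's result on $\mu^{-1}(0)$, but these are cosmetic differences.
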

\begin{proof}
    For $\theta\in F$ generic, let
    $N_\theta:= (\Pi_\theta\circ\pi_\theta)^{-1}(0)$. This is the preimage of the core in $\mu^{-1}(0)$. By Theorem \ref{thm:konno} together with Theorem \ref{thm:HPcore}, we see that $\mu^{-1}(0)^{\theta_1\text{-ss}}\setminus N_{\theta_1}=\mu^{-1}(0)^{\theta_2\text{-ss}}\setminus N_{\theta_2}$. Therefore we obtain that $\MM_{\theta_1}$ and $\MM_{\theta_2}$ are isomorphic away from their cores.
    
    Since $\Pi_\theta$ is a geometric quotient by $G$, where $\dim(G)=n+3$, and $\pi_\theta^{-1}(0)$ has dimension $n-3$, we see that $\dim(N_\theta)=2n$. By \cite[Theorem~1.2]{CB}, we have that $\mu^{-1}(0)$ is an irreducible complete intersection of dimension $3n-3$. Therefore $N_\theta$ is a closed subscheme of $\mu^{-1}(0)^{\theta\text{-ss}}$ of codimension $n-3$. Since we have assumed $n\geq 5$, we see that $N_\theta$ has codimension $\geq 2$ in $\mu^{-1}(0)^{\theta\text{-ss}}$. Hence $\mu^{-1}(0)^{\theta_1\text{-ss}}$ and $\mu^{-1}(0)^{\theta_2\text{-ss}}$ contain the same codimension 1 points.
\end{proof}
\begin{proposition}\label{prop:calXmap}
    There is a $G$-equivariant morphism $\iota:\calX \rar \mu^{-1}(0)$ such that for all generic $\theta\in F$ we have that $\iota\vert_{\iota^{-1}(\mu^{-1}(0)^{\theta\text{-ss}})}$ is an isomorphism. Moreover $\calX$ is normal and $\iota^{-1}(\mu^{-1}(0)^{\theta\text{-ss}})$ has complement of codimension $\geq 2$, so restriction of functions induces an isomorphism
    \[ \CC[\calX]\cong \Gamma(\mu^{-1}(0)^{\theta\text{-ss}},\oh_{\mu^{-1}(0)^{\theta\text{-ss}}}).\]
\end{proposition}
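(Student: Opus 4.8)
The plan is to write the map $\iota$ down explicitly on coordinate rings, to recognise it as an isomorphism over the open locus where the torus acts freely, and then to extract the statement about global functions from a codimension‑$2$ estimate and algebraic Hartogs' lemma. Concretely: since $\CC[\mu^{-1}(0)]$ is $\CC[x_i,y_i,z_i,w_i]$ modulo the moment relations $\sum_i y_iz_i$, $\sum_i x_iw_i$, $\sum_i x_iz_i$, $\sum_i y_iw_i$ and $x_iz_i+y_iw_i$ $(1\le i\le n)$, I would define $\iota^\sharp\colon\CC[\mu^{-1}(0)]\to\CC[\calX]$ by $x_i\mapsto x_i$, $y_i\mapsto y_i$, $z_i\mapsto y_ic_i$, $w_i\mapsto -x_ic_i$; a one‑line check shows each moment relation lands in $J$ (for instance $\sum_i y_iz_i\mapsto\sum_i c_iy_i^2$ and $x_iz_i+y_iw_i\mapsto 0$), so this defines $\iota\colon\calX\to\mu^{-1}(0)$. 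Equivariance is formal: in each summand the assignment $z_i,w_i\mapsto y_ic_i,-x_ic_i$ realises the $\GL(2)$‑isomorphism $V^*\xrightarrow{\sim}V\otimes\det(V)^{-1}$, and the $\ZZ^n$‑degrees agree because $\deg(y_ic_i)=-e_i=\deg(z_i)$.

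Next I would establish the isomorphism over the free locus. Let $U=\Rep(Q,v)\setminus\bigcup_i\VV(x_i,y_i)$ and write $\calX_U=\iota^{-1}(U)$. The basic opens $\{\prod_i\sigma(i)_i\neq0\}$, for $\sigma\in\{x,y\}^{[n]}$, cover $U$, and on each the relation $x_iz_i+y_iw_i=0$ lets one solve $c_i=z_i/y_i$ or $c_i=-w_i/x_i$; one checks $(x_i,y_i,z_i,w_i)\mapsto(x_i,y_i,c_i)$ lands in $\calX_U$ and inverts $\iota$, and the local inverses glue. So $\iota\colon\calX_U\xrightarrow{\sim}\mu^{-1}(0)_U$. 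Since $\mu^{-1}(0)^{\theta\text{-ss}}\subseteq\mu^{-1}(0)_U$ for generic $\theta\in F$ by Theorem~\ref{thm:konno}, $\iota$ restricts to an isomorphism $\calX^\theta\xrightarrow{\sim}\mu^{-1}(0)^{\theta\text{-ss}}$, where $\calX^\theta:=\iota^{-1}(\mu^{-1}(0)^{\theta\text{-ss}})$. (One could instead deduce the isomorphism over $U$ from a snake‑lemma comparison of two‑term complexes built from the split sequence $1\to\SL(2)\to G\to T\to1$.)

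For normality: the three generators of $J$ are $\CC[\calW]$‑linear in $c_1,\dots,c_n$ with coefficient vectors $(x_i^2)$, $(x_iy_i)$, $(y_i^2)$, which are linearly independent over $\CC(x_1,y_1,\dots,x_n,y_n)$ for $n\ge3$; hence they form a regular sequence and $\calX$ is a complete intersection, pure of dimension $3n-3$ and Cohen--Macaulay. It is reduced and irreducible: $\calX_U\cong\mu^{-1}(0)_U$ is a variety of dimension $3n-3$ (as $\mu^{-1}(0)$ is an irreducible complete intersection of that dimension, \cite[Theorem~1.2]{CB}), while $\calX\setminus\calX_U=\bigcup_i(\calX\cap\VV(x_i,y_i))$ is a finite union of closed sets each isomorphic to an affine line times the analogous $\calX$ for $n-1$ points, hence of dimension $\le 3(n-1)-3+1=3n-5$, so $\calX_U$ is dense. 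Granting the codimension bound below, every codimension‑$1$ point of $\calX$ lies in $\calX^\theta\cong\mu^{-1}(0)^{\theta\text{-ss}}$, which is smooth (a principal $G$‑bundle over the smooth $\MM_\theta$, cf.\ Theorem~\ref{thm:BCSRW}); so $\calX$ is regular in codimension $1$, and being Cohen--Macaulay it is normal. Algebraic Hartogs' lemma then gives $\CC[\calX]=\Gamma(\calX,\oh_\calX)\xrightarrow{\sim}\Gamma(\calX^\theta,\oh)\cong\Gamma(\mu^{-1}(0)^{\theta\text{-ss}},\oh)$, the last isomorphism being pullback along $\iota$; this is the asserted identification.

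The main obstacle is the codimension bound $\codim_\calX(\calX\setminus\calX^\theta)\ge2$. Since $\calX\setminus\calX_U$ already has codimension $\ge2$, it reduces to $\dim(\mu^{-1}(0)_U\setminus\mu^{-1}(0)^{\theta\text{-ss}})\le 3n-5$. By Theorem~\ref{thm:konno} this locus is $\bigcup_{\theta\text{-long }I}\VV(\mathscr{I}_I)\cap\mu^{-1}(0)\cap U$; over $U$ the vectors $(x_i,y_i)$ with $i\in I$ are pinned to a common line and $z_k=w_k=0$ for $k\notin I$, and feeding this into the moment equations leaves at most $2n$ free parameters, so each member has dimension $\le 2n$. (Equivalently one reads $\dim\le2n$ off the Harada--Proudfoot core description, Theorem~\ref{thm:HPcore} and the count in Lemma~\ref{lem:codimen}: the core $\pi_\theta^{-1}(0)$ is $(n-3)$‑dimensional, $\Pi_\theta$ is a $G$‑bundle with $\dim G=n+3$, and the ``all $a_i$ proportional'' contribution is, modulo $T$, an $(n-1)$‑plane bundle over $\PP^1$.) Summing over the finitely many $I$ gives $\dim\le 2n\le 3n-5$, the final inequality being precisely $n\ge5$ (the true codimension is $n-3$). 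I expect this dimension bookkeeping, together with the correct invocation of Konno's and Harada--Proudfoot's descriptions, to be the only genuinely delicate part.
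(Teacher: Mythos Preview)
Your proposal is correct and follows essentially the same approach as the paper: the explicit map $\iota^\sharp$, the inverse over $U=\{(x_i,y_i)\neq0\ \forall i\}$, the complete intersection and $R_1$ argument for normality, and the codimension bound via $\dim\le 2n$ on each unstable piece. The only noteworthy difference is organisational: the paper factors the codimension estimate through the intermediate set $\mu^{-1}(0)^{F\text{-ss}}=\bigcup_{\theta}\mu^{-1}(0)^{\theta\text{-ss}}$, handling the pieces $\VV(\mathscr{I}_I)$ with $I\subsetneq[n]$ via Lemma~\ref{lem:codimen} (the core preimage has dimension $2n$) and then treating $I=[n]$ by the direct count you give, whereas you bound every $\VV(\mathscr{I}_I)\cap U$ uniformly by the same elementary parameter count --- this is a mild streamlining and you even note the paper's route as an alternative.
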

\begin{proof}
    We begin by constructing the morphism. A presentation of $\CC[\mu^{-1}(0)]$ is given above Definition \ref{def:hyperpolygon}. Consider the algebra map $\iota^*:\CC[\Rep(\overline{Q},v)]\rar \CC[\calX]$ defined on generators by
    \[ x_i\mapsto x_i, \ \ y_i \mapsto y_i, \ \ z_i \mapsto c_iy_i, \ \ w_i \mapsto -c_ix_i.\]
    The ideal defining $\mu^{-1}(0)$ in $\CC[x_1,y_1,z_1,w_1,\ldots,x_n,y_n,z_n,w_m]$ is generated by
    \begin{align*}
        \sum_{i=1}^n y_iz_i, \ \sum_{i=1}^n x_iw_i, \ \sum_{i=1}^n x_iz_i, \ \sum_{i=1}^ny_iw_i, \ \text{and }
        x_iz_i+y_iw_i, \ \text{for }i=1,\ldots,n.
    \end{align*}
    We have $f(\sum_{i=1}^n y_iz_i)=\sum_{i=1}^n c_iy_i^2\in J$, and $f(x_iz_i+y_iw_i)=x_ic_iy_i-y_ic_ix_i=0$, for all $i\in[n]$. Similarly all generators of the ideal can be seen to have image inside $J$, hence $\iota^*$ factors through $\CC[\mu^{-1}(0)]$. We have that $c_i\in \det(V^*)(2e_i)$, $x_i,y_i\in V(-e_i)$ and $z_i,y_i\in V^*(e_i)$. Thus $c_iy_i,-c_ix_i\in \det(V^*)\otimes V(e_i)\cong V^*(e_i)$, and from this one deduces that $\iota^*$ respects the $G$-action.

    Let $\iota:\calX \rar \mu^{-1}(0)$ denote the $G$-equivariant morphism induced by $\iota^*$.
    Let 
    \[U:=\{p\in\mu^{-1}(0) \ \vert \ (p_i,q_i)\neq 0, \ \text{for all }i=1,\ldots n\}\subseteq\mu^{-1}(0).\]
    On $U$ there is an inverse morphism given on functions by
    \[ x_i\mapsto x_i, \ \ y_i \mapsto y_i, \ \ c_i \mapsto \frac{z_i}{y_i}=-\frac{w_i}{x_i}.\]
    Thus $\iota\vert_{\iota^{-1}(U)}$ is an isomorphism.
    Let $i\in [n]$. Suppose $\theta \in F$ such that $\theta_i>0$, then $(\CC[\mu^{-1}(0)]/(x_i,y_i))_{\theta}=0$, since any homogeneous function such that the $i$-th component of its degree is positive must lie in the ideal $(x_i,y_i)$. Therefore for all $\theta\in F^\circ$, we have that $\mu^{-1}(0)^{\theta\text{-ss}}\subseteq U$.

     We claim $\calX$ is normal. Let $\calX_{n-1}$ denote the variety given by the same presentation as $\calX$, but with $n-1$ replacing $n$. Then $\dim(\calX_{n-1})< 3n-3$, since it is a closed subscheme defined by a non-zero ideal in $\CC^{3n-3}$. Now note that $\CC[\calX]/(x_i,y_i)\cong \CC[\calX_{n-1}\times \CC]$. Therefore the closed subschemes in $\calX$ defined by the ideals $(x_i,y_i)$ have dimension $< 3n-2$. Since $\calX$ contains a dense open subset $\iota^{-1}(U)$ of dimension $3n-3$, and the complement of $U$ has dimension $<3n-3$, $\calX$ is an irreducible complete intersection. By the unmixedness theorem \cite[Theorem~17.6]{Matsumura}, $\calX$ is reduced. By the same reasoning $\dim(\calX_{n-1})=3n-6$, and the complement of $\iota^{-1}(U)$ has codimension $\geq 2$. Therefore $\iota$ induces an isomorphism on the local rings of all codimension 1 points in $\calX$. Since $\mu^{-1}(0)$ is normal, it is regular in codimension 1, therefore $\calX$ is regular in codimension 1. Since $\calX$ is a complete intersection this suffices to show $\calX$ is normal (see II.8.23 in \cite{Hartshorne}).

    Define
    \[ \mu^{-1}(0)^{F\text{-ss}}=\bigcup_{\substack{\theta\in F \\ \mathrm{generic}}}\mu^{-1}(0)^{\theta\text{-ss}}.\]
    It follows from Lemma \ref{lem:codimen} that the inclusion $\mu^{-1}(0)^{\theta\text{-ss}}\subseteq \mu^{-1}(0)^{F\text{-ss}}$ is an isomorphism in codimension 1. Moreover it follows from Theorem \ref{thm:konno}, that $\mu^{-1}(0)^{F\text{-ss}}\subseteq U$.

    We claim that $\iota^{-1}(\mu^{-1}(0)^{\theta\text{-ss}})$ has complement of codimension $\geq 2$ in $\calX$. If this holds, then since $\calX$ is normal, we obtain the isomorphism given by restriction of functions
    \[ \CC[\calX]\cong \Gamma(\mu^{-1}(0)^{\theta\text{-ss}},\oh_{\mu^{-1}(0)^{\theta\text{-ss}}}),\]
    by Hartogs' lemma (see II.6.3A in \cite{Hartshorne}).

    We know that $\mu^{-1}(0)^{\theta\text{-ss}}\subseteq \mu^{-1}(0)^{F\text{-ss}}$ is an isomorphism in codimension 1, and that $U\cong \iota^{-1}(U)\subseteq \calX$ is an isomorphism in codimension 1. Thus the claim reduces to showing that $\mu^{-1}(0)^{F\text{-ss}}\subseteq U$ is an isomorphism in codimension 1.

    It follows from Theorem \ref{thm:konno} that
    \[ \mu^{-1}(0)^{F\text{-ss}}=\mu^{-1}(0)\setminus \left( \VV(\mathscr{I}_{[n]})\cup\bigcup_{i=1}^n\VV(x_i,y_i)\right) = U\setminus \left(\VV(\mathscr{I}_{[n]})\cap U\right).\]
    We describe $Z:=\iota^{-1}\left(\VV(\mathscr{I}_{[n]})\cap U\right)$. Let 
    \[Z'=\VV(x_iy_j-x_jy_i \ \vert \ i,j\in[n])\subseteq \Spec(\CC[x_1,y_1,c_1,\ldots,x_n,y_n,c_n]).\] 
    Let
    \[ U' = \Spec(\CC[x_1,y_1,c_1,\ldots,x_n,y_n,c_n]) \setminus \bigcup_{i=1}^n\VV(x_i,y_i).\]
    Then note $U'\cap Z'$ is just the product of $\CC^n$ with the space of $n$ non-zero pairwise linearly independent vectors in $\CC^2$, therefore $Z'$ is integral and $\dim(U'\cap Z')= 2n+1$. We have that $Z = \calX \cap U'\cap Z'$. The equations in the ideal $J$ defining $\calX$ give non-zero relations in the coordinate functions of the $\CC^n$ factor (see Definition \ref{def:calX}). Thus $\dim(Z)\leq 2n$. Therefore $Z$ has codimension $\geq 2$ in $\iota^{-1}(U)$, finishing the proof of the proposition.
\end{proof}
\begin{corollary}
    Let $\theta\in \Theta$ be generic. There is an isomorphism of $\ZZ^n$-graded $\CC$-algebras
    \[ \Cox(\MM_\theta)\cong \CC[\calX]^{\SL(2)}.\]
\end{corollary}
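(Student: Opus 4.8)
The plan is to deduce this from Proposition~\ref{prop:calXmap} together with the standard description of the Cox ring of a smooth geometric quotient by a reductive group as a ring of semi-invariant functions on the semistable locus. Since the Cox ring depends only on the isomorphism class of $\MM_\theta$, and since by Theorem~\ref{thm:BCSRW} every smooth $\MM_\theta$ is isomorphic over $\MM_0$ to $\MM_{\theta'}$ for some generic $\theta'$ in the positive orthant $F$, I would first reduce to the case $\theta\in F$ generic, which is the hypothesis under which Proposition~\ref{prop:calXmap} applies.

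Next, applying $\SL(2)$-invariants to the $G$-equivariant isomorphism $\CC[\calX]\cong\Gamma(\mu^{-1}(0)^{\theta\text{-ss}},\oh)$ of Proposition~\ref{prop:calXmap} gives $\CC[\calX]^{\SL(2)}\cong\Gamma(\mu^{-1}(0)^{\theta\text{-ss}},\oh)^{\SL(2)}$. Because $\SL(2)$ is connected and has no non-trivial characters, every $\chi\in G^\vee$ factors through the quotient homomorphism $G\rar T$; hence the space of $\chi$-semi-invariants is automatically $\SL(2)$-invariant, and decomposing the $T$-module $\Gamma(\mu^{-1}(0)^{\theta\text{-ss}},\oh)^{\SL(2)}$ into its weight spaces yields the $\ZZ^n$-graded identification
\[ \CC[\calX]^{\SL(2)}\;\cong\;\bigoplus_{\chi\in T^\vee}\Gamma(\mu^{-1}(0)^{\theta\text{-ss}},\oh)_\chi .\]

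It then remains to match the right-hand side with $\Cox(\MM_\theta)$. For generic $\theta$ the morphism $\Pi_\theta:\mu^{-1}(0)^{\theta\text{-ss}}\rar\MM_\theta$ is a principal $G$-bundle --- semistability coincides with stability and the $\GL(Q,v)$-stabiliser of a $\theta$-stable representation is exactly $\Delta$, so $G$ acts freely --- a fact built into the construction of the crepant resolutions in \cite{BCSRW} and also extractable from King's stability criterion \cite{King}. Descent along this bundle identifies, for each $\chi\in T^\vee$, the $\chi$-semi-invariants with $\Gamma(\MM_\theta,L_\theta(\chi))$, where $L_\theta:T^\vee\rar\Pic(\MM_\theta)$ is the linearisation map. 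Since $L_\theta$ is an isomorphism (this uses that $\MM_\theta$ is a simply connected symplectic resolution with $\Pic(\MM_\theta)$ free of rank $n$, as in \cite{BCSRW}, \cite{Namikawa}), summing over $\chi$ gives a $\ZZ^n\cong\Pic(\MM_\theta)$-graded isomorphism with $\bigoplus_{\lambda}\Gamma(\MM_\theta,L_\lambda)=\Cox(\MM_\theta)$, completing the proof.

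I expect the main obstacle to be the two external inputs about $\MM_\theta$ used in the last step: that $\Pi_\theta$ is a genuine principal $G$-bundle (equivalently, semistable $=$ stable and the action is free), and that the linearisation map $L_\theta:T^\vee\rar\Pic(\MM_\theta)$ is an \emph{isomorphism} rather than merely an injection --- this surjectivity is exactly what guarantees that the $\ZZ^n$-grading on $\CC[\calX]^{\SL(2)}$ exhausts the full $\Pic(\MM_\theta)$-grading of the Cox ring. Granting these, the weight bookkeeping and section-descent in the intermediate steps are routine.
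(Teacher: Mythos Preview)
Your proposal is correct and follows essentially the same route as the paper's proof. The paper is more terse: it cites \cite[Theorem~1.2]{BCSRW} for the isomorphism $L_\theta:G^\vee\to\Pic(\MM_\theta)$ and \cite[Theorem~3.2]{alinew} for the identification $\Gamma(\mu^{-1}(0)^{\theta\text{-ss}},\oh)^{\SL(2)}\cong\Cox(\MM_\theta)$, then invokes Proposition~\ref{prop:calXmap} --- which is exactly the descent-and-linearisation argument you spell out in detail, including your explicit reduction to $\theta\in F$.
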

\begin{proof}
    It is shown in Theorem 1.2 of \cite{BCSRW} that there is an isomorphism
    \begin{align*}
        L_\theta: G^\vee &\longrightarrow \Pic(\MM_\theta), \\
        \chi &\longmapsto (\pi_{\theta})_*(\oh_{\mu^{-1}(0)^{\theta\text{-ss}}}\otimes \chi)^G.
    \end{align*}
    This is the homomorphism given by equivariant descent. Theorem 3.2 in \cite{alinew} provides an isomorphism 
    \[ \Gamma(\mu^{-1}(0)^{\theta\text{-ss}},\oh_{\mu^{-1}(0)^{\theta\text{-ss}}})^{\SL(2)}\cong \Cox(\MM_\theta).\]
    Thus the claim follows from Proposition \ref{prop:calXmap}.
\end{proof}
Now we compute an explicit presentation of $\CC[\calX]^{\SL(2)}$.
\begin{lemma}
    Let $\varphi_{i,j}:=x_iy_j-x_jy_i\in \CC[\calX]$, for $1\leq i<j\leq n$. The $\SL(2)$-invariant subalgebra $\CC[\calX]^{\SL(2)}$ is generated by $\varphi_{i,j}$, for $1\leq i<j\leq n$ and $c_k$, for $k=1,\ldots,n$.
\end{lemma}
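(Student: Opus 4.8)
The plan is to describe the $\SL(2)$-invariants of the ambient polynomial ring $\CC[\calW]$ by the first fundamental theorem of invariant theory, and then to transfer this to the quotient $\CC[\calX]=\CC[\calW]/J$ using linear reductivity of $\SL(2)$.

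First I would note that $\det(V)$, and hence its dual, is the trivial representation of $\SL(2)$, so $\SL(2)$ acts trivially on each coordinate $c_i$ and acts on the span of $x_i,y_i$ as a copy of the standard representation. Thus, as $\SL(2)$-algebras, $\CC[\calW]\cong\CC[V^{\oplus n}]\otimes_\CC\CC[c_1,\ldots,c_n]$, with $\SL(2)$ acting only on the first tensor factor. Since tensoring over $\CC$ with the (trivially acted-on) polynomial ring $\CC[c_1,\ldots,c_n]$ is exact, taking invariants commutes with it, giving $\CC[\calW]^{\SL(2)}\cong\CC[V^{\oplus n}]^{\SL(2)}\otimes_\CC\CC[c_1,\ldots,c_n]$. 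By the first fundamental theorem for $\SL(2)$ (\cite[11.1.2]{Procesi}, as in the proof of Lemma \ref{lem:griso}), the ring $\CC[V^{\oplus n}]^{\SL(2)}$ is generated by the determinants $\varphi_{i,j}=x_iy_j-x_jy_i$. Hence $\CC[\calW]^{\SL(2)}$ is generated as a $\CC$-algebra by the $\varphi_{i,j}$ together with the $c_k$.

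Next I would use reductivity. The ideal $J$ is $\GL(Q,v)$-invariant by construction (Definition \ref{def:calX}), in particular $\SL(2)$-invariant, so the short exact sequence $0\to J\to\CC[\calW]\to\CC[\calX]\to 0$ of $\SL(2)$-modules remains exact after applying the exact functor of $\SL(2)$-invariants. Therefore the induced map $\CC[\calW]^{\SL(2)}\to\CC[\calX]^{\SL(2)}$ is surjective, and $\CC[\calX]^{\SL(2)}$ is generated by the images of the $\varphi_{i,j}$ and the $c_k$, which are precisely the asserted generators.

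No step here is a genuine obstacle; the only points that need a moment's attention are that $\det(V)$ (and its dual) really is an $\SL(2)$-trivial representation, so that the $c_i$ are honestly invariant, and that invariants of $\CC[\calW]$ surject onto invariants of its quotient, which is the standard consequence of linear reductivity of $\SL(2)$. The harder work — identifying the kernel of the resulting surjection $R\to\CC[\calX]^{\SL(2)}$ with the ideal $I$ — lies outside this lemma and forms part of the remaining proof of Theorem \ref{thm:cox}.
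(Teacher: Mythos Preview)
Your proof is correct and follows essentially the same route as the paper: apply the first fundamental theorem for $\SL(2)$ to $\CC[V^{\oplus n}]$, note that the $c_k$ are $\SL(2)$-invariant since $\det(V)$ restricts trivially, and then use linear reductivity to pass from $\CC[\calW]^{\SL(2)}$ to $\CC[\calX]^{\SL(2)}$ via the equivariant surjection. The paper's version is terser but makes exactly these three moves.
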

\begin{proof}
    B the first fundamental theorem of invariant theory for $\SL$, we have that $\CC[V^{\oplus n}]^{\SL(2)}$ is generated by $\varphi_{i,j}$, for $1\leq i <j \leq n$. Moreover since $\SL(2)$ acts trivially on $\det(V)$ we have that $c_i$ is $\SL(2)$-invariant, for $i\in [n]$. Since there is an $\SL(2)$-equivariant surjection $\CC[(V\oplus\CC)^{\oplus n}]\rar \CC[\calX]$ the result follows since $\SL(2)$ is reductive, thus taking invariants is right exact.
\end{proof}
\begin{lemma}There is an isomorphism of $\ZZ^n$-graded $\CC$-algebras
    \[ \CC[\calX]^{\SL(2)}\cong R/ I,\]
    where $R$ and $I$ are as given in Definition \ref{def:RI}.
\end{lemma}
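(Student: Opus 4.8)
The plan is to use linear reductivity of $\SL(2)$ to turn the statement into a concrete computation with $\SL(2)$-covariants. Write $A := \CC[\calW]$, so that $\calX = \Spec(A/J)$ with $J$ the $\GL(Q,v)$-invariant ideal of Definition~\ref{def:RI}. Applying the exact functor $(-)^{\SL(2)}$ to the short exact sequence $0 \to J \to A \to A/J \to 0$ gives $\CC[\calX]^{\SL(2)} \cong A^{\SL(2)}/J^{\SL(2)}$, where $J^{\SL(2)} := J\cap A^{\SL(2)}$. Since $\SL(2)$ acts trivially on the $c_k$, the first fundamental theorem of invariant theory for $\SL(2)$ gives $A^{\SL(2)} = \CC[c_1,\dots,c_n]\otimes\CC[x_i,y_i]^{\SL(2)}$, generated by the $c_k$ together with the $2\times 2$ minors $\varphi_{i,j}=x_iy_j-x_jy_i$, and the second fundamental theorem says the kernel of the resulting graded surjection $R \to A^{\SL(2)}$ is exactly the ideal generated by the Plücker relations, i.e.\ family~(1) of $I$. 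Thus $R/(\text{family }(1)) \cong A^{\SL(2)}$, and it remains only to identify $J^{\SL(2)}$, as an ideal of $A^{\SL(2)}$, with the ideal generated by the elements $\sigma_{i,j} := \sum_{k=1}^n\varphi_{i,k}\varphi_{j,k}c_k$ of family~(2) (with the conventions $\varphi_{k,k}=0$, $\varphi_{k,j}=-\varphi_{j,k}$). One inclusion is immediate from the identity $\sigma_{i,j} = y_iy_j\sum_k c_kx_k^2 - (x_iy_j+x_jy_i)\sum_k c_kx_ky_k + x_ix_j\sum_k c_ky_k^2 \in J$; the substance is the reverse inclusion.

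To compute $J^{\SL(2)}$ I would first observe that the three generators $\sum_k c_kx_k^2,\ \sum_k c_kx_ky_k,\ \sum_k c_ky_k^2$ of $J$ span a subrepresentation of $A$ isomorphic to $\Sym^2 V$, so that $J$ is the image of the $A$-linear contraction $\phi\colon A\otimes_\CC\Sym^2 V \to A$. A Reynolds-operator argument then shows $J^{\SL(2)} = \phi\bigl((A\otimes\Sym^2 V)^{\SL(2)}\bigr)$, and since $\phi$ is $A^{\SL(2)}$-linear this is precisely the $A^{\SL(2)}$-ideal generated by the images under $\phi$ of any generating set of the $A^{\SL(2)}$-module $(A\otimes\Sym^2 V)^{\SL(2)}$. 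Using triviality of the $c_k$-action once more, $(A\otimes\Sym^2 V)^{\SL(2)} = \CC[c_1,\dots,c_n]\otimes M$, where $M := (\CC[x_i,y_i]\otimes\Sym^2 V)^{\SL(2)}$ is the module of $\Sym^2 V$-covariants of $n$ binary linear forms; so it suffices to exhibit generators of $M$ over $\CC[x_i,y_i]^{\SL(2)} = \CC[\varphi_{i,j}]$.

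The crux — and the step I expect to be the main obstacle — is the claim that $M$ is generated over $\CC[\varphi_{i,j}]$ by the products $\ell_i\ell_j$ for $1\le i\le j\le n$, where $\ell_i$ is the tautological linear covariant, i.e.\ the equivariant map $V \to \CC[x_i,y_i]$ identifying $V$ with $\mathrm{span}(x_i,y_i)$. I would prove this by adjoining one extra pair of variables $s,t$ transforming as $V$: then $(\CC[x_i,y_i]\otimes\Sym^d V)^{\SL(2)}$ is the degree-$d$-in-$(s,t)$ graded piece of $\CC[x_1,y_1,\dots,x_n,y_n,s,t]^{\SL(2)}$, and by the first fundamental theorem the latter ring (the homogeneous coordinate ring of $\mathrm{Gr}(2,n+1)$) is generated by the brackets $\varphi_{\alpha,\beta}$ for $0\le\alpha<\beta\le n$; here $\varphi_{0,i} = sy_i - tx_i$ is the generator of $(s,t)$-degree $1$ and corresponds to $\ell_i$, while $\varphi_{i,j}$ for $1\le i<j\le n$ has $(s,t)$-degree $0$. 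Hence the degree-$2$ piece $M$ is spanned over $\CC[\varphi_{i,j}]$ by the products $\varphi_{0,i}\varphi_{0,j}=\ell_i\ell_j$, which is the claim. A final short calculation of the contraction — writing $\ell_i\ell_j = y_iy_j\otimes s^2 - (x_iy_j+x_jy_i)\otimes st + x_ix_j\otimes t^2$ and contracting against the identification of $\Sym^2 V=\langle s^2,st,t^2\rangle$ with the span of the generators of $J$ (which is pinned down by equivariance) — yields $\phi(\ell_i\ell_j) = \sum_k c_k\varphi_{i,k}\varphi_{j,k} = \sigma_{i,j}$. Therefore $J^{\SL(2)} = (\sigma_{i,j} : 1\le i\le j\le n)$, and combined with the first paragraph this gives $\ker(R \to \CC[\calX]^{\SL(2)}) = I$, proving the isomorphism $\CC[\calX]^{\SL(2)}\cong R/I$ of $\ZZ^n$-graded algebras.
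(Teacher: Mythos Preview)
Your proof is correct and follows essentially the same route as the paper: both identify $A^{\SL(2)}$ with $R/(\text{Pl\"ucker})$ via the first and second fundamental theorems, recognise the span of the three generators of $J$ as a copy of $\Sym^2 V$, and then compute $(A\otimes\Sym^2 V)^{\SL(2)}$ by adjoining an extra pair of variables transforming as $V$ and applying the first fundamental theorem to $n{+}1$ copies, so that the degree-$2$ covariants are the $\ell_i\ell_j$ with images $\sigma_{i,j}$. Your write-up is more explicit about the Reynolds/exactness step and the $A^{\SL(2)}$-module structure, but the argument is the same.
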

\begin{proof}   
    Recall $\CC[\calY]=\CC[\oplus_{i=1}^n V^*(e_i)]$ (see the discussion above Definition \ref{def:gr}). Let $M:=\oplus_{i=1}^n \CC[\calY](-2e_i)$, we have that 
    \[\Sym_{\CC[\calY]}(M)=\CC[\calY][c_1,\ldots,c_n]\cong\CC[\calW].\] Note that $\deg(c_i)=-2e_i$ and $c_i$ is $\SL(2)$-invariant for each $i$. Let 
    \[I_0=(\varphi_{i,j}\varphi_{k,l}-\varphi_{i,k}\varphi_{j,l}+\varphi_{i,l}\varphi_{j,k} \ \vert \ 1\leq i<j<k<l\leq n).\] By the first fundamental theorem of invariant theory we have $\CC[\calW]^{\SL(2)}=R/I_0$.

    As a representation of $\SL(2)$, the vector space spanned by the three equations in $\CC[\calW]$ defining $\Sigma^{-1}(0)$ is isomorphic to $\Sym^2(V^*)$. This induces an $\SL(2)$-equivariant presentation of $\CC[\calY]$-modules
    \[ \CC[\calW]\otimes \Sym^2(V^*) \rar \CC[\calW] \rar \CC[\calX]\rar 0.\]
    Taking $\SL(2)$-invariants we obtain
    \[ (\CC[\calW]\otimes \Sym^2(V^*))^{\SL(2)} \rar R/I_0 \rar \CC[\calX]^{\SL(2)}\rar 0.\]
    Thus it suffices to find the images of the generators of $(\CC[\calW]\otimes \Sym^2(V^*))^{\SL(2)}$ in $R$.
    Consider the $\SL(2)$-equivariant $\CC[\calY]$-module $\CC[\calW]\otimes_\CC \CC[V]$. This module contains $\CC[\calW]\otimes \Sym^2(V^*)$ as the $\CC[\calY]$-submodule consisting of functions of degree 2 on $V^*$.
    
    Let $\CC[V^*]\cong\CC[w_1,w_2]$. By the first fundamental theorem for invariant theory the $\SL(2)$-invariant functions of degree 2 on $V^*$ are given by
    \[ (x_iw_1-y_iw_2)(x_jw_1-y_jw_2), \ \text{for } 1\leq i \leq j \leq n.\]
    The image of such an element in $\CC[\calW]$ under the presentation $\CC[\calW]\rar \CC[\calX]$ is given by
\[ \sum_{k=1}^n (x_ix_jx_k^2 - x_ky_k(x_iy_j+x_jy_i) + y_iy_jy_k^2)c_k = \sum_{k=1}^n\varphi_{i,k}\varphi_{j,k}c_k.\]
    Therefore $\CC[\calX]^{\SL(2)}\cong R/I$.
\end{proof}
\section{Non-projective polygon spaces}
\subsection{The orbit cones of $Y$.}
Recall from Definition \ref{def:polyquiver} that $\calY=~\Rep(Q,v)$ is the vector space of representations of the undoubled star-shaped quiver with dimension vector $v$. By Lemma \ref{lem:griso} the affine quotient $Y=\calY\git\SL(2)$ is isomorphic to the affine cone over the Grassmannian $\mathrm{Gr}(2,n)$ under its Plücker embedding. The homogeneous coordinate ring $\CC[\mathrm{Gr}(2,n)]$ is endowed with a $\ZZ^n$-grading, where the degree of the Plücker coordinate $\varphi_{i,j}$ is $e_i+e_j$. The projective GIT quotients arising from this setup are well-studied and included as introductory examples in many texts on GIT (see \cite[Chapter~3]{GIT} or \cite[Chapter~11]{Dolgachev}). By Proposition \ref{prop:cox} the affine variety $Y$ is isomorphic to the spectrum of the Cox ring of a smooth projective variety, hence $Y$ is normal \cite[Theorem~1.5.1.1]{ADHL}.

The $\ZZ^n$-grading on $\CC[Y]$ induces an action of a torus $T\cong(\CC^*)^n$. Since $Y$ is a normal affine variety there is an open subset $Y^\free$ where the torus acts freely. We begin by classifying the orbit cones in $\Omega_Y$ and those in the subset $\Omega_Y^\free=\{\omega_y\in \Omega_Y \ \vert \ y\in Y^\free\}$.
\begin{definition}
    Let $I\subseteq[n]$, and let $P$ be a partition of $I$. Let 
\[ \omega_P\vcentcolon=\mathrm{Cone}(e_i+e_j \ \vert \ \{i,j\}\not\subseteq J, \ \text{for all }J\in P, \ \text{for }i,j\in I)\subseteq \Theta.\]
Where $\cone(S)=\{\lambda_1v_1 + \ldots +\lambda_kv_k \ \vert \ \lambda_i\geq 0, \ v_i\in S\}$, for $S\subseteq \Theta$.
\end{definition}
\begin{lemma}\label{lem:polycones}
    We have
    \begin{enumerate}
        \itemsep0.2em
        \item $\Omega_Y = \{\omega_{P} \ \vert \ P \ \text{partition of }I\subseteq[n]\}$.
        \item $\Omega_Y^\free=\{\omega_P \ \vert \ P \ \text{partition of }[n], \ \text{with }\#P\geq 3\}$.
    \end{enumerate}
\end{lemma}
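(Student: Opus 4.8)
The plan is to compute orbit cones directly from the definition, using the Plücker presentation $\CC[Y]\cong\CC[\mathrm{Gr}(2,n)]$ from Lemma~\ref{lem:griso}. Fix a point $y\in Y$; since $Y$ is the affine cone over $\mathrm{Gr}(2,n)$, the point $y$ corresponds (up to scaling) to a $2$-plane $\Lambda\subseteq\CC^n$, or equivalently to a $2\times n$ matrix $A$ of rank $\leq 2$ with columns $v_1,\dots,v_n\in\CC^2$. The monomials in $\CC[Y]$ are products of Plücker coordinates, and $\varphi_{i,j}(y)\neq 0$ iff $v_i,v_j$ are linearly independent. So $\omega_y$ is generated by those $e_i+e_j$ with $v_i,v_j$ linearly independent. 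The key combinatorial observation is that ``$v_i$ parallel to $v_j$'' (including the case $v_i=0$ or $v_j=0$, which I must handle: a zero column is parallel to everything, so such an index contributes no generator $e_i+e_j$) is \emph{not} quite an equivalence relation because of zero columns — but on the support $I:=\{i : v_i\neq 0\}$ it is, giving a partition $P$ of $I$ into ``parallelism classes.'' Then $\varphi_{i,j}(y)\neq 0$ precisely when $i,j\in I$ lie in distinct blocks of $P$, so $\omega_y=\omega_P$. This proves the inclusion $\Omega_Y\subseteq\{\omega_P\}$; conversely, given any $I\subseteq[n]$ and partition $P$ of $I$, one builds a matrix realizing it (assign each block a distinct generic direction in $\CC^2$, assign $0$ to columns outside $I$) and checks the rank condition is automatically $\leq 2$, so the point lies in $Y$ and has orbit cone $\omega_P$. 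This establishes (1).

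For (2), recall $Y^\free$ is the locus where $T=(\CC^*)^n$ acts freely. The $i$-th $\CC^*$ factor scales the $i$-th column $v_i$ (this follows from $\deg(\varphi_{i,j})=e_i+e_j$: the $i$-th coordinate torus acts on $\varphi_{i,j}$ with weight $1$ for every $j$, matching the action scaling row... rather, column $i$ of $A$). The stabilizer of $y$ in $T$ is therefore trivial iff no coordinate subtorus fixes $y$; a subtorus corresponding to a subset $S\subseteq[n]$ of indices acting with equal weight fixes $y$ iff scaling the columns indexed by $S$ by a common scalar $t$ lies in the $\SL(2)$-orbit direction... more carefully: since we have already passed to $Y=\calY\git\SL(2)$, I should instead argue that the $T$-stabilizer of the point of $Y$ is detected by which $\varphi_{i,j}$ vanish. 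The clean statement: $y\in Y^\free$ iff $\omega_y$ spans $\Theta\cong\QQ^n$ and moreover the semigroup of weights appearing is ``saturated enough''; concretely one shows the stabilizer is trivial iff $I=[n]$ (no zero columns — else the corresponding coordinate $\CC^*$ acts trivially on all nonvanishing $\varphi_{i,j}$, hence fixes $y$) and $\#P\geq 3$ (if $\#P\leq 2$, the two blocks $B_1,B_2$ give a one-parameter subgroup $t\mapsto(t^{a_i})$ with $a_i$ constant on each block and $a_i+a_j=0$ whenever $i,j$ in different blocks, which is nontrivial and fixes every $\varphi_{i,j}$ with $i,j$ in different blocks — and these are all the nonvanishing ones). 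Conversely if $I=[n]$ and $\#P\geq 3$, pick three distinct blocks; the corresponding $e_i+e_j$ vectors already force any fixing cocharacter to be zero, so the stabilizer is trivial and $y\in Y^\free$. Combined with part (1), this gives $\Omega_Y^\free=\{\omega_P : P\text{ a partition of }[n],\ \#P\geq 3\}$.

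The main obstacle I anticipate is the bookkeeping around \emph{zero columns} and the precise translation between the $T$-action on $Y$ and the column-scaling action on matrices — in particular making rigorous the claim that the $T$-stabilizer of $[A]\in Y$ is trivial exactly under the stated numerical conditions, since $Y$ is a quotient and one must be careful that scaling columns by a common scalar that happens to equal an $\SL(2)$-scalar does not secretly act trivially. The resolution is to use the explicit description $\CC[Y]^T$-grading: the stabilizer of $y$ is the diagonalizable group $\mathrm{Hom}(\ZZ^n/\langle\deg f : f(y)\neq 0\rangle,\CC^*)$, so $y\in Y^\free$ iff $\{\deg f : f(y)\neq 0,\ f\text{ homogeneous}\}$ generates $\ZZ^n$ as a group (not just as a cone). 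Since the weights appearing are exactly $\ZZ_{\geq 0}$-combinations of the $e_i+e_j$ for $i,j$ in distinct blocks of $P$, the subgroup they generate is all of $\ZZ^n$ iff $I=[n]$ and $\#P\geq 3$ — a short lattice computation (three classes suffice because $(e_i+e_j)+(e_i+e_k)-(e_j+e_k)=2e_i$ and with a fourth available index or by combining more relations one gets every $e_i$; with only two classes one is stuck in the sublattice $\{a : \sum_{B_1}a_i=\sum_{B_2}a_i\}$). This lattice-generation criterion is exactly what separates orbit cones of free points from the rest, and once it is set up the rest is routine.
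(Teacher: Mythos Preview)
Your argument for part (1) is correct and essentially identical to the paper's: both identify a preimage in $\calY$ (your $2\times n$ matrix), read off the support $I$ and the parallelism partition $P$, and construct a representative point for the converse.

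For part (2), your strategy --- computing the $T$-stabiliser of $y\in Y$ directly via the lattice criterion $\mathrm{Stab}_T(y)=\mathrm{Hom}(\ZZ^n/M_y,\CC^*)$ where $M_y$ is the subgroup generated by the weights of nonvanishing homogeneous functions --- is a genuinely different route from the paper, which instead lifts to $\calY$ and analyses \emph{$G$-stabilisers} of quiver representations, then descends using that $\pi_Y$ is an $\SL(2)$-bundle away from $0$. However, your execution has a real gap. The generators of $M_y$ are vectors $e_i+e_j$, each of which has \emph{even} coordinate sum; hence $M_y$ is always contained in the index-$2$ sublattice $\{a\in\ZZ^n:\sum a_i\text{ even}\}$ and never equals $\ZZ^n$. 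Your parenthetical ``with a fourth available index or by combining more relations one gets every $e_i$'' is therefore false: you only ever obtain $2e_i$, never $e_i$. Concretely, the element $(-1,\ldots,-1)\in T$ acts trivially on every $\varphi_{i,j}$ and hence on all of $Y$ (on $\calY$ it coincides with the action of $-I\in\SL(2)$), so $T$ never acts freely on $Y$ in the literal sense.

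The paper's formulation is thus slightly informal; what is really being characterised is the locus where the $T$-stabiliser equals this global $\ZZ/2$, equivalently where the preimage in $\calY$ has trivial $G$-stabiliser. Your lattice approach can be salvaged with this correction: one checks that $M_y$ equals the full even sublattice if and only if $I=[n]$ and $\#P\geq 3$ (your computations for $\#P\leq 2$ and $I\neq[n]$ already show proper containment; for $\#P\geq 3$ note that all $2e_i$ and all $e_i+e_j$ lie in $M_y$, which suffices). The paper's upstairs argument sidesteps this subtlety entirely by never computing $T$-stabilisers on $Y$ directly.
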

\begin{proof}
The orbit cone of $y\in Y$ is generated by the degrees of the homogeneous generators of $\CC[Y]$ which are non-zero at $y$; therefore
\[\omega_y = \cone(\deg(\varphi_{i,j}) \ \vert \ \varphi_{i,j}(y)\neq 0).\] 
The quotient $\pi_Y:\calY\rar Y$ is surjective and induced by an injection of coordinate rings, hence $\varphi_{i,j}(y) \neq0$ if and only if $\varphi_{i,j}(p)\neq 0$ for any $p\in \pi_Y^{-1}(y)$.

We first prove statement (1).
Let $y\in Y$, and let $(p_1,q_1,\ldots,p_n,q_n)\in~\pi_Y^{-1}(y)$. Set $I=\{i\in [n] \ \vert \ (p_i,q_i)\neq(0,0)\}$, and let $P$ be the partition of $I$ given by the equivalence relation $i\sim j$ if $p_iq_j-p_jq_i=0$, for $i,j\in I$. One sees that $\omega_y = \omega_P$, and $\Omega_Y\subseteq \{\omega_P \ \vert \ P \ \text{partition of }I\subseteq[n]\}$. For the opposite inclusion let $I\subseteq[n]$, and let $P$ be a partition of $I$. We construct a point $p=(p_1,q_1,\ldots,p_n,q_n)\in\calY$ such that $\pi_Y(p)$ has the orbit cone $\omega_{P}$. Set $(p_i,q_i)=(0,0)$, for all $i\in I$. Fix a collection $\{w_J \ \vert \ J\in P\}$ of pairwise linearly independent vectors in $\CC^2$ indexed by $P$. Set $(p_j,q_j)=w_J$, for all $j\in J$, for all $J\in P$. This proves (1).

To prove statement (2) we describe an open subset $\calY^\free\subseteq \calY$ with the property that the closed points of $\calY^\free$ are the closed points in $\calY$ with trivial stabiliser under the $G$-action. Let $p=(p_1,q_1,\ldots,p_n,q_n)\in \calY$. Suppose $(p_i,q_i)=(0,0)$, for some $i\in [n]$. The subgroup $\CC^*\subseteq G$, which acts by change of basis only at the vertex $i\in Q$, stabilises $p$. Therefore, for any closed point with trivial stabiliser we must have $(p_i,q_i)\neq (0,0)$, for all $i\in [n]$. Suppose $(p_i,q_i)\neq (0,0)$, for all $i\in [n]$. Suppose the induced partition $P$ has size 1, then there is a subgroup $\mathbb{G}_a\subseteq G$, which acts at the vertex $0\in Q_0$, which fixes the 1-dimensional linear subspace given by $\mathrm{Span}((p_i,q_i) \ \vert \ i\in [n])$, which stabilises $p$. Suppose $P$ has size 2, then there is a maximal torus in $\GL(2)$ with image in $G$ that stabilises $p$. Finally supposing $P$ has size $\geq 3$, then supposing $[g_0,g_1,\ldots,g_n]\in G$ stabilises $p$, we must have that $g_0=\lambda I$ (since $g_0$ must have $\geq 3$ linearly independent eigenvectors), and we must have $g_i=\lambda$. Thus $(g_0,g_1,\ldots,g_n)\in \Delta$, hence $p$ has trivial stabiliser. It follows that the closed points with trivial stabiliser lie in the complement of a closed subset. Denote this complement by $\calY^\free$.

By \cite[Proposition~3.1]{GIT}, we have that the restriction of $\pi_Y$ to $\pi_Y^{-1}(Y\setminus\{0\})$ is a locally trivial $\SL(2)$-bundle over $Y\setminus\{0\}$. Moreover $\calY^\free\subseteq \pi_Y^{-1}(Y\setminus\{0\})$. Since the restricted morphism $\pi_Y^{-1}(Y\setminus\{0\})\rar Y\setminus\{0\}$ is a geometric quotient of the $\SL(2)$-action, it follows that a closed point $p\in \pi_Y^{-1}(Y\setminus\{0\})$ has trivial stabiliser if and only if $\pi_Y(p)$ has trivial stabiliser. Hence $\pi_Y^{-1}(Y^\free)=\calY^\free$. The description of $\calY^\free$ as the locus where $(p_i,q_i)\neq0$, for all $i\in[n]$, and such that the induced partition has size $\geq 3$ implies statement (2).
\end{proof}
\begin{definition}
Let $\omega\subseteq \Theta$ be a convex cone. The dual cone to $\omega$ is 
    \[ \omega^\vee\vcentcolon=\{L\in \Theta^* \ \vert \ L(z)\geq 0, \ \text{for all }z\in \omega\}\subseteq \Theta^*.\]
    Where $\Theta^*$ denotes the vector space dual to $\Theta$. Recall $\Theta$ is endowed with a fixed basis $e_i$, for $i\in[n]$. Let $f_i\in \Theta^*$ be the dual basis to $e_i$. For $I\subseteq [n]$, let 
    \[v_I \vcentcolon= \sum_{j\notin I}f_j - \sum_{i\in I}f_i\in\Theta^*.\]
\end{definition}
\begin{definition}\label{def:etacone}
    Let $I\subseteq[n]$. Let
    \[ \eta_I \vcentcolon= \mathrm{Cone}\big(e_i+e_j \ \vert \ \{i,j\}\not\subseteq I, \ \text{for }i,j\in [n]\big).\]
    Note $\eta_I=\omega_{P_I}$, where $P_I$ denotes the partition of $[n]$ consisting of $I$ together with the set of singleton sets in $I^c$.
\end{definition}
\begin{lemma}\label{lem:twoprop}\label{lem:dualdesc}
    Let $\omega_P,\omega_Q\in\Omega_Y^\free$. Then
    \begin{enumerate}
        \itemsep0.2em
        \item $\omega_P^\vee = \cone(v_I \ \vert \ I\in P) + \cone(f_1,\ldots,f_n)\subseteq \Theta^\vee$.
        Where $+$ denotes the Minkowski sum.
        \item $\omega_P\subseteq \omega_Q$ if and only if $Q$ is a refinement of $P$.
        \item if $\omega_P^\circ\cap\omega_Q^\circ=\emptyset$, then there exists some $I\subseteq [n]$ such that $\omega_P\subseteq \eta_I$ and $\omega_Q\subseteq \eta_{I^c}$.
    \end{enumerate}
\end{lemma}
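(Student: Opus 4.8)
The plan is to establish the three statements in order, using the computation behind the easy half of (1) again in (2), and (2) again in (3). Throughout write $i\sim_P j$ for "$i,j$ lie in a common block of $P$", recall $\omega_P=\cone(e_i+e_j\mid i\not\sim_P j)$, that the $f_k$-coefficient of $v_I$ is $-1$ for $k\in I$ and $+1$ for $k\notin I$, and that $\omega_P,\omega_Q\in\Omega_Y^\free$ forces $P,Q$ to be partitions of $[n]$ with at least three blocks. For (1), I would check the inclusion $\cone(v_I\mid I\in P)+\cone(f_1,\dots,f_n)\subseteq\omega_P^\vee$ on generators: $f_k(e_i+e_j)\ge 0$ always, and since at most one of two $P$-inequivalent indices lies in a given block $I$, one gets $v_I(e_i+e_j)\in\{0,2\}$. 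For the reverse inclusion I would dualise: both sides are polyhedral cones, so (using $\omega_P^{\vee\vee}=\omega_P$) it suffices to show $\bigl(\cone(v_I\mid I\in P)+\cone(f_k)\bigr)^\vee=\{z\in\Theta\mid z\ge 0,\ v_I(z)\ge 0\ \text{for all }I\in P\}$ lies in $\omega_P$. Since $v_I(z)\ge 0$ unwinds to $z(I)\le\tfrac12 z([n])$, this is the combinatorial statement that a nonnegative $z$ on $[n]$ with $z(I)\le\tfrac12 z([n])$ for every block $I$ of $P$ is a nonnegative combination of the $e_i+e_j$ with $i,j$ in distinct blocks, i.e. lies in the edge cone of the complete multipartite graph with parts the blocks of $P$. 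I would prove this by induction, peeling off a small positive multiple of some $e_a+e_b$ with $a,b$ chosen in the two heaviest blocks and the multiple chosen small enough that the balancedness inequalities survive and the support strictly shrinks (or invoke the corresponding fact about fractional $b$-factors).

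Statement (2) is then quick. If $Q$ refines $P$, then $i\not\sim_P j$ implies $i\not\sim_Q j$, so every generator of $\omega_P$ is a generator of $\omega_Q$ and $\omega_P\subseteq\omega_Q$. Conversely, if $Q$ does not refine $P$, pick a block $C$ of $Q$ and $i,j\in C$ with $i\not\sim_P j$; then $e_i+e_j$ generates $\omega_P$, but $v_C$ is $\ge 0$ on every generator of $\omega_Q$ while $v_C(e_i+e_j)=-2<0$, so $e_i+e_j\notin\omega_Q$ and $\omega_P\not\subseteq\omega_Q$.

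For (3), I would first note that $\#P\ge 3$ makes $\omega_P$ full-dimensional: if $\ell\in\Theta^*$ kills every generator $e_i+e_j$, then comparing three indices in three distinct blocks forces $\ell$ to vanish there, hence (comparing any other index with one of those three lying in a different block) everywhere. So $\omega_P^\circ$, $\omega_Q^\circ$ are honest nonempty interiors; being disjoint, separating them by a hyperplane through the origin yields a nonzero $\ell\in\Theta^*$ with $\ell_i+\ell_j\ge 0$ for $i\not\sim_P j$ and $\ell_i+\ell_j\le 0$ for $i\not\sim_Q j$. If $\ell\ge 0$, then for each $i$ a $Q$-inequivalent $j$ (present as $\#Q\ge 3$) gives $\ell_i+\ell_j=0$, hence $\ell_i=\ell_j=0$, so $\ell\equiv 0$, contradiction; symmetrically $\ell\le 0$ is impossible, so $\ell$ has both a strictly negative and a strictly positive coordinate. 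Any two indices with $\ell<0$ are $P$-equivalent (their $\ell$-values cannot sum to $\ge 0$ while both being negative), so $\{k\mid \ell_k<0\}$ lies in a single block $B_0$ of $P$; and if $\ell_k=0$ with $k\notin B_0$, then taking $j_0\in B_0$ with $\ell_{j_0}<0$ gives $\ell_{j_0}=\ell_k+\ell_{j_0}\ge 0$, a contradiction, so $I:=\{k\mid \ell_k\le 0\}\subseteq B_0$. Symmetrically $I^c=\{k\mid \ell_k>0\}$ lies in a single block $C_0$ of $Q$. Since $\eta_I=\omega_{P_I}$ and, by (2), $\omega_P\subseteq\eta_I$ exactly when $P_I$ refines $P$, i.e. when $I$ lies in one block of $P$, the inclusion $I\subseteq B_0$ gives $\omega_P\subseteq\eta_I$, and likewise $I^c\subseteq C_0$ gives $\omega_Q\subseteq\eta_{I^c}$ (note $I\ne\emptyset,[n]$ because $\ell$ takes both signs).

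The only substantial point is the combinatorial decomposition underlying the hard inclusion in (1): making the edge-peeling induction terminate requires choosing the subtracted multiple with some care, or appealing to the theory of fractional $b$-factors of complete multipartite graphs. Everything else is short; the one thing to watch in (3) is the passage from "relative interior" to "interior", which is exactly why full-dimensionality of $\omega_P$ and $\omega_Q$ is recorded at the start.
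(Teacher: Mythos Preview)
Your argument is correct. For part (1) you follow the same outline as the paper: verify that the proposed dual cone pairs nonnegatively with the generators of $\omega_P$, then pass to duals and show that any nonnegative $z$ satisfying $z(I)\le\tfrac12 z([n])$ for every block $I$ decomposes as a nonnegative combination of the cross-block $e_i+e_j$'s via an inductive peeling. The paper's peeling is organised block-by-block rather than by heaviest pair, but the content is the same and both sketches leave comparable details to the reader.

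For parts (2) and (3) you take a genuinely different and more economical route than the paper. In (2) the paper works entirely on the dual side: it writes $v_I\in\omega_P^\vee$ in terms of the generators produced in (1) and derives a contradiction from a system of coordinate inequalities. Your argument is primal and needs only the easy direction of (1): a generator $e_i+e_j$ of $\omega_P$ with $i,j$ in the same $Q$-block is killed to $-2$ by $v_C$ for that block, so it cannot lie in $\omega_Q$. In (3) the paper translates $\omega_P^\circ\cap\omega_Q^\circ=\emptyset$ into the existence of a nonzero element of the lineality space of $\omega_P^\vee+\omega_Q^\vee$ and then carries out a lengthy case analysis on nonnegative linear combinations of the $v_I$, $v_J$ and $f_k$. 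You instead separate the full-dimensional cones by a linear functional $\ell$, read off from its sign pattern that $\{\ell\le 0\}$ sits in a single $P$-block and $\{\ell>0\}$ in a single $Q$-block, and conclude via (2). This buys a much shorter and more transparent proof; the paper's approach stays closer to the explicit dual description from (1) but pays for it with substantially more bookkeeping.
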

\begin{proof}
    We first prove statement (1). Let 
    \[\chi \vcentcolon= \cone(v_I \ \vert \ I\in P) + \cone(f_1,\ldots,f_n)\subseteq \Theta^\vee.\] 
    Let $I\in P$, and let $i,j\in [n]$ such that $\{i,j\}\not\subseteq J$, for all $J\in P$. Then 
    \[ v_I(e_i+e_j)=\begin{cases}
        0, & \text{if }i \ \text{or} \ j \ \text{lies in} \ I, \\
        2, & i,j\notin I.
    \end{cases} \]
    Moreover, given $k\in [n]$, then 
    \[ f_k(e_i+e_j)=\begin{cases}
        0, & \text{if }i,j\neq k, \\
        1, & \text{if }i \text{ or }j = k.
    \end{cases}
    \]
    It follows that $\chi\subseteq\omega_P^\vee$, since the cone generators of $\chi$ are non-negative on the cone generators of $\omega_P$. For the opposite inclusion let $z\in \chi^\vee$. Let $I\in P$, then by definition $v_I(z)\geq 0$. We can subtract a non-negative multiple of $e_i+e_j$ from $z$, where $i\in I$ and $j\notin I$, such that the result remains in $\chi^\vee$. We can repeat this process for each $I\in P$, when we reach the final $I$ we have that $\sum_{j\notin I}z_j = 0$. Hence $z$ is a non-negative linear combination of the generators of $\omega$. Thus $\chi^\vee \subseteq \omega$. This completes the proof of statement (1).

    To prove statement (2) assume that $Q$ is a refinement of $P$, i.e.\ for all $I\in Q$, there is some $J\in P$ with $I\subseteq J$. Therefore 
    \[v_I=v_J+2\sum_{i\in J\setminus I}f_i\in \omega_P^\vee.\] 
    Thus $\omega_Q^\vee\subseteq \omega_P^\vee$. For the opposite inclusion suppose $\omega_P\subseteq\omega_Q$. Let $I\in Q$, then by assumption $v_I\in \omega_P^\vee$. Suppose for a contradiction that $I\not\subseteq J$, for all $J\in P$. Then there exists distinct $J_0,J_1\in P$ with $j_0\in I\cap J_0$ and $j_1\in I\cap J_1$. We have
    \[ v_I = \sum_{J\in P}\lambda_J v_J + \sum_{i=1}^n \mu_i f_i, \ \ \lambda_J,\mu_i\geq 0.\]
    Taking the $j_0$-th and $j_1$-th coordinates we obtain the following inequalities.
    \begin{align*}
        -1 \geq -\lambda_{J_0} + \sum_{J\neq J_0}\lambda_J, \ \ 
        -1 \geq -\lambda_{J_1} + \sum_{J\neq J_1}\lambda_J.
    \end{align*}
    Summing gives
    $
        -2 \geq \sum_{J\neq J_0,J_1}\lambda_J$.
    Therefore $\lambda_J=0$, for all $J\neq J_0,J_1$. Considering the $j_0$-th and $j_1$-th coordinates again we have
    \begin{align*}
        -1 &= -\lambda_{J_0}+\lambda_{J_1}+\mu_{j_0},\\
        -1 &= \lambda_{J_0}-\lambda_{J_1}+\mu_{j_1}.
    \end{align*}
    Summing implies that $\mu_{j_0}=\mu_{j_1}=0$, which implies a contradiction. Hence there exists some $J\in P$ with $I\subseteq J$, and $Q$ is a refinement of $P$. This proves statement (2).

 We now prove statement (3). Since $\omega\in \Omega_Y^\free$ is a top dimensional cone we have that $\omega_P^\circ\cap\omega_Q^\circ\neq\emptyset$ if and only if $\omega_P\cap \omega_Q$ is top dimensional. Hence we have $\omega_P^\circ \cap \omega_Q^\circ = \emptyset$ if and only if $\omega_P\cap\omega_Q$ is contained in a proper linear subspace; if and only if $(\omega_P\cap\omega_Q)^\vee=\omega_P^\vee+\omega_Q^\vee$ contains a non-zero linear subspace. 
    
Note that by statement (2) we have that $\omega_P\subseteq \eta_I$ and $\omega_Q\subseteq \eta_{I^c}$ if and only if $P_I$ is a refinement of $P$ and $P_{I^c}$ is a refinement of $Q$. Equivalently $I\subseteq I'$, for some $I'\in P$ and $I^c\subseteq J'$, for some $J'\in Q$. In this case we have $(I')^c \cap (J')^c = \emptyset$. We show that if $(I')^c\cap (J')^c\neq\emptyset$, for all $I'\in P$ and $J'\in Q$, then $\omega_P^\vee+\omega_Q^\vee$ does not contain a non-zero linear subspace.
    
The Minkowski sum $\omega_P^\vee+\omega_Q^\vee$ contains a non-zero linear subspace if and only if there exists some non-trivial linear combination with non-negative coefficients:
\[ \sum_{I\in P}\lambda_I v_I + \sum_{J\in Q}\lambda_Jv_J + \sum_{i=1}^n \mu_if_i =0.\]
Suppose there exists such a linear combination. 
Given any $i\in [n]$, let $I_i$ and $J_i$ denote the unique sets in $P$ and $Q$ respectively, containing $i$. Fix $i\in [n]$, taking the $i$-th coordinate of the above linear combination we obtain the inequality:
\[ \lambda_{I_i}+\lambda_{J_i}\geq \sum_{I\neq I_i}\lambda_I + \sum_{J\neq J_i}\lambda_J.\]
By assumption there exists some $j\in I_i^c \cap J_i^c$. Taking the $j$-th coordinate of the linear combination we obtain:
    \begin{align*}
         \lambda_{I_j}+\lambda_{J_j}&\geq \sum_{I\neq I_j}\lambda_I + \sum_{J\neq J_j}\lambda_J, \\
         &= \lambda_{I_i}+\lambda_{J_i} + \sum_{I\neq I_i,I_j}\lambda_I + \sum_{J
         \neq J_i,J_j}\lambda_J, \\
         &\geq \lambda_{I_j}+\lambda_{J_j} + 2\left(\sum_{I\neq I_i,I_j}\lambda_I + \sum_{J\neq J_i,J_j}\lambda_J\right).
    \end{align*}
    Where we have applied the inequality obtained from taking the $i$-th coordinate.
    Thus we have that:
    \[ \sum_{I\neq I_i,I_j}\lambda_I + \sum_{J\neq J_i,J_j}\lambda_J=0.\]
    Hence $\lambda_I=0$, for all $I\neq I_i,I_j$, and $\lambda_J=0$, for all $J\neq J_i,J_j$. 
    
    Finally, let $k\in (I_i\cup I_j)^c$, which is non-empty since $\omega_P\in\Omega_Y^\free$. We are reduced to two cases. Let $k\in (J_i\cup J_j)^c$, then
    \[ 0 \geq \lambda_{I_i}+\lambda_{J_i}+\lambda_{I_j}+\lambda_{J_j},\]
    hence all the coefficients in the linear combination vanish. In the other case, we may assume without loss of generality that $k\in J_i$, which gives the inequality:
    \begin{align*}
         \lambda_{J_i} &\geq \lambda_{I_i}+\lambda_{I_j}+\lambda_{J_j}, \\
         & \geq 2\lambda_{I_i}+\lambda_{J_i}.
    \end{align*}
    Thus $\lambda_{J_i}=0$. Let $k\in I_j^c\cap J_i^c$. Again there are two cases. If $k\in J_j^c$ then all coefficients vanish. If $k\in J_j$ then taking the $k$-th coordinate gives:
    \[ \lambda_{J_j}\geq \lambda_{I_j}+\lambda_{J_i}.\]
    Which implies $\lambda_{I_j}=0$. Hence $\lambda_I=0$, for all $I\in P$. Thus the linear combination lies in $\omega_Q^\vee$, which contains no linear subspace, so all coefficients must vanish. This completes the proof of statement (3).
\end{proof}
\subsection{Maximal bunches and complexes.}
Recall the definition of a maximal bunch of orbit cones (see Definition \ref{def:bunch}). Here we introduce maximally-biconnected complexes and show they parameterise the maximal bunches of orbit cones consisting of orbit cones in $Y^\free$. We show that the number of maximally-biconnected complexes on $[n]$ equals the $n$-th Hoşten-Morris number.
\begin{definition}\label{def:complex} \label{def:complextobunch}
	A \emph{complex} (on $[n]$) is a set $\Delta$ of non-empty subsets $I\subseteq [n]$, such that if $J\subseteq I$ and $I\in \Delta$, then $J\in \Delta$.
    A complex $\Delta$ is \emph{biconnected} if for all $I,J\in \Delta$ then $I\cup J \neq [n]$. A complex $\Delta$ is \emph{maximally-biconnected} if it is biconnected and not contained in any larger biconnected complex. A complex $\Delta$ is \emph{full} if $\{i\}\in \Delta$, for all $i\in[n]$.

    Let $\Delta$ be a complex. We associate a collection of cones \[\Phi_\Delta\vcentcolon=\{\omega_P\in\Omega_Y^\free \ \vert \ P\subseteq\Delta\}\subseteq\Omega_Y^\free.\]
\end{definition}

\begin{proposition}\label{prop:maxbunchbij}
	Let $\Delta$ be a full maximally-biconnected complex. Then $\Phi_\Delta$ is a maximal bunch of orbit cones. The induced map
	\begin{align*} \left\{\begin{array}{c}
        \text{Full maximally-biconnected} \\
        \text{complexes}
      \end{array}\right\} &\longrightarrow \left\{\begin{array}{c} \text{Maximal bunches of orbit cones } \\ \Phi\subseteq\Omega_Y^\free\end{array}\right\}, \\
    \Delta &\longmapsto \Phi_\Delta,\end{align*}
	is a bijection.
\end{proposition}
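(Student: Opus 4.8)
\emph{Preliminaries.} Every cone in $\Omega_Y^\free$ is top-dimensional, so for $\omega,\omega'\in\Omega_Y^\free$ one has $\omega^\circ=\mathrm{int}(\omega)$, the relation $\omega^\circ\subseteq(\omega')^\circ$ is equivalent to $\omega\subseteq\omega'$, and by Lemma~\ref{lem:twoprop}(2) the assignment $\omega_P\mapsto P$ is injective; moreover, if $\omega\in\Omega_Y^\free$ and $\omega^\circ\subseteq\tau^\circ$ for some $\tau\in\Omega_Y$, then $\tau$ is top-dimensional, hence $\tau\in\Omega_Y^\free$. With $\eta_I=\omega_{P_I}$ as in Definition~\ref{def:etacone}, Lemma~\ref{lem:twoprop}(2) gives that $\omega_P\subseteq\eta_I$ iff $I$ is contained in a part of $P$; combining this with Lemma~\ref{lem:twoprop}(3), together with its converse established in the proof of that lemma, yields the reformulation
\[ \omega_P^\circ\cap\omega_Q^\circ=\emptyset \ \Longleftrightarrow\ \exists\, I'\in P,\ J'\in Q \ \text{ with }\ I'\cup J'=[n]. \]
I will also use that for $2\le\#J\le n-2$ the functional $v_J$ is strictly positive on $\eta_J^\circ$ and strictly negative on $\eta_{J^c}^\circ$, so $\eta_J^\circ\cap\eta_{J^c}^\circ=\emptyset$.

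\emph{$\Phi_\Delta$ is a maximal bunch.} Suppose first $\Delta$ is a full biconnected complex. Axiom~(2) of Definition~\ref{def:bunch} holds because $\Delta$ is a complex: if $\omega_P\in\Phi_\Delta$ and $\omega_P^\circ\subseteq\tau^\circ$, then $\tau=\omega_Q\in\Omega_Y^\free$ with $Q$ refining $P$, so every part of $Q$ lies in a part of $P\subseteq\Delta$, hence in $\Delta$. Axiom~(1) holds because $\Delta$ is biconnected: if $\omega_P^\circ\cap\omega_Q^\circ=\emptyset$ with $P,Q\subseteq\Delta$, the reformulation produces parts $I'\in P,J'\in Q$ with $I'\cup J'=[n]$, and $I',J'\in\Delta$, a contradiction. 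Non-emptiness holds because $\Delta$ is full: the singleton partition $P_{\min}$ has $P_{\min}\subseteq\Delta$ and $n\ge3$ parts. Now assume $\Delta$ is maximally biconnected and $\Phi_\Delta\subseteq\Phi'$ is a bunch. If $\omega_Q\in\Phi'\setminus\Phi_\Delta$, some part $J$ of $Q$ is not in $\Delta$, and $\#J\ge2$ since $\Delta$ is full. The complex generated by $\Delta\cup\{J\}$ is strictly larger than $\Delta$, so maximality forces it to fail biconnectedness, which (as $\Delta$ is biconnected and $\#J<n$) is possible only if $J^c\in\Delta$. Then $P_{J^c}\subseteq\Delta$ has $1+\#J\ge3$ parts, so $\eta_{J^c}\in\Phi_\Delta\subseteq\Phi'$; but $J\in Q$ gives $\omega_Q\subseteq\eta_J$ by Lemma~\ref{lem:twoprop}(2), hence $\omega_Q^\circ\subseteq\eta_J^\circ$, and $2\le\#J\le n-2$ gives $\eta_J^\circ\cap\eta_{J^c}^\circ=\emptyset$, contradicting Axiom~(1) for $\Phi'$. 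Hence $\Phi'=\Phi_\Delta$.

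\emph{Injectivity.} For any full biconnected complex $\Delta$ I claim $\Delta=\{I\subseteq[n] : 1\le\#I\le n-2,\ \eta_I\in\Phi_\Delta\}$. No set of size $\ge n-1$ lies in $\Delta$: biconnectedness excludes $[n]$, and if $\#I=n-1$ then $I\cup\{i\}=[n]$ with $\{i\}\in\Delta$ by fullness. For $1\le\#I\le n-2$ one has $\#P_I=1+(n-\#I)\ge3$, so $\eta_I\in\Omega_Y^\free$, and by injectivity of $\omega_P\mapsto P$ one gets $\eta_I\in\Phi_\Delta$ iff $P_I\subseteq\Delta$ iff $I\in\Delta$ (using fullness for the singletons of $P_I$). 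This proves the claim, so $\Delta$ is determined by $\Phi_\Delta$, and the map is injective.

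\emph{Surjectivity.} Let $\Phi\subseteq\Omega_Y^\free$ be a maximal bunch and put $\Delta_\Phi:=\{\emptyset\ne I\subseteq[n] : I\in P \text{ for some }\omega_P\in\Phi\}$. Axiom~(2) shows $\Delta_\Phi$ is a complex: splitting a part $I$ of $P$ (with $\omega_P\in\Phi$) into $J$ and $I\setminus J$ yields $P'$ with $\ge3$ parts and $\omega_P^\circ\subseteq\omega_{P'}^\circ$, so $\omega_{P'}\in\Phi$ and $J\in\Delta_\Phi$. Refining any fixed member of $\Phi$ down to $P_{\min}$ shows $\omega_{P_{\min}}\in\Phi$, so $\Delta_\Phi$ is full; and $\Delta_\Phi$ is biconnected, since by the reformulation and Axiom~(1) no two parts of members of $\Phi$ can union to $[n]$. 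The heart of the matter is $\Phi_{\Delta_\Phi}=\Phi$; the inclusion $\supseteq$ is clear. For $\subseteq$ I fix $\omega_P\in\Phi_{\Delta_\Phi}$ and argue by downward induction on $\#P$: the base case $\#P=n$ is $\omega_{P_{\min}}\in\Phi$, and for $3\le\#P=m<n$ I show $\Phi\cup\{\omega_P\}$ is a bunch, whence $\omega_P\in\Phi$ by maximality. For Axiom~(2): if $\omega_P^\circ\subseteq\tau^\circ$ then $\tau=\omega_R$ with $R$ refining $P$, and either $R=P$ or $\#R>m$ and $R\subseteq\Delta_\Phi$, so $\omega_R\in\Phi$ by the inductive hypothesis; for elements of $\Phi$ the axiom is inherited from $\Phi$. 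For Axiom~(1): if $\omega_P^\circ\cap\omega_Q^\circ=\emptyset$ for some $\omega_Q\in\Phi$, the reformulation gives a part $I'\in P$ and a part $J'\in Q$ with $I'\cup J'=[n]$; but $I'\in\Delta_\Phi$ is a part of some $\omega_{\widetilde P}\in\Phi$, so $\omega_{\widetilde P}$ and $\omega_Q$ violate Axiom~(1) for $\Phi$. Thus $\Phi_{\Delta_\Phi}=\Phi$. Finally, any biconnected complex $\Delta'\supsetneq\Delta_\Phi$ is full, so $\Phi_{\Delta'}$ is a bunch containing $\Phi_{\Delta_\Phi}=\Phi$, forcing $\Phi_{\Delta'}=\Phi$ by maximality and hence $\Delta'=\Delta_\Phi$ by the recovery formula, a contradiction; so $\Delta_\Phi$ is maximally biconnected and maps to $\Phi$. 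The crux of the whole argument, and the only place genuinely using maximality of $\Phi$ rather than the bunch axioms alone, is this downward induction establishing $\Phi_{\Delta_\Phi}=\Phi$.
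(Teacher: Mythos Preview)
Your maximality argument has a gap. When you take $\Phi_\Delta\subseteq\Phi'$ a bunch and pick $\omega_Q\in\Phi'\setminus\Phi_\Delta$, you write ``some part $J$ of $Q$ is not in $\Delta$'', which presumes $\omega_Q\in\Omega_Y^\free$ so that $\omega_Q\notin\Phi_\Delta$ is equivalent to $Q\not\subseteq\Delta$. But Definition~\ref{def:bunch} allows $\Phi'\subseteq\Omega_Y$, and maximality is tested against all bunches, not only those inside $\Omega_Y^\free$. If $\omega_Q\notin\Omega_Y^\free$---for instance if $Q$ is the partition of a proper subset $I\subsetneq[n]$ into singletons---then every part of $Q$ is a singleton and hence lies in $\Delta$ by fullness, so your deduction fails. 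The paper closes this by first forcing $\omega_Q$ to be top-dimensional: if $Q$ partitions $I\subsetneq[n]$ then $\omega_Q$ lies in the coordinate hyperplane $\{f_j=0\}$ for $j\notin I$, whereas $f_j>0$ on $\omega_P^\circ$ for every $\omega_P\in\Omega_Y^\free$; and if $\#Q=2$, say $Q=\{I,I^c\}$, then $\omega_Q^\circ\subseteq\{v_I=0\}$, while the maximally-biconnected $\Delta$ contains exactly one of $I,I^c$, giving $\eta_I\in\Phi_\Delta$ (or $\eta_{I^c}$) with $\omega_Q^\circ\cap\eta_I^\circ=\emptyset$. Once $\omega_Q\in\Omega_Y^\free$ is established, your argument goes through.

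Apart from this omission your proof is correct, and in places sharper than the paper's. Your reformulation of Lemma~\ref{lem:twoprop}(3) as $\omega_P^\circ\cap\omega_Q^\circ=\emptyset\Leftrightarrow\exists\,I'\in P,\,J'\in Q$ with $I'\cup J'=[n]$ is a clean combinatorial restatement. The downward induction on $\#P$ establishing $\Phi_{\Delta_\Phi}\subseteq\Phi$ makes explicit what the paper compresses into ``the image of $\Delta_\Phi$ clearly contains $\Phi$, and by the maximality of $\Phi$ it is equal to it''; this is the one step where maximality of $\Phi$ is genuinely needed, and your argument isolates it well. Your recovery formula for injectivity is also more transparent than the paper's remark about distinct partitions.
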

\begin{proof}
    We begin by showing that $\Phi_\Delta$ is a maximal bunch of orbit cones. Let $P\subseteq \Delta$ be a partition of $[n]$, by definition $\omega_P\in \Phi_\Delta$. Suppose $\omega\in \Omega_Y$ such that $\omega_P^\circ\subseteq\omega^\circ$. This implies that $\omega$ is top-dimensional, hence $\omega\in \Omega^\free_Y$ and $\omega=\omega_Q$ for some partition $Q$ of $[n]$. Since $\omega_P$ and $\omega_Q$ are both top dimensional we have that $\omega_P\subseteq \omega_Q$, thus by Lemma \ref{lem:twoprop}, $Q$ is a refinement of $P$. Since $\Delta$ is closed under taking subsets we have that $Q\subseteq \Delta$, therefore $\omega_Q\in\Phi_\Delta$. Hence $\Phi_\Delta$ satisfies Definition \ref{def:bunch} (2) in the definition of a bunch of orbit cones.

    To show that $\Phi_\Delta$ satisfies Definition \ref{def:bunch} (1), let $\omega_P,\omega_Q\in \Omega_Y^\free$ such that $\omega_P^\circ\cap\omega_Q^\circ=\emptyset$. By Lemma \ref{lem:twoprop}, there exists some $I\subseteq[n]$ such that $\omega_P\subseteq \eta_I$ and $\omega_Q\subseteq \eta_{I^c}$. If $\omega_P,\omega_Q\in\Phi_\Delta$, then by definition we have that $I,I^c\in \Delta$, which is absurd since $\Delta$ is biconnected. Therefore for all $\omega_P,\omega_Q\in \Delta$ we have that $\omega_P^\circ\cap\omega_Q^\circ\neq\emptyset$, thus $\Phi_\Delta$ is a bunch of orbit cones. 

    We claim $\Phi_\Delta$ is a \emph{maximal} bunch of orbit cones. Suppose that $\Phi$ is a bunch of orbit cones containing $\Phi_\Delta$. Let $\omega_P\in \Phi$. We first treat the case that $\omega_P$ is not top dimensional; the following two subcases can occur:
    \begin{enumerate}
        \itemsep0.2em
        \item $P$ is a partition of $I$, with $I\neq[n]$.
        \item $P$ is a partition of $[n]$, with $\#P\leq 2$.
    \end{enumerate}
    In the first case $\omega_P^\circ\cap\omega_Q^\circ=\emptyset$, for all $\omega_Q\in\Omega_Y^\free$, since $\omega_{P}$ is contained in the coordinate hyperplanes given by the vanishing of the $i$-th coordainte, for all $i\in I^c$. In the second case we may assume $\#P=2$, otherwise $\omega_P=\{0\}$. In this case $\omega_P=\eta_I\cap\eta_{I^c}$, for some $I\subseteq [n]$. It is a property of maximally-biconnected complexes that they contain precisely one of $I$ or $I^c$, for all $I\subseteq [n]$. Without loss of generality suppose $I\in \Delta$, then $\eta\in \Phi_\Delta$, and $\eta_I^\circ\cap\omega_P^\circ=\emptyset$, giving a contradiction. Therefore $\omega_P$ must be top dimensional, i.e. $\omega_P\in \Omega_Y^\free$.
    
    Let $\Delta_P$ be the complex consisting of the partition $P$ and its subsets. Since $\Phi$ is a bunch of orbit cones contains $\Delta_P$ by Definition \ref{def:bunch} (2). The union $\Delta\cup \Delta_P$ remains a full complex. Suppose $\{I,I^c\}\subseteq \Delta\cup \Delta_P$, then $\eta_I,\eta_{I^c}\in \Phi$, which cannot occur since $\eta_I^\circ\cap\eta_{I^c}^\circ=\emptyset$. Thus $\Delta\cup \Delta_P$ is a full biconnected complex. It follows, by maximality of $\Delta$, that $\Delta\cup \Delta_P=\Delta$, and hence $P\subseteq \Delta$ and $\omega_P\in \Phi_\Delta$. Thus $\Phi_\Delta$ is a maximal bunch of orbit cones.

    Finally we show the map is a bijection. Distinct maximally-biconnected complexes contain distinct partitions, hence their images contain distinct orbit cones, thus the map is injective. To show surjectivity, let $\Phi\subseteq \Omega_Y^\free$ be a maximal bunch of orbit cones. Let
	\[ \Delta_\Phi = \bigcup_{\omega_P\in \Phi}\Delta_P.\]
    Since $\Phi\neq\emptyset$ we must have $\omega_P\in \Phi$ for some partition $P$. Since $\Delta_P$ is full we have that $\Delta_\Phi$ is full. Again $\Delta_\Phi$ is biconnected since we cannot have that $\{\eta_I,\eta_{I^c}\}\subseteq \Phi$. 
    It is maximally-biconnected by the maximality of $\Phi$: the image of $\Delta_\Phi$ clearly contains $\Phi$, and by the maximality of $\Phi$ it is equal to it. Thus the map in the statement is a bijection.
\end{proof}

\begin{lemma}\label{lem:fullcomplexes}
    There are exactly $n$ maximally-biconnected complexes which are not full; these are the complexes generated by subsets of $\{i\}^c\subseteq[n]$, for some $i\in [n]$.
\end{lemma}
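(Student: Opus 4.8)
The plan is to write down the $n$ candidate complexes explicitly, check that each is maximally-biconnected and non-full, and then argue that any non-full maximally-biconnected complex must be one of them. For $i\in[n]$ let $\Delta_i$ denote the complex whose members are all non-empty subsets of $\{i\}^c\subseteq[n]$; this is the complex generated by the subsets of $\{i\}^c$ referred to in the statement (recall the definitions of \emph{complex}, \emph{biconnected} and \emph{full} from Definition~\ref{def:complex}). First I would record three easy facts: $\Delta_i$ is biconnected, since any $I,J\in\Delta_i$ satisfy $I\cup J\subseteq\{i\}^c\subsetneq[n]$; it is not full, since $\{i\}\notin\Delta_i$; and the $\Delta_i$ are pairwise distinct, since $\{j\}\in\Delta_i$ exactly when $j\neq i$.

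The one step requiring a short argument is the maximality of $\Delta_i$. Here I would suppose $\Delta\supseteq\Delta_i$ is a biconnected complex and show $\Delta=\Delta_i$. If the inclusion were strict, $\Delta$ would contain some set $K$ not contained in $\{i\}^c$, i.e.\ with $i\in K$. But $\{i\}^c$ itself is a non-empty subset of $\{i\}^c$ (using $n\geq 5$), so $\{i\}^c\in\Delta_i\subseteq\Delta$, and then $K\cup\{i\}^c=[n]$ contradicts biconnectedness of $\Delta$. The key observation is simply that $\Delta_i$ already contains the coatom $\{i\}^c$, so it cannot absorb any further set without meeting its complement and breaking biconnectedness.

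For the converse, let $\Delta$ be maximally-biconnected and not full, and pick $i\in[n]$ with $\{i\}\notin\Delta$. Since $\Delta$ is closed under passing to subsets, no set containing $i$ can lie in $\Delta$, hence every member of $\Delta$ is a non-empty subset of $\{i\}^c$; that is, $\Delta\subseteq\Delta_i$. As $\Delta_i$ is biconnected and $\Delta$ is maximally-biconnected, this forces $\Delta=\Delta_i$. Therefore the non-full maximally-biconnected complexes are exactly $\Delta_1,\dots,\Delta_n$, giving the count $n$. I do not anticipate any serious obstacle: the whole lemma falls out of the definitions once one notices that the condition $\{i\}\notin\Delta$ already pins $\Delta$ down to $\Delta_i$ by maximality.
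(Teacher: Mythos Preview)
Your proof is correct and follows essentially the same approach as the paper's. The paper's argument is terser: it invokes the property (noted in the proof of Proposition~\ref{prop:maxbunchbij}) that a maximally-biconnected complex contains exactly one of $I$ and $I^c$, applies this with $I=\{i\}$ to get $\{i\}^c\in\Delta$, and then concludes directly, whereas you verify maximality of $\Delta_i$ and the inclusion $\Delta\subseteq\Delta_i$ from the definitions alone; both routes amount to the same observation that once $\{i\}\notin\Delta$ the complex is forced to equal $\Delta_i$.
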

\begin{proof}
    Let $\Delta$ be a maximally-biconnected complex. Suppose $\{i\}\notin \Delta$. Then we must have $\{i\}^c\in \Delta$. Hence $\Delta$ consists entirely of the non-empty subsets of $\{i\}^c$. There are exactly $n$ such complexes.
\end{proof}
\begin{lemma}
    There is a bijection between maximally-biconnected complexes on $[n]$ and biconnected complexes on $[n-1]$.
\end{lemma}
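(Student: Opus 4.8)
The plan is to set up the bijection by distinguishing the vertex $n$ and taking the \emph{link} there. The backbone is the structural description of maximally-biconnected complexes already invoked in the proof of Proposition~\ref{prop:maxbunchbij}: a complex $\Delta$ on $[n]$ is maximally-biconnected precisely when, for every $I$ with $\emptyset\ne I\subsetneq[n]$, exactly one of $I$, $I^c:=[n]\setminus I$ lies in $\Delta$. I would begin by giving this a quick self-contained proof. Biconnectedness already forbids $I$ and $I^c$ both lying in $\Delta$; and if neither does, then $\Delta\cup\{J:\emptyset\ne J\subseteq I\}$ is again a complex which is still biconnected, because a union $A\cup B=[n]$ with $A\in\Delta$, $B\subseteq I$ would force $I^c\subseteq A$, hence $I^c\in\Delta$ --- so $\Delta$ is not maximal. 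Conversely, a downward-closed family meeting every complementary pair in exactly one set is biconnected ($I\cup J=[n]$ with $I,J\in\Delta$ gives $I^c\subseteq J$, whence $I^c\in\Delta$, a contradiction) and obviously cannot be enlarged.

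Given this, to a maximally-biconnected $\Delta$ on $[n]$ I associate the link at $n$,
\[ \Gamma_\Delta\ :=\ \{\,\emptyset\ne J\subseteq[n-1]\ :\ J\cup\{n\}\in\Delta\,\}, \]
and I would check that $\Gamma_\Delta$ is a biconnected complex on $[n-1]$: downward-closedness is immediate, and if $J,J'\in\Gamma_\Delta$ had $J\cup J'=[n-1]$ then $J\cup\{n\}$ and $J'\cup\{n\}$ would be members of $\Delta$ with union $[n]$, contradicting biconnectedness of $\Delta$. The key reconstruction formula, obtained by applying the structural characterization to the complementary pair $\{\,I,\ ([n-1]\setminus I)\cup\{n\}\,\}$, is
\[ I\in\Delta\iff[n-1]\setminus I\notin\Gamma_\Delta\qquad(\emptyset\ne I\subseteq[n-1]), \]
so the part of $\Delta$ not meeting $\{n\}$ is determined by $\Gamma_\Delta$, the part meeting $\{n\}$ is $\Gamma_\Delta$ shifted up by $\{n\}$, and $\Delta$ is recovered once one also records whether $\{n\}\in\Delta$.

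For surjectivity I would run this in reverse: from a biconnected complex $\Gamma$ on $[n-1]$ build $\Delta_\Gamma$ with bottom part $\{\,I:\emptyset\ne I\subseteq[n-1],\ [n-1]\setminus I\notin\Gamma\,\}$ and top part $\{\,J\cup\{n\}:J\in\Gamma\,\}$, and then check that it is downward-closed (the complementary family of the downward-closed $\Gamma$ is upward-closed), biconnected (the only potentially bad union pits a top set $J\cup\{n\}$ against a bottom set $I$ with $[n-1]\setminus I\notin\Gamma$, which would force $[n-1]\setminus I\subseteq J$, hence $[n-1]\setminus I\in\Gamma$, impossible), and meets each complementary pair exactly once, so is maximally-biconnected; finally one verifies the two constructions are mutually inverse.

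The step I expect to be the real obstacle is the bookkeeping around the vertex $n$ --- equivalently, the empty face of the link. One must decide, for each biconnected complex $\Gamma$ on $[n-1]$, whether $\{n\}$ belongs to the reconstructed complex, and in particular reconcile the degenerate cases: the full complex on $[n-1]$ sitting inside $[n]$ is maximally-biconnected (it is the complex $\langle\{n\}^c\rangle$ of Lemma~\ref{lem:fullcomplexes}) and has empty link, and one has to be careful that it is matched correctly under the bijection. Away from this boundary phenomenon, everything reduces to the structural characterization and the reconstruction formula above, and the verifications are routine.
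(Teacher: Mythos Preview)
Your approach via the link at $n$ is essentially the same as the paper's: the paper sends $\Delta$ to $\widetilde\Delta=\{[n-1]\setminus I:I\subseteq[n-1],\ I\notin\Delta\}$, and for $\emptyset\ne J\subsetneq[n-1]$ one has $J\in\widetilde\Delta$ iff $[n-1]\setminus J\notin\Delta$ iff $J\cup\{n\}\in\Delta$, i.e.\ $\widetilde\Delta$ agrees with your $\Gamma_\Delta$ on such $J$. So the two routes are the same construction.

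However, the obstacle you flag in your last paragraph is not merely bookkeeping; as you have written it, the map $\Delta\mapsto\Gamma_\Delta$ is genuinely not injective. A clean collision in every $n$: take
\[
\Delta_1=\{\text{nonempty }I\subsetneq[n-1]\}\cup\{\{n\}\},\qquad
\Delta_2=\{\text{nonempty }I\subseteq[n-1]\}.
\]
Both are maximally-biconnected (the second is the complex $\langle\{n\}^c\rangle$ of Lemma~\ref{lem:fullcomplexes}; the first is checked directly from the ``exactly one of $I,I^c$'' criterion), yet both have $\Gamma_{\Delta_1}=\Gamma_{\Delta_2}=\emptyset$. Correspondingly your inverse $\Delta_\Gamma$ is not a complex whenever $\Gamma\ne\emptyset$: if $J\in\Gamma$ then $J\cup\{n\}\in\Delta_\Gamma$ but $\{n\}\notin\Delta_\Gamma$, so downward closure fails; and your bottom part always contains $[n-1]$ (since $\emptyset\notin\Gamma$ under the paper's convention), which is incompatible with $\{n\}\in\Delta_\Gamma$.

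The fix is precisely to let the target carry the missing bit. If one allows the empty set as a possible face of the complex on $[n-1]$ (so the void complex $\{\,\}$ and the complex $\{\emptyset\}$ are distinguished), then replacing $\Gamma_\Delta$ by the augmented link $\{J\subseteq[n-1]:J\cup\{n\}\in\Delta\text{ or }(J=\emptyset\text{ and }\{n\}\in\Delta)\}$ makes both your forward and inverse formulas work verbatim. Equivalently, observe that exactly one maximally-biconnected $\Delta$ has $\{n\}\notin\Delta$ (namely $\Delta_2$ above), and for all others your link map, after adjoining $\{n\}$ to $\Delta_\Gamma$ and removing $[n-1]$ from its bottom part, is the desired bijection. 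The paper's argument skates over this same edge case.
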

\begin{proof}
    Let $I\subseteq[n]$. For the purposes of this proof let $C_n(I)$ denote the complement of $I$ within $[n]$. 
    
    Let $\Delta$ be a maximally-biconnected complex on $[n]$. Let
    \[ \widetilde{\Delta}\vcentcolon=\{C_{n-1}(I) \ \vert \ I\subseteq [n-1], \ I\notin \Delta\},\]
    where the complement of $I$ is taken within $[n-1]$. We claim this is a biconnected complex. Let $J_0\subseteq J_1\subseteq [n-1]$, with $J_1\in \widetilde{\Delta}$. Then $C_{n-1}(J_1)\notin \Delta$, hence $C_{n-1}(J_0)\notin \Delta$, since $\Delta$ is closed under taking subsets. Suppose $J_0,J_1\in \widetilde{\Delta}$, and $J_0\cup J_1=[n-1]$. Then $C_{n-1}(J_0),C_{n-1}(J_1)\notin \Delta$. Since $\Delta$ is maximally-biconnected we have that $C_n(C_{n-1}(J_0)),C_n(C_{n-1}(J_1))\in \Delta$. Since $n\in C_n(C_{n-1}(J_i))$ for each $i=0,1$ and $[n-1]\subseteq C_n(C_{n-1}(J_0))\cup C_n(C_{n-1}(J_1))$, this implies that $C_n(C_{n-1}(J_0))\cup C_n(C_{n-1}(J_1))=[n]$, contradicting the biconnectedness of $\Delta$.

    We now give an inverse construction. Let $\widetilde{\Delta}$ be a biconnected complex on $[n-1]$. Let
    \[ \Delta\vcentcolon= \{I\subseteq[n] \ \vert \ I\neq C_n(C_{n-1}(J)), \ \text{for any }J\subseteq[n-1], \ \text{such that } J\notin \widetilde{\Delta}\}.\]
    Suppose $I_0\subseteq I_1$, and $I_0=C_n(C_{n-1}(J))$ for some $J\subseteq[n-1]$ with $J\notin \widetilde{\Delta}$. Then $C_n(I_1)\subseteq C_n(I_0)\subseteq [n-1]$, and thus $C_{n-1}(C_n(I_0))=J\subseteq C_{n-1}(C_n(I_1))$. Since $J\notin\widetilde{\Delta}$ we have that $J'=C_{n-1}(C_n(I_1))\notin \widetilde{\Delta}$. Hence $I_1\notin\widetilde{\Delta}$. Thus $\Delta$ is a complex. Suppose $I_0,I_1\in \Delta$ and $I_0\cup I_1=[n]$. 
    
    Then $C_n(I_0)\cap C_n(I_1)=\emptyset$. Without loss of generality assume that $C_n(I_0)\subseteq [n-1]$. Then $C_{n-1}(C_n(I_0))\in\widetilde{\Delta}$ by definition.
\end{proof}
\begin{definition}[\cite{HM}, \cite{MillerSturmfels} Theorem 6.33]\label{def:hm}
    The \emph{Hoşten-Morris numbers} $\lambda(n)$ are the number of biconnected complexes on $[n-1]$. 
    
    These are known up to $n=9$, as given in the OEIS entry A001206 \cite{OEIS}.
    \medskip
    \begin{center}
        \begin{tabular}{l|ccccccl}\toprule
            \ \ $n$ & 5 & 6 & 7 & 8 & 9 \\
            $\lambda(n)$ & 81 & 2646 & 1422564 & 229809982112 & 423295099074735261880  \\ \bottomrule
        \end{tabular}
    \end{center}
    \medskip
\end{definition}
\begin{corollary}
    Consider the collection of $T$-invariant open subsets $U\subseteq Y$ admitting geometric quotients $U\rar U\git T$, such that the image $U\git T$ has the $A_2$-property. Let $\mathscr{G}$ denote the set of elements in this collection that are maximal with respect to $T$-equivariant inclusion. Then $\#\mathscr{G}=\lambda(n)-n$.
\end{corollary}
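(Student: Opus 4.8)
\emph{Strategy.} The count will be extracted by identifying $\mathscr{G}$ with the set of maximal bunches of orbit cones contained in $\Omega_Y^\free$, which by Proposition~\ref{prop:maxbunchbij} is in bijection with the full maximally-biconnected complexes, and then counting those complexes. So the plan has two parts: (i) prove that $\Phi\mapsto U(\Phi)$ restricts to a bijection from the set of maximal bunches of orbit cones $\Phi\subseteq\Omega_Y^\free$ onto $\mathscr{G}$; (ii) count the full maximally-biconnected complexes. Granting (i), the corollary follows from (ii) by composing with the bijection of Proposition~\ref{prop:maxbunchbij}.

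\emph{The combinatorial count (part (ii)).} By the bijection between maximally-biconnected complexes on $[n]$ and biconnected complexes on $[n-1]$ together with Definition~\ref{def:hm}, there are exactly $\lambda(n)$ maximally-biconnected complexes on $[n]$. By Lemma~\ref{lem:fullcomplexes} exactly $n$ of them fail to be full, namely those generated by the subsets of $\{i\}^c$ for $i\in[n]$. Hence there are $\lambda(n)-n$ full maximally-biconnected complexes, and therefore, by Proposition~\ref{prop:maxbunchbij}, exactly $\lambda(n)-n$ maximal bunches of orbit cones contained in $\Omega_Y^\free$.

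\emph{The geometric bijection (part (i)).} Recall from the proof of Lemma~\ref{lem:polycones} that $\Omega_Y^\free$ is precisely the set of full-dimensional orbit cones of $Y$, and that the corresponding points of $Y$ have trivial $T$-stabiliser (finite stabilisers of these quiver representations being trivial). Given a maximal bunch $\Phi\subseteq\Omega_Y^\free$: by Proposition~\ref{prop:3138}, $U(\Phi)$ carries a good quotient with the $A_2$-property, and since $\omega_y\in\Phi$ is full-dimensional for every $y\in U(\Phi)$, every $T$-orbit in $U(\Phi)$ has dimension $\dim T$; thus no orbit lies in the closure of another, every fibre of the good quotient is a single orbit, and $U(\Phi)\rar U(\Phi)\git T$ is geometric, so $U(\Phi)$ belongs to the collection in question. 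For maximality with respect to $T$-equivariant inclusion: if $V\supseteq U(\Phi)$ is $T$-invariant open with a geometric $A_2$-quotient, then the orbit cones of the points of $V$ are again all full-dimensional, and (the $A_2$-property forcing their relative interiors to meet pairwise) they generate a bunch $\Phi_V\subseteq\Omega_Y^\free$ with $V\subseteq U(\Phi_V)$; since every cone in $\Omega_Y$ is realised by a point (Lemma~\ref{lem:polycones}), $U(\Phi)\subseteq U(\Phi_V)$ forces $\Phi\subseteq\Phi_V$, and maximality of $\Phi$ gives $\Phi=\Phi_V$, hence $V=U(\Phi)$. Running the same argument on an arbitrary $U\in\mathscr{G}$ produces a maximal bunch $\Phi_U\subseteq\Omega_Y^\free$ with $U=U(\Phi_U)$, so the map is surjective; it is injective because distinct maximal bunches are realised by distinct sets of points of $Y$.

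\emph{Main obstacle.} The delicate ingredient is the equivalence, for a maximal bunch $\Phi\subseteq\Omega_Y$, between geometricity of $U(\Phi)\rar U(\Phi)\git T$ and the condition $\Phi\subseteq\Omega_Y^\free$ (equivalently, full-dimensionality of all orbit cones occurring in $\Phi$), and in particular the hard direction "geometric $\Rightarrow$ no point of $U$ has positive-dimensional $T$-stabiliser" for a $U$ maximal in $\mathscr{G}$ --- the subtlety being that $\mathscr{G}$ is defined by maximality with respect to arbitrary $T$-equivariant inclusions, whereas \cite[Theorem~3.1.4.4]{ADHL} only controls maximality with respect to saturated inclusions. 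This is where the concrete geometry of $Y$ must be used: the classification of $\Omega_Y$ and of its non-full cones in Lemma~\ref{lem:polycones}, and the separation property of orbit cones with disjoint relative interiors in Lemma~\ref{lem:dualdesc}(3), which together rule out enlarging $U$ past a maximal bunch inside $\Omega_Y^\free$ or shrinking it below one. By contrast the bookkeeping in part (ii) is entirely routine.
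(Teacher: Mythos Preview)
Your approach is the same as the paper's: reduce to counting maximal bunches contained in $\Omega_Y^\free$ via Proposition~\ref{prop:maxbunchbij}, then count full maximally-biconnected complexes using Lemma~\ref{lem:fullcomplexes} together with the identification of the total number of maximally-biconnected complexes with $\lambda(n)$. The paper's own proof of part~(i) is in fact terser than yours --- it simply asserts, citing Proposition~\ref{prop:3138}, that the maximal bunches in $\Omega_Y^\free$ are in correspondence with $\mathscr{G}$, without spelling out the passage from good to geometric quotients or from saturated to arbitrary $T$-equivariant inclusions; the subtlety you flag in your ``Main obstacle'' paragraph is therefore glossed over in the paper as well.
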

\begin{proof}
    By Proposition \ref{prop:3138} we have that the maximal bunches of orbit cones in $\Omega_Y^\free$ are in correspondence with $\mathscr{G}$. By proposition \ref{prop:maxbunchbij} these are in bijection with the full maximally-biconnected complexes. Lemma \ref{lem:fullcomplexes} then gives the count $\lambda(n)-n$.
\end{proof}
\subsection{All complete geometric quotients of $Y$.}
We show that given an arbitrary $T$-invariant open subset $U\subseteq Y$ admitting a geometric quotient $U\rar U\git T$, such that the image $U\git T$ is complete, then $U$ is obtained from the construction given in the previous subsection. In particular any such quotient space $U\git T$ has the $A_2$-property.
\begin{definition}
    Let $\mathscr{U}$ denote the set of $T$-invariant open subschemes $U\subseteq Y$ admitting a geometric quotient $U\rar U\git T$, such that the image $U\git T$ complete.
\end{definition}
\begin{theorem}\label{thm:polybij} There is a bijection
        \begin{align*}
            \{\text{Full maximally-biconnected complexes}\}&\longrightarrow \mathscr{U},\\
            \Delta &\longmapsto U(\Phi_\Delta)\git T.
        \end{align*}
    which factors through the bijection given in Proposition \ref{prop:maxbunchbij}
\end{theorem}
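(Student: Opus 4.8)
The plan is to prove the theorem in two halves: first that for every full maximally-biconnected complex $\Delta$ the open set $U(\Phi_\Delta)$ really lies in $\mathscr{U}$, and second that every member of $\mathscr{U}$ is of this form. Since Proposition~\ref{prop:maxbunchbij} already identifies full maximally-biconnected complexes with the maximal bunches $\Phi\subseteq\Omega_Y^\free$, it suffices to show that $\Phi\mapsto U(\Phi)$ restricts to a bijection from $\{\text{maximal bunches }\Phi\subseteq\Omega_Y^\free\}$ onto $\mathscr{U}$; composing with Proposition~\ref{prop:maxbunchbij} then gives the asserted bijection, which manifestly factors through it. First I would record the easy structural input: for $\Phi\subseteq\Omega_Y^\free$ a maximal bunch, Proposition~\ref{prop:3138} gives a good quotient $U(\Phi)\to U(\Phi)\git T$ with the $A_2$-property; moreover it is geometric, because every $y\in U(\Phi)$ has $\omega_y\in\Omega_Y^\free$, which by Lemma~\ref{lem:polycones} forces $\omega_y$ to be top-dimensional and $y$ to have trivial $T$-stabiliser (the stabiliser analysis in the proof of Lemma~\ref{lem:polycones}), so that $T$-orbits in $U(\Phi)$ are closed in $U(\Phi)$; this is exactly the free-locus case of \cite[Theorem~3.1.4.4]{ADHL}. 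Injectivity of $\Phi\mapsto U(\Phi)$ on maximal bunches is \cite[Theorem~3.1.4.4]{ADHL}, and distinct full maximally-biconnected complexes give distinct maximal bunches by Proposition~\ref{prop:maxbunchbij}.

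The substance of the theorem, and what I expect to be the main obstacle, is \emph{completeness} of $U(\Phi_\Delta)\git T$. I would argue this via the valuative criterion. Given a DVR $\mathcal R$ with fraction field $\mathcal K$ and a map $\operatorname{Spec}\mathcal K\to U(\Phi_\Delta)\git T$, Hilbert~90 for the split torus $T$ (vanishing of $H^1$ of $\operatorname{Spec}\mathcal K$ and of $\operatorname{Spec}\mathcal R$ with coefficients in $T$) lets me lift it to $\operatorname{Spec}\mathcal K\to U(\Phi_\Delta)\subseteq Y$ and, since $Y$ is affine, extend to $h\colon\operatorname{Spec}\mathcal R\to Y$; the task is to find a cocharacter $\lambda\colon\Gm\to T$ so that $\lim_{t\to 0}\lambda(t)\cdot h$ extends over $\operatorname{Spec}\mathcal R$ and lands in $U(\Phi_\Delta)$. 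As in the semistable-reduction arguments underlying Theorem~\ref{thm:konno} and Lemma~\ref{lem:codimen}, this reduces to a combinatorial statement about which of the loci $\VV(\varphi_{i,j}\mid i,j\in I)$ a one-parameter family can be steered to or away from; the orbit cone of a limit $\lim_{t\to0}\lambda(t)\cdot h$ is a face of the generic orbit cone cut out by $\lambda$, and one must exhibit a $\lambda$ whose limit lands in $\Phi_\Delta=\{\omega_P\mid P\subseteq\Delta\}$. The key input making this possible is the property of maximally-biconnected complexes used already in the proof of Proposition~\ref{prop:maxbunchbij}: $\Delta$ contains exactly one of $I$ and $I^c$ for every proper nonempty $I\subseteq[n]$. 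Concretely, for each wall $H_I$, of the two opposite one-parameter directions crossing it precisely one keeps the limiting orbit cone inside $\Phi_\Delta$, so that one can always complete the family. I expect the laborious part here to be the bookkeeping: tracking orbit cones of limits along cocharacters in the Grassmannian cone, and reducing a general degeneration to the rank-one walls by an induction on the number of blocks of the generic partition; the combinatorial core (that the relevant cones associated to partitions contained in $\Delta$ "cover enough") is the biconnectedness maximality, but turning this into the valuative statement is where the work lies.

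For the reverse inclusion, let $U\in\mathscr{U}$. First, $U\git T$ complete forces $U$ to be maximal among $T$-invariant open subsets that are saturated in a common $T$-invariant open admitting a good quotient: any proper saturated enlargement $U\subsetneq U'$ with $U'\git T$ a good quotient would realise the complete variety $U\git T$ as a nonempty open subscheme of the irreducible variety $U'\git T$, forcing $U\git T=U'\git T$ and hence $U=U'$. Next, since $U\to U\git T$ is geometric and $Y$ is a conical variety, a point of $U$ with finite stabiliser has trivial stabiliser (its preimages in $\calY$ are quiver representations, for which finite stabilisers vanish, cf.\ the proof of Lemma~\ref{lem:polycones}), while completeness of $U\git T$ rules out the strata on which $T$ acts with positive-dimensional stabiliser; hence $U\subseteq Y^\free$ and all orbit cones occurring in $U$ lie in $\Omega_Y^\free$. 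Together with the $A_2$-property of $U\git T$ (which a complete quotient of this form enjoys; cf.\ the discussion following Proposition~\ref{prop:3138}), \cite[Theorem~3.1.4.4]{ADHL} writes $U=U(\Phi)$ for a unique maximal bunch $\Phi$, which lies in $\Omega_Y^\free$, and Proposition~\ref{prop:maxbunchbij} then gives $\Phi=\Phi_\Delta$ for a unique full maximally-biconnected complex $\Delta$. This produces the inverse map and finishes the proof.
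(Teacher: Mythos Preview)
Your surjectivity argument has a genuine gap. You write that $U\git T$ ``enjoys the $A_2$-property'' and then invoke \cite[Theorem~3.1.4.4]{ADHL} to identify $U$ with some $U(\Phi)$. But the definition of $\mathscr{U}$ imposes only that $U\git T$ is a \emph{complete geometric} quotient; there is no $A_2$ hypothesis, and the paper explicitly warns (in the remark after Theorem~3.1.4.4 in the background) that good quotients need not have the $A_2$-property in general. So this step is circular: you are assuming exactly what the theorem is meant to establish, namely that every complete geometric quotient of $Y$ arises from a bunch. Your maximality argument has the same defect: a complete variety can sit as a proper open subscheme of a non-separated irreducible scheme (think of $\PP^1$ inside $\PP^1$ with a doubled point), so ``$U\git T$ complete and open in $U'\git T$'' does not force $U=U'$ unless you already know $U'\git T$ is separated.

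The paper's route avoids this entirely. It introduces the non-separated total space $\calP_\tot=Y^\free/T$, stratified by the locally closed $Z_\omega$ for $\omega\in\Omega_Y^\free$, and shows directly that any complete open $\calP\subseteq\calP_\tot$ is a union $\bigcup_{\omega\in\Phi}Z_\omega$ for some $\Phi$ which is then verified to be a maximal bunch. Condition~(2) of Definition~\ref{def:bunch} comes from openness via the closure formula in Lemma~\ref{prop:polystrata}. Condition~(1) is the substantive point: the paper exhibits, for each $I$, an explicit $\CC^*$-family in $\calY^\free$ with two distinct limits, one in $V_{\eta_I}$ and one in $V_{\eta_{I^c}}$, so that no separated subscheme of $\calP_\tot$ can contain both $Z_{\eta_I}$ and $Z_{\eta_{I^c}}$; combined with Lemma~\ref{lem:twoprop}(3) this forces $\omega_P^\circ\cap\omega_Q^\circ\neq\emptyset$ for all $\omega_P,\omega_Q\in\Phi$. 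This argument \emph{proves} separatedness constraints directly from completeness, rather than assuming $A_2$.

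Your completeness direction is headed the right way but the cocharacter-steering sketch is vague at exactly the point where work is needed. The paper's argument (Proposition~\ref{prop:bunchquotientsarecomplete}) is again organised via $\calP_\tot$: given a $K$-point landing in $Z_\omega$, each projective $\calP_\theta$ with $\theta\in\omega$ generic supplies a unique extension, whose closed point lands in some $Z_{\chi_C}$ depending only on the GIT chamber $C$; these $\chi_C$ cover $\omega$ with pairwise disjoint interiors, and then Lemma~\ref{lem:twoprop}(3) together with the fact that $\Delta$ contains exactly one of $I,I^c$ lets one iteratively discard cones until a single $\chi$ remains, which lies in $\Phi_\Delta$. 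This replaces your unspecified ``induction on the number of blocks'' with a clean finite elimination.
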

We utilise ideas developed by Proudfoot in \cite{Proudfoot}; our constructions often take place inside an ambient space obtained by gluing together all of the generic GIT quotients.
\begin{lemma} Let $y\in Y^\free$. There exists a generic $\theta\in \Theta$ such that $y$ is $\theta$-semistable, i.e.
        \[ Y^\free = \bigcup_{\substack{\theta\in \Theta \\ \mathrm{generic}}}Y^{\theta\text{-ss}}.\]
    \end{lemma}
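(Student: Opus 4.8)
The plan is to prove the two inclusions separately, the substance being the inclusion $Y^\free\subseteq\bigcup_{\theta}Y^{\theta\text{-ss}}$, which will follow once we know that for $y\in Y^\free$ the orbit cone $\omega_y$ is $n$-dimensional. The first thing I would record is the elementary equivalence: since $\Theta=\QQ^n$, a point $y\in Y$ is $\theta$-semistable if and only if $\theta\in\omega_y$. Indeed $\omega_y$ is generated by the degrees $\deg(\varphi_{i,j})$ of the Plücker coordinates with $\varphi_{i,j}(y)\neq 0$; if $\theta$ is a non-negative rational combination of these degrees, then clearing denominators produces a monomial in those $\varphi_{i,j}$ lying in $R_{m\theta}$ for suitable $m>0$ and not vanishing at $y$, while the converse is immediate. (Note that this only asks $\theta\in\omega_y$, not $\theta\in\omega_y^\circ$; boundary parameters cause no trouble.)

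For the inclusion $\subseteq$, take $y\in Y^\free$. By Lemma \ref{lem:polycones} we have $\omega_y=\omega_P$ for a partition $P$ of $[n]$ with $\#P\geq 3$. The key computation is that such a cone is $n$-dimensional: given $k\in[n]$, pick $l,l'$ lying in two blocks of $P$ that are distinct from each other and from the block of $k$ (possible since $\#P\geq 3$); then $e_k+e_l$, $e_k+e_{l'}$ and $e_l+e_{l'}$ are all cone generators of $\omega_P$, and $2e_k=(e_k+e_l)+(e_k+e_{l'})-(e_l+e_{l'})$ lies in the linear span of $\omega_P$, so this span is all of $\Theta$. Since moreover each generator $e_i+e_j$ lies in $C_0$, the whole cone $\omega_y$ lies in $C_0$. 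As $\bigcup_{H\in\mathcal A}H$ is a finite union of hyperplanes, it cannot contain the open set $\omega_y^\circ$, so $\omega_y^\circ$ contains a rational point $\theta$ off every $H\in\mathcal A$; by Lemma \ref{lem:polygit} such a $\theta$ lies in a GIT chamber, hence is generic, and by the first step $y\in Y^{\theta\text{-ss}}$.

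For the reverse inclusion, suppose $\theta$ is generic and $y\in Y^{\theta\text{-ss}}$, so $\theta\in\omega_y$. If $y\notin Y^\free$, then by Lemma \ref{lem:polycones} and the stabiliser analysis in its proof, $\omega_y=\omega_P$ where either $P$ is a partition of a proper subset $I\subsetneq[n]$, or $P$ is a partition of $[n]$ into at most two blocks; inspecting the generators shows that in the first case $\omega_P$ lies in the coordinate hyperplane indexed by some $k\notin I$, and in the second case $\omega_P$ lies in a hyperplane $H_I$ (or equals $\{0\}$). Either way $\omega_y$, and hence $\theta$, lies on a hyperplane of $\mathcal A$; but $Y^{\theta\text{-ss}}\neq\emptyset$ forces $\theta\in C_0$ by Lemma \ref{lem:polygit}, and then genericity puts $\theta$ in a GIT chamber, which avoids $\bigcup_{H\in\mathcal A}H$ — a contradiction. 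Hence $y\in Y^\free$. The argument has no serious obstacle; the one point that needs care is exactly this last characterisation, namely that a point whose orbit cone is full-dimensional must have trivial stabiliser, and this is already supplied by the proof of Lemma \ref{lem:polycones}(2) in the case of partitions with at least three blocks, so the proof is really a matter of assembling that input with the dimension count above.
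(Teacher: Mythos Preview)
Your proof is correct. The forward inclusion is essentially the paper's argument with the dimension count spelled out: the paper just says ``the orbit cone $\omega_y$ is top dimensional, thus it intersects the interior of some GIT chamber non-trivially,'' whereas you exhibit $2e_k$ explicitly in the span of $\omega_P$.

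The reverse inclusion is where you genuinely diverge. The paper argues as follows: for generic $\theta$ every $\theta$-semistable point is stable, hence has finite stabiliser in $G$; but a quiver representation with finite stabiliser has trivial stabiliser \cite[Lemma~1.3.2]{Ginzburg}, so the point lies in the free locus. Your route is instead purely combinatorial: using the classification in Lemma~\ref{lem:polycones} you show that every orbit cone outside $\Omega_Y^\free$ is contained in some hyperplane of $\mathcal A$, so cannot contain a generic $\theta$. Your argument is more self-contained (no external citation) and stays entirely within the cone picture already set up; the paper's argument is shorter but imports a general structural fact about quiver representations. Both are perfectly valid, and your version has the mild advantage that it makes transparent \emph{which} wall obstructs each non-free point.
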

    \begin{proof}
        Let $y\in Y^\free$. The orbit cone $\omega_y$ is top dimensional, thus it intersects the interior of some GIT chamber non-trivially. If $\theta$ lies in such a chamber then $y\in Y^{\theta\text{-s}}$. The opposite inclusion follows from the fact that a quiver representation with finite stabiliser has trivial stabiliser, see \cite[Lemma~1.3.2]{Ginzburg}.
    \end{proof}
\begin{definition}
    We define the \emph{total polygon space} $\calP_\tot$ as the quotient of $Y^\free$ by $T$.
    \[ \calP_\mathrm{tot}\vcentcolon=\bigcup_{\substack{\theta\in \Theta\\ \mathrm{generic}}}\calP_\theta = Y^\free\git T.\]
    Let $\pi_\calP$ denote the quotient morphism $Y^\free\rar\calP_\tot$, since $\pi_\calP$ is a geometric quotient it is both open and closed. The total space $\calP_\tot$ is a highly non-separated space, but it is covered by projective varieties, hence the structure morphism is universally closed.
\end{definition}
\begin{definition}
    For all $\omega=\omega_P\in \Omega_Y^\free$, we let 
    \[ V_\omega = \{y\in Y \ \vert \ \varphi_{i,j}(y)=0 \ \text{if and only if} \ i,j\in I, \ \text{for some} \ I\in P\}.\]
    This is a locally closed subscheme of $Y^\free$. Let $Z_\omega \vcentcolon= \pi_\calP(V_\omega)$. There is a commutative square
    \[\begin{tikzcd}
        {V_\omega} & {Y^\free} \\
        {Z_\omega} & \calP_\tot
        \arrow[from=1-1, to=1-2]
        \arrow[from=1-1, to=2-1]
        \arrow["{\pi_\calP}", from=1-2, to=2-2]
        \arrow[from=2-1, to=2-2]
    \end{tikzcd}\]
\end{definition}
    \begin{lemma}\label{prop:polystrata}
        Let $\omega\in \Omega_Y^\free$. The subsets $V_\omega\subseteq Y^\free$ and $Z_\omega \subseteq\calP_\tot$ are irreducible. Moreover the closures of $V_\omega$ and $Z_\omega$ are given by
        \begin{align*}
            \overline{V}_\omega =\bigcup_{\chi\subseteq \omega}V_{\chi}\subseteq Y^\free \ \text{and} \ \ \
            \overline{Z}_\omega =\bigcup_{\chi\subseteq \omega}Z_{\chi}\subseteq \calP_\tot.
        \end{align*}
    \end{lemma}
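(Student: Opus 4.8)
The plan is to describe $V_\omega$ explicitly, settle irreducibility and the closure formula for $V_\omega$ directly, and then transfer both statements to $Z_\omega$ through the quotient map $\pi_\calP$. Write $\omega=\omega_P$ for a partition $P$ of $[n]$; by Lemma~\ref{lem:polycones}(2) the hypothesis $\omega\in\Omega_Y^\free$ means $\#P\geq 3$, and in particular $V_\omega\neq\emptyset$ since $\omega$ is the orbit cone of some point of $Y^\free$. Unwinding the definition of $V_\omega$ together with the description of $\Omega_Y$ in the proof of Lemma~\ref{lem:polycones}, the preimage $\pi_Y^{-1}(V_\omega)\subseteq\calY$ is precisely the locus of tuples $(v_1,\dots,v_n)$ of nonzero vectors in $\CC^2$ such that $v_i$ and $v_j$ are proportional if and only if $i$ and $j$ lie in a common block of $P$. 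Since each $\varphi_{i,j}$ is a $T$-eigenfunction, $V_\omega$ is $T$-stable, and being a union of $T$-orbits (the fibres of $\pi_\calP$) it is $T$-saturated, so $V_\omega=\pi_\calP^{-1}(Z_\omega)$, and likewise $V_\chi=\pi_\calP^{-1}(Z_\chi)$ for all $\chi\in\Omega_Y^\free$. For irreducibility, let $r=\#P$, index the blocks of $P$ by $\{1,\dots,r\}$, write $b(i)$ for the block containing $i$, and let $C\subseteq(\CC^2\setminus\{0\})^r$ be the dense open (hence irreducible) subset of $r$-tuples of pairwise linearly independent vectors. The morphism $C\times(\CC^*)^n\to\calY$ sending $\big((w_1,\dots,w_r),(\lambda_i)_i\big)$ to $(\lambda_i w_{b(i)})_i$ has image exactly $\pi_Y^{-1}(V_\omega)$, so $\pi_Y^{-1}(V_\omega)$, and therefore its continuous images $V_\omega=\pi_Y(\pi_Y^{-1}(V_\omega))$ and $Z_\omega=\pi_\calP(V_\omega)$, are irreducible.

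For the inclusion $\overline V_\omega\subseteq\bigcup_{\chi\subseteq\omega}V_\chi$ (closure taken in $Y^\free$), I would argue as follows. If $i,j$ lie in a common block of $P$ then $\varphi_{i,j}$ vanishes identically on $V_\omega$, hence on $\overline V_\omega$. So for any $y\in\overline V_\omega$, with induced partition $Q$, one has $i\sim_P j\Rightarrow\varphi_{i,j}(y)=0\Rightarrow i\sim_Q j$, i.e.\ $P$ refines $Q$; by Lemma~\ref{lem:twoprop}(2) this gives $\omega_Q\subseteq\omega$, and $y\in V_{\omega_Q}$.

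For the reverse inclusion, fix $\chi=\omega_Q$ with $\chi\subseteq\omega$, so $Q$ is coarser than $P$, and fix $y\in V_{\omega_Q}$. Choose a lift $(v_i)\in\pi_Y^{-1}(y)$, so that within each block $J$ of $Q$ one has $v_i=c_i n_J$ for a fixed $n_J\neq0$ and scalars $c_i\in\CC^*$. Pick vectors $u_K\in\CC^2$, indexed by the blocks $K$ of $P$, that are pairwise linearly independent, and consider
\[
\phi\colon\CC\longrightarrow\calY,\qquad \phi(t)=\big(c_i\,(n_{J(i)}+t\,u_{K(i)})\big)_{i\in[n]},
\]
where $J(i),K(i)$ denote the $Q$- and $P$-blocks of $i$. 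Then $\phi(0)=(v_i)$ lifts $y$; for $i,j$ in the same block of $P$ the coordinates $\phi(t)_i,\phi(t)_j$ stay proportional for every $t$; and for $i,j$ in distinct blocks of $P$ the determinant $\det(\phi(t)_i,\phi(t)_j)$ is a nonzero polynomial in $t$ — of degree $2$ with leading coefficient $c_ic_j\det(u_{K(i)},u_{K(j)})\neq0$ when $i,j$ share a $Q$-block, and already nonzero at $t=0$ otherwise. Hence there is a finite set $F$ with $0\notin F$ such that for $t\in\CC\setminus F$ every $\phi(t)_i$ is nonzero and $\phi(t)$ has induced partition exactly $P$, so $\pi_Y(\phi(t))\in V_\omega$. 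Continuity of $\pi_Y\circ\phi$ and density of $\CC\setminus F$ then give $y=\pi_Y(\phi(0))\in\overline V_\omega$, completing the closure formula for $V_\omega$.

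Finally, for $Z_\omega$: the set $\bigcup_{\chi\subseteq\omega}Z_\chi=\pi_\calP\big(\bigcup_{\chi\subseteq\omega}V_\chi\big)=\pi_\calP(\overline V_\omega)$ has $\pi_\calP$-preimage $\bigcup_{\chi\subseteq\omega}V_\chi=\overline V_\omega$ (each $V_\chi$ being saturated), which is closed; since $\pi_\calP$ is a quotient map this forces $\bigcup_{\chi\subseteq\omega}Z_\chi$ to be closed, so $\overline Z_\omega\subseteq\bigcup_{\chi\subseteq\omega}Z_\chi$, while the opposite inclusion holds because $\pi_\calP^{-1}(\overline Z_\omega)$ is closed and contains $V_\omega$, hence $\overline V_\omega$. (Alternatively one may invoke that $\pi_\calP$ is open and closed, as recorded after the definition of $\calP_\tot$.) I expect the genuine obstacle to be the second inclusion of the closure formula: one must produce, from an arbitrary point of an arbitrary smaller stratum, a one-parameter family limiting to it while lying in $V_\omega$ — keeping every vector nonzero and neither degenerating a prescribed direction nor letting two prescribed directions coincide. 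The choice $\phi(t)_i=c_i(n_{J(i)}+t\,u_{K(i)})$ with the $u_K$ in general position is exactly what makes the determinant computation come out; everything else is soft topology together with the combinatorial input of Lemma~\ref{lem:polycones}.
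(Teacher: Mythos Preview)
Your proof is correct. The irreducibility argument is essentially the paper's: both exhibit $\pi_Y^{-1}(V_\omega)$ (equivalently $V_\omega$) as an image of, or torus bundle over, an open piece of affine space indexed by the blocks of $P$; you parametrise by $C\times(\CC^*)^n$, the paper retracts onto the slice $W=\VV(x_i-x_j,\,y_i-y_j\mid i\sim_P j)$ via the map $\alpha$, but these are two sides of the same coin.

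Where you genuinely diverge is the closure formula. The paper simply writes
\[
\overline V_\omega=\{y\in Y\mid\varphi_{i,j}(y)=0\text{ whenever }i\sim_P j\}=\bigcup_{\chi\subseteq\omega}V_\chi
\]
as an observation, the implicit point being that the right-hand closed set is \emph{irreducible} (it is $\pi_Y(W)$ with $W\cong\Af^{2N}$), so the nonempty open $V_\omega$ is automatically dense in it. You instead prove the hard inclusion $\bigcup_{\chi\subseteq\omega}V_\chi\subseteq\overline V_\omega$ constructively, by the one-parameter deformation $\phi(t)_i=c_i(n_{J(i)}+t\,u_{K(i)})$. Your route is longer but more self-contained and makes the geometry of the degeneration visible; the paper's route is slicker once one notices the irreducibility of the ambient closed set. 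Either way the transfer to $Z_\omega$ via the open and closed map $\pi_\calP$ is the same.
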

    \begin{proof}
        First note that
        \begin{align*}
            \overline{V}_\omega = \{y\in Y \ \vert \ \varphi_{i,j}(y)=0 \ \text{if} \ i,j\in I, \ \text{for some} \ I\in P\} = \bigcup_{\chi\subseteq \omega} V_\chi.
        \end{align*}
        Consider the closed subscheme  
        \[W\vcentcolon=\VV(x_i-x_j,y_i-y_j \ \vert \ i,j\in I, \ \text{for all }I\in P)\subseteq \calY.\] 
        Let $N=\#P$, then $V_\omega\cap W$ is an open subscheme of $W\cong \Af^{2N}$, hence it is irreducible.
        For each $I\in P$ choose a representative $i_I\in I$. We define a morphism $\alpha:V_\omega \rar V_\omega\cap W$ as follows. Let $\alpha(p_1,q_1,\ldots,p_n,q_n)= (p'_1,q'_1,\ldots,p'_n,q'_n)$, where $(p'_i,q'_i)\vcentcolon= (p_{i_I},q_{i_I})$, for $i\in I$. This morphism is a bundle over its image, with fiber given by the torus
        \[ \{(g_1,\ldots,g_n)\in T \ \vert \ g_{i_I}=1, \ \text{for all }I\in P\}\subseteq T.\] 
        Hence $V_\omega$ is irreducible. Since $\pi_\calP$ is open and closed, we obtain the analogous results for $Z_\omega$.
    \end{proof}
\begin{proposition}\label{prop:bunchquotientsarecomplete}
    Let $\Phi\subseteq\Omega_Y^\free$ be a maximal bunch of orbit cones. Then $U(\Phi)\git T$ is complete.
\end{proposition}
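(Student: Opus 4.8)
By Proposition~\ref{prop:3138} the space $U(\Phi)\git T$ is a variety with the $A_2$-property, hence separated, and it is of finite type; so it is complete once it satisfies the valuative criterion of properness. Let $R$ be a discrete valuation ring over $\CC$ with fraction field $K$, and let $\Spec K\to U(\Phi)\git T$ be given; uniqueness of an extension over $\Spec R$ follows from separatedness, so only existence is at issue. As $\Phi\subseteq\Omega_Y^\free$, the torus $T$ acts freely on $U(\Phi)$ and $U(\Phi)\to U(\Phi)\git T$ is a $T$-torsor, hence Zariski-locally trivial; since $\Spec K$ and $\Spec R$ are local with trivial Picard group, this torsor trivialises over each of them. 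It thus suffices to start from a lift $y\in U(\Phi)(K)\subseteq Y^\free(K)$ and produce $t\in T(K)$ together with a morphism $\Spec R\to Y^\free$ whose generic point is $t\cdot y$ and whose special point lies in $U(\Phi)$.

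\textbf{Producing a limit point.}
Write $\Phi=\Phi_\Delta$ for the full maximally-biconnected complex $\Delta$ attached to $\Phi$ in Proposition~\ref{prop:maxbunchbij}, and set $\omega_y=\omega_P$, so $P$ is a partition of $[n]$ with $\#P\geq 3$, every block of which lies in $\Delta$. The relative interior $\omega_P^\circ$ is a non-empty open subset of $C_0^\circ$, so it contains a GIT chamber $\gamma$; for any $\theta\in\gamma$ the point $y$ is $\theta$-stable, so its image lies in the projective quotient $\calP_\theta\cong Y^{\theta\text{-ss}}\git T$. Properness of $\calP_\theta$ gives a unique extension $\Spec R\to\calP_\theta$ of $\Spec K\to\calP_\theta$, and lifting through the $T$-torsor $Y^{\theta\text{-ss}}\to\calP_\theta$ over $\Spec R$ (generic $\theta$, so no strictly semistable points) and translating by a suitable $t\in T(K)$ yields $\Spec R\to Y^{\theta\text{-ss}}\subseteq Y^\free$ with generic point $t\cdot y$ and special point $y_0\in Y^\free$. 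Being a degeneration of $y$, the point $y_0$ has $\omega_{y_0}=\omega_Q\subseteq\omega_P$, so by Lemma~\ref{lem:twoprop} the partition $Q$ is a coarsening of $P$; hence $\#Q\geq 3$, every block of $Q$ is a union of blocks of $P$, and (since $\theta\in\omega_Q^\circ$) every block of $Q$ is $\theta$-short. If every block of $Q$ lies in $\Delta$, then $y_0\in U(\Phi)$ and the extension is found.

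\textbf{Choosing the chamber: the crux.}
It remains to show that the chamber $\gamma\subseteq\omega_P^\circ$ can be selected so that the partition $Q=Q(y,\gamma)$ produced by semistable reduction consists of blocks lying in $\Delta$. Two observations frame this. First, for generic $\theta\in C_0^\circ$ the family of $\theta$-short subsets of $[n]$ is itself a full maximally-biconnected complex: it is downward closed because all $\theta_i\geq 0$, no two $\theta$-short sets cover $[n]$ (adding the two short inequalities forces $2\sum_{i\in I\cap J}\theta_i<0$), and for generic $\theta$ it contains exactly one of each complementary pair $I,I^c$ — just like $\Delta$. Second, $Q(y,\gamma)$ is governed by $\gamma$ together with the tropical data $a_{ij}=v(\varphi_{ij}(y))$, which places the $T(K)$-orbit of $y$ on $\mathrm{trop}\,\mathrm{Gr}(2,n)$, i.e.\ realises it as a phylogenetic tree on the blocks of $P$, the limit $y_0$ being the collapse of that tree dictated by $\gamma$. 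The plan is to use maximal-biconnectedness of $\Delta$ to steer this collapse: whenever a collapse threatens to merge blocks of $P$ into a set $J\notin\Delta$, one has $J^c\in\Delta$, and a chamber on the ``$J^c$-heavy'' side of the hyperplane $H_J$ forces the complementary merge, which stays inside $\Delta$.

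\textbf{The main obstacle.}
The first two steps are formal; the whole difficulty lies in the last one — converting ``$\Delta$ maximally-biconnected'' into a combinatorial recipe selecting a good chamber. The subtle point is that moving $\gamma$ across one hyperplane $H_J$ to cure a bad block $J$ changes the limit globally and may in principle create a new bad block, so the argument will need an induction, presumably on $n$ or on $\#P$, with base case the three-block partitions (whose strata $Z_{\omega_P}$ are already closed in $\calP_\tot$ by Lemma~\ref{prop:polystrata}), together with a careful description of how the chamber decomposition of $\omega_P^\circ$ interacts with the phylogenetic tree of $y$. That bookkeeping is the technical heart of the proof.
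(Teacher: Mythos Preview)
Your setup is sound: the reduction to the valuative criterion, and the observation that for each GIT chamber $\gamma\subseteq\omega$ one obtains a limit with orbit cone $\omega_{Q}\subseteq\omega$ a coarsening of $P$, are correct and match the paper (which works downstairs in $\calP_\tot$ rather than lifting through the torsor, but this is cosmetic). You also correctly locate the crux --- showing that for \emph{some} chamber the resulting partition $Q$ lies in $\Delta$.

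However, your argument stops precisely there. The mechanism you propose --- navigating chambers by crossing walls $H_J$ to repair bad blocks, governed by the phylogenetic tree of $y$ on $\mathrm{trop}\,\mathrm{Gr}(2,n)$ --- is never carried out, and you yourself flag the danger that repairing one block may spoil another. The induction you gesture at is neither formulated nor executed. As written this is a plan, not a proof.

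The paper avoids your obstacle by abandoning the attempt to select a single good chamber. Instead it records the limit cone $\chi_C\in\Omega_Y^\free$ for \emph{every} chamber $C\subseteq\omega$; the resulting cones $\chi_1,\dots,\chi_N$ cover $\omega$ and have pairwise disjoint relative interiors (two distinct limits for a common generic $\theta$ would contradict separatedness of $\calP_\theta$). One then runs a finite elimination: pick any $\chi_i\neq\chi_j$; Lemma~\ref{lem:twoprop}(3) produces $I$ with $\chi_i\subseteq\eta_I$ and $\chi_j\subseteq\eta_{I^c}$; maximal-biconnectedness of $\Delta$ places exactly one of $I,I^c$ in $\Delta$, say $I$; discard all $\chi_k\not\subseteq\eta_I$. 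The surviving collection contains $\chi_i$ and is strictly smaller, so iterating leaves a single cone, which the paper then argues lies in $\Phi_\Delta$. No tropical input or wall-crossing bookkeeping is needed; Lemma~\ref{lem:twoprop}(3) together with the covering-with-disjoint-interiors property does all the work.
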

\begin{proof}
    The quotient $U(\Phi)\git T$ has the $A_2$-property by Proposition \ref{prop:3138}, hence it is separated. Therefore it suffices to show it is proper. Let $R$ be a valuation ring with fraction field $K$, together with a morphism $\alpha:\Spec(K)\rar U(\Phi)\git T\subseteq \calP_\tot$.  The image of $\alpha$ lies in $Z_\omega$, for some $\omega\in\Omega_Y^\free$. 
    
    For all generic $\theta\in\omega$, we have that $Z_\omega\subseteq \calP_\theta$, and since $\calP_\theta$ is projective, there is a unique extension $\beta_\theta:\Spec(R)\rar \calP_\theta$. Thus for all GIT chambers $C$ such that $\overline{C}\subseteq \omega$ there is a unique $\chi\in\Omega_Y^\free$, with $C\subseteq\chi_C\subseteq\omega$ such that, given $\theta\in C$, there is a unique extension $\Spec(R)\rar \calP_\theta$, with the image of the closed point lying in $Z_{\chi_C}$. The cones $\chi_C$ cover $\omega$, and we claim that their interiors have pairwise empty intersection. If their interiors did have non-empty intersection, then there would exist some generic GIT parameter in their intersection, which would contradict the properness of the resulting projective GIT quotient.

    Label the set of cones $\chi_C$ covering $\omega$ by $\chi_1,\ldots,\chi_N$. If $N=1$ then $\chi_1=\omega$, hence the morphism $\alpha$ admits an extension $\beta:\Spec(R)\rar U(\Phi)\git T$, where the closed point has image in $Z_\omega$, hence $Y\git_\Phi T$ is proper.

    Suppose $N\geq 2$. Take $i\neq j$, for $1\leq i,j\leq N$. Since $\chi_i^\circ\cap\chi_j^\circ=\emptyset$, by Lemma \ref{lem:twoprop} there exists some $I\subseteq[n]$ with $\chi_i\subseteq \eta_I$ and $\chi_j\subseteq\eta_{I^c}$. Since $\Phi=\Phi_\Delta$, where $\Delta$ is a full maximally-biconnected complex, we know that precisely one of $\eta_I$ or $\eta_{I^c}$ lies $\Phi$. Without loss of generality we assume $\eta_I\in \Phi$, hence $\chi_j\notin \Phi$. Consider the new collection of $N'$ cones consisting of $\chi_k$ such that $\chi_k\subseteq\eta_I$, we have that $N'<N$. Repeating this process until we are left with one cone $\chi_1=\omega_P$, for some partition $P$, we claim that $\omega_P\in \Phi$. By construction $\eta_I\in \Phi$, for all $I\in P$. Therefore $I\in \Delta$, for all $I\in P$. Thus by definition $\omega_P\in \Phi$. Since $\omega_P=\chi_1$, we have that there exists an extension $\alpha:\Spec(R)\rar U(\Phi)\git T$, where the closed point has image in $Z_{\chi_1}$. Hence $U(\Phi)\git T$ is complete.
\end{proof}
    We now complete the proof of the theorem.
\begin{proof}[Proof of theorem \ref{thm:polybij}]
    Note that by Proposition \ref{prop:bunchquotientsarecomplete} $U(\Phi_\Delta)\git T\in \mathscr{U}$, and the induced map is injective. Thus it suffices to prove surjectivity. Suppose $U\in \mathscr{U}$, we must have that $U\subseteq Y^\free$. Therefore $\pi_\calP(U)$ is a complete open subscheme of $\calP_\tot$.

    We begin by showing the following: let $\calP \subseteq \calP_\tot$ be a complete open subscheme, then there exists a unique full maximally-biconnected complex $\Delta$ such that \[\calP=\bigcup_{\omega\in \Phi_\Delta}Z_\omega.\]
    
    Let $p\in \calP$, then $p\in Z_\omega$ for some $\omega\in\Omega_Y^\free$. Since $\calP$ is open in $\calP_{\tot}$, we have that the generic point $\zeta_\omega$ of $Z_\omega$ lies in $\calP$. Let $q\in Z_\omega$, we claim that $q\in \calP$. 
    By \cite[\href{https://stacks.math.columbia.edu/tag/054F}{Lemma 054F}]{stacks-project}, there exists a discrete valuation ring $R$ with fraction field $K$ and a morphism $\alpha:\Spec(R)\rar \calP_\tot$ sending the open point to $\zeta_\omega$ and the closed point to $q$. We show this is the unique extension of the induced morphism $\Spec(K)\rar\calP_\tot$ to $\Spec(R)\rar \calP_\tot$. Since $\calP$ is proper, it follows that the image of the closed point $q\in\calP$, hence $\calP$ is a union of the strata $Z_\omega$. 
    
    Suppose there is an extension $\beta:\Spec(R)\rar\calP_\tot$ sending the closed point to $q'$. Since $\calP_\tot$ is covered by $\calP_\theta$, for $\theta\in\Theta$ generic, there exists some $\calP_\theta$ containing the image of $\Spec(R)$. Again since $\calP_\theta$ is open in $\calP_\tot$ we have that $\zeta_\omega\in \calP_\theta$ and since $\calP_\theta$ is a union of $Z_\chi$, we have that $q\in \calP_\theta$. Further since $\calP_\theta$ is projective, hence proper, we have that $q'=q$. Hence $\alpha=\beta$. 
    
    We have shown that $\calP=\bigcup_{\omega\in \Phi}Z_\omega$, for some subset $\Phi\subseteq\Omega_Y^\free$. We now show that $\Phi$ is a maximal bunch of orbit cones. By Proposition \ref{prop:polystrata}, we have that $\overline{Z}_\omega = \bigcup_{\chi \subseteq \omega}Z_\chi$, hence $\bigcup_{\omega\subsetneq \omega_0}Z_\chi$ is open. Since $\calP\subseteq \calP_\tot$ is an open subscheme, if $\omega\in \Phi$ and $\omega\subseteq\omega_0$ then $\omega_0\in \Phi$.
    If $\omega\in \Phi$ and $\omega^\circ\subseteq \omega_0^\circ$, then $\omega\subseteq\omega_0$ and thus $\omega_0\in \Phi$. Therefore $\Phi$ satisfies Definition \ref{def:bunch} (2).

    Let $\omega_P,\omega_Q\in \Omega_Y^\free$ such that $\omega_P^\circ\cap\omega_Q^\circ=\emptyset$, then by Lemma \ref{lem:twoprop} there exists some $I$ such that $\omega_P\subseteq\eta_I$ and $\omega_Q\subseteq\eta_{I^c}$. We claim that if $U\subseteq\calP_\tot$ is an open subscheme containing $Z_{\eta_I}\cup Z_{\eta_{I^c}}$, then $U$ is not separated. Since we have assumed $\calP$ is complete, this implies that we cannot have $\{\omega_P,\omega_Q\}\subseteq \Phi$, which implies that $\Phi$ satisfies Definition \ref{def:bunch} (1).

    We construct a morphism $\CC^*\rar \calP_\tot$, with two distinct extensions to a morphism $\Af^1\rar \calP_\tot$, such that $0$ has image in $Z_{\eta_I}$ and $Z_{\eta_{I^c}}$ respectively.
Let $\emptyset\neq I\subseteq[n]$. Up to reordering we may assume $I=\{1,\ldots,m\}$ and $I^c=\{m+1,\ldots,n\}$, with $1\leq m < n$. Consider the family 
\[v(t)=(p_1(t),q_1(t),\ldots,p_n(t),q_n(t))\in \calY^\free.\] For $t\neq 0$. Given by $(p_i(t),q_i(t))=(1,it)$, for $i\in I$, and $(p_j(t),q_j(t)) = (jt,1)$, for $j\in I^c$. Then
    \[ \varphi_{i,j}(v(t)) = \begin{cases}
        (j-i)t, \ &i,j\in I, \\
        1 - ijt, \ &i\in I, \ j\in I^c, \\
        (j-i)t, \ &i,j\in I^c.
    \end{cases}\]
    Define a family of elements $g(t)$ in $G$, given by
    \[ g_0(t) =\begin{pmatrix}
        1 & 0 \\ 0 & t
    \end{pmatrix}, \ g_i(t)=\begin{cases}
        1, & i\in I, \\
        t, & i\in I^c.
    \end{cases}\]
    We have $g(t)\cdot v(t) = (p_1'(t),q_1'(t),\ldots,p_n'(t),q_n'(t))$, where $(p_i'(t),q_i'(t))=(1,i)$, for $i\in I$, and $(p_i'(t),q_i'(t))=(it^2,1)$, for $i\in I^c$. We have $\lim_{t\rar 0}(g(t)\cdot v(t))\in V_{\eta_I}\subseteq Y^\free$.

    In addition define
    \[ h_0(t) =\begin{pmatrix}
        t & 0 \\ 0 & 1
    \end{pmatrix}, \ h_i(t)=\begin{cases}
        t, & i\in I, \\
        1, & i\in I^c.
    \end{cases}\]
    Then $h(t)\cdot v(t) =(p_1''(t),q''_1(t),\ldots,p_n''(t),q_n''(t))$, where $(p_i''(t),q''_i(t)) = (1,it^2)$, for $i\in I$, and $(p_i''(t),q''_i(t)) = (i,1)$, for $i\in I^c$. Thus we have $\lim_{t\rar 0}(h(t)\cdot v(t))\in V_{\eta_{I^c}}\subseteq Y^\free$. Therefore no separated subscheme $U\subseteq\calP_\tot$ contains $Z_{\eta_I}\cup Z_{\eta_{I^c}}$. Hence $\Phi$ is a bunch of orbit cones and since $\calP$ is complete it must be maximal.

    It follows that if $U\in \mathscr{U}$, then $\pi_\calP(U)=\bigcup_{\omega\in\Phi_\Delta}Z_\omega$. Therefore $U=\bigcup_{\omega\in\Phi_\Delta}V_\omega = U(\Phi)$, thus the map is surjective.
\end{proof}
\subsection{Examples.}
Here we describe some examples of singular polygon spaces and how their crepant resolutions can be obtained from Theorem \ref{thm:polybij}.
\begin{example}\label{ex:segre}
    Let $n=6$, and $\theta=(1,1,1,1,1,1)$. It is shown in \cite{HMSV} that $Z_3\vcentcolon=\calP_{\theta}$ is the Segre cubic, this is the projective 3-fold in $\PP^5$ defined by the equations
    \begin{align*}
        \sum_{i=0}^5 x_i = 0, \ \ \sum_{i=0}^5 x_i^3 =0.
    \end{align*}
    The variety $Z_3$ has 10 ordinary double points, each of which locally admits 2 crepant resolutions. Since each singular point can be resolved independently, $Z_3$ has $2^{10}$ crepant resolutions. By work of Finkelnberg \cite{Finkelnberg} only 332 of these are projective. We demonstrate that our results provide an alternative proof of this result. 
    
    Let $\PP^3_{5}$ be the blow-up of $\PP^3$ at 5 points in general position. This is a smooth variety which contains 10 disjoint $(-1,-1)$-curves. Contracting each of these curves to a point gives a morphism $f:\PP^3_5\rar Z_3$.
    Any crepant resolution of $Z_3$ is isomorphic in codimension 1 to $\PP^3_{5}$ (which is a Mori dream space by Lemma \ref{lem:polycox}). Therefore each projective crepant resolution of $Z_3$ corresponds to a Mori chamber in the movable cone of $\PP^3_5$. By Theorem \ref{thm:hukeel} these Mori chambers are identified with the GIT chambers associated to the torus action on $\Spec(\Cox(\PP^3_5))$ that contain the ray generated by $f^*L$ in their closure, where $L$ is an ample line bundle on $Z_3$. By Lemma \ref{lem:polygit}, the Mori chambers are determined by the hyperplane arrangement $\mathcal{A}$. Enumerating the chambers of the hyperplane arrangement $\mathcal{A}$ containing $(1,1,1,1,1,1)$ in their closure, one obtains the number 332.

    There are $10$ partitions $P_1,\ldots,P_{10}$ of $\{1,2,3,4,5,6\}$ into two subsets of size 3. Given $I_i\in P_i$ for all $i$, we construct a full maximally-biconnected complex generated by the subsets of $I_i$ for all $i$. Via the bijection given in Theorem \ref{thm:polybij} we obtain the $2^{10}$ crepant resolutions of $Z_3$ from these complexes.
    Let $\Delta$ be such a full maximally-biconnected complex. One can show that $U(\Phi_\Delta)\subseteq Y^{\theta\text{-ss}}$, hence there is an induced morphism $U(\Phi_\Delta)\git T\rar \calP_\theta$. Now note that $U(\Phi_\Delta)$ is covered by open subsets of $\calP_{\theta_+}$, where $\theta_+\in\Theta$ lies in a GIT chamber containing $\theta$ in its closure. Therefore the morphism $U(\Phi_\Delta)\git T \rar \calP_\theta$ is crepant.
\end{example}

\begin{example}\label{ex:segre2}
    Let $n=8$, and let $\theta = (1,1,1,1,1,1,1,1)$. Then $Z_5\vcentcolon=\calP_\theta$ is a 5-fold with 35 isolated ordinary double points. See \cite[Section~8.3.4]{projectiveinvariants} for a description of equations defining $Z_5$. Via Lemma \ref{lem:flipflop}, $Z_5$ is obtained from $\PP^5$ via the following birational transformations.
    \begin{enumerate}
        \item Select 7 distinguished points in $\PP^5$ in general position and blow them up.
        \item Flip the 21 disjoint curves given by the strict transform of lines spanned by 2 distinguished points in $\PP^5$.
        \item Contract the 35 disjoint planes given by the strict transform of planes spanned by 3 distinguished points in $\PP^5$.
    \end{enumerate}
    Each isolated ordinary double point admits two crepant resolutions related by a flop, thus $Z_5$ has $2^{35}$ minimal resolutions. These are obtain by considering all 35 partitions of $\{1,2,\ldots,7,8\}$ into two subsets of size 4, and associated $2^{35}$ full maximally-biconnected complexes similarly to the example of the Segre cubic.
    Counting the number of GIT chambers in $\Theta$ that contain the ray generated by $\theta$ in their closure, we find there are 495504 projective minimal resolutions of $Z_5$.
\end{example}
\begin{remark}
    The previous two examples generalise to a family. Let $n=2m$, with $m\geq 3$. Let $\theta=(1,1,\ldots,1)\in\Theta$. Then $Z_{n-3}\vcentcolon=\calP_\theta$ is a $n-3$ dimensional variety with $k\vcentcolon=\frac{1}{2}\binom{n}{m}$ ordinary double points. Thus $Z_{n-3}$ has $2^k$ crepant resolutions. 
    
    Let $\mathcal{B}$ be the collection of hyperplanes in $\QQ^{n-1}$ given by $\sum_{i\in I}z_i=0$, for all $I\subseteq[n-1]$ with $\#I =m$. The number of projective crepant resolutions of $Z_{n-3}$ is given by the number of chambers of the hyperplane arrangement $\mathcal{B}$.
\end{remark}
\section{All crepant resolutions of hyperpolygon spaces}
\subsection{The orbit cones of $X$.}
Recall the affine varieties $\calW$ and $\calX$ described in and above Definition \ref{def:calX}. We have \[\calW=\bigoplus_{i=1}^n \big(V^*(e_i)\oplus \det(V)(-2e_i)\big).\] 
Let $\pi_W:\calW \rar W:=\calW\git \SL(2)$ and $\pi_X:\calX\rar X:=\calX\git \SL(2)$ denote the affine quotients by $\SL(2)$. There are actions of $G$ on $\calW$ and $\calX$, and of $T$ on $W$ and $X$. Resulting in a commutative square
\[\begin{tikzcd}
	\calX & \calW \\
	X & Y
	\arrow[from=1-1, to=1-2]
	\arrow["{\pi_X}"', from=1-1, to=2-1]
	\arrow["{\pi_W}", from=1-2, to=2-2]
	\arrow[from=2-1, to=2-2]
\end{tikzcd}\]
Where the horizontal arrows are closed immersions equivariant with respect to $G$ and $T$ respectively. By Theorem \ref{thm:cox}, for $\theta\in \Theta$ generic there is a $T$-equivariant isomorphism $X\cong\Spec(\Cox(\MM_\theta))$. 

 Since $X$ is the spectrum of the Cox ring of a smooth irreducible variety, by Theorem \ref{thm:cox} and \cite[Theorem~1.5.1.1]{ADHL} $X$ is normal. Let $X^\free$ denote the open subscheme where $T$ acts freely on $X$. We begin by classifying the orbit cones in $\Omega_X$ and those in the subset $\Omega_X^\free=\{\omega_x \ \vert \ x\in X^\free\}$.
\begin{definition}
    Let $I,K\subseteq [n]$, and let $P$ be partition of $I$. We call such a collection \emph{orbit data}. Let $\epsilon_K=\cone(-e_k \ \vert \ k\in K)\subseteq \Theta$. Given orbit data $P,K$ let 
    \[\omega_{P,K}=\omega_{I,P}+\epsilon_K\subseteq \Theta.\]
\end{definition}
\begin{proposition}\label{prop:hyporbitcones}
    We have $\Omega_X \subseteq \{\omega_{P,K} \ \vert \ \text{Orbit data} \ P,K\}$, with $\omega_{P,K}\in \Omega_X$ if and only if one of the following conditions is satisfied:
    \begin{enumerate}
        \itemsep0.2em
        \item $\#\{J\in P \ \vert \ J\cap K\neq\emptyset\}\geq 4$.
        \item $\#(K \cap J) \neq 1$, for all $J\in P$.
    \end{enumerate}
    Moreover $\omega_{P,K}\in \Omega_X^\free$ if and only if $P$ is a partition of $[n]$, and one of the following conditions is satisfied:
    \begin{enumerate}
        \itemsep0.2em
        \item $\#\{J\in P \ \vert \ J\cap K\neq\emptyset\}\geq 4$.
        \item $\#K\cap J \neq 1$, for all $J\in P$, and $\#P \geq 3$.
        \item $\# K\cap J\neq 1$, for all $J\in P$, and $\#P\geq 2$, and $K\neq\emptyset$.
    \end{enumerate}
\end{proposition}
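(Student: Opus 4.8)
The plan is to work downstairs on $\calX\subseteq\calW$ and compute orbit cones orbit by orbit. Write a point of $\calX$ as $p=(v_1,\dots,v_n,c_1,\dots,c_n)$, where $v_i$ denotes the point of the $i$-th copy of $V^*$ and $\varphi_{i,j}=x_iy_j-x_jy_i$ is the $2\times2$ determinant in slots $i,j$, so $\varphi_{i,j}(p)\neq0$ exactly when $v_i,v_j$ are linearly independent. Since $\CC[X]=\CC[\calX]^{\SL(2)}$ is generated by the $\SL(2)$-invariants $\varphi_{i,j}$ and $c_k$, which descend to $X$, and a homogeneous element of $\CC[X]$ is a $\CC$-linear combination of monomials in these generators of a common degree, the orbit cone of $x=[p]$ is $\omega_x=\cone(e_i+e_j\mid\varphi_{i,j}(p)\neq0)+\cone(-e_k\mid c_k\neq0)$. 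Putting $I=\{i\mid v_i\neq0\}$, letting $P$ be the partition of $I$ under $v_i\parallel v_j$, and $K=\{k\mid c_k\neq0\}$, one reads off $\omega_x=\omega_{P,K}$; this proves $\Omega_X\subseteq\{\omega_{P,K}\}$ and reduces the statement to deciding which orbit data $(P,K)$ occur as the data of a point of $\calX$ (for $\Omega_X$) and of a point with trivial $G$-stabiliser (for $\Omega_X^\free$).

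The key device is to read the generators $\sum_kc_kx_k^2$, $\sum_kc_kx_ky_k$, $\sum_kc_ky_k^2$ of $J$ through the quadratic Veronese $v\mapsto v^2\in\Sym^2V^*\cong\CC^3$: the relation says the points $v_k^2$, which lie on a smooth conic $C\subseteq\PP(\Sym^2V^*)$, satisfy $\sum_kc_kv_k^2=0$. I will use two facts about $C$: (a) $v^2,w^2$ are proportional iff $v\parallel w$, and any three distinct points of $C$ are linearly independent (a line meets $C$ in at most two points); (b) among $m\ge4$ distinct points of $C$ the space of linear dependences has dimension $m-3$, and since any three are independent each locus ``$i$-th coefficient $=0$'' is a proper subspace, so one can choose a dependence with all coefficients nonzero. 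Grouping the $k\in K$ by the part of $P$ containing $k$, with $v_k=\lambda_kd_J$ ($\lambda_k\neq0$) for $k\in K\cap J$, the relation $\sum_kc_kv_k^2=0$ becomes $\sum_J\mu_Jd_J^2=0$ with $\mu_J=\sum_{k\in K\cap J}c_k\lambda_k^2$, a dependence among the $m:=\#\{J\in P\mid J\cap K\neq\emptyset\}$ distinct conic points $d_J^2$.

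For the ``if'' direction: given $(P,K)$, assign distinct directions to the parts of $P$ and build $p\in\calX$ with exactly this data. If (2) holds, every part meeting $K$ contains at least two indices of $K$, so each $\mu_J$ can be forced to $0$ with all $c_k\neq0$ and the $d_J$ are then arbitrary. If (1) holds ($m\ge4$), pick the $d_J$ together with a dependence $\sum_J\mu_Jd_J^2=0$ having all $\mu_J\neq0$ (fact (b)) and solve for the $c_k$, using that a part meeting $K$ can realise any prescribed nonzero $\mu_J$. For the ``only if'' direction it is enough to check that the data of any $p\in\calX$ already satisfies (1) or (2): if $m\le3$ then by (a) the $d_J^2$ are linearly independent, hence all $\mu_J=0$, so no part $J$ can have $\#(K\cap J)=1$ (that would force $\mu_J=c_k\lambda_k^2\neq0$); therefore $\#(K\cap J)\neq1$ for every $J$ and (2) holds. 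Together with realizability this yields $\Omega_X=\{\omega_{P,K}\mid(1)\text{ or }(2)\}$.

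For $\Omega_X^\free$ we combine this with a stabiliser computation in the spirit of Lemma \ref{lem:polycones}, additionally tracking the action on the $c_i$ (weight $-2e_i$, twisted by $\det$ under $\GL(2)$) and using that a quiver representation with finite stabiliser has trivial stabiliser \cite[Lemma~1.3.2]{Ginzburg}, so that $[p]\in X^\free$ iff $p$ has trivial $G$-stabiliser. If some $v_i=0$ there is a non-trivial stabiliser (for $c_i=0$ the torus factor at $i$ fixes $p$; the case $c_i\neq0$ uses the $\det$-twist), so $P$ must partition $[n]$; if $\#P=1$ a $\mathbb{G}_a\subseteq\GL(2)$ fixes $p$; if $\#P=2$ the rank-one torus of $\GL(2)$ preserving the two directions stabilises $p$ unless some $c_k\neq0$, in which case the corresponding $\det$-equation cuts it to a finite, hence trivial, group; and $\#P\ge3$ always gives trivial stabiliser. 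Intersecting with the conditions for $\Omega_X$ (and noting $m\ge4\Rightarrow\#P\ge4$) produces exactly (1)--(3). The main obstacle is the bookkeeping in ``only if'': distinct orbit data can define the same cone (merging indices inside a part that already meets $K$ in $\ge2$ points does not change $\omega_{P,K}$), so to get the literal equivalence one argues via ``every point's data is good'' — which suffices for the set equality — together with the observation that $K$, the partition $K$ induces, and $m$ are intrinsic to $\omega_{P,K}$ (for instance $K=\{k\mid-e_k\in\omega_{P,K}\}$, and membership of the vectors $e_k+e_j$ detects the rest).
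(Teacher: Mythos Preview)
Your proposal is correct and follows essentially the same route as the paper: both lift to $\calX\subseteq\calW$, read off the orbit data $(P,K)$ from which generators $\varphi_{i,j},c_k$ vanish, reduce existence of a point in $\calX$ with prescribed data to the solvability of $\sum_{k\in K}c_k v_k^2=0$ with all $c_k\neq0$ via the Veronese conic in $\PP(\Sym^2 V^*)$ (using that any three distinct conic points are independent), and then carry out the same stabiliser analysis for $\Omega_X^\free$. Your grouping by parts with the coefficients $\mu_J$ and your closing remark that $K$ and the induced partition on $K$ are intrinsic to the cone make the logic of the ``only if'' direction a bit more explicit than the paper, but the substance is identical.
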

\begin{proof}
Since there is a $T$-equivariant closed immersion $X\subseteq W$, we have that $\Omega_X\subseteq\Omega_W$. We begin by characterising the orbit cones in $\Omega_W$.
The coordinate ring of $W$ is generated by the homogeneous elements $\varphi_{i,j}$ and $c_k$, for $1\leq i<j\leq n$ and $k\in [n]$. Given $w\in W$ we have
\[ \omega_w = \mathrm{Cone}\big(\deg(\varphi_{i,j}), \deg(c_k) \ \vert \ \varphi_{i,j}(w)\neq 0, \ c_k(w)\neq 0\big).\]

Since $\pi_W$ is surjective and determined by an injection on coordinate rings it suffices to check this vanishing on any point of the fiber $\pi_W^{-1}(w)$. 

Let $p=(p_1,q_1,r_1,\ldots,p_n,q_n,r_n)\in \calW$. Let $I=\{i \in [n] \ \vert \ (p_i,q_i)\neq 0\}$, and let $P$ be the partition of $I$ given by the equivalence relation $i\sim j$ if $p_iq_j-p_jq_i=0$. Let $K=\{k\in [n] \ \vert \ r_k\neq 0\}$. The orbit data $P,K$ determines exactly which generators of $\CC[W]$ vanish at $p$, hence at $\pi_W(p)$. Therefore we obtain 
\[\Omega_W=\{\omega_{P,K} \ \vert \ \text{Orbit data }P,K\}.\]
We claim that there exists $x\in X$ such that $\omega_x=\omega_{P,K}$ if and only if one of the following holds:
\begin{enumerate}
    \itemsep0.2em
    \item $\#\{J\in P\ \vert \ J\cap K\neq\emptyset\}\geq 4$.
    \item $\#J\cap K\geq 2$, for all $J\in P$.
\end{enumerate}
The subset $I\subseteq[n]$ and the partition $P$ depends only on the set of $(p_i,q_i)$, for $i\in [n]$. As in the proof of Lemma \ref{lem:polycones}, for any $I\subseteq [n]$ and partition $P$ of $I$ we can construct a collection of $(p_i,q_i)$ with this data. Given such a collection of $(p_i,q_i)$, for $i\in [n]$, and $K\subseteq[n]$, we can construct a point $p\in \calX$ such that $\omega_{\pi_X(p)}=\omega_{P,K}$ if and only if there exists $r_k\neq 0$, for $k\in K$, such that:
\begin{align*}
    \sum_{k\in K} (r_kp_k^2,r_kp_kq_k,r_kq_k^2)=(0,0,0)\in\CC^3.
\end{align*}

Some simple linear algebra tells us that we can find such a set of $r_k$ if and only if $(p_i^2,p_iq_i,q_i^2)\in \mathrm{Span}((p_j^2,p_jq_j,q_j^2) \ \vert \ j\in K\setminus\{i\})$, for all $i\in K$.
If $\#J\cap K\geq2$ for all $J\in P$, then this is satisfied, since removing any one vector has no effect on the total span. 
Similarly if $\#\{J \in P \ \vert \ J\cap K\neq\emptyset\}\geq 4$, then this is satisfied, since $(p_i^2,p_iq_i,q_i^2)$ lie in a 3-dimensional vector space and there are $\geq 4$ linear equivalence classes of vectors. 
Any vector of the form $(p_0^2,p_0q_0,q_0^2)$ lies in the cone over a smooth quadric in $\PP^2$, therefore no 3 pairwise linearly independent vectors of that form span a 2-dimensional space (since no 3 distinct points on a smooth quadric span a line).
Therefore if $\#\{J\in P \ \vert \ J\cap K\neq\emptyset\}=m$, with $m\leq 3$, then $\dim(\mathrm{span}((p_j^2,p_jq_j,q_j^2) \ \vert \ j\in K))=m$, and if there is some $J\in P$ with $J\cap K =1$, then removing the vector indexed by the element in $J\cap K$ leaves us with an $m-1$ dimensional span. 

This proves our claim, and completes the classification of $\Omega_X$. We now construct an open subset $\calW^\free\subseteq \calW$ with the property that the closed points of $\calW^\free$ are the closed points in $\calW$ with trivial stabiliser under the $G$-action. Hence we obtain a similar open subset \[\calX^\free\vcentcolon=\calW^\free\cap \calX\subseteq \calX.\]

Let $p=(p_1,q_1,r_1,\ldots,p_n,q_n,r_n)\in \calW$. Suppose $(p_i,q_i)=0$ for some $i\in [n]$. Then there is a subgroup $\ZZ/2\ZZ\subseteq G$, which acts by $-1$ at the vertex $i\in Q_0$, which stabilises $p$. Suppose that $(p_i,q_i)\neq 0$ for all $i$. Let $P$ be the partition of $[n]$ induced by $p$. We have that $[(g_0,g_1,\ldots,g_n)]\in G$ stabilises $p$ if and only if the following two statements hold
\begin{enumerate}
    \itemsep0.2em
    \item $g_0\cdot (p_i,q_i)  = (g_ip_i,g_iq_i)$, for all $i\in [n]$.
    \item $\det(g_0)r_i=g_i^2r_i$, for all $i\in [n]$.
\end{enumerate}
In a similar vein to the proof of Lemma \ref{lem:polycones}, one can show that $p\in\calW^\free$ if and only if one of the following conditions is satisfied
\begin{enumerate}
    \itemsep0.2em
    \item $\#P\geq 3$.
    \item $\#P\geq2$, and $K\neq\emptyset$.
\end{enumerate}
Thus we obtain an open subset $\calX^\free\subseteq \calX$.
We claim that $\pi_X^{-1}(X^\free)=\calX^\free$. As a representation of $\SL(2)$ we have that $\calW = (\oplus_{i=1}^n V^*)\oplus \CC^n$, let $\calW_0\cong\CC^n$ denote the trivial subrepresentation of $\calW$. We have that $\pi_W\vert_{\calW_0}$ is an isomorphism; denote its image by $W_0\subseteq W$. Note that $W_0\subseteq X$. It follows from \cite[Proposition~3.1]{GIT} that \[\pi_W:\pi_W^{-1}(W\setminus W_0)\rar W\setminus W_0.\] is a locally trivial $\SL(2)$-bundle. Since $\calW^\free\subseteq\calW\setminus\calW_0$, it follows that $\calW^\free=\pi_W^{-1}(W^\free)$, which implies the analogous result for $\calX^\free$.

Combining the conditions found for a point $p\in \calW$ to lie in $\calW^\free$, together with the conditions required for $\omega_{P,K}$ to lie in $\Omega_X$ we obtain the stated conditions required for $\omega_{P,K}$ to lie in $\Omega_X^\free$, this completes the proof.
\end{proof}
\begin{lemma}\label{lem:T+}
    Recall from Definition \ref{def:F} that $F\subseteq \Theta$ denotes the positive orthant with respect to a given basis. For all $\omega \in \Omega_X^\free$, we have that $\omega^\circ \cap  F^\circ\neq\emptyset$.
\end{lemma}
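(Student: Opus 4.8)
The plan is to write down, for each $\omega\in\Omega_X^\free$, an explicit rational vector lying simultaneously in the relative interior of $\omega$ and in the open positive orthant $F^\circ$, built directly from the generators of $\omega$ via the description of $\Omega_X^\free$ in Proposition \ref{prop:hyporbitcones}.

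First I would record two inputs. (i) By Proposition \ref{prop:hyporbitcones}, every $\omega\in\Omega_X^\free$ equals $\omega_{P,K}=\omega_{[n],P}+\epsilon_K$ for orbit data in which $P$ is a partition of the \emph{whole} of $[n]$, and moreover each of the three conditions characterising $\Omega_X^\free$ forces $\#P\geq 2$: condition (1) requires at least four blocks of $P$ to meet $K$, while (2) and (3) impose $\#P\geq 3$ and $\#P\geq 2$ directly. (ii) For any finite nonempty family $S$ of vectors, the relative interior of $\cone(S)$ is precisely $\{\sum_{v\in S}\lambda_v v:\lambda_v>0\}$; I would justify this by the standard two-line argument (from such a point one can still move slightly \emph{away} from any member of $\cone(S)$ and stay inside, and conversely any relative-interior point is recovered by averaging a nearby conic combination with the sum $\sum_{v\in S}v$).

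Next I would unwind the definition of $\omega_{P,K}$: since $\cone(A)+\cone(B)=\cone(A\cup B)$,
\[ \omega \;=\; \cone\Big(\{e_i+e_j : i,j\in[n]\text{ in distinct blocks of }P\}\ \cup\ \{-e_k : k\in K\}\Big). \]
Using $\#P\geq 2$, for each $k\in[n]$ the block $B_k$ of $P$ through $k$ has $|B_k|\leq n-1$, so $e_k$ occurs in exactly $n-|B_k|\geq 1$ of the generators $e_i+e_j$. I would then propose the witness
\[ x \;:=\; \sum_{\substack{i<j\\ i,j\text{ in distinct blocks of }P}}(e_i+e_j)\ +\ \tfrac12\sum_{k\in K}(-e_k), \]
which is a strictly positive combination of \emph{all} the listed generators, hence $x\in\omega^\circ$ by (ii). The coefficient of $e_k$ in $x$ equals $n-|B_k|$ when $k\notin K$ and $n-|B_k|-\tfrac12$ when $k\in K$; since $n-|B_k|\geq 1$, in both cases it is $\geq\tfrac12>0$. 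Thus $x\in F^\circ$, and therefore $x\in\omega^\circ\cap F^\circ$, which is what we want.

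I do not expect a genuine obstacle; the only delicate point is the interaction between positivity and the negative generators $-e_k$ coming from $\epsilon_K$, which is exactly why the bound $\#P\geq 2$ — guaranteeing $n-|B_k|\geq 1$, enough to absorb the $-\tfrac12$ — must be extracted carefully from Proposition \ref{prop:hyporbitcones}. If one prefers to bypass the relative-interior formula, an equivalent route is to take any strictly positive $w\in\omega$ (e.g.\ $w=\sum(e_i+e_j)$ over distinct-block pairs) and use that $\omega^\circ$ is dense in the closed cone $\omega$ to find a strictly positive point of $\omega^\circ$ near $w$; but the explicit $x$ above is cleaner.
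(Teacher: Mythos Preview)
Your proof is correct and follows essentially the same approach as the paper: both extract $\#P\geq 2$ from Proposition~\ref{prop:hyporbitcones} and use that every index $k$ appears in some cross-block generator $e_i+e_j$ to build a vector with all coordinates positive. The only difference is cosmetic: the paper produces a point of $\omega\cap F^\circ$ and then invokes full-dimensionality of $\omega$ to pass to $\omega^\circ$ (exactly the alternative you sketch at the end), whereas you fold in the generators $-e_k$ with coefficient $\tfrac12$ to land directly in $\omega^\circ$.
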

\begin{proof}
By Proposition \ref{prop:hyporbitcones}, we have $\omega=\omega_{P,K}$, with $\#P\geq 2$. Let $J_0$ and $J_1$ be distinct elements of $P$. For all $j\in [n]$, with $j\in J_0$, for some $i$, there is some $k\in [n]$, with $k\notin J_i$. Thus $e_j+e_k\in \omega$. The sum of such vectors for all $j\in [n]$ is a vector $v\in \omega$, for which each entry is positive. Hence $v\in  F^\circ$. Since $\omega$ is a top dimensional cone we have that $\omega^\circ \cap  F^\circ\neq\emptyset$.
\end{proof}
\subsection{All of the crepant resolutions of $\MM_0$.}
Let $\pi:\MM\rar \MM_0$ be a crepant resolution. By Lemma \ref{lem:semismall}, we have that for all $\theta\in \Theta$ generic, $\MM$ is isomorphic to $\MM_\theta$ in codimension 1. Since $\MM$ is smooth, hence normal, restriction and extension of sections induces a graded isomorphism 
\[\Cox(\MM)\cong \Cox(\MM_\theta).\]
\begin{definition}
    Let $\widehat{\MM}:=\Spec_{\MM}(\mathscr{C}ox(\MM))$. By Lemma \ref{lem:coxsheaf}, there is a $T$-equivariant open immersion $\widehat{\MM}\rar X$, with complement of codimension $\geq 2$.
\end{definition}
Recall from Definition \ref{def:hyperplanes} \[C_0 := \Big\{(\theta_i)\in \Theta \ \vert \ 0 \leq \theta_i \leq \sum_{j\neq i}\theta_j, \ \text{for all }i\in [n]\Big\}\subseteq \Theta.\] 
    For every $i\in [n]$, let
    \[ C_i = \Big\{(\theta_k)\in \Theta \ \vert \ \theta_k\geq 0, \ \sum_{j\neq i}\theta_j \leq \theta_i, \ \text{for all }k\in [n], \ \text{and }j\neq i\Big\}\subseteq \Theta.\]
    We have that $C_i=\cone(e_i,e_i+e_j \ \vert \ j\in\{i\}^c)$.
    It follows from the description of the GIT fan given in Theorem \ref{thm:BCSRW} that for $i\in [n]$, $C_i$ is a the closure of a single GIT chamber, while $C_0$ is the union of the closures of multiple GIT chambers. For $i\in [n]$, let $Q_i$ denote the partition $\{\{i\},\{i\}^c\}$. We have that $C_i = \omega_{Q_i,\{i\}^c}\cap F$.
\begin{lemma}\label{lem:cornercones}
    Let $\omega_{P,K}\in\Omega_X^\free$. Then $C_i\subseteq\omega_{P,K}$ if and only if $K\cap I^c\neq\emptyset$, for the unique $I\in P$ such that $i\in I$. Moreover, for all $\omega_{P,K}$ containing $C_i$, we have that $\omega_{Q_i,J}\subseteq \omega_{P,K}$, for some $J\subseteq\{i\}^c$ with $\#J\geq 2$.
\end{lemma}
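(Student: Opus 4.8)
The plan is to reduce both assertions to bookkeeping with the generators of the relevant cones. Recall that, since $\omega_{P,K}\in\Omega_X^\free$, the partition $P$ is a partition of $[n]$, that $\omega_{P,K}$ is generated by the vectors $e_a+e_b$ with $a,b$ in different parts of $P$ together with the vectors $-e_k$ for $k\in K$, and that $C_i=\cone(e_i,\,e_i+e_j\mid j\in\{i\}^c)$. The one genuinely nontrivial input I would isolate first is the \emph{sign observation}: for any orbit data $P,K$ one has $-e_k\in\omega_{P,K}$ if and only if $k\in K$. The ``if'' is clear; for ``only if'', write $-e_k$ as a non-negative combination of the generators of $\omega_{P,K}$ and read off the $k$-th coordinate, which is a sum of non-negative terms minus the coefficient $\mu_k$ of $-e_k$ when $k\in K$, and a sum of non-negative terms when $k\notin K$; since it must equal $-1$ this forces $k\in K$.

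For the equivalence, the forward direction starts from $e_i\in C_i\subseteq\omega_{P,K}$. Let $I\in P$ be the part containing $i$. Writing $e_i$ as a non-negative combination of the generators of $\omega_{P,K}$ and summing the coordinates indexed by $I^c$: the left-hand side is $0$; if $K\cap I^c=\emptyset$ the $-e_k$ terms contribute nothing to this sum, while each generator $e_a+e_b$ of $\omega_P$ contributes a positive amount because $\{a,b\}\not\subseteq I$, so all those coefficients must vanish, leaving $e_i$ as a non-positive combination of $-e_k$'s, which is absurd. Hence $K\cap I^c\neq\emptyset$. Conversely, given $k_0\in K\cap I^c$, the vectors $e_i+e_{k_0}$ and $e_j+e_{k_0}$ (for $j\in I$) are generators of $\omega_P$ and $-e_{k_0}\in\epsilon_K$, so $e_i=(e_i+e_{k_0})+(-e_{k_0})$ and $e_i+e_j=(e_i+e_{k_0})+(e_j+e_{k_0})+2(-e_{k_0})$ for $j\in I\setminus\{i\}$ both lie in $\omega_{P,K}$, while $e_i+e_j$ is already a generator for $j\in I^c$; thus every generator of $C_i$ lies in $\omega_{P,K}$.

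For the ``moreover'' statement, note that $\omega_{Q_i}=\cone(e_i+e_j\mid j\in\{i\}^c)\subseteq C_i\subseteq\omega_{P,K}$ automatically, so it suffices to find a two-element $J\subseteq\{i\}^c$ with $\epsilon_J\subseteq\omega_{P,K}$; by the sign observation this is equivalent to $\#(K\cap\{i\}^c)\geq 2$. Suppose instead $\#(K\cap\{i\}^c)\leq 1$. By the first part $K\cap I^c\neq\emptyset$ and $I^c\subseteq\{i\}^c$, so $K\cap\{i\}^c=K\cap I^c=\{k_0\}$ with $k_0\notin I$, whence $K\subseteq\{i,k_0\}$. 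If $I'\in P$ is the part containing $k_0$ then $i\notin I'$, so $\#(K\cap I')=1$, and at most the two parts $I,I'$ meet $K$; this violates all three conditions of Proposition~\ref{prop:hyporbitcones} describing $\Omega_X^\free$ (condition (1) needs four parts meeting $K$, and conditions (2), (3) forbid a part meeting $K$ in exactly one element), contradicting $\omega_{P,K}\in\Omega_X^\free$. Hence $\#(K\cap\{i\}^c)\geq 2$; choosing any two-element $J\subseteq K\cap\{i\}^c$ and using that $-e_k$ is a generator of $\omega_{P,K}$ for each $k\in J$ gives $\omega_{Q_i,J}=\omega_{Q_i}+\epsilon_J\subseteq\omega_{P,K}$.

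The arguments are all short linear algebra and finite combinatorics, so I do not expect a serious obstacle; the one place that needs care is the sign observation and the $I^c$-summation in the forward direction — getting the coefficient bookkeeping exactly right — together with making sure the three distinct $\Omega_X^\free$ conditions are each genuinely ruled out in the final contradiction rather than conflated.
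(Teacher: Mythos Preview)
Your proof is correct and follows essentially the same approach as the paper: both directions of the equivalence are handled via elementary cone-generator bookkeeping, and the ``moreover'' statement is reduced to the $\Omega_X^\free$ conditions of Proposition~\ref{prop:hyporbitcones}. Your forward direction, summing coordinates over $I^c$, is in fact cleaner than the paper's comparison of a single $l$-th coordinate against the $i$-th coordinate; and for the final part the paper argues directly that $\#(K\cap I^c)\geq 2$ (which is slightly stronger than your $\#(K\cap\{i\}^c)\geq 2$), but both yield the required two-element $J\subseteq\{i\}^c$.
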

\begin{proof}
     If $K\cap I^c\neq\emptyset$, then take $j_0\in K\cap I^c$. For all $j_1\in I$, we have that $e_{j_0}+e_{j_1}$ and $-e_{j_0}\in \omega_{P,K}$. Hence $e_i,e_i+e_{j_1}\in \omega_{P,K}$, for all $j_1\in I$. Further we have that $e_i+e_{j_1}\in\omega_{P,K}$ for all $j_1\in I^c$. Since $C_i=\cone(e_i,e_i+e_{j_1} \ \vert \ {j_1}\in\{m\}^c)$ we have $C_i\subseteq\omega_{P,K}$.

    Conversely if $K\cap I^c=\emptyset$, then for all $l\in I^c$ (which is not empty since $\#P\geq 2$) we have that in the sum
    \[ \sum_{j_1\not\sim_P j_2 }\lambda_{j_1,j_2}(e_{j_1}+e_{j_2})-\sum_{   k\in K}2e_k,\]
    the $l$-th coordinate is greater than or equal to the $i$-th coordinate, hence $e_i\notin \omega_{P,K}$. For the final statement, note that $\omega_{Q_i,J}=C_i+\epsilon_J$. So it suffices to show that $-e_{j}\in \omega_{P,K}$ for two distinct ${j}\in [n]\setminus\{m\}$, but the condition that $K\cap I^c\neq\emptyset$ guarantees the existence of such one $j$, and the conditions given in Proposition \ref{prop:hyporbitcones} for $\omega_{P,K}$ to lie in $\Omega_X^\free$ guarantee that $\#K\cap I^c \geq 2$.
\end{proof}
\begin{definition}\label{def:hypcomplextobunch}
    Let $\Delta$ be a maximally-biconnected complex. Let $\Phi_\Delta\subseteq\Omega_Y^\free$ denote the maximal bunch of orbit cones given in Definition \ref{def:complextobunch}. Let
\[ \Psi_\Delta = \begin{cases}
    \{\omega\in\Omega_X^\free \ \vert \ C_i\subseteq \omega\}, & \{i\}^c \in \Delta,\\
    \{\omega\in\Omega_X^\free \ \vert \ \omega_0\subseteq \omega, \ \text{for some }\omega_0\in \Phi_\Delta\}, & \Delta \ \text{full}. 
\end{cases} \]  
\end{definition}
\begin{lemma}
	Let $\Delta$ be a maximally-biconnected complex. Then $\Psi_\Delta$ is a maximal bunch of orbit cones.
\end{lemma}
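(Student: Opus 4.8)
The plan is to treat separately the two clauses defining $\Psi_\Delta$ in Definition \ref{def:hypcomplextobunch}; by Lemma \ref{lem:fullcomplexes} these exhaust all maximally-biconnected complexes.

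\emph{The non-full case.} Suppose $\{i\}^c\in\Delta$, so $\Psi_\Delta=\{\omega\in\Omega_X^\free \ \vert \ C_i\subseteq\omega\}$. Since $C_i$ is the closure of a single GIT chamber (as recorded above Lemma \ref{lem:cornercones}), I would pick a generic $\theta$ in its interior, so that $C(\theta)=C_i$. By Remark \ref{rmk:projcones} the set $\Phi_\theta:=\{\omega\in\Omega_X \ \vert \ \theta\in\omega^\circ\}$ is a maximal bunch of orbit cones with $U(\Phi_\theta)=X^{\theta\text{-ss}}$, and the plan is to prove $\Psi_\Delta=\Phi_\theta$. Because $\theta$ is generic, $X\cong\Spec(\Cox(\MM_\theta))$ by Theorem \ref{thm:cox} and $X^{\theta\text{-ss}}=\widehat{\MM_\theta}$ by Lemma \ref{lem:coxsheaf}, so $T$ acts freely on $X^{\theta\text{-ss}}$ (it being the characteristic space of the Mori dream space $\MM_\theta$); hence every $\omega\in\Phi_\theta$ equals $\omega_x$ for some $x\in U(\Phi_\theta)=X^{\theta\text{-ss}}\subseteq X^\free$, giving $\Phi_\theta\subseteq\Omega_X^\free$. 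Finally, for $\omega\in\Omega_X^\free$ one has $C_i\subseteq\omega$ if and only if $\theta\in\omega^\circ$: if $\theta\in\omega^\circ$ then $C_i=C(\theta)\subseteq\omega$, and conversely if $C_i\subseteq\omega$ then $\theta\in C_i^\circ\subseteq\omega$, and a generic point of $\Theta$ lies in the relative interior of every orbit cone containing it \cite{ADHL}. Thus $\Psi_\Delta=\Omega_X^\free\cap\Phi_\theta=\Phi_\theta$ is a maximal bunch.

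\emph{The full case.} Now suppose $\Delta$ is full, so $\Psi_\Delta=\{\omega\in\Omega_X^\free \ \vert \ \omega_0\subseteq\omega \text{ for some }\omega_0\in\Phi_\Delta\}$, where $\Phi_\Delta\subseteq\Omega_Y^\free$ is the maximal bunch of Proposition \ref{prop:maxbunchbij}. Comparing Lemma \ref{lem:polycones} with Proposition \ref{prop:hyporbitcones} identifies $\Omega_Y^\free$ with $\{\omega_{P,\emptyset}\in\Omega_X^\free\}$, so $\Phi_\Delta\subseteq\Omega_X^\free$ and $\Psi_\Delta$ is the upward closure of $\Phi_\Delta$ inside $\Omega_X^\free$. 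For axiom (2) of Definition \ref{def:bunch}: if $\omega\in\Psi_\Delta$ and $\omega'\in\Omega_X$ with $\omega^\circ\subseteq(\omega')^\circ$, then $\omega$ being top-dimensional forces $\omega\subseteq\omega'$ with $\omega'$ top-dimensional, and the key point is that a top-dimensional orbit cone $\omega_{P',K'}$ of $X$ containing some $\omega_0=\omega_{Q,\emptyset}\in\Omega_Y^\free$ must satisfy $K'\subseteq I'$ and $I'\cup K'=[n]$: indeed $\omega_0$ has a generator with a positive coordinate at every index, whereas $\omega_{P',K'}$ has coordinate $\le 0$ at each index of $K'\setminus I'$ and coordinate $0$ outside $I'\cup K'$. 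Hence $P'$ is a partition of $[n]$ with at least two blocks (a single block would force $\omega'=\epsilon_{K'}\ne\{0\}$, impossible), so $\omega'\in\Omega_X^\free$ by Proposition \ref{prop:hyporbitcones} and therefore $\omega'\in\Psi_\Delta$. Axiom (1) and the maximality of $\Psi_\Delta$ I would establish by imitating the proof of Proposition \ref{prop:maxbunchbij}: one needs a separation criterion for $\Omega_X^\free$ asserting that two free orbit cones of $X$ with disjoint relative interiors are separated by a complementary pair $\eta_I,\eta_{I^c}$ (the analogue of Lemma \ref{lem:twoprop}(3)), after which biconnectedness of $\Delta$ gives axiom (1) exactly as in the polygon case; and a bunch strictly containing $\Psi_\Delta$ would, via the underlying-complex construction of Definition \ref{def:complextobunch} together with Lemma \ref{lem:cornercones} to dispose of non-top-dimensional or boundary cones, yield a full biconnected complex strictly containing $\Delta$, contradicting maximality of $\Delta$.

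The main obstacle is the full case, and within it the separation criterion for $\Omega_X^\free$ on which both axiom (1) and maximality rest. For $Y$ the free orbit cones are strongly convex and two of them with disjoint interiors are cleanly separated by $\eta_I$ versus $\eta_{I^c}$ (Lemma \ref{lem:twoprop}); for $X$ a free orbit cone $\omega_{P,K}$ need not be strongly convex — it contains a line as soon as $K$ meets two distinct blocks of $P$ — so this separation must be reformulated and the negative directions contributed by $\epsilon_K$ tracked carefully both in the separation lemma and in the boundary analysis needed for maximality. The non-full case, by contrast, is essentially formal once one knows that the semistable locus of a generic parameter lies in $X^\free$.
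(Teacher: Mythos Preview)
Your treatment of the non-full case is correct and essentially identical to the paper's.

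In the full case, however, you have taken a much harder road than necessary, and the central step you identify as ``the main obstacle'' does not go through as stated. The separation criterion you propose for $\Omega_X^\free$ --- that two free orbit cones with disjoint interiors are separated by a pair $\eta_I,\eta_{I^c}$ --- is false on its face: every $\eta_I$ lies inside $C_0\subseteq F$, while any $\omega_{P,K}$ with $K\neq\emptyset$ contains $-e_k$ for $k\in K$ and hence is not contained in $F$, let alone in some $\eta_I$. So no reformulation of Lemma~\ref{lem:twoprop}(3) of this shape can hold for $\Omega_X^\free$, and both your axiom~(1) argument and your maximality argument rest on it.

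The paper avoids this entirely. For axiom~(1) it uses nothing beyond the definition: if $\omega,\omega'\in\Psi_\Delta$, pick $\omega_0\subseteq\omega$ and $\omega_0'\subseteq\omega'$ with $\omega_0,\omega_0'\in\Phi_\Delta$; since $\Phi_\Delta$ is already a bunch, $\omega_0^\circ\cap(\omega_0')^\circ\neq\emptyset$, and since all four cones are top-dimensional this gives $\omega^\circ\cap(\omega')^\circ\neq\emptyset$ immediately. No separation lemma is needed. For maximality the key idea you are missing is to \emph{intersect with $C_0$} and thereby reduce to the maximality of $\Phi_\Delta$ already established in Proposition~\ref{prop:maxbunchbij}. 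Concretely: $C_0$ itself lies in $\Phi_\Delta\subseteq\Psi_\Delta$, so any $\omega_{P,K}$ in a bunch $\Psi\supseteq\Psi_\Delta$ must satisfy $\omega_{P,K}^\circ\cap C_0^\circ\neq\emptyset$; this forces $\omega_{P,K}\in\Omega_X^\free$ and $\omega_{P,K}\cap C_0=\omega_P\in\Omega_Y^\free$. Since every $\chi\in\Phi_\Delta$ lies inside $C_0$, one has $\omega_{P,K}^\circ\cap\chi^\circ=\omega_P^\circ\cap\chi^\circ$, so $\omega_P$ is compatible with all of $\Phi_\Delta$ and hence lies in $\Phi_\Delta$ by its maximality. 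Then $\omega_P\subseteq\omega_{P,K}$ gives $\omega_{P,K}\in\Psi_\Delta$ by definition. The moral is that $\Psi_\Delta$ inherits both axioms and maximality directly from $\Phi_\Delta$ via the retraction $\omega\mapsto\omega\cap C_0$, rather than via any new combinatorics on $\Omega_X^\free$.
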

\begin{proof}
    Suppose $\{i\}^c\in \Delta$. The cone $C_i^\circ$ is a GIT chamber. Given $\theta\in C_i^\circ$, we have that $X^{\theta\text{-ss}}\cong\widehat{\MM_\theta}$, on which $T$ acts freely. Therefore if $C_i\subseteq \omega$, then $\omega\in \Omega_X^\free$. The collection of orbit cones $\Psi=\{\omega\in\Omega_X^\free \ \vert \ C_i^\circ\subseteq\omega\}$ is then the maximal bunch of orbit cones associated to the generic GIT parameter $\theta$ (see Remark \ref{rmk:projcones}).

    Suppose $\Delta$ is full. Let $\omega\in \Psi_\Delta$, and $\chi\in \Omega_X$ such that $\omega^\circ \subseteq \chi^\circ$. Since $\omega$ is top-dimensional we have that $\omega\subseteq \chi$, and then $\chi\in \Psi_\Delta$ by definition, thus $\Psi_\Delta$ satisfies Definition \ref{def:bunch} (2). Given $\omega,\omega'\in \Psi_\Delta$, then there exists $\omega_0\subseteq \omega$ and $\omega_0'\subseteq\omega'$ with $\omega_0,\omega_0'\in \Phi_\Delta$. Since $\Phi_\Delta$ is a maximal bunch of orbit cones we have $\omega_0^\circ\cap(\omega_0')^\circ\neq\emptyset$, which implies $\omega^\circ\cap(\omega')^\circ\neq\emptyset$. Thus $\Psi_\Delta$ satisfies Definition \ref{def:bunch} (1).

    We claim that $\Psi_\Delta$ is a maximal bunch of orbit cones. Suppose $\Psi$ is a bunch of orbit cones such that $\Psi_\Delta\subseteq\Psi$. Let $\omega_{P,K}\in \Psi$. Since $C_0\in \Phi_\Delta$, we have that $C_0\in \Psi_\Delta$. We then have that $\omega_{P,K}^\circ\cap C_0^\circ\neq\emptyset$ by assumption. This implies that $\omega_{P,K}\in \Omega_X^\free$. Since $\omega_{P,K}\cap C_0 = \omega_P\in \Omega_Y^\free$, we have that $\omega_{P,K}^\circ \cap \chi^\circ = \omega_P^\circ \cap \chi^\circ$, for all $\chi\in \Phi_\Delta$, since $\chi\subseteq C_0$. Therefore $\omega_P\in \Phi_\Delta$, since $\Phi_\Delta$ is a maximal bunch of orbit cones, hence $\omega_{P,K}\in \Psi_\Delta$ by definition.
\end{proof}
\begin{theorem}\label{thm:B}
    Let $\mathscr{C}$ denote the set of crepant resolutions $\pi:\MM\rar \MM_0$ up to isomorphism over $\MM_0$.
    There is a bijection
    \begin{align*}
        \{\text{Maximally-biconnected complexes on }[n]\} &\longrightarrow \mathscr{C}, \\
        \Delta &\longmapsto \left[ U(\Psi_\Delta)\git T \rar \MM_0. \right]
    \end{align*}
\end{theorem}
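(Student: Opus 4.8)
\emph{Overall strategy.}
The plan is to factor the asserted map through maximal bunches of orbit cones of $X$. Writing $\Omega_X^\free$ for the orbit cones of points of $X^\free$ (classified in Proposition~\ref{prop:hyporbitcones}), I would prove that
\[
\{\text{maximally-biconnected complexes on }[n]\}\ \xrightarrow{\ \Delta\mapsto\Psi_\Delta\ }\ \{\text{maximal bunches }\Psi\subseteq\Omega_X^\free\}\ \xrightarrow{\ \Psi\mapsto U(\Psi)\git T\ }\ \mathscr{C}
\]
are both bijections, where the second map sends $\Psi$ to the class of the morphism $U(\Psi)\git T\to\MM_0$ induced by the inclusion $U(\Psi)\subseteq X$. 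The analysis of the second map mirrors the polygon-space argument of Theorem~\ref{thm:polybij}, now carried out relative to the affine base $\MM_0$ in place of $\Spec(\CC)$, and it draws on the ADHL correspondence \cite[Theorem~3.1.4.4]{ADHL} between maximal bunches of orbit cones of $X$ and the collection $\mathscr{A}$ of maximal $A_2$-quotient open subsets of $X$.

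\emph{The first map is a bijection.}
We already know $\Psi_\Delta$ is a maximal bunch, and injectivity is immediate since distinct complexes contain distinct partitions while $\Psi_\Delta$ remembers its partitions. For surjectivity, let $\Psi\subseteq\Omega_X^\free$ be a maximal bunch. If some $C_i$ lies in $\Psi$, then, as $C_i^\circ$ is a single GIT chamber, $\Psi$ must equal the bunch $\Phi_\theta$ of a generic $\theta\in C_i^\circ$ (Remark~\ref{rmk:projcones}, Lemma~\ref{lem:cornercones}), which is $\Psi_\Delta$ for the unique non-full maximally-biconnected complex generated by the subsets of $\{i\}^c$ (Lemma~\ref{lem:fullcomplexes}). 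Otherwise one checks $C_0\in\Psi$ and, using the identities $\omega_{P,K}\cap C_0=\omega_P$ from Proposition~\ref{prop:hyporbitcones} and $\omega_{Q_i,J}=C_i+\epsilon_J$ from Lemma~\ref{lem:cornercones}, that $\{\,\omega\cap C_0 : \omega\in\Psi\,\}$ is a maximal bunch in $\Omega_Y^\free$; by Proposition~\ref{prop:maxbunchbij} it is $\Phi_\Delta$ for a unique full $\Delta$, and tracing the definitions back gives $\Psi=\Psi_\Delta$.

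\emph{The second map is well defined.}
Fix a maximal bunch $\Psi\subseteq\Omega_X^\free$. Since all cones of $\Psi$ lie in $\Omega_X^\free$, $T$ acts freely on $U(\Psi)\subseteq X^\free$; and $X^\free=\bigcup_{\theta\text{ generic}}X^{\theta\text{-ss}}$ by Lemma~\ref{lem:T+}, with each $X^{\theta\text{-ss}}=\widehat{\MM_\theta}$ smooth, so $X^\free$ — hence $U(\Psi)\git T$ — is smooth, and the quotient has the $A_2$-property, in particular is separated, by Proposition~\ref{prop:3138}. One then shows $X\setminus U(\Psi)$ has codimension $\geq 2$: a codimension-$1$ point $x$ lies in $X^\free$ (whose complement is small by Lemma~\ref{lem:coxsheaf}) and, by Lemma~\ref{lem:semismall}, is stable for every generic $\theta\in F$, so $\omega_x^\circ\supseteq F^\circ$; since every cone of $\Psi$ meets $F^\circ$ (Lemma~\ref{lem:T+}), adjoining $\omega_x$ and its supersets preserves the bunch axioms, forcing $\omega_x\in\Psi$ by maximality. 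Consequently $U(\Psi)$ and $\widehat{\MM_\theta}$, both open in $X$ with codimension $\geq 2$ complement, are isomorphic in codimension $1$, hence so are the smooth varieties $U(\Psi)\git T$ and $\MM_\theta$; comparing canonical classes (detected in codimension $1$, where they agree with the respective pullbacks of $K_{\MM_0}$) reduces crepancy of $U(\Psi)\git T\to\MM_0$ to its properness, given that $\pi_\theta$ is crepant. Properness over $\MM_0$ I would obtain by the valuative argument of Proposition~\ref{prop:bunchquotientsarecomplete} run relative to $\MM_0$: a discrete-valuation-ring point compatible with a map to $\MM_0$ lands, over the generic point, in a stratum $Z_\omega$; since $\pi_\theta:\MM_\theta\to\MM_0$ is projective for $\theta$ generic in $\omega$, there is an extension into some $\MM_\theta$, and the maximal-biconnectedness of $\Delta$ — which puts exactly one of each pair $\eta_I,\eta_{I^c}$ into $\Psi$ — forces the special point into $U(\Psi)\git T$, exactly as in the polygon case.

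\emph{The second map is a bijection, and the main obstacle.}
Injectivity follows from the ADHL correspondence together with the fact that an isomorphism over $\MM_0$ forces, by functoriality of the Cox sheaf, the equality $\widehat{\MM}=\widehat{\MM'}$ of the corresponding open subsets of $X=\Spec(\Cox(\MM_\theta))$. For surjectivity, a crepant resolution $\pi:\MM\to\MM_0$ is isomorphic in codimension $1$ to $\MM_\theta$ (Lemma~\ref{lem:semismall}), hence is smooth, proper over the affine $\MM_0$, and has the same finitely generated free Picard group and the same Cox ring as $\MM_\theta$, so it is a Mori dream space (Definition~\ref{def:MDS}) and Lemma~\ref{lem:coxsheaf} realises $\widehat{\MM}$ as a $T$-equivariant open subset of $X$ with codimension $\geq 2$ complement, necessarily inside $X^\free$ by freeness of the $T$-action; one then shows, just as in Theorem~\ref{thm:polybij}, that $\Psi:=\{\omega_x : x\in\widehat{\MM}\}$ is a maximal bunch in $\Omega_X^\free$ with $U(\Psi)=\widehat{\MM}$ — using irreducibility of the strata (the analogue of Lemma~\ref{prop:polystrata}), separatedness of $\MM$, the codimension $\geq 2$ complement of $\widehat{\MM}$, and the fact that $\MM\to\MM_0$ is proper while $X^\free\git T\to\MM_0$ is universally closed — whence $\MM\cong U(\Psi)\git T$ over $\MM_0$ and, by the first map, $\Psi=\Psi_\Delta$ for a unique $\Delta$. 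I expect the two genuinely new points to be the main obstacle: the relative-properness argument for $U(\Psi)\git T$ over $\MM_0$ (the valuative reduction must be re-run with base $\MM_0$ and shown to terminate at a cone lying in $\Psi$), and the identification $U(\Psi)=\widehat{\MM}$ for an \emph{a priori} possibly non-projective crepant resolution $\MM$, i.e.\ that $\widehat{\MM}$ is the ADHL-maximal open subset attached to its bunch. Both rely on the standing hypothesis $n\geq 5$, through Lemma~\ref{lem:codimen}, and on the irreducibility of the orbit-cone strata of $X$.
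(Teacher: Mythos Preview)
Your factorisation through the set of \emph{all} maximal bunches $\Psi\subseteq\Omega_X^\free$ is exactly what the paper avoids, and for good reason: immediately after the theorem the paper remarks that, unlike the polygon case, it does \emph{not} claim that maximally-biconnected complexes biject with all maximal bunches in $\Omega_X^\free$, only with those whose quotients are proper over $\MM_0$. Your surjectivity argument for the first map hinges on the line ``Otherwise one checks $C_0\in\Psi$'', and this is precisely where the gap sits. There are orbit cones in $\Omega_X^\free$ whose relative interior misses $C_0^\circ$: for instance $\omega_{Q_i,K}$ with $Q_i=\{\{i\},\{i\}^c\}$ and $K\subseteq\{i\}^c$, $\#K\geq 2$, satisfies $\omega_{Q_i,K}\cap F=C_i$, hence $\omega_{Q_i,K}^\circ\cap C_0^\circ=\emptyset$. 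A maximal bunch containing such a cone need not contain $C_0$, and you give no argument ruling out maximal bunches that contain neither $C_0$ nor any cone with $\rho(\omega)=C_i$. (Note also that $C_i$ for $i\neq 0$ is \emph{not} itself an orbit cone in $\Omega_X$, so ``$C_i\in\Psi$'' is not well-formed; the relevant statement is that some $\omega$ with $\omega\cap F=C_i$ lies in $\Psi$.)

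The paper circumvents this by reversing the logical order. It starts from an arbitrary crepant resolution $\pi:\MM\to\MM_0$, embeds $\MM$ into $\MM_\tot$, and extracts a subset $\Psi\subseteq\Omega_X^\free$ from the stratification. The key step (Step~3) uses the \emph{properness} of $\pi$ via the valuative criterion to force some $C_i$ into $\rho(\Psi)=\{\omega\cap F:\omega\in\Psi\}$: one runs a DVR from the generic point of $\MM_\tot$ to the generic point of $\mathcal{Z}_{C_0}$ and sees where the unique extension into $\MM$ lands. Only after this does one conclude $\Psi=\Psi_\Delta$. Your architecture needs the same conclusion \emph{before} any properness is available, and that is not established. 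If you want to salvage your outline, you should replace the intermediate set by $\{\text{maximal bunches }\Psi\subseteq\Omega_X^\free\text{ with }U(\Psi)\git T\text{ proper over }\MM_0\}$ and move the properness argument for $U(\Psi_\Delta)\git T$ ahead of the surjectivity claim; at that point the proof essentially becomes the paper's.
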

\begin{remark}
    We note that unlike the polygon case, we do not claim that the maximally-biconnected complexes are in bijection with the maximal bunches of orbit cones in $\Omega_X^\free$, only that there is a bijection with the maximal bunches of orbit cones in $\Omega_X^\free$ admitting quotients proper over the base. There may or may not exist additional geometric quotients constructed via maximal bunches of orbit cones that are not proper over $\MM_0$.  
\end{remark}

The rest of this section is dedicated to a proof of Theorem \ref{thm:B}. 
\begin{lemma}\label{lem:hypfreelocus} The open subset $X^\free$ is covered by the $\theta$-semistable loci for $\theta\in F$ generic, that is 
    \[ X^\free = \bigcup_{\substack{\theta\in  F \\ \mathrm{generic}}}X^{\theta\text{-ss}}.\]
\end{lemma}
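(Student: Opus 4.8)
The statement to prove is that $X^{\free} = \bigcup_{\theta \in F \text{ generic}} X^{\theta\text{-ss}}$. The plan is to argue both inclusions, with the substantive work on $\subseteq$. For the inclusion $\supseteq$: by Theorem \ref{thm:cox} and \cite[Theorem~1.2]{BCSRW}, for any generic $\theta \in F$ we have $X^{\theta\text{-ss}} \cong \widehat{\MM_\theta}$, which by Lemma \ref{lem:coxsheaf} embeds $T$-equivariantly into $X$ with complement of codimension $\geq 2$; since $\MM_\theta$ is a smooth variety on which the $\Pic(\MM_\theta)$-torus acts with the tautological (free) action on the total space of the Cox sheaf, $T$ acts freely on $\widehat{\MM_\theta}$, hence on $X^{\theta\text{-ss}}$. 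Thus $X^{\theta\text{-ss}} \subseteq X^{\free}$.

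For the inclusion $\subseteq$: let $x \in X^{\free}$. By the classification in Proposition \ref{prop:hyporbitcones}, $\omega_x = \omega_{P,K}$ with $P$ a partition of $[n]$ satisfying one of the three listed conditions, and in particular $\omega_x$ is top-dimensional. By Lemma \ref{lem:T+}, $\omega_x^\circ \cap F^\circ \neq \emptyset$. Choose $\theta \in \omega_x^\circ \cap F^\circ$ that is moreover generic (possible since the non-generic locus is a finite union of lower-dimensional GIT walls, so a generic point of the open set $\omega_x^\circ \cap F^\circ$ is generic). Then $\theta \in \omega_x$, so $x$ is $\theta$-semistable by definition of the orbit cone: there is a homogeneous $f$ with $\deg(f) = m\theta$ for some $m > 0$ (using that $\theta \in \omega_x^\circ$, so some positive multiple of $\theta$ is a nonnegative combination of degrees of functions nonvanishing at $x$, and by saturating with such functions one produces a single semi-invariant of weight $m\theta$ nonvanishing at $x$) and $f(x) \neq 0$. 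Hence $x \in X^{\theta\text{-ss}}$ for this generic $\theta \in F$, completing the inclusion.

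The main obstacle is the last step: passing from "$\theta$ lies in the orbit cone $\omega_x$" to "$x$ is actually $\theta$-semistable", i.e.\ producing a single homogeneous semi-invariant $f \in R_{m\theta}$ with $f(x) \neq 0$ rather than merely a nonnegative rational combination of weights of nonvanishing functions. This is exactly the content relating orbit cones to semistability in \cite[Section~3.1]{ADHL}; concretely, if $\theta \in \omega_x^\circ$, then writing $\theta = \sum_j \lambda_j \deg(g_j)$ with $\lambda_j \in \QQ_{>0}$ and $g_j(x) \neq 0$, a suitable monomial $\prod_j g_j^{a_j}$ with $a_j \in \ZZ_{\geq 0}$ proportional to $\lambda_j$ has weight $m\theta$ and does not vanish at $x$. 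I would cite \cite[Proposition~3.1.1.10 and its surrounding discussion]{ADHL} (or Remark \ref{rmk:projcones}, which already records $U(\Phi_\theta) = X^{\theta\text{-ss}}$ with $\Phi_\theta = \{\omega \mid \theta \in \omega^\circ\}$) to make this precise, so that the argument reduces to: $x \in X^{\free}$ $\Rightarrow$ $\omega_x$ top-dimensional $\Rightarrow$ $\omega_x^\circ \cap F^\circ$ is a nonempty open cone meeting some GIT chamber $\subseteq F$ $\Rightarrow$ $x \in X^{\theta\text{-ss}}$ for $\theta$ in that chamber. The genericity bookkeeping (that the chamber can be taken inside $F$) is immediate from Lemma \ref{lem:T+} together with the description of the GIT fan in Theorem \ref{thm:BCSRW}.
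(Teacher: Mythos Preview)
Your proposal is correct and follows essentially the same approach as the paper: for $\supseteq$ you use $X^{\theta\text{-ss}}=\widehat{\MM}_\theta$ (on which $T$ acts freely), and for $\subseteq$ you invoke Lemma~\ref{lem:T+} to find a generic $\theta\in\omega_x^\circ\cap F^\circ$ and then conclude $x\in X^{\theta\text{-ss}}$. The only difference is that you spell out in detail the passage from $\theta\in\omega_x$ to $\theta$-semistability, which the paper simply asserts as the standard equivalence from \cite{ADHL}; your extra appeal to Proposition~\ref{prop:hyporbitcones} for top-dimensionality is not needed since Lemma~\ref{lem:T+} already applies to all $\omega\in\Omega_X^\free$.
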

\begin{proof}
    Let $x\in X^\free $. By Lemma \ref{lem:T+} we have that $\omega_x^\circ\cap F^\circ\neq\emptyset$, thus there exists some generic $\theta$ in this intersection. Since a point $x\in X$ is $\theta$-semistable if and only if $\theta\in\omega_x$, it follows that $x\in X^{\theta\text{-ss}}$. Since $X^{\theta\text{-ss}}=\widehat{\MM}_\theta$ we have that $T$ acts freely at $x$.
\end{proof}
\begin{definition} We define the \emph{total hyperpolygon space} $\MM_\tot$ as the quotient of $X^\free$ by $T$.
    \[\MM_\mathrm{tot}:= \bigcup_{\substack{\theta\in  F \\ \mathrm{generic}}}\MM_\theta= X^\free\git T.\]
    Let $\Pi_\MM:X^\free \rar \MM_\tot$ denote the quotient morphism. Let $\pi_\tot:\MM_\tot \rar \MM_0$ denote the gluing of the natural morphism $\pi_\theta:\MM_\theta\rar \MM_0$.
\end{definition}
\begin{definition}
    Given $\omega=\omega_{P,K}\in\Omega_X^\free$, let 
    \[\mathcal{V}_\omega:=\left\{x\in X \ \Big\vert \ \begin{array}{c}\varphi_{i,j}(x)=0 \ \text{if and only if }i,j\in I, \ \text{for some }I\in P, \\ c_k(x)\neq 0, \ \text{if and only if }k\in K.\end{array}\right\}.\]
    Then $\mathcal{V}_\omega$ is a locally closed subscheme of $X^\free$
    and $\mathcal{Z}_\omega:=\Pi_\MM(\mathcal{V}_\omega)\subseteq \MM_\tot$.
    There is a commutative square
    \[\begin{tikzcd}
        {\mathcal{V}_\omega} & {X^\free} \\
        {\mathcal{Z}_\omega} & \MM_\tot
        \arrow[from=1-1, to=1-2]
        \arrow[from=1-1, to=2-1]
        \arrow["{\Pi_\MM}", from=1-2, to=2-2]
        \arrow[from=2-1, to=2-2]
    \end{tikzcd}\]
\end{definition}
\begin{lemma}\label{lem:addition}\label{lem:nullc}
    Suppose $K\neq\emptyset$, then $\omega_{P,K}\not\subseteq F$ if and only if $K\subseteq I$, for some $I\in P$. Moreover in this case we have 
    \[\omega_{P,K}\cap C_0 = \big\{z\in C_0 \ \vert \ \sum_{i\in I}z_i\leq \sum_{j\notin I}z_j\big\}.\]
\end{lemma}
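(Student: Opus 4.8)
The plan is to establish the first assertion in the form $F\not\subseteq\omega_{P,K}$ if and only if $K$ is contained in a single block $I$ of $P$ (such an $I$ being unique, since the blocks are disjoint and $K\neq\emptyset$). For ``$K\subseteq I\Rightarrow F\not\subseteq\omega_{P,K}$'', I would fix $i\in I$ and show $e_i\notin\omega_{P,K}$: writing an arbitrary $z\in\omega_{P,K}$ as a non-negative combination of the generators $e_a+e_b$ (with $a,b$ in distinct blocks of $P$) and $-e_k$ ($k\in K\subseteq I$), only the generators $e_i+e_l$, necessarily with $l\notin I$, contribute positively to the $i$-th coordinate, and each coefficient $\lambda_{\{i,l\}}$ is also counted in $\sum_{l\notin I}z_l$; hence $z_i\leq\sum_{l\notin I}z_l$, which is violated by $z=e_i$. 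Conversely, if $K$ lies in no single block, then for each $i\in[n]$ one can choose $l\in K$ in a block different from that of $i$, whence $e_i=(e_i+e_l)+(-e_l)\in\omega_{P,K}$; so every $e_i$, and therefore all of $F=\cone(e_1,\ldots,e_n)$, lies in $\omega_{P,K}$.

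For the ``moreover'', assume $K\subseteq I\in P$ and pass to dual cones in $\Theta^\vee$, using biduality ($C=C^{\vee\vee}$ for a polyhedral cone) and $(A+B)^\vee=A^\vee\cap B^\vee$. From the generators, $\omega_{P,K}^\vee=\{L\mid L_a+L_b\geq0\text{ when }a,b\text{ lie in distinct blocks},\ L_k\leq0\text{ for }k\in K\}$; recalling that $C_0=\cone(e_a+e_b\mid 1\leq a<b\leq n)$ (e.g.\ by Lemma~\ref{lem:polygit}, as this cone is the union of the orbit cones of $Y$), one gets $C_0^\vee=\{L\mid L_a+L_b\geq0\text{ for all }a\neq b\}$; and the right-hand side of the claimed identity is $C_0\cap\{v_I\geq0\}$, the intersection of $C_0$ with the half-space $\{z\mid v_I(z)\geq0\}$, where $v_I=\sum_{j\notin I}f_j-\sum_{i\in I}f_i$ (with $f_i$ the basis of $\Theta^\vee$ dual to $e_i$), whose dual is $C_0^\vee+\cone(v_I)$. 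Since all the cones involved are polyhedral, the desired identity $\omega_{P,K}\cap C_0=C_0\cap\{v_I\geq0\}$ is equivalent to $\omega_{P,K}^\vee+C_0^\vee=C_0^\vee+\cone(v_I)$, and the inclusion ``$\supseteq$'' of the latter reduces to verifying $v_I\in\omega_{P,K}^\vee$: for $a,b$ in distinct blocks at most one lies in $I$, so $(v_I)_a+(v_I)_b\in\{0,2\}$, and $(v_I)_k=-1\leq0$ for $k\in K\subseteq I$.

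The substantive inclusion is ``$\subseteq$'': given $L\in\omega_{P,K}^\vee$ I would produce $t\geq0$ with $L-tv_I\in C_0^\vee$, so that $L=(L-tv_I)+tv_I\in C_0^\vee+\cone(v_I)$. Two elementary consequences of $L\in\omega_{P,K}^\vee$ (using $\emptyset\neq K\subseteq I$) do the work: choosing $k\in K$ and combining $L_a+L_k\geq0$ with $L_k\leq0$ gives $L_a\geq0$ for all $a\notin I$, and for distinct $a,b\in I$ and distinct $c,d\notin I$, summing $L_a+L_c\geq0$ and $L_b+L_d\geq0$ gives $L_a+L_b+L_c+L_d\geq0$. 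Since $(v_I)_a+(v_I)_b$ equals $-2$, $0$ or $2$ according as $0$, $1$ or $2$ of the indices lie in $I$, the condition $L-tv_I\in C_0^\vee$ is automatic on pairs split by $I$, reads $t\geq-\tfrac12(L_a+L_b)$ on pairs inside $I$, and reads $t\leq\tfrac12(L_a+L_b)$ on pairs inside $I^c$; the two inequalities above show the admissible interval for $t$ is non-empty and meets $[0,\infty)$, the degenerate shapes ($\#I\leq1$, $\#I^c\leq1$, or $\#P=1$ so that $\omega_{P,K}=\epsilon_K$) being handled directly. Dualising $\omega_{P,K}^\vee+C_0^\vee=C_0^\vee+\cone(v_I)$ then yields the stated identity. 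I expect the main obstacle to be precisely this splitting step — showing that the two cross-block inequalities are exactly what is needed to decompose an arbitrary element of $\omega_{P,K}^\vee$ along the ray $\cone(v_I)$ — together with keeping track of the degenerate configurations of $P$, $K$ and $I$.
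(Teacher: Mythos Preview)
Your argument is correct. For the first biconditional you argue exactly as the paper does (exhibit $e_i$ as a sum $(e_i+e_l)+(-e_l)$ when $K$ meets two blocks; separate by a linear functional when $K\subseteq I$), so there is nothing to add there.

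For the ``moreover'' you take a genuinely different route. The paper works primally in $\Theta$: it checks $v_I\geq 0$ on the generators of $\omega_{P,K}$ for the inclusion $\subseteq$, and for $\supseteq$ it decomposes an arbitrary $z_0\in\eta_I$ as $z_0'+z_0''$ with $z_0'\in\cone(e_i+e_j\mid i\in I,\ j\notin I)$ and $z_0''$ supported on $I^c$ (a transportation-type splitting, using that $e_j\in\omega_{P,K}$ for every $j\notin I$ because $e_j=(e_j+e_k)+(-e_k)$ with $k\in K$). You instead dualise and reduce the identity to $\omega_{P,K}^\vee+C_0^\vee=C_0^\vee+\cone(v_I)$, then for the nontrivial inclusion slide an arbitrary $L\in\omega_{P,K}^\vee$ along the ray $\cone(v_I)$ into $C_0^\vee$. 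The two key inequalities you isolate, $L_a\geq 0$ for $a\notin I$ and $L_a+L_b+L_c+L_d\geq 0$ for $a,b\in I$, $c,d\notin I$, are precisely the dual shadows of the paper's primal decomposition. Your approach is a bit longer but more explicit about where the constraint $K\neq\emptyset$ enters (it is what forces $L_a\geq 0$ on $I^c$), and it makes the degenerate cases $\#I\leq 1$, $\#I^c\leq 1$ visibly harmless; the paper's version is terser but leaves the existence of the splitting $z_0=z_0'+z_0''$ to the reader.
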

\begin{proof}
    Given $\omega_{P,K}\in\Omega_X^\free$ such that $K\not\subseteq I$, for all $I\in P$. Let $I,I'\in P$ be distinct with $i\in I\cap K$ and $i'\in I\cap K$. Then for all $j\in I^c$ we have $e_i+e_j,-e_i\in \omega_{P,K}$, hence $e_j\in \omega$. Similarly for all $j\in (I')^c$ we have $e_j\in \omega$. Hence $F\subseteq \omega$.

    For the converse, first suppose that $F\subseteq \omega_{P,K}$, with $K\neq\emptyset$. Then $e_k$ and $-e_k$ lie in $\omega_{P,K}$, hence $\omega_{P,K}$ contains a 1-dimensional linear subspace. Suppose that $K\subseteq I$, for some $I\in P$. Consider the functional $L:\Theta \rar \QQ$ given by
    \[ L(z)= \sum_{j\notin I}2z_j - \sum_{i\in I} z_i.\] 
    We have that $\omega_{P,K}$ is generated by $e_i+e_j$, where $\{i,j\}\not\subseteq I$, together with $-2e_k$, for $k\in K$. Note that $L(e_i+e_j)>0$, when $\{i,j\}\not\subseteq I$, and $L(-2e_k)>0$ for all $k\in K$. Therefore $\omega_{P,K}$ cannot contain a 1-dimensional linear subspace, hence $F\not\subseteq \omega_{P,K}$.

    For the proof of the final statement we first consider the functional
    \[ L_i\vcentcolon=\sum_{j\notin I}f_j - \sum_{i\in I}f_i,\]
    and note that $L_i$ is non-negative on the generators of $\omega_{P,K}$. Therefore
    \[ \omega_{P,K}\subseteq \big\{z\in C_0 \ \vert \ \sum_{i\in I}z_i\leq \sum_{j\notin I}z_j\big\}.\]
    This implies $\omega_{P,K}\cap C_0 \subseteq \big\{z\in C_0 \ \vert \ \sum_{i\in I}z_i\leq \sum_{j\notin I}z_j\big\}$. For the converse inclusion let $z_0\in \big\{z\in C_0 \ \vert \ \sum_{i\in I}z_i\leq \sum_{j\notin I}z_j\big\}$. There is a decomposition $z_0=z_0'+z_0''$, where $z_0'\in \mathrm{Cone}(e_i+e_j \ \vert \ i\in I, j\notin I)$ and $f_i(z_0'')\geq 0$, for all $i\in I$. Thus both $z_0'$ and $z_0''$ lie in $\omega_{P,K}$.
\end{proof}
\begin{remark}
    Note that $\big\{z\in C_0 \ \vert \ \sum_{i\in I}z_i\leq \sum_{j\notin I}z_j\big\}=\eta_I$ (see Definition~\ref{def:etacone}). In particular if $K\subseteq I\in P$, then $\omega_{P,K}\cap C_0\in \Omega_Y^\free$ if and only if $\#I<n-1$.
\end{remark}
\begin{corollary}\label{cor:dda2}
    The zero fiber $\pi_\tot^{-1}(0)$ is proper over $\MM_0\setminus\{0\}$. Moreover
    \[\pi_\tot^{-1}(0)=\bigcup_{\omega\not\supseteq F}\calZ_\omega.\]
\end{corollary}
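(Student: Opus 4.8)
The plan is to pin down the fibre $\pi_\tot^{-1}(0)$ pointwise on the total coordinate space and then read off properness from Lemma~\ref{lem:codimen}. First I would record the preliminary identifications: by Theorem~\ref{thm:cox}, $\CC[X]\cong\Cox(\MM_\theta)$ as a $\ZZ^n$-graded ring, so $\CC[X]^T=\Cox(\MM_\theta)_0=\Gamma(\MM_\theta,\oh_{\MM_\theta})=\Gamma(\MM_0,\oh_{\MM_0})=\CC[\MM_0]$, using that a crepant resolution satisfies $(\pi_\theta)_*\oh=\oh$. Hence $X\git T=\MM_0$, the composite $X^\free\xrightarrow{\Pi_\MM}\MM_\tot\xrightarrow{\pi_\tot}\MM_0$ is the restriction of the affinisation morphism $X\to X\git T$, and the cone point $0\in\MM_0$ is the image of the origin $0_X\in X$ (the point at which every $\varphi_{i,j}$ and every $c_k$ vanishes). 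Consequently a point $x\in X^\free$ maps to $0\in\MM_0$ if and only if every $T$-invariant regular function on $X$ takes the same value at $x$ as at $0_X$, i.e.\ every non-constant $\ZZ^n$-degree-zero monomial in the $\varphi_{i,j}$ and $c_k$ vanishes at $x$.

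Next I would show, for $x\in X^\free$ with orbit cone $\omega_x=\omega_{P,K}$, that $x\mapsto 0$ if and only if $F\not\subseteq\omega_{P,K}$. By Lemma~\ref{lem:nullc}, $F\not\subseteq\omega_{P,K}$ precisely when $K$ is contained in a single block of $P$ (the empty set being contained in every block). Suppose $K\subseteq I_0$ for some $I_0\in P$, and let $m=\prod_{i<j}\varphi_{i,j}^{a_{i,j}}\prod_k c_k^{b_k}$ be a $\ZZ^n$-degree-zero monomial with $m(x)\neq0$; then $a_{i,j}>0$ forces $i,j$ into distinct blocks of $P$ (since $\varphi_{i,j}(x)\neq0$) and $b_k>0$ forces $k\in K\subseteq I_0$ (since $c_k(x)\neq0$). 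The balance relation $\sum_j a_{k,j}=2b_k$, coming from $\deg m=0$, then gives $a_{k,j}=0$ for all $j$ whenever $k\notin I_0$, so every edge of positive multiplicity lies inside $I_0$; but edges must join distinct blocks, so there are none, hence $m$ is constant. Thus every $f\in\CC[\MM_0]$ agrees at $x$ and at $0_X$, and $x\mapsto 0$. Conversely, if $K$ meets two distinct blocks $I_0,I_1$ of $P$, choose $k_0\in K\cap I_0$ and $k_1\in K\cap I_1$; then $f:=c_{k_0}c_{k_1}\varphi_{k_0,k_1}^2$ has $\ZZ^n$-degree $0$, vanishes at $0_X$, and is non-zero at every point of $\calV_{\omega_{P,K}}$, so no point of $\calV_{\omega_{P,K}}$ maps to $0$. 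Since $X^\free=\bigsqcup_{\omega\in\Omega_X^\free}\calV_\omega$ and $\pi_\tot^{-1}(0)=\Pi_\MM\big(\{x\in X^\free\mid x\mapsto 0\}\big)$, this yields $\pi_\tot^{-1}(0)=\bigcup_{\omega\not\supseteq F}\calZ_\omega$.

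For the properness assertion I would show that the restriction of $\pi_\tot$ over $\MM_0\setminus\{0\}$ is proper (equivalently, that $\MM_\tot\setminus\pi_\tot^{-1}(0)$ is proper over $\MM_0\setminus\{0\}$). Since $\pi_\tot\vert_{\MM_{\theta'}}=\pi_{\theta'}$, one has $\pi_\tot^{-1}(0)\cap\MM_{\theta'}=\pi_{\theta'}^{-1}(0)$ for each generic $\theta'\in F$, so $\pi_\tot^{-1}(\MM_0\setminus\{0\})=\MM_\tot\setminus\pi_\tot^{-1}(0)=\bigcup_{\theta'}\big(\MM_{\theta'}\setminus\pi_{\theta'}^{-1}(0)\big)$. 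By Lemma~\ref{lem:codimen} all these open subschemes are identified inside $\MM_\tot$, so $\pi_\tot^{-1}(\MM_0\setminus\{0\})=\MM_\theta\setminus\pi_\theta^{-1}(0)$ for any single generic $\theta\in F$; over this locus $\pi_\tot$ restricts to an open restriction of the projective (hence proper) morphism $\pi_\theta$, and is therefore proper.

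The main obstacle is the combinatorial step in the second paragraph: converting ``$K$ lies in one block of $P$'' into ``the only $\ZZ^n$-degree-zero monomial in the coordinates non-vanishing on $\calV_{\omega_{P,K}}$ is the constant'', via the balance relation on exponent vectors, and making sure this really identifies the fibre over the cone point rather than some larger fibre — this is exactly where $X\git T=\MM_0$ and the location of the cone point as the image of $0_X$ are needed. The rest is bookkeeping with the orbit-cone stratification together with single appeals to Lemma~\ref{lem:nullc}, Lemma~\ref{lem:codimen}, and the projectivity of $\pi_\theta$.
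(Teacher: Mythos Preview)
Your argument is correct. For the identity $\pi_\tot^{-1}(0)=\bigcup_{\omega\not\supseteq F}\calZ_\omega$ you take a different route from the paper: the paper observes that the fibre over the cone point is the null-cone in $X$, invokes the general fact that a point lies in the null-cone if and only if its orbit cone is strongly convex, and then uses Lemma~\ref{lem:addition} to identify strong convexity of $\omega_{P,K}$ with $F\not\subseteq\omega_{P,K}$. Your approach instead analyses degree-zero monomials in the generators directly, via the balance relation $\sum_j a_{k,j}=2b_k$, and arrives at the same dichotomy on whether $K$ meets one or several blocks of $P$. The paper's argument is shorter and more conceptual, but assumes the null-cone characterisation as known; yours is self-contained and makes explicit why $X\git T=\MM_0$ and why the cone point is the image of the origin, points the paper leaves implicit.

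For the properness assertion, the paper's proof of this corollary does not actually address it; the argument you give (that Lemma~\ref{lem:codimen} identifies all the $\MM_\theta\setminus\pi_\theta^{-1}(0)$ inside $\MM_\tot$, so the preimage of $\MM_0\setminus\{0\}$ coincides with a single such open piece and inherits properness from $\pi_\theta$) is exactly what the paper supplies later, in Step~1 of the proof of Theorem~\ref{thm:B}. So on this point you have simply filled in what the paper postpones.
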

\begin{proof}
    Under the affine GIT quotient $X\rar \MM_0$, the fiber over $0\in\MM_0$, is the null-cone in $X$. These are precisely the points $x\in X$, such that $\omega_x$ is strongly convex, i.e. it contains no 1-dimensional linear subspace. Therefore $\pi_\tot^{-1}(0)$ is a union of the strata $\calZ_\omega$, where $\omega\in \Omega_X^\free$ is strongly convex.
    
    The proof of Lemma \ref{lem:addition} shows that, if $K\neq\emptyset$, then $\omega_{P,K}$ is strongly convex if and only if $F\not\subseteq \omega_{P,K}$. If $K=\emptyset$ then $\omega_{P,K}$ is strongly convex, hence we obtain the result.
\end{proof}
\begin{lemma}\label{lem:hyperpolystrata}
    Let $\omega\in\Omega_X^\free$, such that $F\not\subseteq \omega$ then $\mathcal{V}_\omega\subseteq X^\free $ and $\mathcal{Z}_\omega\subseteq\MM_\tot$ are irreducible, and their closures satisfy
    \begin{align*}
        \overline{\mathcal{V}}_\omega =\bigcup_{\chi\subseteq \omega}\mathcal{V}_{\chi}\subseteq X^\free , \ \text{and} \ \ \
        \overline{\mathcal{Z}}_\omega =\bigcup_{\chi\subseteq \omega}\mathcal{Z}_{\chi}\subseteq \MM_\tot.
    \end{align*}
\end{lemma}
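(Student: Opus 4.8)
The plan is to follow the template of Lemma~\ref{prop:polystrata}; the genuinely new point is the defining relation of $\calX$, and this is exactly where the hypothesis $F\not\subseteq\omega$ is used.

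Write $\omega=\omega_{P,K}$ with $P$ a partition of $[n]$. First I would identify the candidate closure set-theoretically: inside $X^\free$ the locus
\[ C:=\{x\in X^\free \mid \varphi_{i,j}(x)=0 \text{ if } i,j\in I \text{ for some } I\in P,\ \text{and}\ c_k(x)=0 \text{ if } k\notin K\} \]
is closed, contains $\mathcal{V}_\omega$, and $\mathcal{V}_\omega$ is open in it, since the conditions $\varphi_{i,j}(x)\neq0$ on pairs in distinct blocks and $c_k(x)\neq0$ for $k\in K$ are open. Set-theoretically $C=\bigcup_{\chi}\mathcal{V}_\chi$ where $\chi=\omega_{P',K'}$ runs over the orbit data with $P'$ a coarsening of $P$ and $K'\subseteq K$; using Proposition~\ref{prop:hyporbitcones} one checks that the $\chi$ which appear are precisely the $\chi\in\Omega_X^\free$ with $\chi\subseteq\omega$. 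Thus it remains to show $\overline{\mathcal{V}}_\omega=C$, for which I would prove (i) $\mathcal{V}_\omega$ is irreducible, and (ii) every lower stratum $\mathcal{V}_\chi$ with $\chi\subsetneq\omega$ lies in $\overline{\mathcal{V}}_\omega$. Part (ii) is a direct degeneration: merge blocks of $P$ by letting the corresponding directions in $\CC^2$ coincide, and send the coordinates $c_k$ with $k\in K\setminus K'$ to $0$; the relation $\sum_k c_k(p_k^2,p_kq_k,q_k^2)=0$ is a closed condition so the limit stays in $\calX$, and it stays in $X^\free$ exactly when the limiting orbit data lies in $\Omega_X^\free$, which is the condition that picks out which $\chi\subseteq\omega$ occur.

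The crux is the irreducibility of $\mathcal{V}_\omega$. I would imitate the proof of Lemma~\ref{prop:polystrata}: set $N:=\#P$, pick a representative $i_I\in I$ for each $I\in P$, write $I(k)\in P$ for the block containing $k$, and consider the $\SL(2)$-invariant closed subscheme
\[ \mathcal{W}_\omega:=\VV\big(x_i-x_{i_I},\ y_i-y_{i_I}\ (i\in I,\ I\in P),\ c_k\ (k\notin K)\big)\cap\calX \subseteq \calX. \]
On the linear slice $L:=\VV(x_i-x_{i_I},\,y_i-y_{i_I},\,c_k)$, with coordinates $x_{i_I},y_{i_I}$ ($I\in P$) and $c_k$ ($k\in K$), the generators of the ideal $J$ restrict via $x_k^2\mapsto x_{i_{I(k)}}^2$ and its two companions; here the hypothesis $F\not\subseteq\omega$ enters through Lemma~\ref{lem:addition}, which forces either $K=\emptyset$ or $K\subseteq I_0$ for a single block $I_0\in P$. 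In the first case $\mathcal{W}_\omega=L\cong\Af^{2N}$. In the second, all $k\in K$ have $I(k)=I_0$, so $J|_L$ is generated by $\big(\sum_{k\in K}c_k\big)x_{i_{I_0}}^2$ and its two companions; on the locus where $(x_{i_{I_0}},y_{i_{I_0}})\neq0$ this cuts $\mathcal{W}_\omega$ out of $L$ by the single equation $\sum_{k\in K}c_k=0$, an affine space of one dimension less. In either case $\mathcal{V}_\omega\cap\mathcal{W}_\omega$ — the further open locus where the $N$ vectors $w_I:=(x_{i_I},y_{i_I})$ are nonzero and pairwise non-proportional and all $c_k\neq0$ for $k\in K$ (nonempty because $\#K\neq1$ by Proposition~\ref{prop:hyporbitcones}) — is a nonempty open subscheme of an affine space, hence irreducible.

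Finally I would realise $\mathcal{V}_\omega$ as a torus bundle over $\mathcal{V}_\omega\cap\mathcal{W}_\omega$: the normalisation map sends a point with data $(p_i,q_i,r_i)_i$, where $(p_i,q_i)=\lambda_i(p_{i_I},q_{i_I})$ for $i\in I$, to the point with vectors $(p_{i_I},q_{i_I})_i$ and scalars $(\lambda_i^2r_i)_i$ — the rescaling of the $r_i$ is forced so the relation $\sum_k r_k(p_k^2,p_kq_k,q_k^2)=0$ is preserved, and the map is well defined on $X^\free$ since it does not depend on the chosen $\SL(2)$-representative. It is a principal bundle under the subtorus $\{(g_i)\in T\mid g_{i_I}=1\ \text{for all}\ I\in P\}\cong(\CC^*)^{\,n-N}$, so $\mathcal{V}_\omega$ is irreducible and $\overline{\mathcal{V}}_\omega=C=\bigcup_{\chi\subseteq\omega}\mathcal{V}_\chi$. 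Applying $\Pi_\MM$, which is open and closed, transfers everything to $\MM_\tot$: $\mathcal{Z}_\omega=\Pi_\MM(\mathcal{V}_\omega)$ is irreducible and $\overline{\mathcal{Z}}_\omega=\Pi_\MM(\overline{\mathcal{V}}_\omega)=\bigcup_{\chi\subseteq\omega}\mathcal{Z}_\chi$. I expect the main obstacle to be the slice computation: making the defining ideal $J$ of $\calX$ collapse to a single linear relation on $L$ — which is precisely what $F\not\subseteq\omega$ provides, since otherwise $\sum_k c_k w_k^{\otimes2}$ would impose several independent relations on the slice and irreducibility could fail — together with checking that the fibres of the normalisation map carry the stated torsor structure compatibly with that relation.
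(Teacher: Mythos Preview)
Your proposal is correct and follows essentially the same approach as the paper: reduce irreducibility of $\mathcal{V}_\omega$ to that of a linear slice, use $F\not\subseteq\omega$ via Lemma~\ref{lem:addition} to force $K$ into a single block so that the defining ideal $J$ collapses to the single linear relation $\sum_{k\in K}c_k=0$, and then exhibit $\mathcal{V}_\omega$ as a torus bundle over the slice. Your treatment is in fact more complete than the paper's, which proves irreducibility but leaves the closure formula $\overline{\mathcal{V}}_\omega=\bigcup_{\chi\subseteq\omega}\mathcal{V}_\chi$ implicit; your explicit identification of the candidate closed set $C$ and the degeneration argument for the lower strata fill this in. One small notational wrinkle: $\mathcal{V}_\omega\subseteq X$ while your slice $\mathcal{W}_\omega\subseteq\calX$, so the intersection $\mathcal{V}_\omega\cap\mathcal{W}_\omega$ should be read upstairs in $\calX$ (as the paper does) or else you should descend the map through $\pi_X$ --- your remark about $\SL(2)$-independence handles this, but it would be cleaner to say so explicitly.
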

\begin{proof}
    Suppose $\omega=\omega_{P,K}$, such that $ F\not\subseteq \omega$. By Lemma \ref{lem:nullc}, we have that $K\subseteq I$, for some $I\in P$. Let
    \[ \calS := \left\{p\in \calX \ \Big\vert \ \begin{array}{c}(p_i,q_i) = (p_j,q_j), \ i,j\in J, \ \text{for all }J\in P, \\ \text{and} \ r_k=0, \ \text{for all }k\notin K.\end{array}\right\}.\]
For each $J\in P$ select a representative $i_J\in J$. Define a morphism $\alpha: \mathcal{V}_\omega \rar \mathcal{V}_\omega \cap \calS$ by $\alpha(p)=(p_1',q_1',r_1',\ldots,p_n',q_n',r_n')$, where if $i\in I$ then $(p_i',q_i') = (p_{i_J},q_{i_J})$ and $r_i'= \lambda_i^2r_{i_J}$. Where $\lambda_i\in \CC^*$ is the unique element such that $\lambda_i(p_i,q_i) = (p_{i_J},q_{i_J})$. Note that
\[ \sum_{i\in [n]}(r_i'p_i'^2,r_i'p_i'q_i',r_i'q_i'^2)=\sum_{i\in [n]}(r_ip_i^2,r_ip_iq_i,r_iq_i^2)=(0,0,0).\]
The morphism $\alpha$ is a bundle for the torus
\[ \{(g_1,\ldots,g_n)\in T \ \vert \ g_{i_J}=1, \ \text{for all }J\in P\}\subseteq T.\]
The image $\mathcal{V}_\omega\cap\mathcal{S}$ is isomorphic to an open subset of $\Spec(\CC[x_{i_J},y_{i_J},c_{i} \ \vert \ J\in P])$ the subscheme determined by the equation $\sum_{i\in I}c_i=0$. In particular it is an open subset of an affine space, hence irreducible. Since $\mathcal{V}_\omega$ is a torus bundle over an irreducible subset it is irreducible. Apply $\Pi_\MM$ to $\mathcal{V}_\omega$ to obtain the result for $\mathcal{Z}_\omega$.
\end{proof}
\begin{proof}[Proof of Theorem \ref{thm:B}]
     Let $\pi:\MM\rar\MM_0$ be a crepant resolution. Then $\widehat{\MM}\subseteq X^\free$, hence $\MM\subseteq \MM_\tot$, proper over $\MM_0$. 
     
    \medskip
    
    \emph{Step 1}. We begin by showing that there exists some subset $\Psi\subseteq \Omega_X^\free$ such that 
    \begin{align*} \widehat{\MM} = \bigcup_{\Psi} \mathcal{V}_\omega\ \ \text{and} \ \ \MM = \bigcup_{\Psi} \mathcal{Z}_\omega. \end{align*}

    It follows from Lemma \ref{lem:codimen} that given a cartesian square
    \[\begin{tikzcd}
        {\mathfrak{U}} & {\mathfrak{M}_\tot} \\
        {\mathfrak{M}_0\setminus\{0\}} & {\mathfrak{M}_0}
        \arrow[from=1-1, to=1-2]
        \arrow[from=1-1, to=2-1]
        \arrow[from=1-2, to=2-2]
        \arrow[from=2-1, to=2-2]
    \end{tikzcd}\]
    then $\mathfrak{U}$ is separated, since $\mathfrak{U}\subseteq\MM_\theta$, for all generic $\theta\in F$. Since $\MM_\tot$ is universally closed over $\MM_0$ it follows that $\mathfrak{U}$ it is proper over $\MM_0\setminus\{0\}$. Since $\MM$ is proper over $\MM_0$, we must have that $\mathfrak{U}\subseteq \MM$. We have that $\mathfrak{U}$ consists of strata $\mathcal{Z}_\omega$. Thus to show $\MM$ is a union of strata $\mathcal{Z}$, it suffices to show that $\pi^{-1}(0)$ is a union of such strata.

    Suppose $p\in \mathcal{Z}_\omega$, where $\mathcal{Z}_\omega\subseteq \pi^{-1}(0)$. Let $\zeta_\omega$ be the generic point of $\mathcal{Z}_\omega$. Since $\MM$ is open in $\MM_\mathrm{tot}$, it is stable under generalisation, hence $\zeta_\omega\in \MM$. Let $q\in \mathcal{Z}_\omega$. By \cite[\href{https://stacks.math.columbia.edu/tag/054F}{Lemma 054F}]{stacks-project} there exists a discrete valuation ring $R$, with fraction field $K$, and a morphism $\alpha:\Spec(R)\rar \MM_\mathrm{tot}$ sending the open point to $\zeta_\omega$ and the closed point to $q$. 

    Consider the induced diagram
    \[\begin{tikzcd}
        {\Spec(K)} & {\MM_\tot} \\
        {\Spec(R)} & {\MM_0}
        \arrow[from=1-1, to=2-1]
        \arrow[from=2-1, to=2-2]
        \arrow[from=1-2, to=2-2]
        \arrow[from=1-1, to=1-2]
    \end{tikzcd}\]
    We will show that $\alpha$ is the unique extension $\Spec(R)\rar\MM_\tot$.
    
    Suppose $\Spec(R)\rar \MM_\tot$ is another extension sending the closed point to $q'$. Then since $\MM_\tot$ is covered by $\MM_\theta$, for $\theta\in F$ generic, there exists some $\MM_\theta$ containing $q'$ and $\zeta_\omega$. Since $\MM_\theta$ is a union of $\mathcal{Z}_\chi$ we have that $q\in \MM_\theta$, and since $\MM_\theta$ is proper over $\MM_0$, we have that the diagram
    \[\begin{tikzcd}
        {\Spec(K)} & {\MM_\theta} \\
        {\Spec(R)} & {\MM_0}
        \arrow[from=1-1, to=2-1]
        \arrow[from=2-1, to=2-2]
        \arrow[from=1-2, to=2-2]
        \arrow[from=1-1, to=1-2]
        \arrow[dashed, from=2-1, to=1-2]
    \end{tikzcd}\]
    admits a unique completion, hence $q'=q$. Therefore since $\MM$ is proper over $\MM_0$, it follows that $q\in \MM$. Thus $\pi^{-1}(0)$, and hence $\MM$, is a union of strata $\mathcal{Z}_\omega$. 

    \medskip

    \emph{Step 2}. we claim that $\Psi$ satisfies Definition \ref{def:bunch} (2). By Lemma \ref{lem:hyperpolystrata}, we have that $\overline{\mathcal{Z}}_\omega = \bigcup_{\chi \subseteq \omega}\mathcal{Z}_\chi$, hence $\bigcup_{\omega\subsetneq \omega_0}\mathcal{Z}_\chi$ is open. Since $\MM\subseteq \MM_\tot$ is an open subscheme, if $\omega\in \Psi$ and $\omega\subseteq\omega_0$ then $\omega_0\in \Psi$.
    If $\omega\in \Psi$ and $\omega^\circ\subseteq \omega_0^\circ$, then $\omega\subseteq\omega_0$ and thus $\omega_0\in \Psi$. Therefore $\Psi$ satisfies Definition \ref{def:bunch} (2).
    
    \medskip

    \emph{Step 3}. Given $\omega\in\Omega_X^\free$, let 
    $\rho(\omega)\vcentcolon=\omega\cap F$. Given $\Psi\subseteq \Omega_X^\free$, let
    \[ \rho(\Psi)\vcentcolon=\{\rho(\omega) \ \vert \ \omega\in \Psi\}.\]
    We claim that $C_i\in \rho(\Psi)$ for precisely one $i=0,\ldots,n$.

 By Lemma \ref{lem:T+}, $\rho(\omega)$ is a top dimensional cone in $\Theta$. Suppose that $\mathcal{Z}_\omega\subseteq\mathfrak{U}$, then for all generic $\theta\in F$, we have $\theta\in \omega$. Therefore $F\subseteq \omega$, and $\rho(\omega)=F$. In general $\rho(\omega)$ is not a cone in $\Omega_X$. Given $\omega\in\Omega_X^\free$, we claim that $\rho(\omega)=C_i$ if and only if $\omega=\omega_{Q_i,K}$, with $K\subseteq\{i\}^c$. This follows from the fact that $\omega_{P,K}\cap F$ is generated by $e_j$, where $j\in I\in P$, such that $K\cap I^c\neq\emptyset$, together with $e_{j_1}+e_{j_2}$, where $\{j_1,j_2\}\not\subseteq I$, for all $I\in P$.

Let $R$ be a discrete valuation ring with fraction field $K$, together with a morphism $\alpha:\Spec(R)\rar \MM_\tot$ sending the open point to the generic point of $\MM_\tot$, and the closed point to the generic point of $\mathcal{Z}_{C_0}$. We have an induced morphism $\Spec(K)\rar \MM$. By properness this admits a unique extension $\Spec(R)\rar \MM$ over $\MM_0$. If this extension has the same image as $\alpha$, then $C_0\in \Psi$, hence $C_0\in \rho(\Psi)$. If not, then the image lies in $\MM_\theta$, for some generic $\theta\in F$, such that $\theta\notin C_0$. Since $F=\bigcup_{i=0}^n C_i$, we have that $\theta\in C_i^\circ$, for some $i=1,\ldots,n$. Therefore the image of the closed point lies in $\mathcal{Z}_\omega$, such that $\omega\cap F=C_i$, and $C_i\in \rho(\Psi)$.

\medskip

\emph{Step 4}. Suppose $C_i\in \rho(\Psi)$ for $i\neq 0$, we claim that $\Psi=\Psi_\Delta$, where $\Delta$ is the unique maximally-biconnected complex such that $\{i\}^c\in\Delta$.

 Let $\omega\in \Psi$ such that $\rho(\omega)=C_i$. We have that $\omega=\omega_{Q_i,K}$, where $K\subseteq \{i\}^c$. Since $\omega_{Q_i,K}\subseteq \omega_{Q_i,\{i\}^c}$, we have that $\omega_{Q_i,\{i\}^c}\in \Psi$. Let $\omega_0=\omega_{Q_i,\{i\}^c}$. With respect to the open covering
    \[ \MM_\tot = \bigcup_{\substack{\theta\in  F \\ \mathrm{generic}}}\MM_\theta.\]
    the stratum $\mathcal{Z}_{\omega_0}$ is contained in a unique open subscheme given by $\MM_\theta$, where $\theta\in C_i^\circ$ (since $\omega_0$ contains a unique GIT chamber inside $ F$). In particular
    \[ \MM_\tot\setminus\bigcup_{\omega_0\subsetneq\chi}\mathcal{Z}_\chi \subseteq \MM_\theta.\]
    Therefore $\overline{\mathcal{Z}}_{\omega_0} \subseteq \MM$, which implies $\omega_{Q,J}\in \Psi$, for all $J\subseteq\{i\}^c$.

    By Lemma \ref{lem:cornercones}, together with Definition \ref{def:bunch} (2) we have shown that any cone $\omega$ containing the GIT chamber $C_i$ is contained in $\Psi$. Hence $\MM_\theta\subseteq \MM$, and by properness of $\MM$ we obtain equality. Thus $\Psi=\Psi_\Delta$, where $\Delta$ is the unique maximally-biconnected complex containing $\{i\}^c$.

    \medskip

    \emph{Step 5}. Suppose $C_0\in \rho(\Psi)$, we claim that $\Psi=\Psi_\Delta$, where $\Delta$ is a \emph{full} maximally-biconnected complex.

    We have seen that if $C_0\in \rho(\Psi)$, then $C_0\in \Psi$. In this case $\overline{\mathcal{Z}}_{C_0}\subseteq\MM$ is a complete geometric GIT quotient of some open subset of $Y$ (see Theorem \ref{thm:polybij}). We see this by noting that
    \[ \mathcal{V}_{C_0}=\{x\in X^\free  \ \vert \ c_k(x)=0, \ \text{for all }k, \ \varphi_{i,j}(x)\neq0, \ \text{for all }i,j\}.\]
    Which is a dense open subset of the closed subscheme $Y\subseteq X$ defined as the vanishing locus $\VV(c_i \ \vert \ i\in[n])$.
    Since $\overline{\mathcal{Z}}_{C_0} \rar \pi^{-1}(0)$ is a closed immersion of an irreducible variety into complete scheme, we obtain that $\overline{\mathcal{Z}}_{C_0}$ is a complete. Hence by Proposition \ref{prop:maxbunchbij} there is a full maximally-biconnected complex $\Delta$, such that $\overline{\mathcal{Z}}_{C_0}=U(\Phi_\Delta)\git T$. Therefore $\Phi_\Delta\subseteq \Psi$, since $\Psi$ satisfies Definition \ref{def:bunch} (2) and we have that $\Psi_\Delta \subseteq \Psi$.

    Let $\omega_{P,K}\in \Psi$. Then $\omega_{P,K}\cap C_0\neq\emptyset$, else $\omega_{P,K}\cap C_i\neq\emptyset$ for some $i=1,\ldots,n$, which implies $C_i\in \rho(\Psi)$, contradicting our assumption. Thus $\#P \geq 3$, and $\omega_{P,K}\cap C_0 = \omega_P$.

    Since $\omega_P\subseteq\omega_{P,K}$, for all generic $\theta\in F$, we have that $\mathcal{Z}_{\omega_P}\subset\MM_\theta$ implies that $\mathcal{Z}_{\omega_{P,K}}\subseteq\MM_\theta$. Thus let $\Spec(R)\rar \MM_\tot$, sending its open point to the generic point of $\mathcal{Z}_{\omega_{P,K}}$ and its closed point to the generic point of $\mathcal{Z}_{\omega_P}$, then the induced morphism $\Spec(K)\rar \MM_\tot$ admits a unique extension over $\MM_0$. Therefore by properness of $\MM$ we have that $\mathcal{Z}_{\omega_P}\subseteq \MM$, hence $\omega_P\in \Psi$. Again since $\mathcal{Z}_{\omega_P}$ is contained inside the polygon space contained in $\MM$, we have that $\omega_P\in \Phi_\Delta$, hence $\omega_{P,K}\in \Psi_\Delta$. Therefore $\Psi=\Psi_\Delta$.

    \medskip

    \emph{Step 6}. Given a maximally-biconnected complex $\Delta$, let $\MM_\Delta\vcentcolon=U(\Psi_\Delta)\git T$. To finish the proof it suffices to show that $\MM_\Delta\subseteq \MM_\tot$ together with its natural morphism $\pi_\Delta:~\MM_\Delta\rar\MM_0$ is a crepant resolution of $\MM_0$. The map in the statement of the Theorem is then well-defined, and by the previous steps it is surjective. We have seen that if $\{i\}^c\in \Delta$ then $\MM_\Delta=\MM_\theta$, for $\theta\in C_i^\circ$. Thus we may assume $\Delta$ is full.
    
    First note that $U(\Psi_\Delta)\git T$ is an open subscheme of $\MM_\tot$, and $\MM_\tot$ is covered by $\MM_\theta$, for $\theta\in F$ generic. Hence each point of $\MM_\Delta$ is smooth, and locally around each point the natural morphism $\pi_\Delta$ is crepant. Since $\MM_\Delta$ is separated (since it has the $A_2$-property), to show $\pi_\Delta$ is a crepant resolution it suffices to show that it satisfies the existence part of the valuative criterion for properness over $\MM_0$.

    By the definition of $\Psi_\Delta$, if $\omega\in \Omega_X^\free$ and $\omega\supseteq F$ then $\omega\in \Psi_\Delta$, since $\omega$ necessarily contains some $\omega_0\in \Phi_\Delta$. It follows that
    \[ \bigcup_{\omega\supseteq F}\calZ_\omega \subseteq \MM_\Delta.\]
    The open subscheme $\bigcup_{\omega\supseteq F}\calZ_\omega\subseteq\MM_\tot$ is precisely the preimage of $\MM_0\setminus\{0\}$ under $\pi_\tot$, and by Corollary \ref{cor:dda2} we have that it is proper over $\MM_0\setminus\{0\}$. Therefore to prove properness of $\MM_\Delta$ it suffices to verify the properness of $\pi_\Delta^{-1}(0)$.

    Let $R$ be a discrete valuation ring fraction field $K$. Suppose we are given a morphism $\alpha:\Spec(K)\rar \pi_\Delta^{-1}(0)$, with the image lying in $\calZ_\omega$. Since $\calZ_\omega\subseteq \pi_\Delta^{-1}(0)$, by Lemma \ref{lem:addition} and Corollary \ref{cor:dda2}, we have that $\omega=\omega_{P,K}$, where $K\subseteq I$, for some $I\in P$.

    For all generic $\theta\in \omega\cap F$, we have that $\calZ_\omega\subseteq \MM_\theta$, and since $\MM_\theta$ is projective, there is a unique extension of $\alpha$ to a morphism $\Spec(R)\rar \MM_\theta$. Thus for all GIT chambers $C$ such that $\overline{C}\subseteq \omega$, there is a unique $\chi_C\in \Omega_X^\free$, with $C\subseteq \chi_C \subseteq \omega$, such that, given $\theta\in C$, there is a unique extension $\beta_C:\Spec(R)\rar\MM_\theta$, with the image of the closed point lying in $\calZ_{\chi_C}$. The union of these cones contains $\omega$ and their interiors have pairwise empty intersections.

    Label by $\chi_1,\ldots,\chi_N$ the subset of cones $\chi_C$ such that $\chi_C^\circ\cap C_0^\circ\neq\emptyset$. We claim that some $\chi_i\in \Psi_\Delta$, in which case the image of some extension of $\alpha$ lies in $\MM_\Delta$, showing that $\MM_\Delta$ is proper.

    If $\omega=\omega_{P,K}$ and $K\subseteq I\in P$ then by Lemma \ref{lem:addition} we have that $\omega_{P,K}\cap C_0=\eta_I\in\Omega_Y^\free$. Moreover, by definition of $\Psi_\Delta$, we have that $\omega_{P,K}\supseteq \omega_0$, for some $\omega_0\in \Phi_\Delta$. Since $\omega_0\subseteq C_0$, it follows that $\omega_0\subseteq \eta_I$, hence $\eta_I\in \Phi_\Delta$, by Definition \ref{def:bunch} (2).
    Similarly we have that $\chi_i\cap C_0\in \Omega_Y^\free$. Note that the cones $\chi_i\cap C_0$ cover $\eta_I$, and their interiors have pairwise empty intersections. Hence by the same argument given in the proof of Proposition \ref{prop:bunchquotientsarecomplete}, we have that some $\chi_i\cap C_0\in \Phi_\Delta$. Therefore by Definition \ref{def:bunch} (2), applied to $\Psi_\Delta$, we have that $\chi_i\in \Psi_\Delta$. This shows $\MM_\Delta$ is proper.
\end{proof}

We have already seen that the number of maximally-biconnected complexes on $[n]$ equals $\lambda(n)$.
\begin{corollary}
    There are $\lambda(n)$ crepant resolutions of $\MM_0$ up to isomorphism over $\MM_0$.
\end{corollary}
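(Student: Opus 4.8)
The plan is to deduce the count immediately from the bijection of Theorem \ref{thm:B} together with the combinatorial identifications already established. By Theorem \ref{thm:B}, the assignment $\Delta \mapsto [U(\Psi_\Delta)\git T \rar \MM_0]$ is a bijection from the set of maximally-biconnected complexes on $[n]$ onto the set $\mathscr{C}$ of crepant resolutions of $\MM_0$, taken up to isomorphism over $\MM_0$. Hence $\#\mathscr{C}$ equals the number of maximally-biconnected complexes on $[n]$, and it remains only to evaluate this number.

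First I would invoke the bijection between maximally-biconnected complexes on $[n]$ and biconnected complexes on $[n-1]$, given by the complementation construction $\Delta \mapsto \widetilde{\Delta}$ and its inverse. Then, by Definition \ref{def:hm}, the number of biconnected complexes on $[n-1]$ is by definition the $n$-th Hoşten-Morris number $\lambda(n)$. Composing the two bijections yields $\#\mathscr{C} = \lambda(n)$, which is the claim. (Alternatively one may simply cite the sentence preceding the corollary, where the equality between the number of maximally-biconnected complexes on $[n]$ and $\lambda(n)$ is recorded.)

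There is essentially no obstacle: all ingredients have been proved, and the statement is a formal consequence. The only point that merits a word of care is that $\mathscr{C}$ is defined as a set of resolutions up to isomorphism over $\MM_0$, which is precisely the equivalence relation built into the target of the bijection in Theorem \ref{thm:B}; in particular, that distinct maximally-biconnected complexes give genuinely non-isomorphic (over $\MM_0$) crepant resolutions is exactly the injectivity assertion of Theorem \ref{thm:B} and requires nothing further. Thus the corollary follows with no additional argument.
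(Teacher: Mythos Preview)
Your proposal is correct and matches the paper's approach exactly: the paper states the corollary without proof, relying on the bijection of Theorem \ref{thm:B} and the immediately preceding sentence that the number of maximally-biconnected complexes on $[n]$ equals $\lambda(n)$.
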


\printbibliography
\end{document}